\numberwithin{equation}{section}
\newcommand{\td}[2]{\dfrac{\mathrm{d} #1}{\mathrm{d} #2}} 
\newcommand{\ii}[4]{\int_{#1}^{#2} #3 \: \mathrm{d}#4} 
\newcommand{\aevv}{\text{ a.e. in } (0,T) } 
\newcommand{\dq}[1]{\partial_t^h #1} 
\newcommand{\hh}[1]{\left\| #1 \right\|_{H^1(\Omega)}}
\newcommand\restr[2]{{
		\left.\kern-\nulldelimiterspace 
		#1 
		\vphantom{\big|} 
		\right|_{#2} 
}} 
\newcommand{\uu}{\mathbf{u}}
\newcommand{\ene}{\mathcal{E}_{\omega,\varepsilon}}
\newcommand{\eneg}{\mathcal{E}_{\omega}}
\newcommand{\LL}{\mathbf{L}}
\DeclareMathOperator{\spn}{span}
\theoremstyle{remark}
\newtheorem{remark}{Remark}[section]
\theoremstyle{plain}
\newtheorem{proposition}{Proposition}[section]
\newtheorem{lemma}{Lemma}[section]
\newtheorem{theorem}{Theorem}[section]
\theoremstyle{definition}
\newtheorem{definition}{Definition}[section]
\begin{document}
\begin{center}
		\large{\textbf{\uppercase{Well-posedness for a Navier--Stokes--Cahn--Hilliard System for Incompressible Two-phase Flows with Surfactant}}}\\
		\vspace{0.5cm}
		\small{\uppercase{Andrea Di Primio}\footnote[1]{Dipartimento di Matematica, Politecnico di Milano, Milano 20133, Italy. Email: \texttt{andrea.diprimio@polimi.it}}},\hspace{0.5cm}
		\uppercase{Maurizio Grasselli}\footnote[2]{Dipartimento di Matematica, Politecnico di Milano, Milano 20133, Italy. Email: \texttt{maurizio.grasselli@polimi.it}},\hspace{0.5cm}
		\uppercase{Hao Wu}\footnote[3]{
School of Mathematical Sciences and Shanghai Key Laboratory for Contemporary Applied Mathematics, Fudan University, Shanghai 200433, China. Key Laboratory of Mathematics for Nonlinear Sciences (Fudan University), Ministry of Education, Shanghai 200433, China.
Email: \texttt{haowufd@fudan.edu.cn}, \texttt{haowufd@yahoo.com}}
\end{center}

\begin{abstract}
\noindent
We investigate a diffuse-interface model that describes the dynamics of incompressible two-phase viscous flows with surfactant. The resulting system of partial differential equations consists of a sixth-order Cahn--Hilliard equation for \textcolor{black}{the difference of local concentrations of the binary fluid mixture} coupled with a fourth-order Cahn--Hilliard equation for the local concentration of the surfactant. The former has a smooth potential, while the latter has a singular potential. Both equations are coupled with a Navier--Stokes system for the (volume averaged) fluid velocity. The \textcolor{black}{evolution system} is endowed with suitable initial conditions, a no-slip boundary condition for the velocity field and homogeneous Neumann boundary conditions for the phase functions as well as for the chemical potentials. We first prove the existence of a global weak solution, which turns out to be unique in two dimensions. Stronger regularity assumptions on the initial data allow us to prove the existence of a unique global (resp. local) strong solution in two (resp. three) dimensions. In the two dimensional case, we can derive a continuous dependence estimate with respect to the norms controlled by the total energy. Then we establish instantaneous regularization properties of global weak solutions for $t>0$. In particular, we show that the surfactant concentration stays uniformly away from \textcolor{black}{the pure states} $0$ and $1$ after some positive time.
\end{abstract}
\medskip

\textbf{Keywords}. Two-phase flows \textcolor{black}{with surfactant}, Cahn--Hilliard equation, Navier--Stokes equations, well posedness, regularity, strict separation property.  \medskip

\textbf{AMS Subject Classification 2020}: 35Q35; 76T06.

	\renewcommand{\contentsname}{Contents}
	\tableofcontents

\section{Introduction} \label{sec:intro}

Surfactants are substances that can significantly alter the behavior of a fluid mixture, in particular, at the free interfaces between two components. They can change (reduce) the interfacial tension and allow the mixing of substances that are not able to blend under normal circumstances (e.g., water and oil). \textcolor{black}{The gradients in surface tension may also produce Marangoni flows, which are phenomenologically different from the temperature-driven ones. The rich phenomena induced by surfactants have been exploited extensively in science and they have led to a lot of applications in Engineering (see, e.g., \cite{Myers}).}



\textcolor{black}{The dynamics of a binary fluid mixture in presence of a surfactant} can be effectively modeled through the so-called diffuse-interface (or phase-field) approach \cite{AMW}. \textcolor{black}{Within} this framework, various models have been proposed \textcolor{black}{in the literature,} which account for the rich microstructures in the mixture as well as complicated morphological changes of the interfaces. The possibly first one among them, although neglecting hydrodynamical effects, dates back to the work by Laradji et al. \cite{Lar1} (see also \cite{Lar2}), where the authors investigated the dynamics of phase separation using an evolution system derived from a suitable Ginzburg--Landau free energy functional depending on two order parameters: one for \textcolor{black}{the difference in local concentrations} of the two immiscible components (denoted by $\phi$) and the other one for the local concentration of the surfactant (denoted by $\rho$). The resulting system consists of two (weakly) coupled Allen--Cahn type equations subject to thermal noises. However, in the past years, the structure of the free energy functional has been debated and refined, leading to a variety of descriptions.

In order to motivate our choice, we present a brief review of a number of models, without considering hydrodynamical effects at first. Let $\Omega\subset \mathbb{R}^d$, $d=2,3$ be a bounded domain \textcolor{black}{with smooth boundary} $\partial\Omega$. The starting point is a coarse-grained model based on a two-component Ginzburg--Landau free energy functional of possibly the simplest form:
	\[
	\mathcal{E}(\phi, \rho) = \int_{\Omega} \left(k_1|\nabla\phi|^2 + k_2|\nabla \rho|^2 + F_0(\phi) + F_1(\rho) + F_\text{int}(\phi, \rho)\right) \: \mathrm{d}x,
	\]
where $k_1, k_2 \geq 0$. In \cite{Lar1}, the parameter $k_2$ was taken to be zero as a physically reasonable approximation, since the energy cost of the fluid-surfactant attachment is small (see also \cite{Kom97}). Besides, the potential energy densities $F_0$ and $F_1$ are modeled by some double well polynomial functions of the concentrations, while the interaction energy density is given by
	\[
	F_\text{int}(\phi, \rho) = -\theta\rho|\nabla\phi|^2 + p(\phi,\rho),
	\]
where $\theta > 0$ is a given phenomenological parameter and $p$ is a bivariate polynomial. The first coupling term favors the surfactant to reside at the free interfaces between the two fluids, while $p$ is suitably chosen to penalize the presence of free surfactant in the domain. Nonetheless, as mentioned in \cite{GS94} (see also \cite{Kom97}), the energy functional proposed in \cite{Lar1} may not be well defined, since it is not bounded from below for large values of the surfactant concentration $\rho$ at the interfaces. For this reason, in \cite{Kom97} the authors proposed a slight modification of the energy $\mathcal{E}$ (with $k_2=0$) by including a regularizing term, namely,
	\[
	\mathcal{E}(\phi, \rho) = \int_{\Omega} \left(k_1|\nabla\phi|^2 + k_3|\Delta \phi|^2 + F_0(\phi) + F_1(\rho) + F_\text{int}(\phi, \rho)\right) \: \mathrm{d}x,
	\]
	where $k_3 > 0$ and $F_0, F_\text{int}$ are the same as those in \cite{Lar1}, except that $p \equiv 0$. The additional term $k_3|\Delta \phi|^2$ corresponds to the second-order term in the expansion of a free energy density in the region of nonuniform composition for a binary mixture (see e.g., \cite{CH58}). In particular, the potential $F_1$ takes the form $F_1(\rho)=\rho^2(\rho-1)^2$, where the minimum state $\rho=0$ means that the interfacial layer is occupied by the two-component mixture and there is no surfactant in the local volume, while the normalised state $\rho=1$ indicates that the interface is fully saturated with the surfactant. On the other hand, in \cite{SVDG06}, the authors did not add any higher-order regularizing term in the energy functional, but for the surfactant they chose an entropy term
	\[
	F_1(\rho) = c_1\left[ \rho \ln \rho + (1-\rho)\ln(1-\rho) \right],\quad \text{for some }c_1>0.
	\]
Besides, the potentials $F_0$ and $F_\text{int}$ were kept almost unchanged with respect to \cite{Lar1} with $p(\phi,\rho)=\frac12 W\rho\phi^2$ (for some $W>0$), which counteracts the occurrence of free surfactant and serves as an enthalpic contribution for numerical reasons. The entropy term \textcolor{black}{has the advantage that} it guarantees the order parameter $\rho$ for the surfactant will take its values in the physically relevant interval $[0,1]$. However, it has been pointed out in \cite{illposed} that when $k_2=0$, it may exist a relevant set of initial data for which the resulting problem is ill-posed. Moreover, therein the authors suggested replacing the entropy term by the Flory--Huggins type potential (see e.g., \cite{Flory42,Huggins41}). Therefore $F_1$ becomes
	\[
	F_1(\rho) = c_1\left[ \rho \ln \rho + (1-\rho)\ln(1-\rho) \right] + c_2\rho(1-\rho) + c_3|\nabla \rho|^2,
	\]
	for some $c_2 \in \mathbb{R}$ and $c_3 > 0$. We note that the choice $c_3 > 0$ is equivalent to assume $k_2>0$. This term fits \textcolor{black}{with the classical diffuse-interface description of binary mixtures \cite{CH58, Ell89, NC08}.}

The phase-field model can further include hydrodynamical effects through a suitable coupling with a system of Navier--Stokes equations for the (volume) averaged velocity $\mathbf{u}$ of the fluid mixture. To this end, one can add a term related to the kinetic energy
	\[
	\int_\Omega k_4|\mathbf{u}|^2 \: \mathrm{d}x,
	\]
for some $k_4 > 0$, into the energy functional $\mathcal{E}(\phi, \rho)$. Then the full hydrodynamical coupled system of evolutionary partial differential equations can be derived via a variational method. It consists of two convective Cahn--Hilliard type equations and the Navier--Stokes system subject to capillary forces, see, for instance, \cite{PD95} and also \cite{illposed} for the case $k_3=0$.

In light of the above considerations, throughout of this paper, we shall work with the following energy functional for the binary fluid-surfactant system
%
\begin{equation}
\mathcal{E}_{\mathrm{tot}}(\mathbf{u},\phi, \rho) = \int_{\Omega} \left(\frac{1}{2}|\mathbf{u}|^2+ \frac{\alpha}{2}|\Delta \phi|^2 +  \frac{1}{2}|\nabla\phi|^2 + S_\phi(\phi) + \frac{\beta}{2}|\nabla \rho|^2  + S_\rho(\rho) -\frac{\theta}{2}\rho|\nabla \phi|^2  \right) \mathrm{d}x,
\label{total}
\end{equation}
where $\alpha, \beta, \theta$ are positive constants. The potential function $S_\phi$ for $\phi$ is assumed to be a regular one with double-well structure, whose typical form is
\begin{equation} \label{eq:regpot}
		S_\phi(s) = \dfrac{1}{4}(s^2-1)^2, \quad s \in \mathbb{R},
\end{equation}
while the surfactant potential $S_\rho$ is assumed to be a singular one. For instance it can be the Flory--Huggins potential
\begin{equation}
S_\rho(s) = \dfrac{\theta_1}{2}[s\ln s + (1-s)\ln(1-s)] + \dfrac{\theta_2}{2}s(1-s), \quad s \in (0,1),
\label{eq:mixentropy}
\end{equation}
where $\theta_1 > 0$ and $\theta_2 \in \mathbb{R}$. In \eqref{total}, we simply set the coupling polynomial $p$ to zero, since from the mathematical point of view, those physically interesting cases considered in \cite{illposed,Lar1,SVDG06} can be easily controlled by the potential functions $S_\phi$ and $S_\rho$.

The double-well regular potential \eqref{eq:regpot} is a well-known approximation of the Flory--Huggins potential. This does not ensure that $\phi$ takes values in its physical range $[-1,1]$ \textcolor{black}{due to the loss of maximum principle for the higher-order parabolic equation}. Yet, in models as well as in numerical simulations of immiscible fluids, this approximation is easy to handle and has been widely used. Then a natural question is: why we do not assume a Flory--Huggins type potential for $\phi$ as well ? Our consideration is as follows. Observe that the evolution of $\phi$ is described by a sixth-order Cahn--Hilliard equation (see \eqref{eq:strongNSCH}). However, this kind of equation with a singular potential is rather difficult to handle. Indeed, even the existence of a weak solution in the usual sense remains an open problem (see Remark \ref{FHphi}). Only the existence of a weaker solution has been established by replacing the equation with a suitable variational inequality \cite{AM15} (see also \cite{SP13b,SW20} for the analysis of some other sixth-order Cahn--Hilliard type equations with singular potentials). On the other hand, one might think to take $\alpha=0$ and use a singular potential for $\phi$. Then the problem is to take care of the nonlinear coupling due to the term $(\theta/2) \rho|\nabla \phi|^2$, which is highly nontrivial (see Remark \ref{teta} and \cite{SP13a} for a related problem). The system with a singular potential for $\phi$ is interesting and will be the subject of a further investigation. Therefore, in this work, we confine ourselves to the case with a regular potential for $\phi$, which seems a reasonable choice in order to prove a number of theoretical results (see also \cite{SVDG06} for remarks about modeling).

All the phase-field models mentioned above have gained particular interest as far as numerical simulations are concerned. For example, the models with regular potentials have been numerically investigated in \cite{Yang17, Yang18, Xu21,Yang21a}. However, it was also noted in \cite{Xu21} that even this modification does not simplify a rigorous proof that the resulting energy functional is bounded from below. This problem is left unanswered, and the authors chose to introduce an artificial modification to the regularizing higher-order term, in order to provide a simple, yet rigorous, proof that the energy functional is bounded from below (provided that some solution exists). Instead, as we shall prove rigorously in this paper, the fact that $\rho$ takes its values within the physical range $[0,1]$ guarantees the boundedness from below of the energy functional \eqref{total}.

On account of \eqref{total}, assuming the two-phase flow to be isothermal and incompressible with matched densities, the system we want to analyze here, on some time interval $[0,T]$, $T>0$, is the following
\begin{equation} \label{eq:strongNSCH}
\begin{cases}
			\partial_t \mathbf{u} + (\mathbf{u} \cdot \nabla)\mathbf{u} - \nabla \cdot ( \nu(\phi,\rho) D\mathbf{u} ) + \nabla \pi =\mu \nabla \phi +  \psi\nabla \rho& \quad \text{in } \Omega \times (0,T),\\
			\nabla \cdot \mathbf{u} = 0& \quad \text{in } \Omega \times (0,T),\\
			\partial_t\phi + \mathbf{u} \cdot \nabla \phi = \Delta \mu & \quad \text{in } \Omega \times (0,T),\\
			\mu =  \alpha \Delta^2 \phi -\Delta \phi + S_\phi'(\phi) + \theta\nabla \cdot (\rho \nabla \phi) & \quad \text{in } \Omega \times (0,T),\\
			\partial_t\rho + \mathbf{u} \cdot \nabla \rho = \Delta \psi & \quad \text{in } \Omega \times (0,T), \\
			\psi = -\beta \Delta \rho + S'_\rho(\rho) - \dfrac{\theta}{2}|\nabla \phi|^2 & \quad \text{in } \Omega \times (0,T), \\	
\end{cases}
\end{equation}
%
System \eqref{eq:strongNSCH} is subject to the following boundary and initial conditions:
\begin{equation} \label{eq:bcs}
\begin{cases}
			\mathbf{u} = \mathbf{0} & \quad \text{on } \partial \Omega \times (0,T), \\
			\partial_\mathbf{n} \phi = \partial_\mathbf{n}\Delta \phi = \partial_\mathbf{n}\mu =0 & \quad \text{on } \partial \Omega \times (0,T), \\
			\partial_\mathbf{n} \rho = \partial_\mathbf{n} \psi = 0 & \quad \text{on } \partial \Omega \times (0,T), \\
			\mathbf{u}|_{t=0}=\mathbf{u}_0,\quad \phi|_{t=0} = \phi_0,\quad \rho|_{t=0} = \rho_0,	 & \quad \text{in } \Omega,
\end{cases}
\end{equation}
where the vector $\mathbf{n}=\mathbf{n}(x)$ denotes the unit outer normal to $\partial \Omega$. We recall that $\phi=\phi(x,t)$ stands for \textcolor{black}{the difference in local concentrations of} the two immiscible fluid components and $\rho=\rho(x,t)$ denotes the local surfactant concentration. The velocity field $\mathbf{u}=\mathbf{u}(x,t)$ is taken as the volume-averaged velocity of the binary fluid mixture, which is equivalent to the mass-averaged velocity since we only consider the case with matched densities here. The symmetric tensor $D\mathbf{u}=\frac{1}{2}(\nabla\mathbf{u}+(\nabla\mathbf{u})^ \mathrm{T})$ denotes the strain rate and the scalar function $\pi=\pi(x,t)$ stands for the (modified) pressure. The latter can be viewed as a Lagrange multiplier corresponding to the incompressibility condition $\nabla \cdot \mathbf{u} = 0$ for the fluid. The chemical potentials corresponding to $\phi$ and $\rho$ are denoted by $\mu=\mu(x,t)$ and $\psi=\psi(x,t)$, respectively, which can be obtained as variational derivatives of the free energy functional. We note that when the parameter $\theta\neq 0$, the homogeneous Neumann boundary conditions for $\mu$ (resp. $\psi$) is not equivalent to $\partial_\mathbf{n}\Delta^2 \phi=0$ (resp. $\partial_\mathbf{n} \Delta\rho=0$) on $\partial\Omega$. For the sake of simplicity, the density as well as the mobilities and other physical constants are assumed to be equal to one, but we allow the binary fluid mixture to have an unmatched kinematic viscosity $\nu=\nu(\phi, \rho)$. As we shall see, even if the potential $S_\phi$ is regular, the higher-order regularizing term in the energy functional entails the global boundedness of $\phi$ (albeit not necessarily by 1).

Our goal is to provide a first-step theoretical analysis of the initial boundary value problem \eqref{eq:strongNSCH}--\eqref{eq:bcs}. More precisely, we first prove the existence of a global weak solution in both two and three dimensions and this solution is indeed unique in dimension two \textcolor{black}{(see Theorem \ref{th:weaksolution}).} Then we establish the existence of a (unique) strong solution, which is local in time in dimension three and global in time in dimension two \textcolor{black}{(see Theorem \ref{th:wellposedlocal}).} Further results can be obtained in dimension two. First, we derive a continuous dependence estimate for strong solutions $(\mathbf{u},\phi,\rho)$ with respect to the norms \textcolor{black}{in $\mathbf{L}^2(\Omega)\times H^2(\Omega)\times H^1(\Omega)$, which are corresponding to the energy norms associated with \eqref{total} (see Theorem \ref{th:contdep}).} Next, we show that every global weak solution regularizes in finite time and the strict separation property holds for the surfactant concentration $\rho$ \textcolor{black}{(see Theorem \ref{th:regularity3}).} The latter implies that $\rho$ stays uniformly away from the pure states $0$ and $1$ for positive time (cf. \cite{MZ04,GGM}, see also \cite{GGW18,GMT18,HeWu}). \textcolor{black}{This also holds for the strong solution on $[0,+\infty)$ (see Proposition \ref{prop:sepstr}). For the proof, we shall take advantage of what has been done in \cite{GMT18} for the Navier--Stokes--Cahn--Hilliard system with singular potential (see also \cite{Ab09,HeWu}). Nevertheless, extra efforts are required to overcome those mathematical difficulties due to the complicated nonlinear coupling structure of problem \eqref{eq:strongNSCH}--\eqref{eq:bcs}. }

Theoretical analysis of fluid-surfactant type systems (yet different from the one of interest in this paper) have been conducted, starting from sharp interface models (see e.g., \cite{Garcke13,TSLV}), typically investigating only the existence of weak solutions (see \cite{Abels19} and references therein). It is also worth mentioning that other phase-field models for mixtures with surfactant have been analyzed theoretically or numerically (see, for instance, \cite{Fon07} for a stationary model and \cite{Z19,Zhu20} for hydrodynamic problems involving moving contact lines and non-constant density).

Besides the possibility of considering a Flory--Huggins potential also for $\phi$ (see above), other interesting future issues  \textcolor{black}{include, for instance, long-time behavior of global weak/strong solutions (existence of global/exponential attractors and convergence to a single equilibrium as $t\to+\infty$), rigorous mathematical analysis of extended systems with non-constant (or even degenerate) mobilities, dynamic boundary conditions (moving contact lines) as well as non-constant densities. Also, suitable optimal control problems could be formulated and analyzed.} \medskip

\textbf{Plan of the paper.} In Section \ref{sec:main results}, we first introduce some notations and the functional setting. Subsection \ref{sec:wellposed} is devoted to illustrating the weak formulation of problem \eqref{eq:strongNSCH}--\eqref{eq:bcs} and to stating the main results. Proofs of well-posedness results are given in Section \ref{sec:proof1} (existence of global weak solutions and uniqueness of weak solutions in dimension two) and Section \ref{sec:proof3} (existence and uniqueness of strong solutions). \textcolor{black}{In the final Section \ref{sec:proof4}, when $d=2$, we derive a continuous dependence estimate for strong solutions in the norms controlled by \eqref{total} and then establish the regularization property for weak solutions. In particular, we show the validity of the strict separation property for $\rho$.}

\section{Preliminaries and Main Results}
\label{sec:main results}
\subsection{Preliminaries}	
\label{sub:notation}

We first introduce the function spaces and recall some known results in functional analysis. Let $X$ be a (real) Banach space. Its dual space is denoted by $X^*$, and the duality pairing between $X$ and $X^*$ will be denoted by $\langle \cdot,\cdot\rangle_{X^*,X}$. Given an interval $I$ of $\mathbb{R}$, we introduce the function space $L^p(I;X)$ with $p\in [1,+\infty]$, which consists of Bochner measurable $p$-integrable functions with values in the Banach space $X$. The boldface letter $\mathbf{X}$ denotes the vector-valued (resp. matrix-valued) space $X^d$ (resp. $X^{d\times d}$) endowed with the corresponding norms.

For the standard Lebesgue and Sobolev spaces, we use the notations $L^{p} := L^{p}(\Omega)$ and $W^{k,p} := W^{k,p}(\Omega)$ for any $p \in [1,+\infty]$ and $k > 0$, equipped with the norms $\|\cdot\|_{L^{p}(\Omega)}$ and $\|\cdot\|_{W^{k,p}(\Omega)}$.  When $p = 2$, we denote $H^{k}(\Omega) := W^{k,2}(\Omega)$ and the norm $\|\cdot\|_{H^{k}(\Omega)}$. The norm and inner product on $L^{2}(\Omega)$ are simply denoted by $\|\cdot\|$ and $(\cdot,\cdot)$, respectively.
The spaces $H^2_N(\Omega)$ and $H^4_N(\Omega)$ consisting of functions subject to homogeneous Neumann boundary conditions are defined as
\begin{align*}
&H^2_N(\Omega)=\big\{u\in H^2(\Omega): \partial_\mathbf{n} u=0\ \text{a.e. on}\ \partial\Omega\big\},\\
&H^4_N(\Omega)=\big\{u\in H^4(\Omega): \partial_\mathbf{n} u=\partial_\mathbf{n}\Delta u=0\ \text{a.e. on}\ \partial\Omega\big\}.
\end{align*}
For every $f\in (H^1(\Omega))^*$, we denote by $\overline{f}$ its generalized mean value over $\Omega$ such that $\overline{f}=|\Omega|^{-1}\langle f,1\rangle_{(H^1)^*,\,H^1}$. If $f\in L^1(\Omega)$, then its mean value is simply given by $\overline{f}=|\Omega|^{-1}\int_\Omega f \,\mathrm{d}x$. In the subsequent analysis, we will use the well-known Poincar\'{e}--Wirtinger inequality
\begin{equation}
\label{poincare}
\|f-\overline{f}\|\leq C_P\|\nabla f\|,\quad \forall\,
f\in H^1(\Omega),
\end{equation}
where $C_P$ is a constant depending only on $\Omega$.
We introduce the space $L^2_{0}(\Omega):=\{f\in L^2(\Omega):\overline{f} =0\}$ and
$$
V_0=H^1(\Omega)\cap L^2_0(\Omega)=\{ u \in H^1(\Omega):\ \overline{u}=0\}, \quad
V_0^*= \{ u \in (H^1(\Omega))^*:\ \overline{u}=0 \}.
$$
Then we see that $f\mapsto (\|\nabla f\|^2+|\overline{f}|^2)^\frac12$ is an equivalent norm on $H^1(\Omega)$ while $f\mapsto \|\nabla f\|$ is an equivalent norm on $V_0$. Besides, we recall the following elliptic estimates.
\begin{lemma}\label{th:ellreg}
Let $\Omega$ be a bounded domain with a $\mathcal{C}^4$-boundary. The following estimates hold:
		\begin{align*}
		&\| u\|_{H^2(\Omega)} \leq C\|\Delta u\|, \qquad \ \forall\, u\in H^2_N(\Omega)\cap L^2_0(\Omega),\\
        &\| u\|_{H^3(\Omega)} \leq C\|\nabla \Delta u\|, \quad\ \ \forall\, u\in H^3(\Omega)\cap H^2_N(\Omega)\cap L^2_0(\Omega),\\
        &\| u\|_{H^4(\Omega)} \leq C\|\Delta^2 u\|,\qquad \, \forall\, u\in H^4_N(\Omega)\cap L^2_0(\Omega).
		\end{align*}
In all cases, the constant $C>0$ only depends on $\Omega$, $d$, but is independent of $u$.
\end{lemma}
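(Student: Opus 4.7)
The plan is to reduce all three estimates to standard $H^k$ elliptic regularity for the Neumann Laplacian, combined with the Poincar\'e--Wirtinger inequality \eqref{poincare} applied to $u$ or to suitable auxiliary functions derived from $u$. The key point is that, under the stated boundary conditions, the auxiliary functions $-\Delta u$ (and, where needed, $\Delta^2 u$) will themselves lie in mean-zero Neumann spaces, so the first estimate can be bootstrapped.

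First I would prove the $H^2$ bound. For $u\in H^2_N(\Omega)\cap L^2_0(\Omega)$, set $f:=-\Delta u\in L^2(\Omega)$; then $u$ solves the Neumann problem $-\Delta u=f$ in $\Omega$, $\partial_{\mathbf{n}}u=0$ on $\partial\Omega$, with zero mean. Standard $H^2$ elliptic regularity on a $\mathcal{C}^2$-domain yields
\begin{equation*}
\|u\|_{H^2(\Omega)}\leq C\bigl(\|f\|+\|u\|\bigr).
\end{equation*}
Testing $-\Delta u=f$ with $u$ and integrating by parts (using $\partial_{\mathbf{n}}u=0$) gives $\|\nabla u\|^2=(f,u)\leq \|f\|\,\|u\|$, and since $\overline{u}=0$ we may apply \eqref{poincare} to get $\|u\|\leq C_P\|\nabla u\|$, whence $\|u\|\leq C\|f\|=C\|\Delta u\|$. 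Inserting this back yields the first inequality.

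Next, for $u\in H^4_N(\Omega)\cap L^2_0(\Omega)$, introduce $v:=-\Delta u$. The boundary condition $\partial_{\mathbf{n}}\Delta u=0$ gives $\partial_{\mathbf{n}}v=0$, while integrating $\Delta u$ over $\Omega$ and using $\partial_{\mathbf{n}}u=0$ yields $\overline{v}=0$; thus $v\in H^2_N(\Omega)\cap L^2_0(\Omega)$. Applying the first estimate to $v$ gives $\|v\|_{H^2(\Omega)}\leq C\|\Delta v\|=C\|\Delta^2 u\|$. On the other hand, $u$ solves $-\Delta u=v$ with $\partial_{\mathbf{n}}u=0$, so $H^4$ Neumann regularity (which requires the $\mathcal{C}^4$ hypothesis on $\partial\Omega$) gives $\|u\|_{H^4(\Omega)}\leq C(\|v\|_{H^2(\Omega)}+\|u\|)$, and the first estimate applied to $u$ together with $\|\Delta u\|=\|v\|\leq\|v\|_{H^2(\Omega)}$ controls $\|u\|$ by $\|\Delta^2 u\|$. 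The third inequality follows. For the intermediate $H^3$ bound, one argues analogously: with $f=-\Delta u\in H^1(\Omega)$, Neumann $H^3$ regularity yields $\|u\|_{H^3(\Omega)}\leq C(\|f\|_{H^1(\Omega)}+\|u\|)$; noting that $\overline{\Delta u}=0$, one more application of \eqref{poincare} gives $\|\Delta u\|\leq C_P\|\nabla \Delta u\|$, so that $\|f\|_{H^1(\Omega)}\leq C\|\nabla\Delta u\|$, and the first estimate again absorbs $\|u\|$.

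I do not expect a genuine obstacle: the only subtle point is to verify that the auxiliary function $v=-\Delta u$ inherits both the mean-zero property and the homogeneous Neumann condition from the definitions of $H^2_N$ and $H^4_N$, which is precisely what the boundary-condition bookkeeping is set up to give. The $\mathcal{C}^4$-boundary assumption is used exactly once, to invoke $H^4$ regularity for the Neumann Laplacian in the third estimate.
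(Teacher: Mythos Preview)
Your argument is correct. Note, however, that the paper does not actually prove Lemma~\ref{th:ellreg}: it is stated as a recalled fact (``we recall the following elliptic estimates'') with no proof given, so there is no approach to compare against. Your reduction to standard Neumann $H^k$ regularity plus the Poincar\'e--Wirtinger inequality, together with the observation that $-\Delta u$ inherits both the mean-zero property (via $\partial_{\mathbf{n}}u=0$ and the divergence theorem) and the homogeneous Neumann condition (from the definition of $H^4_N$), is exactly the standard way one would justify such a lemma if a proof were required.
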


Consider now the realization of $-\Delta$ with homogeneous Neumann boundary condition, that is, the linear operator
$A_N\in \mathcal{L}(H^1(\Omega),H^1(\Omega)^*)$ defined by
\begin{equation}\nonumber
   \langle A_N u,v\rangle_{(H^1)^*,H^1} := \int_\Omega \nabla u\cdot \nabla v \, \mathrm{d}x,\quad \text{for }\,u,v\in H^1(\Omega).
\end{equation}
Then the restriction of $A_N$ from the linear space $V_0$ onto $V_0^*$ is an isomorphism. In particular, $A_N$ is positively defined on $V_0$ and self-adjoint. We denote its inverse map by $\mathcal{N} =A_N^{-1}: V_0^* \to V_0$. Note that for every $f\in V_0^*$, $u= \mathcal{N} f \in V_0$ is the unique weak solution of the Neumann problem
$$
\begin{cases}
-\Delta u=f, \quad \text{in} \ \Omega,\\
\partial_{\mathbf{n}} u=0, \quad \ \  \text{on}\ \partial \Omega.
\end{cases}
$$
It is straightforward to verify that
\begin{align*}
&\langle A_N u, \mathcal{N} g\rangle_{V_0^*,V_0} =\langle  g,u\rangle_{(H^1)^*,H^1}, \quad \forall\, u\in H^1(\Omega), \ \forall\, g\in V_0^*,\\
&\langle  g, \mathcal{N} f\rangle_{V_0^*,V_0}
=\langle f, \mathcal{N} g\rangle_{V_0^*,V_0} = \int_{\Omega} \nabla(\mathcal{N} g)
\cdot \nabla (\mathcal{N} f) \, \mathrm{d}x, \quad \forall \, g,f \in V_0^*.
\end{align*}
Also, we have the chain rule
\begin{align}
&\langle \partial_t u(t), \mathcal{N} u(t)\rangle_{V_0^*,V_0}=\frac{1}{2}\frac{\mathrm{d}}{\mathrm{d}t}\|\nabla \mathcal{N} u\|^2,\ \ \textrm{a.e. in}\ (0,T),\nonumber
\end{align}
for any $u\in H^1(0,T; V_0^*)$. For any $f\in V_0^*$, we set $\|f\|_{V_0^*}=\|\nabla \mathcal{N} f\|$.
It is well-known that $f \mapsto \|f\|_{V_0^*}$ and $
f \mapsto(\|f-\overline{f}\|_{V_0^*}^2+|\overline{f}|^2)^\frac12$ are equivalent norms on $V_0^*$ and $(H^1(\Omega))^*$, respectively.

Concerning the Navier--Stokes equations, we introduce the spaces (see, for instance, \cite{Galdi11})
$$
\mathbf{H}_\sigma := \overline{\{ \mathbf{u} \in  [C^\infty_0(\Omega)]^d: \nabla \cdot \mathbf{u} = 0 \text{ in } \Omega\}}^{[L^2(\Omega)]^d}, \quad
\mathbf{V}_\sigma := \overline{\{ \mathbf{u} \in [C^\infty_0(\Omega)]^d : \nabla \cdot \mathbf{u} = 0 \text{ in } \Omega\}}^{[H^1(\Omega)]^d}.
$$
endowing the former with the $\mathbf{L}^2(\Omega)$-Hilbert structure, whereas for the latter we set
	\[
	(\mathbf{u}, \mathbf{v})_{\mathbf{V}_\sigma}:=(\nabla \mathbf{u}, \nabla \mathbf{v}), \qquad \| \mathbf{u} \|_{\mathbf{V}_\sigma}:= (\nabla \mathbf{u}, \nabla \mathbf{u})^\frac12.
	\]
\textcolor{black} {The latter is a norm equivalent to the canonical one because of Korn's inequality and}
$$
\|\nabla \uu\|\leq \sqrt{2}\|D\uu\|\leq \sqrt{2}\|\nabla\uu\|,\quad \forall\, \uu\in\mathbf{V}_\sigma.
$$
Next, we consider the Stokes operator $\mathbf{A}:  \mathbf{V}_\sigma \to \mathbf{V}_\sigma^*$, which is the Riesz isomorphism between $\mathbf{V}_\sigma$ and its topological dual, that is,	
    \[
\langle \mathbf{A}\uu, \mathbf{v}\rangle_{\mathbf{V}_\sigma^*,\mathbf{V}_\sigma } = \ii{\Omega}{}{\nabla \uu : \nabla \mathbf{v}}{x}.
    \]
Here, we have adopted the notation $M_1 : M_2 = \mathrm{trace}(M_1M_2^\mathrm{T})$ for two arbitrary $d\times d$ matrices $M_1, M_2$. The inverse of $\mathbf{A}$ is denoted by $\mathbf{A}^{-1}$. In a similar fashion to what has been carried out for the operator $A_N$, we can define the equivalent norm
$\| \mathbf{u}\|_{\mathbf{V}_\sigma^*} := \| \nabla \mathbf{A}^{-1}\mathbf u \|$ in $\mathbf{V}_\sigma^*$. Besides, the following chain rule holds
\begin{equation*}
\langle\partial_t\mathbf{f}(t),\mathbf{A}^{-1}\mathbf{f}(t)\rangle_{\mathbf{V}_\sigma^*,\mathbf{V}_\sigma } =\frac{1}{2}\frac{\mathrm{d}}{\mathrm{d}t}\|\nabla\mathbf{A}^{-1}\mathbf{f}\|^2,\quad   \textrm{a.e.}\ t \in (0,T),\nonumber
\end{equation*}
for any $\mathbf{f}\in H^1(0,T;\mathbf{V}_\sigma^*)$. Next, we define the space $\mathbf{W}_\sigma := \mathbf{H}^2(\Omega) \cap \mathbf{V}_\sigma$ and recall the following regularity result for Stokes operator (see e.g.,  \cite[Appendix B]{GMT18}):
\begin{lemma}\label{stokes}
Let $d=2,\,3$. For any $\mathbf{f} \in \mathbf{H}_{\sigma}$,
there exists a unique $\mathbf{u}\in \mathbf{W}_{\sigma}$ and $q\in H^1(\Omega)\cap L_0^2(\Omega)$ such that $-\Delta \mathbf{u}+\nabla q=\mathbf{f}$ a.e. in $\Omega$, that is, $\mathbf{u}=\mathbf{A}^{-1}\mathbf{f}$. Moreover, we have
\begin{align*}
&\|\mathbf{u}\|_{\mathbf{H}^2}+\|\nabla q\|\le C\|\mathbf{f}\|,
\\
& \|q\|\le C \|\mathbf{f}\|^\frac12\|\nabla \mathbf{A}^{-1}\mathbf{f}\|^\frac12,
 \end{align*}
where $C$ is a positive constant that may depend on $\Omega$, $d$, but is independent of $\mathbf{f}$.
\end{lemma}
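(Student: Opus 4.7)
The plan is to split the statement into two parts: the existence and $H^2\times H^1$-regularity of $(\mathbf{u},q)$, which is by now classical, and the sharper $L^2$-bound on the pressure, which requires extra work.

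\textbf{Part 1 (existence and the $\|\mathbf{u}\|_{\mathbf{H}^2}+\|\nabla q\|\le C\|\mathbf{f}\|$ bound).} I would first obtain $\mathbf{u}=\mathbf{A}^{-1}\mathbf{f}\in \mathbf{V}_\sigma$ as the unique Lax--Milgram weak solution of $(\nabla\mathbf{u},\nabla\mathbf{v})=(\mathbf{f},\mathbf{v})$ for every $\mathbf{v}\in \mathbf{V}_\sigma$. Then $-\Delta\mathbf{u}-\mathbf{f}\in \mathbf{H}^{-1}(\Omega)$ annihilates $\mathbf{V}_\sigma$, so by de Rham's theorem there exists a unique $q\in L^2_0(\Omega)$ with $\nabla q = \mathbf{f}+\Delta\mathbf{u}$ in the sense of distributions. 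The $\mathbf{H}^2\times H^1$-regularity, together with the announced norm control, is then Cattabriga's classical elliptic estimate for the stationary Stokes system, see e.g.\ Galdi's monograph.

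\textbf{Part 2 (refined $L^2$ bound on $q$).} The argument hinges on the algebraic inequality
\[
\|q\|^2 \le \|\nabla q\|\,\|q\|_{V_0^*},
\]
obtained by setting $\eta:=\mathcal{N}q\in V_0\cap H^2_N(\Omega)$, which satisfies $-\Delta \eta=q$ and $\partial_\mathbf{n}\eta=0$, and integrating by parts:
\[
\|q\|^2=(q,-\Delta \eta)=(\nabla q,\nabla \eta)\le \|\nabla q\|\,\|\nabla\mathcal{N}q\|=\|\nabla q\|\,\|q\|_{V_0^*}.
\]
Combined with $\|\nabla q\|\le C\|\mathbf{f}\|$ from Part 1, it then suffices to prove the dual estimate
\[
\|q\|_{V_0^*}\le C\|\nabla \mathbf{A}^{-1}\mathbf{f}\|.
\]
For this I would use duality: given $\varphi\in V_0$, set $\eta':=\mathcal{N}\varphi$ and write
\[
(q,\varphi)=(\nabla q,\nabla \eta')=(\mathbf{f},\nabla \eta')+(\Delta \mathbf{u},\nabla \eta').
\]
The first term vanishes since $\mathbf{f}\in\mathbf{H}_\sigma$ is $\mathbf{L}^2$-orthogonal to every gradient. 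For the second, a further integration by parts produces the bulk contribution $-(\nabla\mathbf{u},\nabla^2\eta')$, readily controlled by $C\|\nabla\mathbf{u}\|\,\|\varphi\|$, plus a boundary integral $\int_{\partial\Omega}(\partial_\mathbf{n}\mathbf{u})\cdot(\nabla \eta')\,\mathrm{d}S$ in which both factors are tangential on $\partial\Omega$: indeed $\mathbf{u}|_{\partial\Omega}=\mathbf{0}$ combined with $\nabla\cdot\mathbf{u}=0$ forces $(\partial_\mathbf{n}\mathbf{u})\cdot\mathbf{n}=0$, while $\nabla\eta'\cdot\mathbf{n}=\partial_\mathbf{n}\eta'=0$ by construction of $\eta'$. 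A tangential integration by parts then converts this boundary integral into a surface quantity bounded by $C\|\nabla\mathbf{u}\|\,\|\nabla\varphi\|$, and taking the supremum over $\varphi$ with $\|\nabla\varphi\|\le 1$ yields the desired control of $\|q\|_{V_0^*}$.

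\textbf{Main obstacle.} The delicate point is precisely the boundary integral above: a naive Cauchy--Schwarz estimate would give only $C\|\mathbf{u}\|_{\mathbf{H}^2}\|\nabla\varphi\|\sim C\|\mathbf{f}\|\|\nabla\varphi\|$, losing exactly the derivative needed to recover the sharp bound. Carefully exploiting the tangentiality of both $\partial_\mathbf{n}\mathbf{u}$ and $\nabla\eta'$ via surface integration by parts is essential here; an alternative, possibly cleaner, strategy would be to prove the two endpoint bounds $\|q\|_{V_0}\le C\|\mathbf{f}\|$ and $\|q\|_{V_0^*}\le C\|\nabla \mathbf{A}^{-1}\mathbf{f}\|$ separately and then invoke the interpolation identity $L^2_0(\Omega)=[V_0^*,V_0]_{1/2}$ to conclude.
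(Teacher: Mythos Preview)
The paper does not prove this lemma; it merely recalls it from \cite[Appendix B]{GMT18}. There is therefore no in-paper argument against which to compare your attempt in detail.

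On your attempt itself: Part~1 is correct and standard. Your reduction in Part~2 is also sound---the interpolation inequality $\|q\|^2\le\|\nabla q\|\,\|q\|_{V_0^*}$ holds, and everything comes down to the dual bound $\|q\|_{V_0^*}\le C\|\nabla\mathbf A^{-1}\mathbf f\|$. A small remark: the ``bulk'' contribution $-(\nabla\mathbf u,\nabla^2\eta')$ that you write actually \emph{vanishes}. Indeed, integrating by parts once more and using $\nabla\cdot\mathbf u=0$ leaves only a surface integral involving $(\nabla\mathbf u)^{\mathrm T}\mathbf n$, and this vector is identically zero on $\partial\Omega$ because $\mathbf u|_{\partial\Omega}=0$ forces $\partial_i u_j=n_i\,\partial_{\mathbf n}u_j$ there, while $\partial_{\mathbf n}\mathbf u\cdot\mathbf n=0$. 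Hence the \emph{entire} pairing $(q,\varphi)$ equals the boundary integral $\int_{\partial\Omega}(\partial_{\mathbf n}\mathbf u)\cdot\nabla\eta'\,\mathrm dS$.

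This is where your argument has a genuine gap. Your proposed tangential integration by parts shifts $\nabla_\tau$ from $\eta'$ onto $\partial_{\mathbf n}\mathbf u$, producing $\mathrm{div}_\tau\big[(\partial_{\mathbf n}\mathbf u)_\tau\big]$, which is of \emph{second} order in $\mathbf u$. There is no direct way to control such a surface quantity by $\|\nabla\mathbf u\|_{\mathbf L^2(\Omega)}$ alone; pairing it with $\eta'$ in $H^{3/2}(\partial\Omega)$ and invoking trace theory only returns a bound by $\|\mathbf u\|_{\mathbf H^2}\sim\|\mathbf f\|$, i.e.\ exactly one derivative too many---precisely the loss you were trying to avoid. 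Your alternative route via $L^2_0=[V_0^*,V_0]_{1/2}$ does not bypass the difficulty either, since it requires the very same endpoint $\|q\|_{V_0^*}\le C\|\nabla\mathbf A^{-1}\mathbf f\|$. A complete argument needs an additional ingredient beyond what you sketch---for instance, exploiting that $q$ is harmonic (since $\nabla\cdot\mathbf f=0$) together with sharper Neumann-trace estimates, or applying complex interpolation directly to the Stokes solution operator between the regularity scales $\mathbf H_\sigma\to V_0$ and a suitable negative-order endpoint; this is essentially what the cited reference carries out.
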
	
\noindent Then it follows that the norm $\|\uu\|_{\textbf{W}_\sigma} := \|\mathbf{A}\uu\|$ is equivalent to the standard $\mathbf{H}^2$-norm in $\mathbf{W}_\sigma$.

For the sake of convenience, below we
%
%
 report the Ladyzhenskaya and Agmon inequalities (see e.g., \cite{RTDDS})
\begin{align}
		&\|f\|_{L^4(\Omega)} \leq C\|f\|^{1-\frac{d}{4}}\|f\|_{H^1(\Omega)}^\frac{d}{4}, \qquad  \forall\, f \in H^1(\Omega), \text{ if } d = 2,3, \label{eq:gn1}\\
		&\|f\|_{L^\infty(\Omega)} \leq C\|f\|^\frac{1}{2}\|f\|^\frac{1}{2}_{H^2(\Omega)}, \qquad \forall \, f \in H^2(\Omega), \text{ if } d = 2, \label{eq:agmon1}\\
		&\|f\|_{L^\infty(\Omega)} \leq C\|f\|^\frac{1}{2}_V\|f\|^\frac{1}{2}_{H^2(\Omega)}, \qquad  \forall \, f \in H^2(\Omega), \text{ if } d = 3,
\label{eq:agmon2}
	\end{align}
and the Gagliardo--Nirenberg inequality
\begin{align} \label{eq:gagnir}
\|D^jf\|_{L^p(\Omega)}\leq C
\|f\|_{L^q(\Omega)}^{1-a} \| f\|_{W^{m,r}(\Omega)}^a ,
\quad \forall \, f\in W^{m,r}(\Omega)\cap L^q(\Omega),
\end{align}
where $D^j f$ denotes the $j$-th weak partial derivatives of $f$, $j, m$ are arbitrary integers satisfying $0\leq j< m$ and $\frac{j}{m}\leq a\leq 1$, and
$1\leq q, r\leq +\infty$ such that
   \[
\frac{1}{p}-\frac{j}{d}=a\left(\frac{1}{r}-\frac{m}{d}\right)+\frac{1-a}{q}.
   \]
If $1<r<+ \infty$ and $m-j-\frac{n}{r}$ is a nonnegative integer, then the
above inequality holds only for $\frac{j}{m}\leq a< 1$. The above inequalities will be frequently used in the subsequent analysis.	

In the remaining part of this paper, the letters $C$, $C_i$ will denote genetic positive constants possibly depending on the domain $\Omega$, the coefficients of the system as well as on the boundary and initial data at most. These constants may vary in the same line in the subsequent estimates and their special dependence will be pointed out explicitly in the text, if necessary.

\subsection{Main results}	
\label{sec:wellposed}
We first state the following assumptions that will be needed in our analysis.
\begin{enumerate}[label=\textbf{(H\arabic*)}]
\item  $\nu \in \mathcal{C}^2(\mathbb{R}^2)$ and there exist two positive constants $\nu_*$ and $\nu^*$ such that
	\[ 0 < \nu_* \leq \nu(s_1,s_2) \leq \nu^*,\quad \forall\, (s_1,s_2) \in \mathbb{R}^2. \]
\item $S_\phi\in \mathcal{C}^2(\mathbb{R})$ satisfies
\begin{align*}
& S_\phi''(s)\geq -c_0,\quad c_0\geq 0, \quad \forall\, s\in \mathbb{R},\\
& S_\phi'(s)s\geq c_1 S_\phi(s)-c_2,\quad S_\phi(s)\geq c_3 s^4-c_4,\quad \text{for some}\ c_1, c_3>0,\ c_2,c_4\geq 0,\quad \forall\, s\in\mathbb{R},\\
& |S_\phi'(s)|\leq \varepsilon S_\phi(s)+c_\varepsilon,\quad\ \forall\, \varepsilon>0, s\in\mathbb{R},
\end{align*}
where $c_\varepsilon>0$ depends on $\varepsilon$.
\item $S_\rho$ can be written as follows
$$
S_\rho(s)= \widehat{S}_\rho(s)+ R_\rho(s),
$$
where $\widehat{S}_\rho: [0,1]\to \mathbb{R}$ satisfies $\widehat{S}_\rho\in \mathcal{C}^0([0,1])\cap \mathcal{C}^2((0,1))$. We make the extension by (right or left) continuity at the endpoints $0,1$ and then over the whole real line with $\widehat{S}_\rho(s) = +\infty$ whenever $s \notin [0,1]$. Moreover, it holds
\begin{align*}
\lim_{s \to 0^+} \widehat{S}_\rho'(s) = -\infty, \qquad \lim_{s \to 1^-} \widehat{S}_\rho'(s) = +\infty, \qquad
\lim_{s \to 0^+} \widehat{S}_\rho''(s) = +\infty, \qquad \lim_{s \to 1^-} \widehat{S}_\rho''(s) = +\infty,
\end{align*}
and there exists a small $\epsilon_1 \in (0,1)$ such that $\widehat{S}''_\rho$ is nondecreasing in $[1-\epsilon_1, 1)$ and nonincreasing in $(0, \epsilon_1]$.
Moreover, we suppose $R_\rho\in \mathcal{C}^2(\mathbb{R})$ is such that
$$
|R''_\rho(s)|\leq L_1,\qquad \forall\, s\in\mathbb{R},
$$
with $L_1 > 0$ being a certain given constant.
\item  $\alpha, \beta$ and $\theta$ are given positive constants.
\end{enumerate}

\begin{remark}
It is easy to see that the fourth-order polynomial \eqref{eq:regpot} fulfills \textbf{(H2)}. Besides, for the physically relevant potential \eqref{eq:mixentropy}, we can simply take
	\begin{equation*} \label{eq:singularpart}
		\widehat{S}_\rho(s) = \dfrac{\theta_1}{2}\left[s\ln s + (1-s)\ln(1-s)\right], \qquad s \in (0,1),
	\end{equation*}
and $R_\rho(s)=\dfrac{\theta_2}{2}s(1-s)$ so that \textbf{(H3)} is satisfied.
\end{remark}
\begin{remark}
\label{teta}
We do not impose any restriction on the size of the parameters $\alpha, \beta$ and $\theta$ in \textbf{(H4)}. Thus, the higher-order term $\alpha \Delta^2\phi$ is necessary to guarantee the well-posedness of the system. Without this regularization, the term $-\Delta \phi+\theta\nabla\cdot(\rho \nabla \phi)= (-1+\theta \rho)\Delta\phi +\theta \nabla \rho\cdot\nabla\phi$ may lead to certain backward diffusion \textcolor{black}{when the coupling parameter $\theta>0$ is} such that $-1+\theta\rho>0$ (see \cite{Ell89,NC08} for the classical Cahn--Hilliard equation).
\end{remark}

Let us introduce the notion of finite energy weak solution to the initial boundary value problem \eqref{eq:strongNSCH}--\eqref{eq:bcs}.
\begin{definition}
\label{def:solution}
Assume that $\Omega\subset\mathbb{R}^d$, $d = 2, 3$ is a smooth bounded domain and $p$ denotes an exponent such that $p > 2$ if $d = 2$ and $2 < p \leq 6$ if $d = 3$. Let $\uu_0 \in \mathbf{H}_\sigma$, $\phi_0 \in H^2_N(\Omega)$, $\rho_0 \in H^1(\Omega)$. Suppose \textcolor{black}{that} $S_\rho(\rho_0) \in L^1(\Omega)$ and $\overline{\rho_0} \in (0,1)$.  Given $T>0$, a quintuplet $( \uu, \phi, \rho, \mu, \psi)$ is called a weak (or finite energy) solution to problem \eqref{eq:strongNSCH}--\eqref{eq:bcs} on $[0,T]$, if
\begin{enumerate}[label=(\roman*)]
\item $\uu \in L^{\infty}(0,T; \mathbf{H}_\sigma) \cap L^2(0,T;\mathbf{V}_\sigma)\cap W^{1,\frac{4}{d}}(0,T; \mathbf{V}^*_\sigma)$;
\item $\phi \in L^{\infty}(0,T; H^2_N(\Omega)) \cap L^{2}(0,T; H^5(\Omega)\cap H^4_N(\Omega))\cap H^1(0,T; (H^1(\Omega))^*)$;
\item $\rho \in L^{\infty}(0,T; H^1(\Omega)) \cap L^{4}(0,T;H^2_N(\Omega)) \cap L^2(0,T; W^{2,p}(\Omega))\cap H^1(0,T; (H^1(\Omega))^*)$;
\item $\mu, \psi \in L^2(0,T; H^1(\Omega))$;
\item $\rho\in L^\infty(\Omega\times(0,T))$\ \text{and}\ $0 < \rho(x, t) < 1$ for a.a. $(x,t) \in \Omega \times (0,T)$;
\item $(\uu, \phi, \rho)$ satisfies the weak formulation
\begin{equation} \label{eq:weakNSCH}
\begin{cases}
\left\langle \partial_t \mathbf{u}, \mathbf{v} \right\rangle_{\mathbf{V}^*_\sigma, \mathbf{V}_\sigma} + ((\mathbf{u} \cdot \nabla)\mathbf{u},\mathbf{v}) + (\nu(\phi,\rho) D\mathbf{u}, D\mathbf{v}) \\
\qquad\qquad = (\mu\nabla \phi, \mathbf{v}) + (\psi\nabla \rho, \mathbf{v}) & \quad \forall \: \mathbf{v} \in \mathbf{V}_\sigma, \aevv, \\
\left\langle \partial_t \phi, v \right\rangle_{(H^1(\Omega))^*,H^1(\Omega)} + (\mathbf{u} \cdot \nabla \phi, v) + (\nabla \mu, \nabla v) = 0& \quad \forall \: v \in H^1(\Omega), \aevv,\\
\mu =  \alpha \Delta^2 \phi-\Delta \phi  + S_\phi'(\phi) + \theta\nabla \cdot (\rho \nabla \phi) & \quad \text{a.e. in } \Omega \times (0,T), \\
\left\langle \partial_t \rho, v \right\rangle_{(H^1(\Omega))^*,H^1(\Omega)} + (\mathbf{u} \cdot \nabla \rho, v) + (\nabla \psi, \nabla v) = 0& \quad \forall \: v \in H^1(\Omega), \aevv,\\
\psi = -\beta \Delta \rho + S'_\rho(\rho) - \dfrac{\theta}{2}|\nabla \phi|^2 & \quad \text{a.e. in } \Omega \times (0,T);
\end{cases}
\end{equation}
\item the initial conditions $\uu|_{t=0} = \uu_0$, $\phi|_{t=0}=\phi_0$, $\rho|_{t=0}=\rho_0$ in $\Omega$ are fulfilled.
\end{enumerate}
\end{definition}
\begin{remark}
\label{rem:reg1}
\textcolor{black}{The properties $\phi\in L^2(0,T;H^5(\Omega))$ and $\partial_t\phi \in L^2(0,T;(H^1(\Omega))^*)$ entail that $\phi  \in \mathcal{C}^0([0,T];H^2(\Omega))$. Similarly, we have $\rho  \in \mathcal{C}^0([0,T];L^2(\Omega))$ and $\rho \in \mathcal{C}^0_{\text{w}}([0,T];H^1(\Omega))$, where the subscript ``w" stands for weak continuity (in time).}
In addition, in light of the regularity of $\uu$ and its time derivative, it follows that $\uu \in \mathcal{C}^0([0,T]; \mathbf{H}_\sigma)$ when $d = 2$ and $\uu \in \mathcal{C}^0_{\text{w}}([0,T]; \mathbf{H}_\sigma)$ when $d = 3$. A pressure $\textcolor{black}{\pi \in W^{-1,\infty}(0,T; L^2(\Omega))}$ can also be recovered, up to a \textcolor{black}{constant}, through the classical De Rham theorem (see, for instance, \cite[Section V.1.5]{BFNS}, see also \cite{RTNS}).
\end{remark}
\begin{remark}
\label{rem:reg3}
 On account of the global boundedness of $\rho$ and the $L^\infty(0,T;H^2(\Omega))$-regularity of $\phi$, it holds that the weak solutions satisfy $\phi, \rho \in L^\infty(0,T; L^p(\Omega))$ for every $p \geq 1$. In particular, the mapping $t \mapsto \| \rho(t) \|_{L^\infty}$ is measurable and essentially bounded (see \cite[Remark 3.3]{GGG}).
\end{remark}

Now we are in a position to state the main results of this paper. The first result concerns the existence of a global weak solution.
\begin{theorem}[Global weak solutions]
\label{th:weaksolution}
Let \textbf{(H1)}--\textbf{(H4)} hold. For any given $T>0$, Problem \eqref{eq:strongNSCH}--\eqref{eq:bcs} admits at least one global weak solution in the sense of Definition \ref{def:solution}. Moreover, every weak solution satisfies the following energy inequality
\begin{equation}
\mathcal{E}_{\mathrm{tot}}(\uu(t),\phi(t),\rho(t)) + \ii{0}{t}{\|\sqrt{\nu(\phi(\tau), \rho(\tau))}D\uu(\tau)\|^2 + \| \nabla \mu(\tau)\|^2 + \|\nabla \psi(\tau)\|^2 }{\tau} \leq \mathcal{E}_{\mathrm{tot}}(\uu_0,\phi_0,\rho_0),
\label{weak-IEN}
\end{equation}
for every $t \in (0,T]$, where $\mathcal{E}_{\mathrm{tot}}$ is given by \eqref{total}.
%
%
If $d = 2$, then the global weak solution $(\uu, \phi, \rho, \mu, \psi)$ is unique and \eqref{weak-IEN} becomes an equality.
\end{theorem}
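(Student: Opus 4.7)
The plan is to construct a weak solution via a Faedo--Galerkin scheme applied to a regularized problem in which the singular part $\widehat{S}_\rho$ is replaced by a Moreau--Yosida-type approximation $\widehat{S}_{\rho,\epsilon}\in \mathcal{C}^2(\mathbb{R})$, and then to pass to the limit first in the Galerkin index and subsequently as $\epsilon \to 0^+$. For the discretization, I would use eigenfunctions of the Stokes operator $\mathbf{A}$ for $\uu$, of the Neumann Laplacian $A_N$ for $\rho$ and the chemical potentials, and of the bi-Laplacian subject to $\partial_\mathbf{n}\cdot = \partial_\mathbf{n}\Delta\cdot = 0$ for $\phi$. Local-in-time solvability of the resulting ODE system is standard.

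The basic a priori bound is obtained by testing the momentum equation with $\uu$, the $\phi$-equation with $\mu$, and the $\rho$-equation with $\psi$. The coupling terms cancel and one obtains (at the approximate level) the energy identity
\begin{equation*}
\frac{\mathrm{d}}{\mathrm{d}t}\mathcal{E}_{\mathrm{tot},\epsilon}(\uu,\phi,\rho) + \|\sqrt{\nu(\phi,\rho)}D\uu\|^2 + \|\nabla\mu\|^2 + \|\nabla\psi\|^2 = 0.
\end{equation*}
The essential subtlety here is to show that $\mathcal{E}_{\mathrm{tot},\epsilon}$ is bounded from below uniformly in $\epsilon$: the indefinite-sign term $-\tfrac{\theta}{2}\int_\Omega \rho|\nabla\phi|^2$ must be absorbed by the positive contributions. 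I would do this via the Gagliardo--Nirenberg inequality $\|\nabla\phi\|_{L^4}^2 \le C\|\Delta\phi\|^{d/2}\|\nabla\phi\|^{2-d/2}$, the bound $\|\rho\| \le C(1+\|\nabla\rho\|)$ furnished by mass conservation and the Poincaré--Wirtinger inequality \eqref{poincare}, and Young's inequality, exploiting \textbf{(H2)}--\textbf{(H3)} to bound $\int_\Omega S_\phi(\phi)$ and $\int_\Omega S_{\rho,\epsilon}(\rho)$ from below. Once the energy is controlled, elliptic regularity applied to $\mu = \alpha\Delta^2\phi - \Delta\phi + S'_\phi(\phi) + \theta\nabla\cdot(\rho\nabla\phi)$ through Lemma \ref{th:ellreg} yields the $L^2(0,T;H^5(\Omega))$-regularity for $\phi$, and compactness arguments (Aubin--Lions--Simon) allow passing to the limit in the Galerkin index. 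The limit $\epsilon \to 0^+$ uses the monotonicity of $\widehat{S}'_{\rho,\epsilon}$, Fatou's lemma together with the extension $\widehat{S}_\rho = +\infty$ outside $[0,1]$ (which forces $0\le \rho \le 1$ a.e.), and a uniform $L^p$-estimate on $\widehat{S}'_{\rho,\epsilon}(\rho_\epsilon)$ obtained by testing the $\psi$-equation with a suitable function (which then forces $0<\rho<1$ a.e. in the limit). The energy inequality \eqref{weak-IEN} is inherited by lower semicontinuity of the norms and of the convex potential.

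The main obstacle will be the uniqueness in two dimensions. Let $(\uu_i,\phi_i,\rho_i,\mu_i,\psi_i)$, $i=1,2$, be two weak solutions with the same data and set $\uu:=\uu_1-\uu_2$, $\phi:=\phi_1-\phi_2$, $\rho:=\rho_1-\rho_2$, $\mu:=\mu_1-\mu_2$, $\psi:=\psi_1-\psi_2$. Conservation of the means implies $\overline{\phi}(t)=\overline{\rho}(t)=0$, so $\mathcal{N}\phi$ and $\mathcal{N}\rho$ are well defined. I would test the difference of the momentum equations with $\mathbf{A}^{-1}\uu$, the difference of the $\phi$-equations with $\mathcal{N}\phi$, and the difference of the $\rho$-equations with $\mathcal{N}\rho$. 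The chain rules recalled in Section \ref{sub:notation} combine the time derivatives into $\tfrac12\frac{\mathrm{d}}{\mathrm{d}t}\bigl(\|\uu\|_{\mathbf{V}_\sigma^*}^2 + \|\phi\|_{V_0^*}^2 + \|\rho\|_{V_0^*}^2\bigr)$; the leading-order elliptic terms produce the coercive quantities $\nu_*\|D\uu\|^2$, $\alpha\|\Delta\phi\|^2 + \|\nabla\phi\|^2$, and $\beta\|\nabla\rho\|^2$; the contribution of $\widehat{S}'_\rho(\rho_1)-\widehat{S}'_\rho(\rho_2)$ is nonnegative by monotonicity, while the Lipschitz contributions from $S'_\phi$, $R'_\rho$, and $\nu(\cdot,\cdot)$ are standard. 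The critical terms to handle are the convective ones and the nonlinear couplings $\theta(\nabla\cdot(\rho_1\nabla\phi_1)-\nabla\cdot(\rho_2\nabla\phi_2),\mathcal{N}\phi)$ and $-\tfrac{\theta}{2}(|\nabla\phi_1|^2-|\nabla\phi_2|^2,\mathcal{N}\rho)$, which in two dimensions I would estimate via the Ladyzhenskaya inequality \eqref{eq:gn1}, the Gagliardo--Nirenberg inequality \eqref{eq:gagnir}, the $L^\infty$-bound $0<\rho_i<1$, and the $L^\infty(0,T;H^2(\Omega))$-bound on $\phi_i$. After using Young's inequality to absorb the $H^2$ and $H^1$ norms into the coercive terms on the left-hand side, one arrives at a differential inequality of the form
\begin{equation*}
\frac{\mathrm{d}}{\mathrm{d}t}\bigl(\|\uu\|_{\mathbf{V}_\sigma^*}^2 + \|\phi\|_{V_0^*}^2 + \|\rho\|_{V_0^*}^2\bigr) \le G(t)\bigl(\|\uu\|_{\mathbf{V}_\sigma^*}^2 + \|\phi\|_{V_0^*}^2 + \|\rho\|_{V_0^*}^2\bigr),
\end{equation*}
with $G \in L^1(0,T)$, so Grönwall's lemma yields uniqueness. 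Finally, in two dimensions the extra regularity $\partial_t \uu \in L^2(0,T;\mathbf{V}_\sigma^*)$ legitimates testing the momentum equation with $\uu$ itself, which combined with analogous use of $\mu$ and $\psi$ as genuine test functions upgrades \eqref{weak-IEN} to an equality.
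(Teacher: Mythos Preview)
Your overall architecture is reasonable, but there are two genuine gaps, one in the existence part and one in the uniqueness part.

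\textbf{Existence: the lower bound on the approximate energy does not close as you describe.} Once $\widehat{S}_\rho$ is regularized, the constraint $0\le\rho\le 1$ is lost (there is no maximum principle for the fourth-order Cahn--Hilliard equation), so the indefinite term $-\tfrac{\theta}{2}\int_\Omega \rho|\nabla\phi|^2$ must be controlled without any $L^\infty$-bound on $\rho$. Your proposed route via $\|\nabla\phi\|_{L^4}^2 \le C\|\Delta\phi\|^{d/2}\|\nabla\phi\|^{2-d/2}$ and $\|\rho\|\le C(1+\|\nabla\rho\|)$ leads (already for $d=2$) to a trilinear term of the type $\|\nabla\rho\|\,\|\nabla\phi\|\,\|\Delta\phi\|$, and Young's inequality cannot absorb this into the quadratic quantities $\tfrac{\alpha}{2}\|\Delta\phi\|^2$, $\tfrac{1}{2}\|\nabla\phi\|^2$, $\tfrac{\beta}{2}\|\nabla\rho\|^2$ with a constant independent of the solution. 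The paper handles this by introducing a further \emph{penalization} of the energy, adding $\tfrac{\omega}{4}\int_\Omega|\nabla\phi|^4$ with $\omega\in(0,1]$; this quartic term absorbs $-\tfrac{\theta}{2}\rho|\nabla\phi|^2$ on the set where $\rho$ is large (via $|\rho||\nabla\phi|^2\le \tfrac{\omega}{4\theta}|\nabla\phi|^4+\tfrac{\theta}{\omega}\rho^2$), and the resulting $\tfrac{\theta^2}{2\omega}\int_{\{\rho\notin[0,4]\}}\rho^2$ is then controlled by choosing the regularization parameter $\varepsilon=\varepsilon(\omega)$ small enough so that the quadratic growth of $\widehat{S}_{\rho,\varepsilon}$ at infinity dominates. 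One therefore passes to the limit in the order $n\to\infty$, then $\varepsilon\to 0^+$, and only at the end $\omega\to 0^+$ (where one may finally use $0\le\rho_\omega\le 1$). Without this extra layer your energy estimate does not yield uniform bounds.

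\textbf{Uniqueness in 2D: the viscosity difference is not ``standard'' and a plain Gr\"onwall inequality does not suffice.} After testing the momentum difference with $\mathbf{A}^{-1}\uu$, the term $\big((\nu(\phi_1,\rho_1)-\nu(\phi_2,\rho_2))D\uu_2,\nabla\mathbf{A}^{-1}\uu\big)$ contains a factor $\rho=\rho_1-\rho_2$ which lives only in $L^\infty(0,T;H^1(\Omega))$, and $H^1(\Omega)\not\hookrightarrow L^\infty(\Omega)$ in two dimensions. At the weak-solution level $D\uu_2$ is merely in $L^2(0,T;\mathbf{L}^2(\Omega))$, so any H\"older splitting that places $\rho$ in $L^p$ and $\nabla\mathbf{A}^{-1}\uu$ in $L^{2p/(p-2)}$ forces a time-integrability requirement $\|D\uu_2\|^{2p/(p-2)}\in L^1(0,T)$, which fails for every finite $p$. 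The paper resolves this via a Brezis--Gallouet type estimate $\|\rho\,\nabla\mathbf{A}^{-1}\uu\|\le C\|\nabla\rho\|\,\|\uu\|_{\mathbf{V}_\sigma^*}\big[\ln\big(e\|\nabla\mathbf{A}^{-1}\uu\|_{\mathbf{H}^1}/\|\uu\|_{\mathbf{V}_\sigma^*}\big)\big]^{1/2}$, which produces a differential inequality of the form $\dot{\mathcal{Y}}\le \mathcal{H}(t)\,\mathcal{Y}\ln(K_2/\mathcal{Y})$ with $\mathcal{H}\in L^1(0,T)$; uniqueness then follows from Osgood's lemma rather than Gr\"onwall. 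Incidentally, the paper tests the $\phi$-difference with $\phi$ itself (not $\mathcal{N}\phi$), which yields the higher dissipation $\alpha\|\nabla\Delta\phi\|^2+\|\Delta\phi\|^2$ needed to close the estimates of the coupling terms $\mathcal{I}_3$ and $\mathcal{I}_{11}$--$\mathcal{I}_{12}$.
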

\begin{remark}
\label{globalsol}
\textcolor{black}{Since $T>0$ is arbitrary, Theorem \ref{th:weaksolution} and its proof below entail that the global weak solution $( \uu, \phi, \rho, \mu, \psi)$ is indeed defined on $[0,+\infty)$ with}
\begin{align*}
&\uu \in L^{\infty}(0,+\infty; \mathbf{H}_\sigma) \cap L^2(0,+\infty;\mathbf{V}_\sigma)\cap W^{1,\frac{4}{d}}_{\text{loc}}(0,+\infty; \mathbf{V}^*_\sigma),\\
&\phi \in L^{\infty}(0,+\infty; H^2_N(\Omega)) \cap L^{2}_{\text{loc}}(0,+\infty; H^5(\Omega)\cap H^4_N(\Omega))\cap H^1_{\text{loc}}(0,+\infty; (H^1(\Omega))^*),\\
&\rho \in L^{\infty}(0,+\infty; H^1(\Omega)) \cap L^{4}_{\text{loc}}(0,+\infty;H^2_N(\Omega)) \cap L^2_{\text{loc}}(0,+\infty; W^{2,p}(\Omega))\cap H^1_{\text{loc}}(0,+\infty; (H^1(\Omega))^*),\\
&\mu, \psi \in L^2_{\text{loc}}(0,+\infty; H^1(\Omega)),\\
&\rho\in L^\infty(\Omega\times(0,+\infty))\ \text{and}\ 0 < \rho(x, t) < 1\text{ for a.a. }(x,t) \in \Omega \times (0,+\infty).
\end{align*}
\end{remark}
\begin{remark}\label{rem:rho1}
In order to ensure that the initial energy $\mathcal{E}_{\mathrm{tot}}(\uu_0,\phi_0,\rho_0)$ is finite, instead of $S_\rho(\rho_0) \in L^1(\Omega)$, we can alternatively assume that $0 \leq \rho_0 \leq 1$ a.e. in $\Omega$. Besides, the assumption $\overline{\rho_0} \in (0,1)$ implies that the initial state of the surfactant phase $\rho$ cannot be a pure state, namely, $\rho_0$ is not identically equal to $0$ or $1$ over $\Omega$.
\end{remark}

Concerning the strong solutions, we suppose in addition that
\begin{itemize}
\item[\textbf{(H2)}$'$] $S_\phi\in \mathcal{C}^3(\mathbb{R})$;
\item[\textbf{(H3)}$'$] $\widehat{S}_\rho\in \mathcal{C}^3((0,1))$, $R_\rho\in \mathcal{C}^3(\mathbb{R})$.
\end{itemize}
The above assumptions combined with more regular initial data allow us to establish the existence and uniqueness of a local strong solution to problem \eqref{eq:strongNSCH}--\eqref{eq:bcs} in three dimensions (global if $d=2$).

\begin{theorem}[Strong solutions]
\label{th:wellposedlocal}
Let \textbf{(H1)}--\textbf{(H4)} as well as \textbf{(H2)}$'$--\textbf{(H3)}$'$ hold .  For any $\uu_0 \in \mathbf{V}_\sigma$, $\phi_0 \in H^5(\Omega)\cap H^4_N(\Omega)$, $\rho_0 \in H^1(\Omega)$ satisfying $0 \leq \rho_0 \leq 1$ a.e. in $\Omega$, $\overline{\rho_0} \in (0,1)$, and $\widehat{\psi}_0 := -\Delta \rho_0 + \widehat{S}'_\rho(\rho_0) \in H^1(\Omega)$, there exists a time $T^* > 0$ such that problem  \eqref{eq:strongNSCH}--\eqref{eq:bcs} admits a unique local strong solution $( \uu, \phi, \rho, \mu, \psi)$
on $[0,T^*]$ satisfying
\begin{align*}
\uu & \in L^\infty(0,T^*;\mathbf{V}_\sigma) \cap L^2(0,T^*;\mathbf{W}_\sigma) \cap H^1(0,T^*;\mathbf{H}_\sigma), \\
			\phi & \in L^\infty(0,T^*;H^5(\Omega)\cap H^4_N(\Omega))\cap H^1(0,T^*;H^2(\Omega)), \\
			\rho & \in L^\infty(0,T^*;W^{2,p}(\Omega)) \cap H^1(0,T^*;H^1(\Omega)), \\
			\mu & \in L^\infty(0,T^*;H^1(\Omega)) \cap \textcolor{black}{L^2(0,T^*;H^4(\Omega)\cap H^2_N(\Omega))},  \\
			\psi & \in L^\infty(0,T^*;H^1(\Omega)) \cap L^2(0,T^*;H^3(\Omega)\cap H^2_N(\Omega)),
\end{align*}
with a pressure $\pi \in L^2(0,T^*;H^1(\Omega))$ \textcolor{black}{uniquely} defined up to a constant,
where the exponent $p$ satisfies $p\geq 2$ if $d=2$, $2 \leq p \leq 6$ if $d=3$. Moreover, the strong solution is global if $d=2$.
\end{theorem}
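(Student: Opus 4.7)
The plan is to prove Theorem \ref{th:wellposedlocal} via a double approximation combined with higher-order \emph{a priori} estimates, a nonlinear Gronwall argument giving local existence in $d=3$ (upgraded to global in $d=2$), and a strict separation property that enables the passage to the limit in the singular potential.

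\textbf{Approximation.} I would regularize the singular part of the surfactant potential by a family $\widehat{S}_\rho^{\lambda}\in \mathcal{C}^3(\mathbb{R})$, $\lambda\in(0,\lambda_0)$, obtained through a Moreau--Yosida-type smoothing of $\widehat{S}_\rho'$, on top of a Faedo--Galerkin discretization based on the eigenfunctions of $\mathbf{A}$ and $A_N$. Existence at the approximate level is then a standard ODE issue, and the basic energy inequality analogous to \eqref{weak-IEN} holds uniformly, so that I can focus on estimates independent of both $\lambda$ and the Galerkin index.

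\textbf{Higher-order estimates.} The heart of the proof is to upgrade $\mathbf{u}$ from $\mathbf{H}_\sigma$ to $\mathbf{V}_\sigma$, $\phi$ from $H^2_N$ to $H^5\cap H^4_N$, and $\psi$ from $L^2$ to $H^1$. I would test the Navier--Stokes equation with $\mathbf{A}\mathbf{u}$ to produce $\|\nabla \mathbf{u}\|^2$ (together with control of $\|\mathbf{u}\|_{\mathbf{W}_\sigma}$), handling the capillary forcing $\mu\nabla\phi + \psi\nabla\rho$ via the Gagliardo--Nirenberg and Agmon inequalities \eqref{eq:gn1}--\eqref{eq:agmon2}. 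Simultaneously, testing the $\phi$-equation with $\partial_t\mu$ (which is nontrivial because of the cross-term $\theta\nabla\cdot(\rho\nabla\phi)$ appearing in $\mu$) and the $\rho$-equation with $\partial_t\psi$ yields $\partial_t\phi,\partial_t\rho\in L^2(0,T^*;L^2)$ and $\mu,\psi\in L^\infty(0,T^*;H^1)$. The remaining regularities ($\phi\in L^\infty(0,T^*;H^5)$, $\rho\in L^\infty(0,T^*;W^{2,p})$, $\mu\in L^2(0,T^*;H^4)$, $\psi\in L^2(0,T^*;H^3)$) are then recovered \emph{a posteriori} by reading the definitions of $\mu$ and $\psi$ as elliptic problems for $\phi$ and $\rho$, respectively, and invoking Lemma \ref{th:ellreg}. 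In $d=3$ the nonlinear structure only gives a differential inequality of the form $y'\leq C(1+y)^q$ with $q>1$, whence a local existence time $T^*>0$; in $d=2$ the linearisation provided by \eqref{eq:gn1} and \eqref{eq:agmon1} turns this into a standard Gronwall and any $T^*>0$ is admissible.

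\textbf{Strict separation, passage to the limit, uniqueness.} To pass to the limit as $\lambda\to 0^+$ inside $\widehat{S}_\rho'(\rho^\lambda)$, I would establish a uniform separation $\delta\leq\rho^\lambda\leq 1-\delta$ on $\Omega\times[0,T^*]$. Since the estimates above yield $\psi^\lambda\in L^\infty(0,T^*;L^p)$ for every finite $p$, such a bound follows from the elliptic equation $\widehat{S}_\rho'(\rho^\lambda)=\psi^\lambda+\beta\Delta\rho^\lambda-R_\rho'(\rho^\lambda)+(\theta/2)|\nabla\phi^\lambda|^2$ by a Moser-type iteration combined with the blow-up of $\widehat{S}_\rho'$ at the endpoints, in the spirit of \cite{GMT18}. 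Once separation is available, $\widehat{S}_\rho'(\rho^\lambda)$ is uniformly bounded and strong convergence of $\rho^\lambda$ (via Aubin--Lions) allows identification of the limit; the pressure $\pi\in L^2(0,T^*;H^1)$ is recovered up to a constant through Lemma \ref{stokes} and De Rham's theorem. Uniqueness of the strong solution then follows from a Gronwall argument on the difference of two solutions, whose extra regularity makes all nonlinear terms, including the singular one, tractable.

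\textbf{Main obstacle.} The delicate point is the closure of the estimates at the highest level of regularity: the sixth-order principal part of the $\phi$-equation must be balanced against the lower-order term $\theta\nabla\cdot(\rho\nabla\phi)$, whose $H^5$-control forces an $H^3$-control of $\rho$, which in turn requires $\psi\in H^1$ together with the strict separation needed to handle $\widehat{S}_\rho'(\rho)$. Propagating this chain of regularities while keeping all constants uniform in the regularization parameter $\lambda$, and, in $d=3$, within a time window on which the nonlinear Gronwall does not explode, is the crux of the argument.
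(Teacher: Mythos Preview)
Your high-level strategy---regularize the singular potential, Galerkin, then a higher-order differential inequality obtained by testing with $\partial_t\mu$, $\partial_t\psi$, $\mathbf{A}\mathbf{u}$ and $\partial_t\mathbf{u}$, closed by a nonlinear Gronwall---is indeed the paper's, but you have skipped two structural ingredients that the paper needs in order to make this scheme work.

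First, once $\widehat{S}_\rho$ is replaced by any smooth approximation $\widehat{S}_\rho^\lambda$, the approximate $\rho^\lambda$ is no longer confined to $[0,1]$ (there is no maximum principle for the fourth-order equation). Then the coupling term $-\tfrac{\theta}{2}\rho^\lambda|\nabla\phi^\lambda|^2$ in the energy is not bounded below uniformly in $\lambda$, and the basic energy estimate---on which every later bound rests, including the coefficients in your higher-order inequality---does not close. The paper cures this by adding a penalization $\tfrac{\omega}{4}|\nabla\phi|^4$ to the energy (Subsection \ref{sub:pert}); the extra parameter $\omega$ is removed only at the very end, after the limit $\lambda\to 0^+$ has restored $0\le\rho\le 1$. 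Your proposal never addresses why $\mathcal{E}_{\omega,\lambda}$ (or your version of it) stays bounded below.

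Second, to launch the nonlinear Gronwall you must control $\Lambda(0)$, which contains $\|\nabla\psi^\lambda(0)\|$ and hence $\|\widehat{S}_\rho^{\lambda\,\prime}(\rho_0)\|_{H^1}$. The hypothesis $\widehat{\psi}_0=-\Delta\rho_0+\widehat{S}_\rho'(\rho_0)\in H^1$ does \emph{not} bound this quantity uniformly in $\lambda$, since $\rho_0$ is allowed to touch $0$ or $1$. The paper handles this by approximating the initial datum: one constructs $\rho_{0,k}$ that is strictly separated from $\{0,1\}$ and solves $-\beta\Delta\rho_{0,k}+\widehat{S}_\rho'(\rho_{0,k})=h_k(\widehat{\psi}_0)$ with $\|h_k(\widehat{\psi}_0)\|_{H^1}\le\|\widehat{\psi}_0\|_{H^1}$ (Subsection \ref{subs:initialdata}). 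This step, absent from your outline, is what makes $\Lambda(0)$ uniformly bounded.

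A smaller point: you propose to pass to the limit in $\widehat{S}_\rho^{\lambda\,\prime}(\rho^\lambda)$ via a strict separation obtained from $\psi^\lambda\in L^\infty(0,T^*;L^p)$ ``for every finite $p$''. In $d=3$, $H^1\hookrightarrow L^p$ only for $p\le 6$, so you cannot reach $L^\infty$ this way, and the paper does not claim separation for strong solutions in three dimensions. The paper instead passes to the limit with the uniform $L^2$ bound on $\widehat{S}_\rho'(\rho^\lambda)$ and monotonicity, exactly as in the weak-solution argument; strict separation (Proposition \ref{prop:sepstr}) is proved afterwards, only for $d=2$ and under the extra hypothesis \textbf{(H5)}, and plays no role in the existence proof.
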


In the two dimensional case we are able to say more.
\textcolor{black}{Let us introduce a further assumption on the singular potential}
\begin{enumerate}[label=\textbf{(H\arabic*)}, start=5]
	\item there exists $C>0$ such that
	$$\widehat{S}_\rho''(s) \leq C e^{C\left|\widehat{S}_\rho'(s)\right|}, \quad \forall\, s \in(0,1). $$
\end{enumerate}
This property is fulfilled by the mixing entropy term in \eqref{eq:mixentropy}.
It enables us to derive estimates for the singular terms $\widehat{S}_\rho'(\rho)$ as well as $\widehat{S}_\rho''(\rho)$, which further entails
higher-order regularity of the solution $\rho$. Besides, it plays a role in establishing the strict separation property for $\rho$ in dimension two (see \cite[Section 5]{GGM}, see also \cite{MZ04}).

First of all, a continuous dependence estimate with respect to the norms in \textcolor{black}{$\mathbf{L}^2(\Omega)\times H^2(\Omega)\times H^1(\Omega)$} (i.e. controlled by \eqref{total}) can be obtained for strong solutions in dimension two. This will be useful, for instance, in the analysis of suitable optimal control problems. More precisely, we have
\begin{theorem}[Continuous dependence in dimension two] \label{th:contdep}
Let $d=2$. Suppose that the assumptions of Theorem \ref{th:wellposedlocal} hold \textcolor{black}{and  \textbf{(H5)} is satisfied.} For every pair of strong solutions $(\uu_1, \phi_1, \rho_1, \mu_1, \psi_1)$ and $(\uu_2, \phi_2, \rho_2, \mu_2, \psi_2)$ originating from the admissible initial data $(\uu_{01}, \phi_{01}, \rho_{01})$ and $(\uu_{02},\phi_{02}, \rho_{02})$, the following continuous dependence estimate holds
\begin{align*}
&\|\uu_1(t)- \uu_2(t)\| + \|\phi_1(t) - \phi_2(t)\|_{H^2(\Omega)} + \|\rho_1(t) - \rho_2(t)\|_{H^1(\Omega)} \notag \\
&\quad \leq C_T\left( \|\uu_{01}-\uu_{02}\| + \|\Delta(\phi_{01} - \phi_{02})\| + \|\nabla (\rho_{01}-\rho_{02})\|  + |\overline{\phi_{01}}-\overline{\phi_{02}}| + |\overline{\rho_{01}}-\overline{\rho_{02}}| \right),
\end{align*}
for every $t \in [0,T]$. Here, $C_T>0$ is a constant depending on $\|\uu_{0i}\|_{\mathbf{V}_\sigma}$, $\|\phi_{0i}\|_{H^5(\Omega)}$, $\|\rho_{0i}\|_{H^1(\Omega)}$, $\|\widehat{\psi}_{0i}\|_{H^1(\Omega)}$, $i=1,2$, coefficients of the system, $\Omega$ and $T$.
\end{theorem}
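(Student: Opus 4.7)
The strategy is to combine the energy identities for the differences of two strong solutions in the topology induced by the total energy \eqref{total}, and to close the estimate via Gronwall. Set $\uu := \uu_1 - \uu_2$, $\phi := \phi_1 - \phi_2$, $\rho := \rho_1 - \rho_2$, $\mu := \mu_1 - \mu_2$, $\psi := \psi_1 - \psi_2$; subtracting the equations yields, for example,
$$\partial_t \phi + \uu_1 \cdot \nabla \phi + \uu \cdot \nabla \phi_2 = \Delta \mu, \qquad \mu = \alpha \Delta^2 \phi - \Delta \phi + \bigl[S_\phi'(\phi_1) - S_\phi'(\phi_2)\bigr] + \theta\nabla\cdot\bigl(\rho_1 \nabla \phi + \rho\nabla \phi_2\bigr),$$
and analogous systems for $\uu$ and $\rho$. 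Integrating the $\phi$- and $\rho$-equations in space shows that $\overline{\phi}$ and $\overline{\rho}$ are constants in time, coinciding with the initial differences; they account for the $|\overline{\phi_{01}}-\overline{\phi_{02}}|$ and $|\overline{\rho_{01}}-\overline{\rho_{02}}|$ contributions in the final bound.

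Next I would assemble three simultaneous energy identities. Testing the Navier--Stokes difference with $\uu$ annihilates $((\uu_1 \cdot \nabla)\uu, \uu)$ and gives $\tfrac{1}{2}\tfrac{d}{dt}\|\uu\|^2 + \int_\Omega \nu(\phi_1,\rho_1)|D\uu|^2 \, \mathrm{d}x$ plus convective, viscosity-difference and capillary remainders. Since $\phi \in L^\infty(0,T;H^5 \cap H^4_N) \cap H^1(0,T;H^2)$ is inherited from each $\phi_i$, I would then differentiate $\tfrac{\alpha}{2}\|\Delta \phi\|^2 + \tfrac{1}{2}\|\nabla \phi\|^2$ in time and integrate by parts using $\partial_\mathbf{n}\phi = \partial_\mathbf{n}\Delta\phi = \partial_\mathbf{n}\partial_t\phi = 0$ to arrive at $(\alpha \Delta^2 \phi - \Delta \phi, \partial_t \phi)$. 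Inserting the equation for $\partial_t \phi$ and using the identity $\alpha \Delta^2 \phi - \Delta \phi = \mu - [S_\phi'(\phi_1)-S_\phi'(\phi_2)] - \theta\nabla\cdot(\rho_1\nabla\phi + \rho\nabla\phi_2)$ together with $\partial_\mathbf{n}\mu = 0$ isolates the dissipation $-\|\nabla\mu\|^2$ and leaves remainders pairing $\nabla\mu$ against the potential and coupling parts. An analogous manipulation starting from $\tfrac{\beta}{2}\|\nabla\rho\|^2$ (via $\frac{d}{dt}\tfrac{\beta}{2}\|\nabla\rho\|^2 = -\beta(\partial_t\rho,\Delta\rho)$ and the expression for $\psi$) yields the dissipation $-\|\nabla\psi\|^2$ together with remainders involving $S_\rho'(\rho_1) - S_\rho'(\rho_2)$ and the capillary coupling $|\nabla\phi_1|^2 - |\nabla\phi_2|^2 = \nabla(\phi_1+\phi_2)\cdot\nabla\phi$.

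The remainders are bounded using strong-solution regularity and two-dimensional interpolation. Because $\phi_i \in L^\infty(0,T;H^5) \hookrightarrow L^\infty(0,T;W^{3,\infty})$ in $d=2$, the sixth-order coupling $\theta\nabla\cdot(\rho\nabla\phi_2)$ (which, when paired with $\nabla\mu$, involves up to three derivatives of $\rho$ and $\phi_2$) and the factor $\nabla(\phi_1+\phi_2)\in L^\infty$ in the capillary coupling are controlled directly. The strict separation property for strong solutions (Proposition \ref{prop:sepstr}), combined with assumption \textbf{(H5)} and the $\mathcal{C}^3$-smoothness of $\widehat{S}_\rho$ on $(0,1)$ granted by \textbf{(H3)}$'$, ensures that $\widehat{S}_\rho'$ and $\widehat{S}_\rho''$ are globally Lipschitz on the compact interval $[\delta,1-\delta]$ containing the ranges of $\rho_1,\rho_2$; hence $\|S_\rho'(\rho_1) - S_\rho'(\rho_2)\|_{H^1} \leq C\|\rho\|_{H^1}$. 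Convective nonlinearities are treated by Ladyzhenskaya's and Agmon's inequalities \eqref{eq:gn1}--\eqref{eq:agmon1} using $\uu_i\in L^\infty(0,T;\mathbf{V}_\sigma)$, while capillary terms of the form $(\mu\nabla\phi_2,\uu)$, $(\psi\nabla\rho_2,\uu)$ are controlled by $\|\nabla\mu\|$, $\|\nabla\psi\|$ via the Poincaré inequality \eqref{poincare} on the mean-zero components, with small constants absorbed into the dissipation.

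Collecting everything yields a differential inequality of the form
$$\frac{d}{dt} Y(t) + \mathcal{D}(t) \leq g(t)\, Y(t) + h,$$
where $Y(t) := \|\uu(t)\|^2 + \alpha\|\Delta\phi(t)\|^2 + \|\nabla\phi(t)\|^2 + \beta\|\nabla\rho(t)\|^2$, $\mathcal{D}\geq 0$ collects the dissipations $\nu_*\|\nabla\uu\|^2$, $\|\nabla\mu\|^2$ and $\|\nabla\psi\|^2$, $g \in L^1(0,T)$ depends only on the strong-solution norms of $(\uu_i,\phi_i,\rho_i)$, and $h = C\bigl(|\overline{\phi_{01}}-\overline{\phi_{02}}|^2 + |\overline{\rho_{01}}-\overline{\rho_{02}}|^2\bigr)$. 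Gronwall's lemma gives the desired pointwise bound on $Y(t)$, which via Lemma \ref{th:ellreg} and the Poincaré inequality translates into a bound on $\|\uu\|^2 + \|\phi\|_{H^2}^2 + \|\rho\|_{H^1}^2$. The principal obstacle is the simultaneous treatment of the high-order coupling $\theta\nabla\cdot(\rho\nabla\phi_2)$ appearing in $\mu$ and of the singular potential term $S_\rho'(\rho_1)-S_\rho'(\rho_2)$, both resolved by the $H^5$-regularity of the $\phi_i$ and the strict separation of $\rho_i$ guaranteed by Proposition \ref{prop:sepstr} together with assumption \textbf{(H5)}.
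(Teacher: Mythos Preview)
Your strategy is sound and leads to the result, but it follows a genuinely different route from the paper's proof. The paper does \emph{not} use the energy-type identity that produces the dissipation $\|\nabla\mu\|^2+\|\nabla\psi\|^2$. Instead, in Subsection~\ref{subs:contdep} it tests the $\phi$-difference equation by $\Delta^2\phi$ and the $\rho$-difference equation by $-\Delta\rho$, which directly yields the higher-order spatial dissipation
\[
\alpha\|\nabla\Delta^2\phi\|^2+\|\Delta^2\phi\|^2+\beta\|\nabla\Delta\rho\|^2.
\]
For the velocity equation the paper rewrites the Korteweg forces so that $\mu$ and $\psi$ disappear entirely (as in Step~3 of Section~\ref{sec:proof2}), whereas you keep them and absorb $(\mu\nabla\phi_2,\uu)$, $(\psi\nabla\rho_2,\uu)$ into $\|\nabla\mu\|$, $\|\nabla\psi\|$.

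The trade-off is this: the paper's choice makes all the coupling estimates immediate, because terms such as $\theta(\nabla(\nabla\cdot(\rho_1\nabla\phi)),\nabla\Delta^2\phi)$ are absorbed straight into the strong dissipation without any back-substitution. In your route, the same term paired with $\nabla\mu$ produces a $\|\nabla\Delta\phi\|$ that must first be controlled through the equation for $\mu$ (i.e.\ $\|\Delta^2\phi\|\le C(\|\nabla\mu\|+Y^{1/2})$ and interpolation), so the closure takes an extra step. Your approach, on the other hand, mirrors the structure of the basic energy law (Proposition~\ref{MEconv}) more transparently. A further practical advantage of the paper's version is that the integrated dissipation $\int_0^T\mathcal W$ gives an $L^2(0,T;H^5)\times L^2(0,T;H^3)$ bound on the difference, which is exactly what is reused---via difference quotients---to prove the higher regularity estimates \eqref{regg11}--\eqref{regg12} in Theorem~\ref{th:regularity3}. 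Both arguments require the strict separation from Proposition~\ref{prop:sepstr} (hence \textbf{(H5)}) to render $S_\rho'(\rho_1)-S_\rho'(\rho_2)$ Lipschitz in $H^1$, and both use $\phi_i\in L^\infty(0,T;H^5)$ for the coupling; so the essential analytic ingredients coincide.

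One small correction: the delicate coupling contribution in your scheme is $\theta\nabla\cdot(\rho_1\nabla\phi)$ rather than $\theta\nabla\cdot(\rho\nabla\phi_2)$, since it is the former that forces a third derivative of the unknown $\phi$ (the latter puts three derivatives on the known $\phi_2\in H^5$).
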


Next, we prove that the global weak solution regularizes in finite time, namely,
\begin{theorem}[Regularity of weak solutions in dimension two]
	\label{th:regularity3}
	Let $d = 2$. Assume that \textbf{(H1)}--\textcolor{black}{\textbf{(H5)}} and \textbf{(H2)}$'$--\textbf{(H3)}$'$ hold. Suppose that $p \geq 2$, $K > 0$, $m_1 \in \mathbb{R}$, $m_2 \in (0,1)$ and $\delta >0$ are given. For any $\uu_0 \in \mathbf{H}_\sigma$, $\phi_0 \in H^2_N(\Omega)$, $\rho_0 \in H^1(\Omega)$ be such that $S_\rho(\rho_0) \in L^1(\Omega)$ and $\overline{\phi_0} = m_1$, $\overline{\rho_0} = m_2$, $\mathcal{E}_{\mathrm{tot}}(\uu_0,\phi_0, \rho_0) \leq K$, let $(\uu, \phi, \rho, \mu, \psi)$ be the unique global weak solution to problem \eqref{eq:strongNSCH}--\eqref{eq:bcs} originating from the initial data $(\uu_0, \phi_0, \rho_0)$.
Then there exists $C_1 > 0$ depending on $K, p, m_1, m_2$ and $\delta$ such that
	\begin{equation}
		\|\uu(t)\|_{\mathbf{V}_\sigma}+\|\phi(t)\|_{H^5(\Omega)} + \|\rho(t)\|_{W^{2,p}(\Omega)}+\|\mu(t)\|_{H^1(\Omega)} + \|\psi(t)\|_{H^1(\Omega)} \leq C_1,\quad \forall\, t \geq \delta. \label{regg1}
	\end{equation}
Besides, there exists $\eta \in (0,1/2]$ such that
		\begin{equation}
		\textcolor{black}{\eta \leq \rho \leq 1-\eta,\quad \text{for all}\ x\in \Omega,\ t\geq \delta. }
        \label{regg2}
		\end{equation}
Moreover, there exists $C_2 > 0$ depending on $K, m_1, m_2$ and $\delta$ such that
	\begin{align}
		&\|\partial_t \uu(t)\| + \textcolor{black}{ \|\partial_t \phi(t)\|_{H^2(\Omega)}+\|\partial_t \rho(t)\|_{H^1(\Omega)} } \leq C_2,
           \label{regg11}\\
		&\|\partial_t\uu\|_{L^2(t,t+1;\mathbf{V}_\sigma)}
           + \textcolor{black}{\|\partial_t\phi\|_{L^2(t,t+1;H^5(\Omega))} + \|\partial_t\rho\|_{L^2(t,t+1;H^3(\Omega))} } \leq C_2,
           \label{regg12}\\
        & \|\uu(t)\|_{\mathbf{W}_\sigma} + \textcolor{black}{\|\mu(t)\|_{H^4(\Omega)} + \|\psi(t)\|_{H^2(\Omega)} + \|\phi(t)\|_{H^6(\Omega)} + \|\rho(t)\|_{H^4(\Omega)} }
        \leq C_2,
        \label{regg3}
	\end{align}
	hold for every $t \geq 2\delta$. In particular, any weak solution becomes strong for $t>0$.
\end{theorem}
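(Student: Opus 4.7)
The plan proceeds in three connected stages and follows closely the regularization strategy developed in \cite{GMT18} for the standard Navier--Stokes--Cahn--Hilliard system with singular potential, adapting it to the sixth-order phase-field equation and the surfactant coupling of \eqref{eq:strongNSCH}. Throughout, the dissipation in \eqref{weak-IEN} provides $\uu \in L^2_{\text{loc}}(0,+\infty;\mathbf{V}_\sigma)$ and $\nabla\mu,\nabla\psi \in L^2_{\text{loc}}(0,+\infty;\mathbf{L}^2(\Omega))$, so whenever I obtain a differential inequality of the form $y'(t) \leq g(t) + f(t)y(t)$ with $\int_\tau^{\tau+1}(g+f)\,\mathrm{d}s$ uniformly bounded in $\tau \geq 0$, the uniform Gronwall lemma yields pointwise control on $y$ past any prescribed positive time.

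\textbf{Stage 1} (energy-level pointwise regularity). I first test the $\phi$- and $\rho$-equations of \eqref{eq:weakNSCH} against constants to control $\overline{\mu}$ and $\overline{\psi}$ in $L^2_{\text{loc}}$ (using $\overline{\rho_0}\in(0,1)$ and \textbf{(H3)} for the latter), obtaining $\mu,\psi \in L^2_{\text{loc}}(0,+\infty;H^1(\Omega))$. Then I test the Navier--Stokes equation with $\mathbf{A}^{-1}\partial_t \uu$, the $\phi$-equation with $\mathcal{N}\partial_t\phi$, and the $\rho$-equation with $\mathcal{N}(\partial_t\rho - \overline{\partial_t\rho})$, closing the nonlinear terms by \eqref{eq:gn1}--\eqref{eq:agmon1} in $d=2$. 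Uniform Gronwall yields pointwise bounds on $\|\uu\|_{\mathbf{V}_\sigma}^2 + \|\mu\|^2 + \|\psi\|^2$ for $t \geq \delta/4$; applying Lemma \ref{th:ellreg} to the identities defining $\mu$ and $\psi$ in \eqref{eq:weakNSCH} then transfers this into $\phi \in L^\infty([\delta/4,+\infty);H^4_N(\Omega))$ and $\rho \in L^\infty([\delta/4,+\infty);H^2_N(\Omega))$.

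\textbf{Stage 2} (strict separation). By Stage 1 and Agmon's inequality \eqref{eq:agmon1}, $\nabla\phi \in L^\infty([\delta/2,+\infty);\mathbf{L}^\infty(\Omega))$, hence the right-hand side of the identity
\[
-\beta\Delta\rho + \widehat{S}'_\rho(\rho) \;=\; \psi + \tfrac{\theta}{2}|\nabla\phi|^2 - R'_\rho(\rho) \;=:\; \Phi
\]
lies in $L^\infty([\delta/2,+\infty);L^p(\Omega))$ for every $p \geq 2$. Testing with the zero-mean part of $e^{\lambda|\widehat{S}'_\rho(\rho)|}\mathrm{sgn}(\widehat{S}'_\rho(\rho))$, using \textbf{(H5)} to dominate $\widehat{S}''_\rho$ by $C\exp(C|\widehat{S}'_\rho|)$, and invoking the two-dimensional Trudinger--Moser inequality, one obtains $\widehat{S}'_\rho(\rho) \in L^\infty([\delta/2,+\infty);L^p(\Omega))$ with norms growing algebraically in $p$. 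A standard Moser-type iteration then upgrades this to a uniform $L^\infty$ bound, and since $|\widehat{S}'_\rho(s)| \to +\infty$ at $s=0$ and $s=1$ by \textbf{(H3)}, this forces \eqref{regg2}.

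\textbf{Stage 3} (strong regularity and time-derivative bounds). With strict separation, $\widehat{S}'_\rho$ and $\widehat{S}''_\rho$ are uniformly bounded on $[\delta,+\infty)$, so the $\rho$-equation effectively has a regular nonlinearity; combining this with Stage 1 shows that the solution at a base time $\tau_0 \in [\delta/2,\delta]$ fulfils the admissibility hypotheses of Theorem \ref{th:wellposedlocal}. Since $d=2$, the resulting strong solution is global and, by uniqueness, coincides with the weak solution, yielding \eqref{regg1}. To obtain \eqref{regg11}--\eqref{regg3} on $[2\delta,+\infty)$, I differentiate each equation in time, test with $\partial_t \uu$, $\mathcal{N}\partial_t\phi$, $\mathcal{N}(\partial_t\rho-\overline{\partial_t\rho})$, and apply uniform Gronwall once more on $[\delta,+\infty)$; the higher spatial bounds ($\mu\in H^4$, $\psi\in H^2$, $\phi\in H^6$, $\rho\in H^4$, $\uu\in\mathbf{W}_\sigma$) then follow from elliptic regularity applied to the CH-type identities in \eqref{eq:weakNSCH} and from Lemma \ref{stokes} applied to the instantaneous Stokes problem with forcing $-\partial_t\uu - (\uu\cdot\nabla)\uu + \mu\nabla\phi + \psi\nabla\rho$. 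The main obstacle is Stage 2: the coupling $(\theta/2)|\nabla\phi|^2$ inside $\psi$ must be absorbed into the Moser scheme, which works only thanks to the sharp $\mathbf{L}^\infty$ control of $\nabla\phi$ available in $d=2$ and the exponential growth condition \textbf{(H5)} matching the scaling of the Trudinger--Moser inequality.
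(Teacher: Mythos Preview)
Your overall architecture (uniform Gronwall, invocation of Theorem \ref{th:wellposedlocal}, strict separation via \textbf{(H5)}, then higher time-derivative bounds and elliptic/Stokes regularity) matches the paper's. However, the test functions you specify in Stages 1 and 3 do not produce the differential inequalities you claim, and this is a genuine gap.

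In Stage 1, testing the $\phi$-equation by $\mathcal{N}\partial_t\phi$ yields $\|\partial_t\phi\|_{V_0^*}^2 - (\mu-\overline{\mu},\partial_t\phi)$; expanding $(\mu,\partial_t\phi)$ via the definition of $\mu$ only reproduces the time derivative of the energy density $\tfrac{\alpha}{2}\|\Delta\phi\|^2+\dots$, not of $\|\nabla\mu\|^2$. Likewise, testing the Navier--Stokes equation by $\mathbf{A}^{-1}\partial_t\uu$ gives $\|\partial_t\uu\|_{\mathbf{V}_\sigma^*}^2$ and a mixed term $(\nu(\phi,\rho)D\uu,D\mathbf{A}^{-1}\partial_t\uu)$ that is not $\tfrac{\mathrm{d}}{\mathrm{d}t}\|\nabla\uu\|^2$. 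Hence no inequality of the form $y'\leq g + fy$ with $y\sim\|\nabla\uu\|^2+\|\nabla\mu\|^2+\|\nabla\psi\|^2$ emerges, and uniform Gronwall cannot be applied. The paper instead tests the two Cahn--Hilliard equations by $\partial_t\mu$ and $\partial_t\psi$, and the Navier--Stokes equation by both $\partial_t\uu$ and $\mathbf{A}\uu$ (Lemma \ref{prop:ub5}); this is what generates $\tfrac{\mathrm{d}}{\mathrm{d}t}\Lambda$ with $\Lambda\sim\|\nabla\uu\|^2+\|\nabla\mu\|^2+\|\nabla\psi\|^2+\text{cross terms}$ and the dissipation $\mathcal{G}$.

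The same issue recurs in Stage 3: testing the time-differentiated system by $\mathcal{N}\partial_t\phi$ and $\mathcal{N}\partial_t\rho$ controls only $(H^1)^*$-norms of the time derivatives, not the $H^2\times H^1$ control required by \eqref{regg11}. The paper obtains \eqref{regg11}--\eqref{regg12} by writing the system for difference quotients and testing with $\Delta^2\partial_t^h\phi$, $-\Delta\partial_t^h\rho$, $\partial_t^h\uu$, exactly as in the continuous-dependence proof (Theorem \ref{th:contdep}); the resulting functional is $\widehat{\mathcal{Z}}=\|\partial_t^h\uu\|^2+\|\Delta\partial_t^h\phi\|^2+\|\nabla\partial_t^h\rho\|^2$. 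Finally, note that the paper's order is more economical: since $\uu\in L^2_{\mathrm{loc}}(0,\infty;\mathbf{V}_\sigma)$, $\phi\in L^2_{\mathrm{loc}}(0,\infty;H^5\cap H^4_N)$ and $\widehat{\psi}=\psi+\tfrac{\theta}{2}|\nabla\phi|^2-R'_\rho(\rho)\in L^2_{\mathrm{loc}}(0,\infty;H^1)$ already at the weak level, one can pick a Lebesgue point $\xi\in(\delta/2,\delta)$ where the strong-data hypotheses of Theorem \ref{th:wellposedlocal} are met and invoke uniqueness immediately; only afterwards does one run the uniform Gronwall argument to make the bounds uniform in $t$. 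Your ordering forces you to establish $\psi\in L^\infty(H^1)$ on the weak solution before the separation step, which is exactly what the incorrect test functions in Stage 1 fail to deliver.
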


\section{Proof of Theorem \ref{th:weaksolution}}
\label{sec:proof1}
The proof of Theorem \ref{th:weaksolution} consists of several steps. The first ingredient is the following
\begin{proposition}
\label{MEconv}
Let $(\uu, \phi, \rho, \mu, \psi)$ be a sufficiently smooth solution to problem \eqref{eq:strongNSCH}--\eqref{eq:bcs} on $[0,T]$. Then we have
\begin{align}
& \frac{\mathrm{d}}{\mathrm{d}t}\int_\Omega \phi(t)\,\mathrm{d} x = \frac{\mathrm{d}}{\mathrm{d}t} \int_\Omega \rho(t)\,\mathrm{d} x=0,\quad \forall\, t\in (0,T), \label{Mass} \\
& \frac{\mathrm{d}}{\mathrm{d}t} \mathcal{E}_{\mathrm{tot}}(\uu(t),\phi(t),\rho(t))
+ \|\sqrt{\nu(\phi(t), \rho(t))}D\uu(t)\|^2 + \| \nabla \mu(t)\|^2 + \|\nabla \psi(t)\|^2 =0,\quad \forall\, t\in (0,T),
\label{BEL}
\end{align}
where $\mathcal{E}_{\mathrm{tot}}(\uu(t),\phi(t),\rho(t)) $ is defined as in  \eqref{total}.
\end{proposition}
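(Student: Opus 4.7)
The plan is to derive both statements by testing the equations of \eqref{eq:strongNSCH} with suitable quantities and exploiting the prescribed boundary conditions in \eqref{eq:bcs}.

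For the conservation of mass \eqref{Mass}, I would integrate the third and fifth equations of \eqref{eq:strongNSCH} over $\Omega$. The convective terms $\int_\Omega \uu \cdot \nabla \phi \,\mathrm{d}x$ and $\int_\Omega \uu \cdot \nabla \rho \,\mathrm{d}x$ vanish after integration by parts thanks to $\nabla \cdot \uu = 0$ in $\Omega$ and $\uu = \mathbf{0}$ on $\partial \Omega$, while $\int_\Omega \Delta \mu\,\mathrm{d}x$ and $\int_\Omega \Delta \psi\,\mathrm{d}x$ vanish because $\partial_\mathbf{n} \mu = \partial_\mathbf{n} \psi = 0$ on $\partial \Omega$.

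For the energy identity \eqref{BEL}, I would test the momentum equation with $\uu$, the convective Cahn--Hilliard equation for $\phi$ with $\mu$, and the analogous equation for $\rho$ with $\psi$, and then add the three resulting identities. In the Navier--Stokes part, the pressure drops out by incompressibility and the no-slip condition, and $((\uu \cdot \nabla)\uu, \uu) = 0$ for the same reason, yielding
\[
\frac{1}{2}\frac{\mathrm{d}}{\mathrm{d}t}\|\uu\|^2 + \|\sqrt{\nu(\phi,\rho)}\,D\uu\|^2 = (\mu\nabla \phi, \uu) + (\psi\nabla \rho, \uu).
\]
Testing the two Cahn--Hilliard equations against $\mu$ and $\psi$ and integrating by parts (using $\partial_\mathbf{n}\mu=\partial_\mathbf{n}\psi=0$) produces $(\partial_t \phi, \mu) + (\uu \cdot \nabla \phi, \mu) + \|\nabla \mu\|^2 = 0$ and $(\partial_t \rho, \psi) + (\uu \cdot \nabla \rho, \psi) + \|\nabla \psi\|^2 = 0$. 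Since $(\mu \nabla \phi, \uu) = (\uu \cdot \nabla \phi, \mu)$ and the analogous identity holds for $\rho$, the capillary forcing on the right-hand side of the momentum balance cancels exactly the transport terms in the last two identities.

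It then remains to recognize $(\partial_t \phi, \mu) + (\partial_t \rho, \psi)$ as the time derivative of the configurational part of $\mathcal{E}_{\mathrm{tot}}$. Substituting the explicit formulas for $\mu$ and $\psi$ and integrating by parts---relying on $\partial_\mathbf{n} \phi = \partial_\mathbf{n} \Delta \phi = 0$ for the biharmonic and Laplace terms in $\mu$, and on $\partial_\mathbf{n} \rho = 0$ for the Laplace term in $\psi$---the linear contributions reassemble into $\tfrac{\alpha}{2}\tfrac{\mathrm{d}}{\mathrm{d}t}\|\Delta \phi\|^2$, $\tfrac{1}{2}\tfrac{\mathrm{d}}{\mathrm{d}t}\|\nabla \phi\|^2$, $\tfrac{\beta}{2}\tfrac{\mathrm{d}}{\mathrm{d}t}\|\nabla \rho\|^2$, together with $\tfrac{\mathrm{d}}{\mathrm{d}t}\int_\Omega [S_\phi(\phi) + S_\rho(\rho)]\,\mathrm{d}x$. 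The main point requiring care is the coupling block: testing $\theta\nabla\cdot(\rho\nabla \phi)$ with $\partial_t \phi$ gives $-\theta(\rho\nabla \phi, \nabla \partial_t \phi)$ (the boundary contribution vanishes by $\partial_\mathbf{n} \phi = 0$), while $-\tfrac{\theta}{2}|\nabla \phi|^2$ tested with $\partial_t \rho$ gives $-\tfrac{\theta}{2}\int_\Omega |\nabla \phi|^2 \partial_t \rho \,\mathrm{d}x$. Using the pointwise identity $\partial_t(\rho|\nabla \phi|^2) = |\nabla \phi|^2 \partial_t \rho + 2\rho\nabla \phi \cdot \nabla \partial_t \phi$, these two pieces combine into $-\tfrac{\theta}{2}\tfrac{\mathrm{d}}{\mathrm{d}t}\int_\Omega \rho|\nabla \phi|^2\,\mathrm{d}x$, which reproduces precisely the interaction contribution in $\mathcal{E}_{\mathrm{tot}}$ and closes \eqref{BEL}.
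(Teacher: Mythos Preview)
Your proposal is correct and follows essentially the same route as the paper: the paper also tests the Navier--Stokes equation by $\uu$, the two Cahn--Hilliard equations by $\mu$ and $\psi$, and then computes $(\partial_t\phi,\mu)$ and $(\partial_t\rho,\psi)$ by inserting the constitutive relations for $\mu$ and $\psi$ (phrased there as testing those relations by $\partial_t\phi$ and $\partial_t\rho$), with the coupling terms $-\theta(\rho\nabla\phi,\nabla\partial_t\phi)$ and $-\tfrac{\theta}{2}(|\nabla\phi|^2,\partial_t\rho)$ combining into $-\tfrac{\theta}{2}\tfrac{\mathrm{d}}{\mathrm{d}t}\int_\Omega\rho|\nabla\phi|^2\,\mathrm{d}x$ exactly as you describe.
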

\begin{proof}
To deduce \eqref{Mass}, we simply test the Cahn--Hilliard type equations in \eqref{eq:strongNSCH} by $1$ and integrate over $\Omega$. Then \eqref{Mass} follows through an integration by parts, thanks to the homogeneous Neumann boundary conditions for $\mu$, $\psi$, the no-slip boundary condition for $\uu$ and \textcolor{black}{the incompressibility condition $\nabla \cdot \uu=0$.}
The energy identity \eqref{BEL} can be obtained by testing the first, third, fourth, fifth and sixth equations in \eqref{eq:strongNSCH} by $\uu$, $\mu$, $\partial_t \phi$, $\psi$ and $\partial_t\rho$, respectively, integrating over $\Omega$ and using the incompressibility condition as well as the boundary conditions for $(\uu, \phi, \rho)$. We thus get
\begin{align*}
\frac12 \frac{\mathrm{d}}{\mathrm{d}t} \|\uu\|^2 + \int_\Omega \nu(\phi, \rho) |D\uu|^2\, \mathrm{d}x
&= -\int_\Omega (\mathbf{u} \cdot \nabla)\mathbf{u} \cdot \uu\, \mathrm{d}x -\int_\Omega \nabla \pi \cdot \uu\, \mathrm{d}x
+ \int_\Omega (\mu \nabla \phi  +  \psi\nabla \rho) \cdot \uu\, \mathrm{d}x\\
& = \int_\Omega (\mu \nabla \phi  +  \psi\nabla \rho) \cdot \uu \, \mathrm{d}x,\\
&\int_\Omega \partial_t \phi \mu \,\mathrm{d}x +\int_\Omega (\mathbf{u} \cdot \nabla \phi)\mu\, \mathrm{d}x   = -\|\nabla \mu\|^2,\\
\int_\Omega  \mu \partial_t \phi \,\mathrm{d}x
&= \int_\Omega  \big(\alpha \Delta^2 \phi-\Delta \phi  + S_\phi'(\phi) + \theta\nabla \cdot (\rho \nabla \phi)\big) \partial_t \phi \,\mathrm{d}x\\
&= \frac{\mathrm{d}}{\mathrm{d}t}\int_\Omega \left(\frac{\alpha}{2}|\Delta \phi|^2 +\frac{1}{2}|\nabla \phi|^2+S_\phi(\phi) \right)\,\mathrm{d}x - \frac{\theta}{2}\int_\Omega \rho \partial_t|\nabla \phi|^2 \,\mathrm{d}x,\\
&\int_\Omega \partial_t \rho \psi \,\mathrm{d}x +\int_\Omega (\mathbf{u} \cdot \nabla \rho)\psi\, \mathrm{d}x = -\|\nabla \psi\|^2,\\
\int_\Omega \psi \partial_t \rho\, \mathrm{d} x &= \int_\Omega \left(  -\beta \Delta \rho + S'_\rho(\rho) - \dfrac{\theta}{2}|\nabla \phi|^2 \right)\partial_t \rho\,  \mathrm{d}x\\
&= \frac{\mathrm{d}}{\mathrm{d}t}\int_\Omega \left(\frac{\beta}{2}|\nabla \rho|^2+S_\rho(\rho)\right)\,\mathrm{d}x  - \dfrac{\theta}{2}\int_\Omega |\nabla \phi|^2 \partial_t \rho\,  \mathrm{d}x.
\end{align*}
Collecting the above identities together, we easily conclude \eqref{BEL}.
\end{proof}
\begin{remark} \label{Rm-ill}
Integrating \eqref{Mass} and \eqref{BEL} with respect to time, we find that
\begin{align}
\notag
&\int_\Omega \phi(t) \: \mathrm{d}x = \int_\Omega \phi_0 \:\mathrm{d}x,\quad  \int_\Omega\rho \:\mathrm{d}x = \int_\Omega\rho_0\:\mathrm{d}x, \quad \forall\, t\in [0,T],\\
&\mathcal{E}_{\mathrm{tot}}(\uu(t),\phi(t),\rho(t)) + \int_0^t \left(\|\sqrt{\nu(\phi(t), \rho(t))}D\uu(t)\|^2 + \| \nabla \mu(t)\|^2 + \|\nabla \psi(t)\|^2\right)\,\mathrm{d}\tau \notag \\
&=\mathcal{E}_{\mathrm{tot}}(\uu_0,\phi_0,\rho_0) ,\quad \forall\, t\in (0,T].
\label{BELa}
\end{align}
Identity \eqref{BELa} shows that, physically as well as mathematically, $\mathcal{E}_{\mathrm{tot}}$ must be bounded from below.
The singular potential $S_\rho$ (formally) implies that
\begin{align}
0\leq \rho\leq 1, \quad \text{for a.e. }(x,t)\in \Omega\times(0,T).
\label{rhobd1}
\end{align}
Thus, we can directly infer from \eqref{rhobd1} that
\begin{equation}
\label{theta}
\int_\Omega - \dfrac{\theta}{2} \rho |\nabla \phi|^2\,\mathrm{d}x
\geq -\frac{\theta}{2}\int_\Omega |\nabla \phi|^2\,\mathrm{d}x =  \frac{\theta}{2}\int_\Omega \phi \Delta \phi\,\mathrm{d}x
\geq -\frac{\alpha}{4}\|\Delta\phi\|^2- \frac{\theta^2}{4\alpha}\|\phi\|^2.
\end{equation}
The first term can be easily controlled by the higher-order term $\frac{\alpha}{2}\|\Delta\phi\|^2$ in $\mathcal{E}_{\mathrm{tot}}$. Concerning the second term, without making any assumption on the size of the positive parameters $\alpha, \theta$, it can still be handled thanks to the coercivity of $S_\phi$. Indeed, from \textbf{(H2)} and Young's inequality, we have
$$
\int_\Omega S_\phi(\phi)\,\mathrm{d} x\geq c_3\int_\Omega |\phi|^4\,\mathrm{d}x -c_4|\Omega|
\geq \frac{c_3}{2}\int_\Omega |\phi|^4\,\mathrm{d}x +\frac{\theta^2}{4\alpha}\|\phi\|^2
-\left(c_4 + \frac{\theta^4}{32\alpha^2 c_3} \right)|\Omega|.
$$
Therefore, the bound \eqref{rhobd1} of $\rho$ plays a crucial role. However, when we prove the existence of weak solutions to problem \eqref{eq:strongNSCH}--\eqref{eq:bcs} we need to introduce a suitable regularization of the singular potential $S_\rho$ (see \eqref{eq:taylor} below) and \eqref{rhobd1} can no longer be guaranteed due to the lack of maximum principle for the fourth order Cahn--Hilliard equation. This is the reason why \textcolor{black}{we have to introduce a further penalization term} in the approximating problem (see \eqref{penalenergy} below, see also \cite{Xu21} for a similar strategy in a numerical context). Note that the presence of the second-order term in the energy  $\mathcal{E}_{\mathrm{tot}}$ would not play any role in proving its boundedness from below provided that $\theta\in (0,1)$ (see Remark \ref{FHphi}).
\end{remark}

\subsection{A regularized problem} \label{sub:pert}
In view of Remark \ref{Rm-ill}, we consider the penalized energy
\begin{equation}
\label{penalenergy}
	\mathcal{E}_\omega(\mathbf{u},\phi,\rho) :=\ii{\Omega}{}{ \left( \dfrac{1}{2}|\mathbf{u}|^2+ \dfrac{\alpha}{2} |\Delta \phi|^2 + \dfrac{1}{2}|\nabla \phi|^2  + S_\phi(\phi) + \dfrac{\beta}{2}|\nabla \rho|^2 +  S_\rho(\rho) - \dfrac{\theta}{2} \rho |\nabla \phi|^2 + \frac{\omega}{4} |\nabla \phi|^4\right)}{x},
\end{equation}
where $\omega\in (0,1]$ is a given parameter. Correspondingly, the initial boundary value problem associated to the perturbed energy functional $\mathcal{E}_\omega$ is the following
\begin{equation} \label{eq:strongNSCHpert}
\begin{cases}
\partial_t\mathbf{u} + (\mathbf{u} \cdot \nabla)\mathbf{u} - \nabla \cdot (\nu(\phi,\rho) D\mathbf{u} ) + \nabla \pi =\mu \nabla \phi +  \psi\nabla \rho& \quad \text{in } \Omega \times (0,T),\\
\nabla \cdot \mathbf{u} = 0& \quad \text{in } \Omega \times (0,T),\\
\partial_t \phi + \mathbf{u} \cdot \nabla \phi = \Delta \mu & \quad \text{in } \Omega \times (0,T),\\
\mu = \alpha \Delta^2 \phi-\Delta \phi  + S_\phi'(\phi) + \theta\nabla \cdot (\rho \nabla \phi) - \textcolor{black}{\omega \nabla \cdot (|\nabla \phi|^2 \nabla \phi)}& \quad \text{in } \Omega \times (0,T),\\
\partial_t \rho + \mathbf{u} \cdot \nabla \rho = \Delta \psi & \quad \text{in } \Omega \times (0,T), \\
\psi = -\beta \Delta \rho + S'_\rho(\rho) - \dfrac{\theta}{2}|\nabla \phi|^2 & \quad \text{in } \Omega \times (0,T), \\
\end{cases}
\end{equation}
subject to the boundary and initial conditions
\begin{equation} \label{eq:bcspert}
\begin{cases}
			\mathbf{u} = \mathbf{0} & \quad \text{on } \partial \Omega \times (0,T), \\
			\partial_\mathbf{n} \phi = \partial_\mathbf{n}\Delta \phi = \partial_\mathbf{n}\mu =0 & \quad \text{on } \partial \Omega \times (0,T), \\
			\partial_\mathbf{n} \rho = \partial_\mathbf{n} \psi = 0 & \quad \text{on } \partial \Omega \times (0,T), \\
			\mathbf{u}|_{t=0}=\mathbf{u}_0(\mathbf{x}),\quad \phi|_{t=0} = \phi_0(\mathbf{x}),\quad \rho|_{t=0} = \rho_0(\mathbf{x}),	 & \quad \text{in } \Omega.
\end{cases}
\end{equation}

To prove the existence of a global weak solution to problem \eqref{eq:strongNSCHpert}--\eqref{eq:bcspert}, we introduce a suitable approximation of the singular potential $S_\rho$, dependent on some (small) parameter $\varepsilon>0$ in such a way that the original potential is recovered in the limit $\varepsilon \to 0^+$. More precisely, following \cite{FG12}
(see also \cite{EG96}), we consider a family of regular potentials based upon the second-order Taylor's expansion of $\widehat{S}_\rho$. Recalling \textbf{(H3)},
for any sufficiently small $\varepsilon \in (0,\epsilon_1)$, let $\widehat{S}_{\rho,\varepsilon}: \mathbb{R} \to \mathbb{R}$ be a globally defined approximation of $\widehat{S}_\rho$ given by
	\begin{equation}
		\label{eq:taylor}
		\widehat{S}_{\rho,\varepsilon}(s) =
		\begin{cases}
			\displaystyle\sum_{i=0}^{2} \dfrac{\widehat{S}_\rho^{(i)}(\varepsilon)}{i!}(s-\varepsilon)^i & \quad \text{if } s \leq \varepsilon,\\[0.2cm]
			\widehat{S}_\rho(s) & \quad \text{if } \varepsilon < s < 1-\varepsilon,\\
			\displaystyle\sum_{i=0}^{2} \dfrac{\widehat{S}_\rho^{(i)}(1-\varepsilon)}{i!}[s-(1-\varepsilon)]^i& \quad \text{if } s \geq 1 - \varepsilon.
		\end{cases}
	\end{equation}
Set
	\begin{equation} \label{eq:approxpot}
		S_{\rho, \varepsilon}(s) := \widehat{S}_{\rho, \varepsilon}(s) + R_\rho(s),\quad \forall\,s \in \mathbb{R}.
	\end{equation}
Then for any $\varepsilon \in (0,\epsilon_1)$, there exist constants $\gamma_1, \gamma_2, \gamma_3 > 0$ such that
	\begin{equation} \label{eq:convexityapprox}
		\widehat{S}_{\rho, \varepsilon}(s)\geq -\gamma_1, \qquad -\gamma_2 \leq \widehat{S}''_{\rho, \varepsilon}(s) \leq \gamma_3, \quad \forall\, s\in \mathbb{R},
	\end{equation}
where $\gamma_1, \gamma_2$ are independent of $\varepsilon$, while the upper bound $\gamma_3$ may depend on $\varepsilon$.

Our strategy is as follows. We first find a global weak solution to a regularized system of problem \eqref{eq:strongNSCHpert}--\eqref{eq:bcspert} with $S_{\rho}$ replaced by the regularized potential \eqref{eq:approxpot}. Then we derive uniform estimates and pass to the limit first as $\varepsilon \to 0^+$ to obtain a solution to the penalized problem \eqref{eq:strongNSCHpert}--\eqref{eq:bcspert}. Finally, we will get rid of the penalization term by letting also $\omega\to 0^+$.

\subsection{The Galerkin scheme} \label{sub:galerkin}
Let us consider the regularized system
\begin{equation} \label{eq:strongNSCHpertap}
\begin{cases}
\partial_t\mathbf{u} + (\mathbf{u} \cdot \nabla)\mathbf{u} - \nabla \cdot (\nu(\phi,\rho) D\mathbf{u} ) + \nabla \pi =\mu \nabla \phi +  \psi\nabla \rho& \quad \text{in } \Omega \times (0,T),\\
\nabla \cdot \mathbf{u} = 0& \quad \text{in } \Omega \times (0,T),\\
\partial_t \phi + \mathbf{u} \cdot \nabla \phi = \Delta \mu & \quad \text{in } \Omega \times (0,T),\\
\mu = \alpha \Delta^2 \phi-\Delta \phi  + S_\phi'(\phi) + \theta\nabla \cdot (\rho \nabla \phi) - \textcolor{black}{\omega \nabla \cdot (|\nabla \phi|^2 \nabla \phi)}& \quad \text{in } \Omega \times (0,T),\\
\partial_t \rho + \mathbf{u} \cdot \nabla \rho = \Delta \psi & \quad \text{in } \Omega \times (0,T), \\
\psi = -\beta \Delta \rho + S'_{\rho, \varepsilon}(\rho) - \dfrac{\theta}{2}|\nabla \phi|^2 & \quad \text{in } \Omega \times (0,T), \\
\end{cases}
\end{equation}
subject to the  boundary and initial conditions \eqref{eq:bcspert}.
Its weak formulation reads essentially the same as \eqref {eq:weakNSCH} in Definition \ref{def:solution}
with obvious modifications. Observe that system \eqref{eq:strongNSCHpertap} is associated with the following energy functional
\begin{equation}
\label{eq:approxenerg}
\mathcal{E}_{\omega,\varepsilon}(\mathbf{u},\phi,\rho) :=\ii{\Omega}{}{ \left( \dfrac{1}{2}|\mathbf{u}|^2+ \dfrac{\alpha}{2} |\Delta \phi|^2 + \dfrac{1}{2}|\nabla \phi|^2  + S_\phi(\phi) + \dfrac{\beta}{2}|\nabla \rho|^2 +  S_{\rho, \varepsilon}(\rho) - \dfrac{\theta}{2} \rho |\nabla \phi|^2 + \frac{\omega}{4} |\nabla \phi|^4\right)}{x}.
\end{equation}

The existence of a global weak solution to the regularized problem \eqref{eq:strongNSCHpertap} with \eqref{eq:bcspert} can be proved by using a suitable Galerkin approximation scheme. Recall the countably many eigencouples of the (negative) Neumann--Laplace operator, denoted by $(\eta_n , w_n) \in \mathbb{R} \times L^2(\Omega)$, $n\in \mathbb{Z}^+$. We note that $\{w_n\}$ forms an orthonormal basis of $L^2(\Omega)$ and is also an orthogonal basis of $H^2_N(\Omega)$. Analogously, we set $(\zeta_n, \mathbf{w}_n) \in \mathbb{R} \times \mathbf{H}_\sigma$ to be the countably many eigencouples of the Stokes operator $\mathbf{A}$ and $\{\mathbf{w}_n\}$ forms an orthonormal basis of $\mathbf{H}_\sigma$ and also an orthogonal basis of $\mathbf{W}_\sigma$. We set $W_n := \spn \{w_1, ..., w_n\}\subset H^2_N(\Omega)$, $\mathbf{W}_n := \spn\{\mathbf{w}_1, ...,\mathbf{ w}_n\}\subset \mathbf{W}_\sigma$, with corresponding orthogonal projections $\Pi_n: L^2(\Omega) \to W_n$ (with respect to the inner product in $L^2(\Omega)$) and $P_n: \mathbf{H}_\sigma \to \mathbf{W}_n$ (with respect to the inner product in $\mathbf{H}_\sigma$). Then we consider the following Galerkin scheme that depends on three approximating parameters $n, \varepsilon$ and $\omega$. Namely, for $\omega\in (0,1]$, $\varepsilon \in (0,\epsilon_1)$ and $n\in \mathbb{Z}^+$, we look for functions $(\uu_\omega^{n,\varepsilon},\phi_\omega^{n,\varepsilon}, \rho_\omega^{n,\varepsilon}, \mu_\omega^{n,\varepsilon}, \psi_\omega^{n,\varepsilon})$ of the form:
\begin{align*}
\phi^{n, \varepsilon}_\omega(t) = \sum_{i = 1}^n a_{i}(t)w_i, & \quad \rho^{n, \varepsilon}_\omega(t) = \sum_{i = 1}^n b_{i}(t)w_i,\\
\mu^{n, \varepsilon}_\omega(t) = \sum_{i = 1}^n c_{i}(t)w_i, & \quad \psi^{n, \varepsilon}_\omega(t) = \sum_{i = 1}^n d_{i}(t)w_i, \\
\uu^{n, \varepsilon}_\omega(t) =  \sum_{i = 1}^n e_{i}(t)\mathbf{w}_i,&
\end{align*}
which solve the following problem:
\begin{equation} \label{eq:weakrega}
\begin{cases}
\left\langle \partial_t\mathbf{u}^{n, \varepsilon}_\omega, \mathbf{v} \right\rangle_{\mathbf{V}^*_\sigma, \mathbf{V}_\sigma} + ((\mathbf{u}^{n, \varepsilon}_\omega \cdot \nabla)\mathbf{u}^{n, \varepsilon}_\omega,\mathbf{v}) + (\nu(\phi^{n, \varepsilon}_\omega,\rho^{n, \varepsilon}_\omega) D\mathbf{u}^{n, \varepsilon}_\omega, D\mathbf{v}) &\\
\qquad \qquad = (\mu^{n, \varepsilon}_\omega\nabla \phi^{n, \varepsilon}_\omega, \mathbf{v}) + (\psi^{n, \varepsilon}_\omega\nabla \rho^{n, \varepsilon}_\omega, \mathbf{v}) & \quad \forall \: \mathbf{v} \in \mathbf{W}_n, \aevv,\\
\left\langle \partial_t\phi^{n, \varepsilon}_\omega, v \right\rangle_{V^*,V} + (\uu^{n, \varepsilon}_\omega \cdot \nabla \phi^{n, \varepsilon}_\omega, v) + (\nabla \mu^{n, \varepsilon}_\omega, \nabla v) = 0 & \quad \forall \: v \in W_n, \aevv,\\
\mu^{n, \varepsilon}_\omega = \Pi_n \big(\alpha \Delta^2 \phi^{n, \varepsilon}_\omega  -\Delta \phi^{n, \varepsilon}_\omega +  S'_{\phi}(\phi^{n, \varepsilon}_\omega) + \theta\nabla \cdot (\rho^{n, \varepsilon}_\omega \nabla \phi^{n, \varepsilon}_\omega)\big) &\\
\qquad \quad - \Pi_n \big(\textcolor{black}{\omega \nabla \cdot\left( |\nabla \phi^{n, \varepsilon}_\omega|^2\nabla \phi^{n, \varepsilon}_\omega \right) }
\big)& \quad \text{a.e. in } \Omega \times (0,T),\\
\left\langle \partial_t\rho^{n, \varepsilon}_\omega, v \right\rangle_{V^*,V} + (\uu^{n, \varepsilon}_\omega \cdot \nabla \rho^{n, \varepsilon}_\omega, v) + (\nabla \psi^{n, \varepsilon}_\omega, \nabla v) = 0& \quad \forall \: v \in W_n, \aevv,\\
\psi^{n, \varepsilon}_\omega = \Pi_n \Big( -\beta \Delta \rho^{n, \varepsilon}_\omega + S'_{\rho,\varepsilon}(\rho^{n, \varepsilon}_\omega) - \dfrac{\theta}{2}|\nabla \phi^{n, \varepsilon}_\omega|^2 \Big) & \quad \text{a.e. in } \Omega \times (0,T), \\
\uu^{n, \varepsilon}_\omega(\cdot,0) = P_n(\uu_0) =: \uu_{0}^n & \quad \text{in } \Omega,\\
\phi^{n, \varepsilon}_\omega(\cdot, 0) = \Pi_n(\phi_{0}) =: \phi_{0}^n, \quad
\rho^{n, \varepsilon}_\omega(\cdot, 0) = \Pi_n(\rho_{0}) =: \rho_{0}^n & \quad \text{in } \Omega.
\end{cases}
\end{equation}
Inserting the expressions of those approximate solutions into the above weak formulation, we arrive at a system consisting of $5n$ ordinary differential equations in the unknowns $\big(a_i(t), b_i(t), c_i(t), d_i(t), e_i(t)\big)$, $i=1,...,n$. Recalling the assumptions \textbf{(H1)}--\textbf{(H4)}, an application of the Cauchy--Lipschitz theorem entails
\begin{proposition}\label{wellODE}
For any positive integer $n$, there exists $T_n \in(0,T]$ such that  problem \eqref{eq:weakrega} admits a unique local solution $(\uu_\omega^{n,\varepsilon},\phi_\omega^{n,\varepsilon}, \rho_\omega^{n,\varepsilon}, \mu_\omega^{n,\varepsilon}, \psi_\omega^{n,\varepsilon})$ on $[0,T_n]$, which is given by the functions $a_i, b_i, c_i, d_i, e_i\in \mathcal{C}^1([0,T_n])$, $i=1, \dots, n$.
\end{proposition}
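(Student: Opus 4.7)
The plan is to reformulate \eqref{eq:weakrega} as a standard initial value problem for a first-order ODE system in $\mathbb{R}^{3n}$ and invoke the classical Cauchy--Lipschitz theorem. First I would plug the finite-dimensional ans\"atze for $\uu_\omega^{n,\varepsilon}, \phi_\omega^{n,\varepsilon}, \rho_\omega^{n,\varepsilon}$ into the weak formulation and test against $\mathbf{w}_k$ (in the momentum equation) and $w_k$ ($k = 1, \ldots, n$) in the remaining ones. Since $\{w_k\}$ is an orthonormal basis of $L^2(\Omega)$ and $\{\mathbf{w}_k\}$ is an orthonormal basis of $\mathbf{H}_\sigma$, the mass matrices multiplying $\dot{a}_i, \dot{b}_i, \dot{e}_i$ reduce to the identity, so the differential part of the system is already in normal form.

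Next I would eliminate $\mu_\omega^{n,\varepsilon}$ and $\psi_\omega^{n,\varepsilon}$ by means of the purely algebraic (in $t$) identities in the fourth and sixth lines of \eqref{eq:weakrega}. Testing against $w_i$ yields
\begin{align*}
c_i(t) &= \bigl(\alpha \Delta^2 \phi_\omega^{n,\varepsilon} - \Delta \phi_\omega^{n,\varepsilon} + S'_\phi(\phi_\omega^{n,\varepsilon}) + \theta \nabla \cdot (\rho_\omega^{n,\varepsilon}\nabla \phi_\omega^{n,\varepsilon}) - \omega \nabla \cdot (|\nabla \phi_\omega^{n,\varepsilon}|^2 \nabla \phi_\omega^{n,\varepsilon}),\, w_i\bigr), \\
d_i(t) &= \bigl(-\beta \Delta \rho_\omega^{n,\varepsilon} + S'_{\rho,\varepsilon}(\rho_\omega^{n,\varepsilon}) - \tfrac{\theta}{2}|\nabla \phi_\omega^{n,\varepsilon}|^2,\, w_i\bigr),
\end{align*}
so that $c_i, d_i$ become explicit functions of $(a_j, b_j)_{j=1}^n$. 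Because each $w_j$ is a smooth eigenfunction of the Neumann--Laplace operator (in particular $\Delta^2 w_j = \eta_j^2 w_j$), the linear and multilinear contributions (including the coupling $\theta\nabla\cdot(\rho\nabla\phi)$ and the cubic term $\omega\nabla\cdot(|\nabla\phi|^2\nabla\phi)$) are polynomial in the coefficients, whereas $(S'_\phi(\phi_\omega^{n,\varepsilon}), w_i)$ and $(S'_{\rho,\varepsilon}(\rho_\omega^{n,\varepsilon}), w_i)$ are of class $\mathcal{C}^1$ thanks to $S_\phi \in \mathcal{C}^2(\mathbb{R})$ from \textbf{(H2)} and to the global $\mathcal{C}^2$ regularity of $S_{\rho,\varepsilon}$ ensured by the second-order Taylor construction in \eqref{eq:taylor}.

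Substituting these expressions back into the three evolution equations yields a closed system $\dot{Y}(t) = F(Y(t))$, $Y(0) = Y_0$, for $Y = (a_1, \ldots, a_n, b_1, \ldots, b_n, e_1, \ldots, e_n) \in \mathbb{R}^{3n}$, with initial datum $Y_0$ extracted from the coefficients of $\Pi_n \phi_0, \Pi_n \rho_0, P_n \uu_0$. It then remains to check that $F$ is locally Lipschitz: the convective and capillary contributions are polynomial; the viscous term $(\nu(\phi_\omega^{n,\varepsilon}, \rho_\omega^{n,\varepsilon}) D\uu_\omega^{n,\varepsilon}, D\mathbf{w}_k)$ is $\mathcal{C}^2$ in the coefficients thanks to $\nu \in \mathcal{C}^2(\mathbb{R}^2)$ from \textbf{(H1)}; and the potential contributions are $\mathcal{C}^1$ as noted above. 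Hence $F \in \mathcal{C}^1(\mathbb{R}^{3n};\mathbb{R}^{3n})$, and the Cauchy--Lipschitz theorem delivers a unique maximal solution $a_i, b_i, e_i \in \mathcal{C}^1([0, T_n])$ for some $T_n \in (0, T]$, whence $c_i, d_i$ inherit the same $\mathcal{C}^1$ regularity through the explicit algebraic formulas above. The only conceptual point, rather than a genuine obstacle, is that one must work with the regularization $\widehat{S}_{\rho,\varepsilon}$ of \eqref{eq:taylor} instead of $\widehat{S}_\rho$ itself (the latter being defined only on $(0,1)$ with derivatives blowing up at the endpoints); it is precisely the globally defined $\widehat{S}_{\rho,\varepsilon} \in \mathcal{C}^2(\mathbb{R})$ that provides Lipschitz control of $F$ over the whole phase space $\mathbb{R}^{3n}$, so that no \emph{a priori} positivity or boundedness of the Galerkin coefficients is needed at this stage.
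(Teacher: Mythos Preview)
Your proof is correct and follows essentially the same approach as the paper, which simply states that inserting the Galerkin ans\"atze yields a system of ODEs in the coefficients and then invokes the Cauchy--Lipschitz theorem via \textbf{(H1)}--\textbf{(H4)}. Your sketch is in fact more precise than the paper's one-line justification: you correctly observe that the equations for $\mu_\omega^{n,\varepsilon}$ and $\psi_\omega^{n,\varepsilon}$ are purely algebraic, so the genuine ODE system is $3n$-dimensional rather than the ``$5n$ ordinary differential equations'' the paper loosely refers to, and you spell out why the regularized potential $\widehat{S}_{\rho,\varepsilon}\in\mathcal{C}^2(\mathbb{R})$ is what makes the right-hand side globally $\mathcal{C}^1$.
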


\subsection{Uniform estimates}
Here we proceed to derive some bounds of the local approximating solutions that are uniform with respect to $n$, $\varepsilon$, and $\omega$.
The first one is the following energy estimate (cf. Proposition \ref{MEconv})
\begin{lemma} \label{prop:enebound1}
For every $t \in (0,T_n]$, it holds
\begin{align*}
&\ene(\mathbf{u}^{n, \varepsilon}_\omega(t), \phi^{n, \varepsilon}_\omega(t),\rho^{n, \varepsilon}_\omega(t)) + \ii{0}{t}{ \|\sqrt{\nu(\phi^{n, \varepsilon}_\omega(\tau),\rho^{n, \varepsilon}_\omega(\tau))}D \mathbf{u}^{n, \varepsilon}_\omega(\tau) \|^2 + \| \nabla \mu^{n, \varepsilon}_\omega(\tau) \|^2 + \| \nabla \psi^{n, \varepsilon}_\omega(\tau)\|^2 }{\tau} \leq C_1,
\end{align*}
and
\begin{align*}
\ene(\mathbf{u}^{n, \varepsilon}_\omega(t),\phi^{n, \varepsilon}_\omega(t),\rho^{n, \varepsilon}_\omega(t))
&\geq \dfrac{1}{2}\|\uu_{\omega}^{n,\varepsilon}(t)\|^2
+ \dfrac{\alpha}{4} \|\Delta \phi^{n, \varepsilon}_\omega(t)\|^2
+ \dfrac{1}{2} \|\nabla \phi^{n, \varepsilon}_\omega(t)\|^2
+ \dfrac{\beta}{2}\|\nabla \rho^{n, \varepsilon}_\omega(t)\|^2
\\
&\quad + \frac{c_3}{2} \|\phi^{n, \varepsilon}_\omega(t)\|^4_{L^4(\Omega)}
+ \dfrac{\omega}{8} \|\nabla\phi^{n, \varepsilon}_\omega(t)\|^4_{\mathbf{L}^4(\Omega)} -C_2,
\end{align*}
where $C_1>0$ is independent of $n$ and $\omega$, while $C_2>0$ is independent of $n$, $\varepsilon$, and $\omega$.
\end{lemma}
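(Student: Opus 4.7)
The plan is to prove the two bounds by distinct strategies: a Galerkin-level energy identity (analogous to Proposition \ref{MEconv}) for the first, and a purely algebraic coercivity inequality for the second.

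For the first bound, I would mimic the testing procedure of Proposition \ref{MEconv} within the finite-dimensional spaces. Since $\mu^{n,\varepsilon}_\omega$, $\psi^{n,\varepsilon}_\omega$, $\partial_t\phi^{n,\varepsilon}_\omega$ and $\partial_t\rho^{n,\varepsilon}_\omega$ all lie in $W_n$, and $\mathbf{u}^{n,\varepsilon}_\omega\in\mathbf{W}_n$, they are admissible test functions in \eqref{eq:weakrega}. Testing the momentum equation against $\mathbf{u}^{n,\varepsilon}_\omega$, the $\phi$-equation against $\mu^{n,\varepsilon}_\omega$, the defining identity for $\mu^{n,\varepsilon}_\omega$ against $\partial_t\phi^{n,\varepsilon}_\omega$, and symmetrically for $\rho,\psi$, then summing and exploiting incompressibility, the chain rule, and the coupling identity
$$\theta\int_\Omega\nabla\cdot(\rho^{n,\varepsilon}_\omega\nabla\phi^{n,\varepsilon}_\omega)\partial_t\phi^{n,\varepsilon}_\omega\,\mathrm{d}x+\frac{\theta}{2}\int_\Omega|\nabla\phi^{n,\varepsilon}_\omega|^2\partial_t\rho^{n,\varepsilon}_\omega\,\mathrm{d}x=\frac{\mathrm{d}}{\mathrm{d}t}\int_\Omega\frac{\theta}{2}\rho^{n,\varepsilon}_\omega|\nabla\phi^{n,\varepsilon}_\omega|^2\,\mathrm{d}x,$$
together with the analogous one for the quartic penalty $\omega|\nabla\phi|^2\nabla\phi$, would yield the differential energy identity $\frac{\mathrm{d}}{\mathrm{d}t}\ene+\|\sqrt{\nu}D\mathbf{u}^{n,\varepsilon}_\omega\|^2+\|\nabla\mu^{n,\varepsilon}_\omega\|^2+\|\nabla\psi^{n,\varepsilon}_\omega\|^2=0$. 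Integrating over $(0,t)$ reduces the first claim to a bound on $\ene(\mathbf{u}_0^n,\phi_0^n,\rho_0^n)$ uniform in $n$ and $\omega$: the quadratic contributions are controlled by contractivity of the orthogonal projections $P_n,\Pi_n$, the quartic penalty by $H^2(\Omega)\hookrightarrow W^{1,4}(\Omega)$ together with $\omega\leq 1$, the $S_\phi$ term by \textbf{(H2)} and Sobolev embedding, and the $S_{\rho,\varepsilon}$ term by the explicit construction \eqref{eq:taylor} (with a bound possibly depending on $\varepsilon$, consistent with the statement).

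For the coercivity, the argument is algebraic. I would retain in reserve half of $\frac{\alpha}{2}\|\Delta\phi\|^2$, half of the quartic penalty $\frac{\omega}{4}\|\nabla\phi\|^4_{\mathbf{L}^4(\Omega)}$, and half of the coercive contribution $c_3\|\phi\|^4_{L^4(\Omega)}$ supplied by \textbf{(H2)}. The indefinite coupling $-\frac{\theta}{2}\int_\Omega\rho|\nabla\phi|^2\,\mathrm{d}x$ is estimated by pointwise Young's inequality, $\frac{\theta}{2}|\rho||\nabla\phi|^2\leq\frac{\omega}{8}|\nabla\phi|^4+\frac{\theta^2}{2\omega}\rho^2$, so that the quartic remainder is absorbed by the reserved penalty, while the $\|\rho\|^2$ remainder is controlled through mass conservation at the Galerkin level (obtained by testing the $\rho$-equation of \eqref{eq:weakrega} with $v=1\in W_n$, yielding $\overline{\rho^{n,\varepsilon}_\omega(t)}=\overline{\rho_0}\in(0,1)$) combined with Poincaré--Wirtinger and the reserved fraction of $\frac{\beta}{2}\|\nabla\rho\|^2$. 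The uniform lower bound $\widehat{S}_{\rho,\varepsilon}\geq-\gamma_1$ from \eqref{eq:convexityapprox} together with the quadratic lower bound on $R_\rho$ supplied by \textbf{(H3)} (via $|R_\rho''|\leq L_1$) produces $\int_\Omega S_{\rho,\varepsilon}(\rho)\,\mathrm{d}x\geq -\tfrac{L_1}{2}\|\rho\|^2-C$; any residual lower-order polynomial terms in $\phi$ are absorbed by the reserved $\frac{c_3}{2}\|\phi\|^4_{L^4(\Omega)}$ via further Young's.

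The main obstacle is keeping the constant $C_2$ independent of $\omega$: a naive pointwise Young's on $-\frac{\theta}{2}\rho|\nabla\phi|^2$ generates an $\omega^{-1}$-weighted $\|\rho\|^2$ remainder, and only through the delicate interplay of mass conservation (which fixes $\overline{\rho}=\overline{\rho_0}$ uniformly), Poincaré--Wirtinger (which trades $\|\rho\|^2$ for $\|\nabla\rho\|^2$), and the reserved coercive terms can one recover a bound whose constant depends only on the coefficients of the system and on the prescribed initial mass $\overline{\rho_0}\in(0,1)$. This balance is what allows the subsequent limit procedures $\varepsilon\to 0^+$ and then $\omega\to 0^+$ to be carried out with uniform estimates in all three parameters.
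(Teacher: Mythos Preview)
Your argument for the upper bound is essentially the paper's: derive the Galerkin energy identity by the same testing procedure as Proposition~\ref{MEconv}, then bound $\ene(\uu_0^n,\phi_0^n,\rho_0^n)$ using projection contractivity and Sobolev embeddings, with the $\varepsilon$-dependence entering only through $\widehat{S}_{\rho,\varepsilon}(\rho_0^n)$.

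The lower bound, however, has a genuine gap. Your pointwise Young step produces $\frac{\theta^2}{2\omega}\|\rho\|^2$, and Poincar\'e--Wirtinger converts this to roughly $\frac{C\theta^2}{\omega}\|\nabla\rho\|^2+\frac{C}{\omega}|\overline{\rho_0}|^2$. Neither piece can be absorbed uniformly in $\omega$: the gradient term overwhelms the fixed reserve $\tfrac{\beta}{2}\|\nabla\rho\|^2$ as soon as $\omega$ is small, and the mean-value remainder blows up as $\omega\to0^+$. Mass conservation fixes $\overline{\rho}$ but does not shrink it, so the ``delicate interplay'' you invoke cannot remove the $\omega^{-1}$ factor; your $C_2$ would depend on $\omega$, contradicting the statement.

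The paper avoids this by a different mechanism. It splits $\Omega=\Omega_1\cup\Omega_2$ according to whether $0<\rho^{n,\varepsilon}_\omega<4$ or not. On $\Omega_1$ the coupling is handled as in Remark~\ref{Rm-ill} via integration by parts and absorption into $\tfrac{\alpha}{4}\|\Delta\phi\|^2$ and $\tfrac{c_3}{2}\|\phi\|_{L^4}^4$, with no $\omega$ appearing. On $\Omega_2$ the Young step is applied and does generate $\frac{\theta^2}{2\omega}\int_{\Omega_2}|\rho|^2$, but this is then absorbed not by $\|\nabla\rho\|^2$ but by the quadratic growth of the approximated singular potential: from \eqref{eq:taylor} one has $\widehat{S}_{\rho,\varepsilon}(s)\geq -\gamma_1+k_1(\varepsilon)s^2$ for $s\leq0$ and a similar bound for $s\geq4$, with $k_1(\varepsilon)\to+\infty$ as $\varepsilon\to0^+$. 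Choosing $\varepsilon<\epsilon_2(\omega)$ small enough that $k_1\geq C_R+\tfrac{\theta^2}{2\omega}$ yields the $\omega$-independent $C_2$. The crucial idea you are missing is thus to exploit the $\varepsilon$-dependent coercivity of $\widehat{S}_{\rho,\varepsilon}$ outside $[0,1]$, at the price of tying the admissible range of $\varepsilon$ to $\omega$.
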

\begin{proof}
Arguing as in the proof of Proposition \ref{MEconv}, in \eqref{eq:weakrega} we can take the test functions $\mathbf{v} = \uu^{n, \varepsilon}_\omega$, $v = \mu^{n, \varepsilon}_\omega$ and $v = \psi^{n, \varepsilon}_\omega$ in the equations for $\uu^{n, \varepsilon}_\omega$, $\phi^{n, \varepsilon}_\omega$ and $\rho^{n, \varepsilon}_\omega$, respectively, while multiplying the equations for the chemical potentials by $\partial_t \phi^{n, \varepsilon}_\omega$ and $\partial_t \rho^{n, \varepsilon}_\omega$ accordingly. Combining all the resulting equalities, we find
  \[
		\td{}{t}\ene(\mathbf{u}^{n, \varepsilon}_\omega, \phi^{n, \varepsilon}_\omega,\rho^{n, \varepsilon}_\omega)
+  \|\sqrt{\nu(\phi^{n, \varepsilon}_\omega,\rho^{n, \varepsilon}_\omega)}D \mathbf{u}^{n, \varepsilon}_\omega\|^2
+ \| \nabla \mu^{n, \varepsilon}_\omega\|^2
+ \| \nabla \psi^{n, \varepsilon}_\omega \|^2= 0, \quad \forall\, t\in (0,T_n).
  \]
For any $t \in (0,T_n]$, integrating the above identity over $[0,t]$, we obtain
\begin{align}
&\ene(\mathbf{u}^{n, \varepsilon}_\omega(t), \phi^{n, \varepsilon}_\omega(t),\rho^{n, \varepsilon}_\omega(t)) + \ii{0}{t}{\|\sqrt{\nu(\phi^{n, \varepsilon}_\omega(\tau),\rho^{n, \varepsilon}_\omega(\tau))}D \mathbf{u}^{n, \varepsilon}_\omega(\tau)\|^2
+\| \nabla \mu^{n, \varepsilon}_\omega(\tau) \|^2 + \| \nabla \psi^{n, \varepsilon}_\omega(\tau)\|^2 }{\tau} \notag\\
 &\quad = \ene(\uu^{\varepsilon}_\omega(0), \phi^{n, \varepsilon}_\omega(0), \rho^{n, \varepsilon}_\omega(0)).
\label{eq:energyn}
\end{align}
Concerning the initial energy, we have
\begin{align*}
&\ene(\uu^{n, \varepsilon}_\omega(0), \phi^{n, \varepsilon}_\omega(0), \rho^{n, \varepsilon}_\omega(0)) = \ene(\uu_{0}^n , \phi_{0}^n, \rho_{0}^n)\\
&\qquad  = \ii{\Omega}{}{\left(\dfrac{1}{2}|\uu_{0}^n|^2 + \dfrac{\alpha}{2} |\Delta \phi_{0}^n|^2 + \dfrac{1}{2}|\nabla \phi_{0}^n|^2 + {S}_{\phi}(\phi_{0}^n) + \dfrac{\beta}{2}|\nabla \rho_{0}^n|^2 + \widehat{S}_{\rho, \varepsilon}(\rho_{0}^n)
+ R_\rho(\rho_{0}^n) \right.\\
&\qquad\qquad \ \  \left. - \dfrac{\theta}{2} \rho_{0}^n |\nabla \phi_{0}^n|^2 + \frac{\omega}{4}|\nabla \phi_{0}^n|^4\right)}{x}.
\end{align*}
We easily obtain the bounds
\begin{align*}
& \| \uu^n_0\|^2 \leq \| \uu_0\|^2,\quad \| \nabla \phi_{0}^n\|^2 \leq \hh{\phi_{0}^n}^2 \leq \hh{\phi_{0}}^2, \quad 	\| \nabla \rho_{0}^n\|^2 \leq \hh{\rho_{0}^n}^2 \leq \hh{\rho_{0}}^2.
\end{align*}
Moreover, we notice that since $\phi^n_0 \to \phi_0$ in $H^2(\Omega)$, there exists $n^{*} \in \mathbb{N}$ such that for all $n > n^{*}$
		\[
		\| \Delta \phi_{0}^n\|^2 \leq \|\phi_{0}^n\|^2_{H^2(\Omega)} \leq C\big(1 + \|\phi_{0}\|^2_{H^2(\Omega)}\big),
		\]
where $C$ is independent of $n$.
Also, we infer from \textbf{(H3)} and \eqref{eq:taylor} that
$$
\widehat{S}_{\rho,\varepsilon}(s) \leq C(\varepsilon)\left(1+ s^2\right)\quad\text{and}\quad  |R_\rho(s)|\leq C_R(1+s^2),\quad \forall\,s\in \mathbb{R},
$$
where $C(\varepsilon)>0$ may depend on $\varepsilon$ and $C_R>0$ is a constant only depending on $R_\rho(0)$, $R'_\rho(0)$ and $L_1$.
On account of \textbf{(H2)}, we deduce that
\begin{align}
\left| \ii{\Omega}{}{{S}_{\phi}(\phi_{0}^n) + \widehat{S}_{\rho,\varepsilon}(\rho_{0}^n) + R_\rho(\rho_{0}^n)}{x} \right|
&\leq C(\|\phi_0^n\|_{H^2(\Omega)})+C(\varepsilon) (\|\rho_0^n\|^2 + 1) \notag\\
&\leq C(\|\phi_0\|_{H^2(\Omega)})+ C(\varepsilon)(\|\rho_0\|^2 + 1),
\label{eq:epsilonestimate}
\end{align}
where we have also used the Sobolev embedding $H^2(\Omega) \hookrightarrow L^\infty(\Omega)$ ($d=2,3$). The symbol $C(\|\phi_0\|_{H^2(\Omega)})$ denotes a positive constant depending on $\|\phi_0\|_{H^2(\Omega)}$ and $\Omega$ but not on $n,\omega, \varepsilon$, while $C(\varepsilon)$ is independent of $n$ and $\omega$. The remaining two terms in the approximate initial energy are treated by using the Cauchy--Schwarz as well as Young's inequalities:
\begin{align*}
\left| \ii{\Omega}{}{ - \dfrac{\theta}{2} \rho_{0}^n |\nabla \phi_{0}^n|^2 + \frac{\omega}{4}|\nabla \phi_{0}^n|^4}{x} \right|
& \leq \frac12 \|\rho_{0}^n\|^2+ \left(\frac{\omega}{4}+ \frac{\theta^2}{8}\right) \int_\Omega |\nabla \phi_{0}^n|^4\,\mathrm{d}x \\
& \leq C\big( \|\nabla \phi_{0}^n\|_{\mathbf{L}^4(\Omega)}^4 + \|\rho_{0}^n\|^2\big)\leq C\big( \|\phi_0^n\|_{H^2(\Omega)}^4 + \|\rho_0^n\|^2 \big) \\
& \leq C\big( \|\phi_0\|_{H^2(\Omega)}^4 + \|\rho_0\|^2 \big),
\end{align*}
where we have also used the Sobolev embedding $H^2(\Omega) \hookrightarrow W^{1,4}(\Omega)$ ($d=2,3$). Collecting the above estimates, we obtain the required upper bound by choosing a suitable constant $C_1>0$ depending on $\varepsilon$, $\| \uu_0 \|$, $\| \phi_0\|_{H^2(\Omega)}$, $ \| \rho_0 \|_{H^1(\Omega)}$ and $\Omega$, but independent of $n$ and $\omega$.

Concerning the lower bound, we exploit some observations made in \cite{FG12,Xu21}. Consider the energy functional for the approximate solution (recall \eqref{eq:approxenerg})
\begin{align*}
\ene(\mathbf{u}^{n, \varepsilon}_\omega, \phi^{n, \varepsilon}_\omega,\rho^{n, \varepsilon}_\omega)
& = \dfrac{1}{2}\|\uu_{\omega}^{n,\varepsilon}\|^2+ \dfrac{\alpha}{2} \|\Delta \phi^{n, \varepsilon}_\omega\|^2 + \dfrac{1}{2} \|\nabla \phi^{n, \varepsilon}_\omega\|^2
+ \dfrac{\beta}{2}\|\nabla \rho^{n, \varepsilon}_\omega\|^2
   \\
   &\quad  + \ii{\Omega}{}{\left(S_{\phi}(\phi^{n, \varepsilon}_\omega) + \widehat{S}_{\rho, \varepsilon}(\rho^{n, \varepsilon}_\omega)
   +R_\rho(\rho^{n, \varepsilon}_\omega) - \dfrac{\theta}{2} \rho^{n, \varepsilon}_\omega |\nabla \phi^{n, \varepsilon}_\omega|^2+\frac{\omega}{4} | \nabla \phi^{n, \varepsilon}_\omega|^4\right)}{x}.
\end{align*}
From \textbf{(H2)} we infer that
\begin{align*}
&\ii{\Omega}{}{S_{\phi}(\phi^{n, \varepsilon}_\omega) }{x}\geq c_3\int_\Omega |\phi^{n, \varepsilon}_\omega|^4\,\mathrm{d}x -c_4|\Omega|.
\end{align*}
Set
$$
\Omega_1=\{x\in \Omega: 0< \rho^{n, \varepsilon}_\omega(x)<4\}\quad \text{and}\quad \Omega_2=\Omega\backslash \Omega_1.
$$
Then we find
\begin{align*}
\ii{\Omega}{}{R_\rho(\rho^{n, \varepsilon}_\omega) }{x}
& \geq - C_R\big (|\Omega|+\|\rho^{n, \varepsilon}_\omega\|^2\big)\\
& = - C_R|\Omega| - C_R\int_{\Omega_1} |\rho^{n, \varepsilon}_\omega|^2\,\mathrm{d}x - C_R\int_{\Omega_2} |\rho^{n, \varepsilon}_\omega|^2\,\mathrm{d}x\\
& \geq -17C_R|\Omega| - C_R\int_{\Omega_2} |\rho^{n, \varepsilon}_\omega|^2\,\mathrm{d}x.
\end{align*}
Next, using the Cauchy--Schwarz inequality, H\"{o}lder's inequality and Young's inequality, we infer that
\begin{align*}
&-\dfrac{\theta}{2} \int_\Omega \rho^{n, \varepsilon}_\omega |\nabla\phi^{n, \varepsilon}_\omega|^2\, \mathrm{d}x \\
&\quad  = -\dfrac{\theta}{2} \int_{\Omega_1} \rho^{n, \varepsilon}_\omega |\nabla\phi^{n, \varepsilon}_\omega|^2\, \mathrm{d}x
-\dfrac{\theta}{2} \int_{\Omega_2} \rho^{n, \varepsilon}_\omega |\nabla\phi^{n, \varepsilon}_\omega|^2\, \mathrm{d}x\\
&\quad  \geq  -2\theta\|\nabla\phi^{n, \varepsilon}_\omega\|^2
- \dfrac{\omega}{8} \int_{\Omega_2}  |\nabla\phi^{n, \varepsilon}_\omega|^4 \,\mathrm{d}x
- \dfrac{\theta^2}{2\omega}\int_{\Omega_2}  |\rho^{n, \varepsilon}_\omega|^2\, \mathrm{d}x\\
&\quad  = 2\theta\int_\Omega \phi^{n, \varepsilon}_\omega \Delta\phi^{n, \varepsilon}_\omega\mathrm{d}{x}
- \dfrac{\omega}{8} \int_{\Omega_2}  |\nabla\phi^{n, \varepsilon}_\omega|^4 \,\mathrm{d}x
-  \dfrac{\theta^2}{2\omega} \int_{\Omega_2} |\rho^{n, \varepsilon}_\omega|^2\, \mathrm{d}x\\
&\quad  \geq -\frac{\alpha}{4}\|\Delta \phi^{n, \varepsilon}_\omega\|^2
- \frac{4\theta^2}{\alpha} \|\phi^{n, \varepsilon}_\omega\|^2
- \dfrac{\omega}{8} \int_{\Omega}   |\nabla\phi^{n, \varepsilon}_\omega|^4\, \mathrm{d}x - \dfrac{\theta^2}{2\omega} \int_{\Omega_2} |\rho^{n, \varepsilon}_\omega|^2\, \mathrm{d}x\\
&\quad  \geq  -\frac{\alpha}{4}\|\Delta \phi^{n, \varepsilon}_\omega\|^2
- \frac{c_3}{2}\int_\Omega |\phi^{n, \varepsilon}_\omega|^4\,\mathrm{d}x
- \frac{8\theta^4}{c_3\alpha^2} |\Omega|
- \dfrac{\omega}{8} \int_{\Omega}  |\nabla\phi^{n, \varepsilon}_\omega|^4\, \mathrm{d}x
- \dfrac{\theta^2}{2\omega} \int_{\Omega_2} |\rho^{n, \varepsilon}_\omega|^2\, \mathrm{d}x.
\end{align*}
Therefore, from the above estimates we deduce that
\begin{align*}
\ene(\mathbf{u}^{n, \varepsilon}_\omega, \phi^{n, \varepsilon}_\omega, \rho^{n, \varepsilon}_\omega)
&\geq  \dfrac{1}{2}\|\uu_{\omega}^{n,\varepsilon}\|^2
+ \dfrac{\alpha}{4} \|\Delta \phi^{n, \varepsilon}_\omega\|^2
+ \dfrac{1}{2} \|\nabla \phi^{n, \varepsilon}_\omega\|^2
+ \dfrac{\beta}{2}\|\nabla \rho^{n, \varepsilon}_\omega\|^2
+ \frac{c_3}{2} \int_\Omega |\phi^{n, \varepsilon}_\omega|^4\,\mathrm{d}x
\\
&\quad + \int_{\Omega} \dfrac{\omega}{8} |\nabla\phi^{n, \varepsilon}_\omega|^4 \, \mathrm{d}x
- \left(c_4+17C_R+\frac{8\theta^4}{c_3\alpha^2}\right)|\Omega|
- \left(C_R+\dfrac{\theta^2}{2\omega}\right)\int_{\Omega_2} |\rho^{n, \varepsilon}_\omega|^2\,\mathrm{d}x \\
&\quad +  \ii{\Omega}{}{ \widehat{S}_{\rho, \varepsilon}(\rho^{n, \varepsilon}_\omega)}{x}.
\end{align*}
Recalling now \eqref{eq:taylor} and \textbf{(H3)}, we see that $\widehat{S}_\rho'(\varepsilon)<0$,  $\widehat{S}_\rho'(1-\varepsilon)>0$, $\widehat{S}_\rho''(\varepsilon)>0$,  $\widehat{S}_\rho''(1-\varepsilon)>0$, when $\varepsilon\in (0,\epsilon_2)$, for some sufficiently small $\epsilon_2\in (0,1)$. Then we have
 $$
 \widehat{S}_{\rho, \varepsilon}(s)\geq \widehat{S}_{\rho, \varepsilon}(\varepsilon) +\dfrac{\widehat{S}_\rho''(\varepsilon)}{2}(s-\varepsilon)^2,\quad \forall\, s\leq \varepsilon,
 $$
 which implies
 $$
 \widehat{S}_{\rho, \varepsilon}(s)\geq -\gamma_1 +\dfrac{\widehat{S}_\rho''(\varepsilon)}{2}s^2,\quad \forall\, s\leq 0.
 $$
 On the other hand, we have
 $$
 \widehat{S}_{\rho, \varepsilon}(s)\geq \widehat{S}_{\rho, \varepsilon}(1-\varepsilon) +  \dfrac{\widehat{S}_\rho''(1-\varepsilon)}{2}[s-(1-\varepsilon)]^2,\quad \forall\, s\geq 1-\varepsilon,
 $$
 so that
 $$
 \widehat{S}_{\rho, \varepsilon}(s)\geq -\gamma_1 +  \dfrac{\widehat{S}_\rho''(1-\varepsilon)}{2}(s-1)^2,\quad \forall\, s\geq 1.
 $$
 Observing that
 $$
 (s-1)^2=\frac12 s^2+\frac12(s-2)^2-1\geq \frac12 s^2,\quad \forall\, s\geq 2+\sqrt{2},
 $$
 we then obtain
 $$
 \widehat{S}_{\rho, \varepsilon}(s)\geq -\gamma_1 +  \dfrac{\widehat{S}_\rho''(1-\varepsilon)}{4}s^2,\quad \forall\, s\geq 4.
 $$
 From the above observations and \textbf{(H3)}, we conclude that there exists $k_1>0$ depending on $\varepsilon$ such that for any $\varepsilon \in (0, \epsilon_2)$
\begin{align*}
\displaystyle\ii{\Omega}{}{\widehat{S}_{\rho, \varepsilon}(\rho^{n, \varepsilon}_\omega)}{x}
& =	\displaystyle\ii{\Omega_1}{}{\widehat{S}_{\rho, \varepsilon}(\rho^{n, \varepsilon}_\omega)}{x}
    + \displaystyle\ii{\Omega_2}{}{\widehat{S}_{\rho, \varepsilon}(\rho^{n, \varepsilon}_\omega)}{x}\\
&\geq  - \gamma_1|\Omega| + k_1 \int_{\Omega_2} |\rho^{n, \varepsilon}_\omega |^{2}\mathrm{d}x.
\end{align*}
The constant $k_1=k_1(\varepsilon)$ can be taken arbitrarily large, as long as $\epsilon_2$ is sufficiently small. Thus, by choosing $0<\epsilon_2 = \epsilon_2(\omega)<<1$ such that
$$
k_1-\left(C_R+\dfrac{\theta^2}{2\omega}\right)\geq 0,\quad \varepsilon\in (0,\epsilon_2),
$$
we infer that
\begin{align*}
\ene(\mathbf{u}^{n, \varepsilon}_\omega, \phi^{n, \varepsilon}_\omega, \rho^{n, \varepsilon}_\omega)
&\geq  \dfrac{1}{2}\|\uu_{\omega}^{n,\varepsilon}\|^2
+ \dfrac{\alpha}{4} \|\Delta \phi^{n, \varepsilon}_\omega\|^2
+ \dfrac{1}{2} \|\nabla \phi^{n, \varepsilon}_\omega\|^2
+ \dfrac{\beta}{2}\|\nabla \rho^{n, \varepsilon}_\omega\|^2
+ \frac{c_3}{2} \int_\Omega |\phi^{n, \varepsilon}_\omega|^4\,\mathrm{d}x
\\
&\quad + \int_{\Omega} \dfrac{\omega}{8} |\nabla\phi^{n, \varepsilon}_\omega|^4\, \mathrm{d}x
- \left(c_4+17C_R+\frac{8\theta^4}{c_3\alpha^2}+\gamma_1\right)|\Omega|\\
&\quad + \left[k_1-\left(C_R+\dfrac{\theta^2}{2\omega}\right)\right]\int_{\Omega_2} |\rho^{n, \varepsilon}_\omega|^2\,\mathrm{d}x\\
&\geq \dfrac{1}{2}\|\uu_{\omega}^{n,\varepsilon}\|^2
+ \dfrac{\alpha}{4} \|\Delta \phi^{n, \varepsilon}_\omega\|^2
+ \dfrac{1}{2} \|\nabla \phi^{n, \varepsilon}_\omega\|^2
+ \dfrac{\beta}{2}\|\nabla \rho^{n, \varepsilon}_\omega\|^2
+ \frac{c_3}{2} \int_\Omega |\phi^{n, \varepsilon}_\omega|^4\,\mathrm{d}x
\\
&\quad +\dfrac{\omega}{8}  \int_{\Omega} |\nabla\phi^{n, \varepsilon}_\omega|^4\, \mathrm{d}x - C_2,
\end{align*}
for all $\varepsilon \in (0, \min(\epsilon_1, \epsilon_2))$,
where
$C_2=\big(c_4+17C_R+\frac{8\theta^4}{c_3\alpha^2}+\gamma_1\big)|\Omega|$
is independent of $n$, $\varepsilon$, and $\omega$.

The proof is complete.
\end{proof}

We can now obtain some uniform estimates for the approximate solutions $(\uu_\omega^{n,\varepsilon},\phi_\omega^{n,\varepsilon}, \rho_\omega^{n,\varepsilon}, \mu_\omega^{n,\varepsilon}, \psi_\omega^{n,\varepsilon})$.
\begin{lemma} \label{prop:ub1}
The sequence $\{\uu^{n, \varepsilon}_\omega\}$ is uniformly bounded in $L^\infty(0,T_n;\mathbf{H}_\sigma) \cap L^2(0,T_n; \mathbf{V}_\sigma)$.
The sequence $\{\phi^{n, \varepsilon}_\omega\}$ is uniformly bounded in $L^\infty(0,T_n;H^2(\Omega))$.
The sequence $\{\rho^{n, \varepsilon}_\omega\}$ is uniformly bounded in $L^\infty(0,T_n;H^1(\Omega))$. The bounds are independent of $\omega$ and $n$, but may depend on $\varepsilon$.
\end{lemma}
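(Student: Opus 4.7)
The plan is to read off all the claimed bounds from Lemma \ref{prop:enebound1} and complete the picture by controlling the mean values of $\phi^{n,\varepsilon}_\omega$ and $\rho^{n,\varepsilon}_\omega$, then invoking the elliptic regularity of Lemma \ref{th:ellreg} and Korn's inequality.

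First, I would combine the two assertions of Lemma \ref{prop:enebound1}. Writing the upper bound as $\ene(\uu^{n,\varepsilon}_\omega(t),\phi^{n,\varepsilon}_\omega(t),\rho^{n,\varepsilon}_\omega(t))\leq C_1$ and subtracting off the $-C_2$ in the coercivity inequality, I obtain the pointwise-in-time estimate
\[
\tfrac{1}{2}\|\uu^{n,\varepsilon}_\omega(t)\|^2+\tfrac{\alpha}{4}\|\Delta\phi^{n,\varepsilon}_\omega(t)\|^2+\tfrac{1}{2}\|\nabla\phi^{n,\varepsilon}_\omega(t)\|^2+\tfrac{\beta}{2}\|\nabla\rho^{n,\varepsilon}_\omega(t)\|^2\leq C_1+C_2,
\]
for every $t\in(0,T_n]$, with right-hand side independent of $n$ and $\omega$ (but possibly depending on $\varepsilon$ through $C_1$). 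Coupling the dissipation integral in Lemma \ref{prop:enebound1} with the lower viscosity bound $\nu_*\leq\nu$ from \textbf{(H1)} and Korn's inequality $\|\nabla\uu\|\leq\sqrt{2}\|D\uu\|$ yields the $L^2(0,T_n;\mathbf{V}_\sigma)$ bound
\[
\int_0^{T_n}\|\nabla\uu^{n,\varepsilon}_\omega(\tau)\|^2\,\mathrm{d}\tau\leq \frac{2C_1}{\nu_*}.
\]

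Next, to upgrade the previous estimate into the full $H^2$ (resp. $H^1$) bound for $\phi^{n,\varepsilon}_\omega$ (resp. $\rho^{n,\varepsilon}_\omega$), I need control of their mean values. Since the constant function $|\Omega|^{-1/2}$ is the first eigenfunction of $-\Delta$ with homogeneous Neumann boundary condition, it lies in $W_n$ for every $n\geq 1$. Testing the Galerkin equations for $\phi^{n,\varepsilon}_\omega$ and $\rho^{n,\varepsilon}_\omega$ with $v=1$, the $(\nabla\mu^{n,\varepsilon}_\omega,\nabla v)$ and $(\nabla\psi^{n,\varepsilon}_\omega,\nabla v)$ terms vanish, and the convective terms also vanish after integration by parts using $\nabla\cdot\uu^{n,\varepsilon}_\omega=0$ in $\Omega$ and $\uu^{n,\varepsilon}_\omega=\mathbf{0}$ on $\partial\Omega$. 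This yields the conservation of mass at the Galerkin level, namely $\int_\Omega\phi^{n,\varepsilon}_\omega(t)\,\mathrm{d}x=\int_\Omega\phi^n_0\,\mathrm{d}x$ and an analogous identity for $\rho^{n,\varepsilon}_\omega$, so that $|\overline{\phi^{n,\varepsilon}_\omega}(t)|\leq|\Omega|^{-1/2}\|\phi_0\|$ and $|\overline{\rho^{n,\varepsilon}_\omega}(t)|\leq|\Omega|^{-1/2}\|\rho_0\|$ uniformly in $t$, $n$ and $\omega$.

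Finally, I would decompose $\phi^{n,\varepsilon}_\omega=(\phi^{n,\varepsilon}_\omega-\overline{\phi^{n,\varepsilon}_\omega})+\overline{\phi^{n,\varepsilon}_\omega}$ and apply Lemma \ref{th:ellreg} to the zero-mean part (which still satisfies the homogeneous Neumann condition because the mean is a constant), together with the Poincar\'e--Wirtinger inequality \eqref{poincare} for $\rho^{n,\varepsilon}_\omega$, to conclude
\[
\|\phi^{n,\varepsilon}_\omega(t)\|_{H^2(\Omega)}\leq C\bigl(\|\Delta\phi^{n,\varepsilon}_\omega(t)\|+|\overline{\phi^{n,\varepsilon}_\omega}(t)|\bigr),\qquad \|\rho^{n,\varepsilon}_\omega(t)\|_{H^1(\Omega)}\leq C\bigl(\|\nabla\rho^{n,\varepsilon}_\omega(t)\|+|\overline{\rho^{n,\varepsilon}_\omega}(t)|\bigr),
\]
and each term on the right has already been controlled in the previous two steps. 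No step is really an obstacle: all bounds are encoded in Lemma \ref{prop:enebound1}, and the only point worth stressing is that the constant function belongs to $W_n$, which is precisely what keeps mass conservation alive at the discrete level and allows the absence of any information on the mean values to be bypassed.
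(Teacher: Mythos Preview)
Your proposal is correct and follows essentially the same approach as the paper: both extract the $L^\infty_t$ and $L^2_t$ bounds directly from the energy estimate of Lemma~\ref{prop:enebound1} together with Korn's inequality, control the spatial means via mass conservation at the Galerkin level (the paper phrases this as ``by orthogonality of the eigenvectors''), and then upgrade to the full $H^2$ and $H^1$ norms using the Poincar\'e--Wirtinger inequality and elliptic regularity.
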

\begin{proof}
It follows from Lemma \ref{prop:enebound1} that
\begin{align}
 C_3\left(\| \mathbf{u}^{n, \varepsilon}_\omega(t) \|^2 + \| \Delta \phi^{n, \varepsilon}_\omega(t)\|^2 + \| \nabla \phi^{n, \varepsilon}_\omega(t) \|^2  + \|\nabla \rho^{n, \varepsilon}_\omega(t)\|^2 \right)
\leq \ene(\uu^{n, \varepsilon}_\omega(t), \phi^{n, \varepsilon}_\omega(t), \rho^{n, \varepsilon}_\omega(t)) + C_2.
\label{eq:aux1}
\end{align}
where $C_3=\frac{1}{8}\min\{1,\alpha,\beta,c_3\}>0$. Besides, we note that the averages of $\phi^{n, \varepsilon}_\omega$ and $\rho^{n, \varepsilon}_\omega$ are independent of $n$ and $t$ (by orthogonality of the eigenvectors), that is,
$$
\int_\Omega \phi^{n, \varepsilon}_\omega(x,t)\,\mathrm{d} x
= \int_\Omega \phi_0^n(x) \,\mathrm{d} x
= \int_\Omega \phi_0(x) \,\mathrm{d} x,\quad
\int_\Omega \rho^{n, \varepsilon}_\omega(x,t)\,\mathrm{d} x
= \int_\Omega \rho_0^n(x) \,\mathrm{d} x
= \int_\Omega \rho_0(x) \,\mathrm{d} x, \quad \forall\, t\in [0,T_n].
$$
Therefore, by the triangle inequality and the Poincar\'{e}--Wirtinger inequality, we have
    \[
    \|\phi^{n, \varepsilon}_\omega\|^2 \leq 2\|\phi^{n, \varepsilon}_\omega - \overline{\phi^{n, \varepsilon}_\omega}\|^2 + 2\| \overline{\phi^{n, \varepsilon}_\omega} \|^2 \leq C \| \nabla \phi^{n, \varepsilon}_\omega \|^2
      + C |\overline{\phi^{n, \varepsilon}_\omega}|^2
    =  C\left( \| \nabla \phi^{n, \varepsilon}_\omega \|^2
      + |\overline{\phi_0}|^2\right),
    \]
		since $\phi^{n, \varepsilon}_\omega \in H^1(\Omega)$. Thus, a uniform bound of $\phi^{n, \varepsilon}_\omega$ in $L^\infty(0,T_n;H^1(\Omega))$ is obtained. The  $L^\infty(0,T_n;H^2(\Omega))$-bound then follows from the standard elliptic regularity theory.
		A similar argument yields a bound for $\rho^{n, \varepsilon}_\omega$ in $L^\infty(0,T_n;H^1(\Omega))$. Concerning the velocity field $\uu^{n, \varepsilon}_\omega$, it follows from \eqref{eq:aux1}, Lemma \ref{prop:enebound1} and Korn's inequality that
		\[
		\ii{0}{t}{\|\sqrt{\nu(\phi^{n, \varepsilon}_\omega(\tau),\rho^{n, \varepsilon}_\omega(t))}D \mathbf{u}^{n, \varepsilon}_\omega(t)\|^2}{\tau} \geq \ii{0}{t}{\dfrac{\nu_*}{2}\| \nabla \uu^{n, \varepsilon}_\omega(\tau)\|^2}{\tau},
\quad \forall\, t\in (0,T_n].
		\]
The proof is complete.
	\end{proof}

We now prove a priori bounds for the chemical potentials.
\begin{lemma} \label{prop:ub2}
The sequences $\{\mu^{n, \varepsilon}_\omega\}$ and $\{\psi^{n, \varepsilon}_\omega\}$  are uniformly bounded in $L^2(0,T_n;H^1(\Omega))$. The bounds are independent of $\omega$, $n$, but may depend on $\varepsilon$.
\end{lemma}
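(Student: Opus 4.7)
The plan is to combine the gradient bounds already furnished by Lemma \ref{prop:enebound1} with uniform bounds on the spatial means $\overline{\mu^{n,\varepsilon}_\omega}$ and $\overline{\psi^{n,\varepsilon}_\omega}$, then close the estimate via the Poincaré--Wirtinger inequality \eqref{poincare}. Indeed, Lemma \ref{prop:enebound1} already controls $\nabla\mu^{n,\varepsilon}_\omega$ and $\nabla\psi^{n,\varepsilon}_\omega$ in $L^2(0,T_n;\mathbf{L}^2(\Omega))$ uniformly in $n$ and $\omega$, so the only missing piece is a uniform (in $n,\omega$, possibly depending on $\varepsilon$) $L^\infty(0,T_n)$-bound on the two means.

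For $\overline{\mu^{n,\varepsilon}_\omega}$, I exploit the fact that the constant function $1$ is (up to normalization) the first Neumann--Laplace eigenfunction and therefore lies in $W_n$ for every $n\geq 1$. Taking the $L^2$-inner product of the fourth identity in \eqref{eq:weakrega} with $1$, and using that $\Pi_n$ is an $L^2$-orthogonal projection, I obtain
\[
|\Omega|\,\overline{\mu^{n,\varepsilon}_\omega}
= \int_\Omega\!\big(\alpha\Delta^2\phi^{n,\varepsilon}_\omega - \Delta\phi^{n,\varepsilon}_\omega + S'_\phi(\phi^{n,\varepsilon}_\omega) + \theta\nabla\!\cdot(\rho^{n,\varepsilon}_\omega\nabla\phi^{n,\varepsilon}_\omega) - \omega\nabla\!\cdot(|\nabla\phi^{n,\varepsilon}_\omega|^2\nabla\phi^{n,\varepsilon}_\omega)\big)\,\mathrm{d}x.
\]
Since $\phi^{n,\varepsilon}_\omega$ is a linear combination of Neumann eigenfunctions, it inherits $\partial_\mathbf{n}\phi^{n,\varepsilon}_\omega = \partial_\mathbf{n}\Delta\phi^{n,\varepsilon}_\omega = 0$ on $\partial\Omega$, so an application of the divergence theorem annihilates the biharmonic, Laplacian and both divergence contributions. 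What remains is $|\Omega|\,|\overline{\mu^{n,\varepsilon}_\omega}|\leq \int_\Omega|S'_\phi(\phi^{n,\varepsilon}_\omega)|\,\mathrm{d}x$, which by the last bullet of \textbf{(H2)} is controlled by a constant multiple of $\int_\Omega S_\phi(\phi^{n,\varepsilon}_\omega)\,\mathrm{d}x + |\Omega|$; this quantity is uniformly bounded thanks to Lemma \ref{prop:enebound1}.

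The argument for $\overline{\psi^{n,\varepsilon}_\omega}$ is completely analogous: testing the sixth identity of \eqref{eq:weakrega} against $1$, the Laplacian term integrates to zero by the Neumann condition and one is left with
\[
|\Omega|\,\overline{\psi^{n,\varepsilon}_\omega} = \int_\Omega S'_{\rho,\varepsilon}(\rho^{n,\varepsilon}_\omega)\,\mathrm{d}x - \dfrac{\theta}{2}\|\nabla\phi^{n,\varepsilon}_\omega\|^2.
\]
The $|\nabla\phi^{n,\varepsilon}_\omega|^2$ contribution is uniformly controlled by Lemma \ref{prop:ub1}. For the singular part, the Taylor-based construction \eqref{eq:taylor} together with the upper bound on $\widehat{S}''_{\rho,\varepsilon}$ in \eqref{eq:convexityapprox} gives the linear growth $|\widehat{S}'_{\rho,\varepsilon}(s)|\leq C(\varepsilon)(1+|s|)$ on all of $\mathbb{R}$; combined with $|R'_\rho(s)|\leq C(1+|s|)$, which follows from the Lipschitz bound on $R''_\rho$ in \textbf{(H3)}, and the $L^\infty(0,T_n;L^2(\Omega))$-bound on $\rho^{n,\varepsilon}_\omega$ from Lemma \ref{prop:ub1}, this yields an $L^\infty(0,T_n)$ bound on $\overline{\psi^{n,\varepsilon}_\omega}$ with a constant that may depend on $\varepsilon$ but not on $n,\omega$.

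Combining the mean bounds with the gradient bounds through \eqref{poincare}, squaring and integrating in time, yields the claimed $L^2(0,T_n;H^1(\Omega))$ estimates. I do not foresee any real obstacle here: the only conceptual step is the observation $1\in W_n$, which allows one to collapse all higher-order terms in the chemical-potential equations via integration by parts. The $\varepsilon$-dependence in the $\psi$-estimate is unavoidable at this level (since the slope of the extended potential $\widehat{S}'_{\rho,\varepsilon}$ is governed by $\widehat{S}''_\rho(\varepsilon)$ and $\widehat{S}''_\rho(1-\varepsilon)$, both of which diverge as $\varepsilon\to 0^+$), but this is consistent with the statement, and an $\varepsilon$-uniform bound will have to be recovered later by a different device (typically by testing the singular equation against $\rho^{n,\varepsilon}_\omega - \overline{\rho^{n,\varepsilon}_\omega}$).
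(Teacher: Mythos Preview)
Your proof is correct and suffices for the lemma as stated. The treatment of $\overline{\mu^{n,\varepsilon}_\omega}$ is essentially the same as the paper's (both reduce to bounding $\int_\Omega |S'_\phi(\phi^{n,\varepsilon}_\omega)|$ via the $L^\infty(0,T_n;H^2(\Omega))$-control of $\phi^{n,\varepsilon}_\omega$ from Lemma~\ref{prop:ub1}; the paper invokes $H^2\hookrightarrow L^\infty$ directly rather than the last item of \textbf{(H2)}, but the difference is cosmetic).

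For $\overline{\psi^{n,\varepsilon}_\omega}$, however, you take a genuinely different and more elementary route. You exploit the linear growth $|\widehat{S}'_{\rho,\varepsilon}(s)|\le C(\varepsilon)(1+|s|)$ built into the Taylor extension \eqref{eq:taylor}, combined with the $L^\infty(0,T_n;L^2(\Omega))$-bound on $\rho^{n,\varepsilon}_\omega$; this is perfectly adequate here since $\varepsilon$-dependence is explicitly permitted. The paper instead tests the $\psi$-equation against $\rho^{n,\varepsilon}_\omega-\overline{\rho_0^n}$ and invokes the Miranville--Zelik type inequality \eqref{eq:uniformhatS} to obtain
\[
\|S'_{\rho,\varepsilon}(\rho^{n,\varepsilon}_\omega)\|_{L^1(\Omega)}\le C\big(\|\nabla\psi^{n,\varepsilon}_\omega\|+1\big)
\]
with $C$ \emph{independent of} $\varepsilon$, leading to the reusable estimate \eqref{eq:potentialest}: $\|\psi^{n,\varepsilon}_\omega\|_{H^1(\Omega)}\le C(\|\nabla\psi^{n,\varepsilon}_\omega\|+1)$. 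Your approach is shorter and avoids this extra test, at the cost of a worse constant; the paper's approach buys an $\varepsilon$-uniform pointwise relation between $\|\psi\|_{H^1}$ and $\|\nabla\psi\|$ that is invoked repeatedly later (e.g.\ in \eqref{eq:step14}, \eqref{eq:step33}, \eqref{eq:step44}). You already anticipate this in your final remark, so there is no conceptual gap---only a choice to defer that refinement.
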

\begin{proof}
From Lemma \ref{prop:enebound1} we infer that
$$
\int_0^{t} \left(\| \nabla \mu^{n, \varepsilon}_\omega(\tau)\|^2 + \| \nabla \psi^{n, \varepsilon}_\omega(\tau)\|^2\right)\,\mathrm{d}\tau \leq C_1+C_2,\quad \forall\, t\in (0,T_n].
$$
Let us first consider the estimate for $\{\mu^{n, \varepsilon}_\omega\}$. Since $\mu^{n, \varepsilon}_\omega \in H^1(\Omega)$, then arguing as in the proof of Lemma \ref{prop:ub1}, we obtain
    \[
\|\mu^{n, \varepsilon}_\omega\|^2 \leq 2\|\mu^{n, \varepsilon}_\omega - \overline{\mu^{n, \varepsilon}_\omega}\|^2 + 2\| \overline{\mu^{n, \varepsilon}_\omega} \|^2 \leq C \left( \| \nabla \mu^{n, \varepsilon}_\omega \|^2 + |\overline{\mu^{n, \varepsilon}_\omega}|^2 \right).
    \]
Then it remains to provide a uniform control of $|\overline{\mu^{n, \varepsilon}_\omega}|$. From the third equation in \eqref{eq:weakrega}, we see that
		\[
		\begin{split}
			|\overline{\mu^{n, \varepsilon}_\omega}| & = \left| \overline{\Pi_n \left(  \alpha \Delta^2 \phi^{n, \varepsilon}_\omega-\Delta \phi^{n, \varepsilon}_\omega  + S'_{\phi}(\phi^{n, \varepsilon}_\omega) + \theta\nabla \cdot (\rho^{n, \varepsilon}_\omega \nabla \phi^{n, \varepsilon}_\omega) -  \textcolor{black}{\omega \nabla \cdot\left( |\nabla \phi^{n, \varepsilon}_\omega|^2\nabla \phi^{n, \varepsilon}_\omega \right)} \right)} \right| \\
			& = \dfrac{1}{|\Omega|}\left| \left( 1, \Pi_n \left(  \alpha \Delta^2 \phi^{n, \varepsilon}_\omega-\Delta \phi^{n, \varepsilon}_\omega  + S'_{\phi}(\phi^{n, \varepsilon}_\omega) + \theta\nabla \cdot (\rho^{n, \varepsilon}_\omega \nabla \phi^{n, \varepsilon}_\omega) - \textcolor{black}{ \omega \nabla \cdot( |\nabla \phi^{n, \varepsilon}_\omega|^2\nabla \phi^{n, \varepsilon}_\omega ) } \right) \right) \right|\\
			& = \dfrac{1}{|\Omega|} \left| \left( 1,  \alpha \Delta^2 \phi^{n, \varepsilon}_\omega - \Delta \phi^{n, \varepsilon}_\omega  + S'_{\phi}(\phi^{n, \varepsilon}_\omega) + \theta\nabla \cdot (\rho^{n, \varepsilon}_\omega \nabla \phi^{n, \varepsilon}_\omega) - \textcolor{black}{\omega \nabla \cdot ( |\nabla \phi^{n, \varepsilon}_\omega|^2\nabla \phi^{n, \varepsilon}_\omega ) } \right) \right|.
		\end{split}
		\]
Thanks to the homogeneous Neumann boundary conditions $\partial_\mathbf{n}\phi^{n, \varepsilon}_\omega=\partial_\mathbf{n} \Delta \phi^{n, \varepsilon}_\omega=0$ on $\partial \Omega$, using integration by parts, we find
		\[
		|\overline{\mu^{n, \varepsilon}_\omega}|
       = \dfrac{1}{|\Omega|}\left| \big( 1, {S}'_{\phi}(\phi^{n, \varepsilon}_\omega) \big)\right|.
		\]
From Lemma \ref{prop:ub1} and the embedding $H^2(\Omega) \hookrightarrow L^\infty(\Omega)$ ($d=2,3$) we obtain that $\phi^{n, \varepsilon}_\omega \in L^\infty(0,T_n;L^\infty(\Omega))$. Then by \textbf{(H2)} we get
		\[
		\sup_{t\in[0,T_n]}|\overline{\mu^{n, \varepsilon}_\omega}(t)| \leq C,
		\]
where $C$ is independent of $\omega$ and $n$, but it may depend on $\varepsilon$. As a consequence, we have
	\[
      \int_0^t \|\mu^{n, \varepsilon}_\omega(\tau)\|_{H^1(\Omega)}^2 \, \mathrm{d}t
      \leq C \int_0^t \|\nabla \mu^{n, \varepsilon}_\omega(\tau)\|^2\, \mathrm{d}t
         + C\int_0^t |\overline{\mu^{n, \varepsilon}_\omega(\tau)}|^2 \ \mathrm{d}t\leq C(1+T_n)\leq C(1+T),
    \]
for all $t\in (0,T_n]$.
		
In a similar manner, for $\psi^{n, \varepsilon}_\omega$ we have
\begin{align*}
		|\overline{\psi^{n, \varepsilon}_\omega}|
& = \dfrac{1}{|\Omega|}\left| \left( 1, \widehat{S}'_{\rho, \varepsilon}(\rho^{n, \varepsilon}_\omega)+ R'_{\rho}(\rho^{n, \varepsilon}_\omega) - \dfrac{\theta}{2}|\nabla \phi^{n, \varepsilon}_\omega|^2 \right)\right|\\
& \leq  \dfrac{1}{|\Omega|}\| {S}'_{\rho, \varepsilon}(\rho^{n, \varepsilon}_\omega) \|_{L^1(\Omega)} + \dfrac{1}{|\Omega|}\| R'_{\rho}(\rho^{n, \varepsilon}_\omega) \|_{L^1(\Omega)}+ \dfrac{\theta}{2|\Omega|} \| \nabla \phi^{n, \varepsilon}_\omega\|^2\\
& \leq \dfrac{1}{|\Omega|}\| {S}'_{\rho, \varepsilon}(\rho^{n, \varepsilon}_\omega) \|_{L^1(\Omega)} +C \big(1+ \|\phi^{n, \varepsilon}_\omega\|^2_{H^1(\Omega)}\big).
\end{align*}
We recall a well-known result for the approximating potential $\widehat{S}_{\rho, \varepsilon}$ (see e.g., \cite{KNP95,MZ04}), namely, there exists $C > 0$ independent of $\varepsilon$, such that
\begin{equation}
\label{eq:uniformhatS}
\| S'_{\rho, \varepsilon} (\rho^{n, \varepsilon}_\omega)\|_{L^1(\Omega)} \leq C \ii{\Omega}{}{(\rho^{n, \varepsilon}_\omega - \overline{\rho_0^n})\big(S'_{\rho, \varepsilon}(\rho^{n, \varepsilon}_\omega) - \overline{S'_{\rho, \varepsilon}(\rho^{n, \varepsilon}_\omega)}\big)}{x} + C.
\end{equation}
Then, choosing $\rho^{n, \varepsilon}_\omega - \overline{\rho_0^n}$ as a test function in the equation for $\psi^{n, \varepsilon}_\omega$ in \eqref{eq:weakrega}, we get
		\[
		\beta\| \nabla \rho^{n, \varepsilon}_\omega \|^2 + \big(S'_{\rho, \varepsilon}(\rho^{n, \varepsilon}_\omega), \rho^{n, \varepsilon}_\omega - \overline{\rho_0^n}\big)
= (\psi^{n, \varepsilon}_\omega - \overline{\psi^{n, \varepsilon}_\omega}, \rho^{n, \varepsilon}_\omega - \overline{\rho_0^n}) + \dfrac{\theta}{2}(|\nabla \phi^{n, \varepsilon}_\omega|^2, \rho^{n, \varepsilon}_\omega - \overline{\rho_0^n}).
		\]
It follows from \eqref{eq:uniformhatS}, using the Poincar\'{e}--Wirtinger and the Cauchy--Schwarz inequalities, that
		\[
		\| S'_{\rho, \varepsilon} (\rho^{n, \varepsilon}_\omega)\|_{L^1(\Omega)} \leq C\big( \| \nabla \psi^{n, \varepsilon}_\omega\|\|\nabla \rho^{n, \varepsilon}_\omega \| + \| |\nabla \phi^{n, \varepsilon}_\omega|^2 \|\|\nabla \rho^{n, \varepsilon}_\omega\| + 1\big)
\leq C\big(\| \nabla \psi^{n, \varepsilon}_\omega\| + 1\big),
		\]
		where we recall that $\||\nabla \phi^{n, \varepsilon}_\omega|^2 \| \leq C\| \nabla \phi^{n, \varepsilon}_\omega \|^2_{\mathbf{L}^4(\Omega)} \leq C$ thanks to Lemma  \ref{prop:ub1}. Hence, we deduce that
\begin{equation} \label{eq:potentialest}
\|\psi^{n, \varepsilon}_\omega\|_{H^1(\Omega)}
\leq C\big( \|\nabla \psi^{n, \varepsilon}_\omega\|+|\overline{\psi^{n, \varepsilon}_\omega(\tau)}|\big)
\leq C\big( \|\nabla \psi^{n, \varepsilon}_\omega\|+1\big),
\end{equation}
which implies
  \[
  \int_0^t \|\psi^{n, \varepsilon}_\omega(\tau)\|_{H^1(\Omega)}^2 \, \mathrm{d}t
     \leq C \int_0^t \|\nabla \psi^{n, \varepsilon}_\omega(\tau)\|^2\, \mathrm{d}t
         + C\int_0^t |\overline{\psi^{n, \varepsilon}_\omega(\tau)}|^2 \ \mathrm{d}t\leq C(1+T_n)\leq C(1+T),
  \]
for all $t\in (0,T_n]$.

The proof is complete.
\end{proof}
The estimates obtained in Lemma \ref{prop:ub1} and Lemma \ref{prop:ub2} allow us to extend the local solution $\{a_i, b_i, c_i, d_i, e_i\}_{i = 1}^n$ to the
full time interval $[0, T]$. As a consequence, the approximate solution $(\uu_\omega^{n,\varepsilon},\phi_\omega^{n,\varepsilon}, \rho_\omega^{n,\varepsilon}, \mu_\omega^{n,\varepsilon}, \psi_\omega^{n,\varepsilon})$ is well defined in $[0,T]$.

We now need to derive some uniform estimates on the time derivatives of the approximate solutions in order to apply a compactness argument.
\begin{lemma}	\label{prop:ub3}
The sequence $\{\partial_t \uu^{n, \varepsilon}_\omega\}$ is uniformly bounded in $L^\frac{4}{d}(0,T; \mathbf{V}_\sigma^*)$, $d=2,3$. The sequences $\{\partial_t \phi^{n, \varepsilon}_\omega\}$, $\{\partial_t \rho^{n, \varepsilon}_\omega\}$ are uniformly bounded in $L^2(0,T; (H^1(\Omega))^*)$. The bounds are independent of $\omega$, $n$, but may depend on $T$ and $\varepsilon$.
\end{lemma}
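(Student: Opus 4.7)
The plan is to test each Galerkin equation against a test function and express the duality pairing with $\partial_t$ in terms of bounded quantities, using the estimates from Lemmas \ref{prop:ub1}--\ref{prop:ub2}. The key observation is that the spectral projections $P_n$ and $\Pi_n$, being built from orthonormal eigenbases of self-adjoint operators with compact inverse, are stable in $\mathbf{V}_\sigma$ and $H^1(\Omega)$, respectively. Hence, for any $\mathbf{v}\in\mathbf{V}_\sigma$ we have $\langle\partial_t\uu^{n,\varepsilon}_\omega,\mathbf{v}\rangle_{\mathbf{V}_\sigma^*,\mathbf{V}_\sigma}=\langle\partial_t\uu^{n,\varepsilon}_\omega,P_n\mathbf{v}\rangle_{\mathbf{V}_\sigma^*,\mathbf{V}_\sigma}$ with $\|P_n\mathbf{v}\|_{\mathbf{V}_\sigma}\leq\|\mathbf{v}\|_{\mathbf{V}_\sigma}$, and similarly for $\Pi_n$ on $H^1(\Omega)$.

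For $\partial_t\uu^{n,\varepsilon}_\omega$, I would use the first line of \eqref{eq:weakrega} with test function $P_n\mathbf{v}$ to write
$$\langle\partial_t\uu^{n,\varepsilon}_\omega,\mathbf{v}\rangle_{\mathbf{V}_\sigma^*,\mathbf{V}_\sigma}=-((\uu^{n,\varepsilon}_\omega\cdot\nabla)\uu^{n,\varepsilon}_\omega,P_n\mathbf{v})-(\nu(\phi^{n,\varepsilon}_\omega,\rho^{n,\varepsilon}_\omega)D\uu^{n,\varepsilon}_\omega,DP_n\mathbf{v})+(\mu^{n,\varepsilon}_\omega\nabla\phi^{n,\varepsilon}_\omega+\psi^{n,\varepsilon}_\omega\nabla\rho^{n,\varepsilon}_\omega,P_n\mathbf{v})$$
and estimate each term. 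By \textbf{(H1)} the viscous term is $L^2$ in time. The capillary term $\mu\nabla\phi$ is handled using $\mu^{n,\varepsilon}_\omega\in L^2(0,T;H^1)$, $\nabla\phi^{n,\varepsilon}_\omega\in L^\infty(0,T;\mathbf{L}^p)$ via $H^2\hookrightarrow W^{1,p}$, together with $\mathbf{V}_\sigma\hookrightarrow\mathbf{L}^q$, giving an $L^2$ contribution; likewise $\psi\nabla\rho$ is handled using $\psi^{n,\varepsilon}_\omega\in L^2(0,T;L^6)$ and $\nabla\rho^{n,\varepsilon}_\omega\in L^\infty(0,T;\mathbf{L}^2)$ combined with the Sobolev embedding $\mathbf{V}_\sigma\hookrightarrow\mathbf{L}^q$. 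The convective term is bounded via the Gagliardo--Nirenberg inequality \eqref{eq:gagnir} through
$$\big|((\uu^{n,\varepsilon}_\omega\cdot\nabla)\uu^{n,\varepsilon}_\omega,P_n\mathbf{v})\big|\leq\|\uu^{n,\varepsilon}_\omega\|_{\mathbf{L}^{2d/(d-1)}}\|\uu^{n,\varepsilon}_\omega\|_{\mathbf{V}_\sigma}\|\mathbf{v}\|_{\mathbf{V}_\sigma}\leq C\|\uu^{n,\varepsilon}_\omega\|^{(4-d)/4}\|\uu^{n,\varepsilon}_\omega\|_{\mathbf{V}_\sigma}^{(4+d)/4}\|\mathbf{v}\|_{\mathbf{V}_\sigma},$$
which, together with $\uu^{n,\varepsilon}_\omega\in L^\infty(\mathbf{H}_\sigma)\cap L^2(\mathbf{V}_\sigma)$, yields an $L^{4/d}(0,T)$ bound (i.e. $L^2$ in 2D and $L^{4/3}$ in 3D). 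Taking the supremum over $\mathbf{v}$ with $\|\mathbf{v}\|_{\mathbf{V}_\sigma}\leq 1$ gives the claim.

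For the phase-field equations, given $v\in H^1(\Omega)$, set $v_n=\Pi_n v$. From the third and fifth equations in \eqref{eq:weakrega} one has
$$\langle\partial_t\phi^{n,\varepsilon}_\omega,v\rangle=-(\uu^{n,\varepsilon}_\omega\cdot\nabla\phi^{n,\varepsilon}_\omega,v_n)-(\nabla\mu^{n,\varepsilon}_\omega,\nabla v_n),\qquad \langle\partial_t\rho^{n,\varepsilon}_\omega,v\rangle=-(\uu^{n,\varepsilon}_\omega\cdot\nabla\rho^{n,\varepsilon}_\omega,v_n)-(\nabla\psi^{n,\varepsilon}_\omega,\nabla v_n).$$
The last term in each identity is immediately in $L^2(0,T)$ by Lemma \ref{prop:ub2}. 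The convective terms are controlled through Hölder's inequality, e.g. $\|\uu^{n,\varepsilon}_\omega\cdot\nabla\phi^{n,\varepsilon}_\omega\|_{L^{3/2}}\leq\|\uu^{n,\varepsilon}_\omega\|_{\mathbf{L}^6}\|\nabla\phi^{n,\varepsilon}_\omega\|_{\mathbf{L}^2}$ in 3D (or $\mathbf{L}^4$--$\mathbf{L}^4$ in 2D), combined with $\uu\in L^2(\mathbf{V}_\sigma)$ and $\nabla\phi,\nabla\rho\in L^\infty(\mathbf{L}^2)$; the embedding $H^1\hookrightarrow L^6$ then yields the $L^2(0,T;(H^1)^*)$ bound in both cases.

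The main obstacle is the velocity estimate in three dimensions: the convective term $(\uu\cdot\nabla)\uu$ only admits the Gagliardo--Nirenberg interpolation $\|\uu\|_{\mathbf{L}^3}^2\leq C\|\uu\|\|\nabla\uu\|$, which forces exactly the integrability exponent $4/d$ appearing in the statement and prevents an $L^2$ bound when $d=3$. The coupling capillary terms $\mu\nabla\phi$ and $\psi\nabla\rho$ also require a careful choice of Hölder exponents so as to combine the available $L^2_t H^1_x$ regularity of the chemical potentials with the $L^\infty_t$ regularity of $\nabla\phi,\nabla\rho$; everything else reduces to direct application of the bounds already obtained.
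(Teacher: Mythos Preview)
Your overall strategy coincides with the paper's: a comparison argument in each Galerkin equation, using the bounds of Lemmas \ref{prop:ub1}--\ref{prop:ub2} together with the stability of the spectral projections in $\mathbf{V}_\sigma$ and $H^1(\Omega)$. The paper handles the capillary terms, the viscous term, and the transport terms in the $\phi$- and $\rho$-equations exactly along the lines you sketch (in fact it also integrates by parts to write $(\uu\cdot\nabla\phi,w)=-(\uu\cdot\nabla w,\phi)$, just as you do implicitly).

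There is, however, a quantitative slip in your treatment of the velocity convective term. The exponents you write, $\|\uu\|^{(4-d)/4}\|\uu\|_{\mathbf{V}_\sigma}^{(4+d)/4}$, do \emph{not} lie in $L^{4/d}(0,T)$: for $d=2$ this would require $\|\uu\|_{\mathbf{V}_\sigma}^3\in L^1(0,T)$, and for $d=3$ it would require $\|\uu\|_{\mathbf{V}_\sigma}^{7/3}\in L^1(0,T)$, neither of which follows from $\uu\in L^2(0,T;\mathbf{V}_\sigma)$. The cure---and this is precisely what the paper does---is to integrate by parts first using $\nabla\cdot\uu^{n,\varepsilon}_\omega=0$ and then apply the Ladyzhenskaya inequality \eqref{eq:gn1}:
\[
|((\uu\cdot\nabla)\uu,\mathbf{w})|=|((\uu\cdot\nabla)\mathbf{w},\uu)|\leq\|\uu\|_{\mathbf{L}^4}^2\|\nabla\mathbf{w}\|\leq C\|\uu\|^{2-\frac d2}\|\uu\|_{\mathbf{V}_\sigma}^{\frac d2}\|\mathbf{w}\|_{\mathbf{V}_\sigma},
\]
and the right-hand side \emph{is} in $L^{4/d}(0,T)$ since $(\|\uu\|_{\mathbf{V}_\sigma}^{d/2})^{4/d}=\|\uu\|_{\mathbf{V}_\sigma}^2\in L^1(0,T)$. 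You essentially write the correct three-dimensional interpolation ($\|\uu\|_{\mathbf{L}^3}^2\leq C\|\uu\|\|\nabla\uu\|$) in your final paragraph, so the issue is only an inconsistency between that remark and the displayed estimate in the body of your argument.
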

\begin{proof}
Consider the first equation in \eqref{eq:weakrega} and let $\mathbf{w} \in \mathbf{V}_\sigma$ be such that $\| \mathbf{w}\|_{\mathbf{V}_\sigma} = 1$. On account of Lemmas \ref{prop:ub1} and \ref{prop:ub2}, we find that
\begin{align}
\left|\left\langle \partial_t\uu^{n, \varepsilon}_\omega, \mathbf{w} \right\rangle_{\mathbf{V}^*,\mathbf{V}}\right|
& \leq |(\uu^{n, \varepsilon}_\omega \cdot \nabla)\uu^{n, \varepsilon}_\omega, \mathbf{w})| + |(\nu(\phi^{n, \varepsilon}_\omega, \rho^{n, \varepsilon}_\omega)D\uu^{n, \varepsilon}_\omega, D\mathbf{w})| + |(\mu^{n, \varepsilon}_\omega\nabla\phi^{n, \varepsilon}_\omega, \mathbf{w})| \notag \\
&\qquad + |(\psi^{n, \varepsilon}_\omega\nabla\rho^{n, \varepsilon}_\omega, \mathbf{w})| \notag\\
& \leq |(\uu^{n, \varepsilon}_\omega \cdot \nabla)\mathbf{w}, \uu^{n, \varepsilon}_\omega)| + \nu^*\|D\uu^{n, \varepsilon}_\omega\|\|D\mathbf{w}\| + \|\mu^{n, \varepsilon}_\omega\|\|\nabla \phi^{n, \varepsilon}_\omega\|_{\LL^4(\Omega)}
\|\mathbf{w}\|_{\LL^4(\Omega)} \notag\\
&\qquad + \|\psi^{n, \varepsilon}_\omega\|_{L^4(\Omega)}\|\nabla \rho^{n, \varepsilon}_\omega\|\|\mathbf{w}\|_{\LL^4(\Omega)} \notag\\
& \leq \|\uu^{n, \varepsilon}_\omega\|_{\LL^4(\Omega)}^2 + C\left( \|\uu^{n, \varepsilon}_\omega\|_{\mathbf{V}_\sigma} + \|\nabla \mu^{n, \varepsilon}_\omega\| + \|\nabla\psi^{n, \varepsilon}_\omega\| +1 \right) \notag\\
& \leq \|\uu^{n, \varepsilon}_\omega\|_{\mathbf{V}_\sigma}^\frac{d}{2}\|\uu^{n, \varepsilon}_\omega\|^{2-\frac{d}{2}}
+ C\left(\|\uu^{n, \varepsilon}_\omega\|_{\mathbf{V}_\sigma}
+ \|\nabla \mu^{n, \varepsilon}_\omega\| + \|\nabla\psi^{n, \varepsilon}_\omega\|  +1 \right)\notag\\
& \leq C\big(\|\uu^{n, \varepsilon}_\omega\|_{\mathbf{V}_\sigma}^\frac{d}{2} +  \|\uu^{n, \varepsilon}_\omega\|_{\mathbf{V}_\sigma} + \|\nabla \mu^{n, \varepsilon}_\omega\| + \|\nabla\psi^{n, \varepsilon}_\omega\| +1\big).
\label{eq:timederivativesU}
\end{align}
From \eqref{eq:timederivativesU}, taking the supremum over all functions $\mathbf{w}$, raising the inequality to the $\frac{4}{d}$-th power and integrating on $[0,T]$, we obtain the desired bound.

We now perform a comparison argument in the second and fourth equations in \eqref{eq:weakrega}.
Let $w \in H^1(\Omega)$ such that $\|w\|_{H^1(\Omega)} = 1$ be given. Using Lemmas \ref{prop:ub1} and \ref{prop:ub2}, we get
\begin{equation} \label{eq:timederivatives}
\begin{cases}
|\left\langle \partial_t\phi^{n, \varepsilon}_\omega, w \right\rangle_{(H^1(\Omega))^*,H^1(\Omega)}| \leq C\left( \| \nabla \mu^{n, \varepsilon}_\omega \| + \|\nabla \uu^{n, \varepsilon}_\omega\|\right),	\\
|\left\langle \partial_t\rho^{n, \varepsilon}_\omega, w \right\rangle_{(H^1(\Omega))^*,H^1(\Omega)}| \leq C\left( \| \nabla \psi^{n, \varepsilon}_\omega \| + \|\nabla \uu^{n, \varepsilon}_\omega\|\right).	
\end{cases}	
\end{equation}
where we have used the following estimate
$$
|(\uu^{n, \varepsilon}_\omega\cdot \nabla \phi^{n, \varepsilon}_\omega, w)|
=|(\uu^{n, \varepsilon}_\omega\cdot \nabla w, \phi^{n, \varepsilon}_\omega)|
\leq C\|\uu^{n, \varepsilon}_\omega\|_{\LL^4(\Omega)}\|\phi^{n, \varepsilon}_\omega\|_{L^4(\Omega)}\leq C\|\nabla \uu^{n, \varepsilon}_\omega\|\|\phi^{n, \varepsilon}_\omega\|_{H^1(\Omega)},
$$
and a similar one for $\rho^{n, \varepsilon}_\omega$.
Taking the supremum over all functions $w$, squaring the inequality and integrating in $[0,T]$, we arrive at the desired bound thanks to Lemma \ref{prop:ub1}.

The proof is complete. 		
\end{proof}
Finally, some additional estimates for $\phi^{n, \varepsilon}_\omega$ and $\rho^{n, \varepsilon}_\omega$ can also be deduced, that is,
\begin{lemma}	\label{prop:ub4}
The sequence $\{\phi^{n, \varepsilon}_\omega\}$ is uniformly bounded in $L^2(0,T;H^4(\Omega))$. The sequence $\{\rho^{n, \varepsilon}_\omega\}$ is uniformly bounded in $L^4(0,T;H^2(\Omega))$. The bounds are independent of $\omega$, $n$, but may depend on $T$ and $\varepsilon$.
\end{lemma}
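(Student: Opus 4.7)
The plan is to apply elliptic regularity to the rearranged Galerkin chemical-potential equations, bootstrapping the spatial regularity of $\phi^{n,\varepsilon}_\omega$ and $\rho^{n,\varepsilon}_\omega$ in parallel. Two observations make the scheme work. Since $\phi^{n,\varepsilon}_\omega,\rho^{n,\varepsilon}_\omega\in W_n$ and the Neumann eigenfunctions satisfy $-\Delta w_i=\eta_i w_i$, the projector $\Pi_n$ acts as the identity on $\Delta\phi^{n,\varepsilon}_\omega$, $\Delta^2\phi^{n,\varepsilon}_\omega$ and $\Delta\rho^{n,\varepsilon}_\omega$; moreover the $H^1$-orthogonality of $\{w_i\}$ gives $\|\Pi_n\|_{\mathcal{L}(H^1,H^1)}\leq 1$ uniformly in $n$. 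Consequently the fourth and sixth equations in \eqref{eq:weakrega} rearrange as
\begin{align*}
\alpha\Delta^2\phi^{n,\varepsilon}_\omega & = \mu^{n,\varepsilon}_\omega+\Delta\phi^{n,\varepsilon}_\omega-\Pi_n\!\left(S'_\phi(\phi^{n,\varepsilon}_\omega)+\theta\nabla\!\cdot\!(\rho^{n,\varepsilon}_\omega\nabla\phi^{n,\varepsilon}_\omega)-\omega\nabla\!\cdot\!(|\nabla\phi^{n,\varepsilon}_\omega|^2\nabla\phi^{n,\varepsilon}_\omega)\right),\\
\beta\Delta\rho^{n,\varepsilon}_\omega & = -\psi^{n,\varepsilon}_\omega+\Pi_n\!\left(S'_{\rho,\varepsilon}(\rho^{n,\varepsilon}_\omega)-\tfrac{\theta}{2}|\nabla\phi^{n,\varepsilon}_\omega|^2\right),
\end{align*}
and it suffices to bound each right-hand side in $L^p(0,T;L^2(\Omega))$ with the appropriate $p$.

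Taking $L^2(\Omega)$-norms in the $\psi$-identity, using the $\varepsilon$-dependent Lipschitz bound on $S'_{\rho,\varepsilon}$ (from \textbf{(H3)} and \eqref{eq:convexityapprox}), the $L^\infty(0,T;L^2)$-bound on $\rho$ of Lemma \ref{prop:ub1}, the $L^\infty(0,T;\mathbf{L}^4)$-bound on $\nabla\phi$ via $H^2\hookrightarrow W^{1,4}$, and Lemma \ref{prop:ub2}, I obtain $\|\Delta\rho^{n,\varepsilon}_\omega\|\lesssim\|\nabla\psi^{n,\varepsilon}_\omega\|+1$, hence $\rho^{n,\varepsilon}_\omega\in L^2(0,T;H^2(\Omega))$ by Lemma \ref{th:ellreg}. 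Next I extract an intermediate $L^2(0,T;H^3)$-bound on $\phi$ by testing the $\mu$-equation against $-\Delta\phi^{n,\varepsilon}_\omega\in W_n$: integration by parts (using $\partial_{\mathbf n}\phi=\partial_{\mathbf n}\Delta\phi=0$, inherited by all elements of $W_n$) produces $\alpha\|\nabla\Delta\phi\|^2$ plus the two nonlinear terms $-\theta(\rho\nabla\phi,\nabla\Delta\phi)$ and $\omega(|\nabla\phi|^2\nabla\phi,\nabla\Delta\phi)$. Both are absorbed via Young's inequality using the $L^\infty(0,T;L^2)$-bounds on $\rho\nabla\phi$ (from $\rho\in L^\infty L^6$ and $\nabla\phi\in L^\infty\mathbf{L}^3$, valid in $d\leq 3$) and on $|\nabla\phi|^3$ (from $H^2\hookrightarrow W^{1,6}$), together with $\omega\leq 1$; this yields $\|\nabla\Delta\phi\|^2\lesssim 1+\|\mu^{n,\varepsilon}_\omega\|_{H^1}^2$ and therefore $\phi^{n,\varepsilon}_\omega\in L^2(0,T;H^3(\Omega))$.

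With these two intermediate bounds the final closure is routine. By the Sobolev embeddings $H^2\hookrightarrow L^\infty\cap W^{1,6}$ and $H^3\hookrightarrow W^{2,6}$ (in $d\leq 3$), each of the products $\rho\Delta\phi$, $\nabla\rho\cdot\nabla\phi$, $|\nabla\phi|^2\Delta\phi$ and $(D^2\phi\cdot\nabla\phi)\cdot\nabla\phi$ lies in $L^2(0,T;L^2(\Omega))$; taking $L^2$-norms in the rearranged $\mu$-identity and invoking Lemma \ref{th:ellreg} produces $\phi^{n,\varepsilon}_\omega\in L^2(0,T;H^4(\Omega))$. Taking one further spatial gradient in the rearranged $\psi$-identity and using the $H^1$-contraction of $\Pi_n$ gives $\beta\|\nabla\Delta\rho\|\leq\|\nabla\psi\|+C(\varepsilon)\|\nabla\rho\|+\theta\|D^2\phi\cdot\nabla\phi\|$, where the last term now belongs to $L^2(0,T;L^2)$ thanks to the intermediate $L^2 H^3$-regularity of $\phi$; this yields $\rho^{n,\varepsilon}_\omega\in L^2(0,T;H^3(\Omega))$. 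Finally, the Gagliardo--Nirenberg interpolation $\|\rho\|_{H^2}^2\leq C\|\rho\|_{H^1}\|\rho\|_{H^3}$ combined with the uniform $L^\infty(0,T;H^1)$-bound from Lemma \ref{prop:ub1} gives
$$\int_0^T\|\rho^{n,\varepsilon}_\omega\|_{H^2(\Omega)}^4\,\mathrm{d}t\leq C\int_0^T\|\rho^{n,\varepsilon}_\omega\|_{H^3(\Omega)}^2\,\mathrm{d}t\leq C,$$
closing the $L^4(0,T;H^2(\Omega))$-bound. The principal obstacle throughout is the $\omega$-uniform control of the cubic penalization $\omega\nabla\!\cdot\!(|\nabla\phi|^2\nabla\phi)$ directly in $L^2(0,T;L^2)$, which cannot be extracted from the energy bound alone in three dimensions; this is precisely what forces the intermediate $L^2(0,T;H^3)$-bound on $\phi$ via the test function $-\Delta\phi$ to be established before the final elliptic estimates for $\Delta^2\phi$ and $\nabla\Delta\rho$ can be closed.
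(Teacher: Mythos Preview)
Your argument is correct, but it follows a noticeably longer path than the paper's. The paper obtains both estimates by \emph{testing} the Galerkin chemical-potential identities rather than rearranging and taking norms. The $\psi$-equation is tested with $-\Delta\rho^{n,\varepsilon}_\omega\in W_n$; since $(S''_{\rho,\varepsilon}(\rho)\nabla\rho,\nabla\rho)\geq -(\gamma_2+L_1)\|\nabla\rho\|^2$ with $\gamma_2$ independent of $\varepsilon$, one immediately gets the sharper bound $\beta\|\Delta\rho^{n,\varepsilon}_\omega\|^2\leq C(\|\nabla\psi^{n,\varepsilon}_\omega\|+1)$, which gives $L^4(0,T;H^2)$ in a single step. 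Likewise, the $\mu$-equation is tested with $\Delta^2\phi^{n,\varepsilon}_\omega$, and the cubic penalization is absorbed \emph{within the same estimate} via Agmon's inequality $\|\nabla\phi\|_{\mathbf{L}^\infty}^2\leq C\|\nabla\phi\|_{\mathbf{H}^1}\|\nabla\phi\|_{\mathbf{H}^2}$ and the interpolation $\|\phi\|_{H^3}\leq C\|\Delta^2\phi\|^{1/2}\|\Delta\phi\|^{1/2}+C$, so no preliminary $L^2(0,T;H^3)$-bound is needed. Your route---first $L^2(H^2)$ for $\rho$ by taking norms, then an intermediate $L^2(H^3)$ for $\phi$ via the test function $-\Delta\phi$, then $L^2(H^4)$ for $\phi$, then $L^2(H^3)$ for $\rho$ using the $H^1$-contraction of $\Pi_n$, and finally interpolation back down to $L^4(H^2)$---is valid and perhaps more mechanical, but it overshoots on $\rho$ and leans on the extra structural fact that the Neumann eigenfunctions are $H^1$-orthogonal. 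The paper's approach is shorter and, for $\rho$, uses only the $\varepsilon$-independent lower bound on $\widehat{S}''_{\rho,\varepsilon}$ rather than the $\varepsilon$-dependent Lipschitz constant $\gamma_3$ that your norm estimate requires (though the lemma permits $\varepsilon$-dependence either way).
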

\begin{proof}
Let us first consider $\rho^{n, \varepsilon}_\omega$. Multiplying the equation for $\psi^{n, \varepsilon}_\omega$ by $-\Delta \rho^{n, \varepsilon}_\omega$ and integrating over $\Omega$, we find
   \[
		(\psi^{n, \varepsilon}_\omega, -\Delta \rho^{n, \varepsilon}_\omega) = \beta\|\Delta \rho^{n, \varepsilon}_\omega\|^2
+ \big(S''_{\rho,\varepsilon}(\rho^{n, \varepsilon}_\omega)\nabla \rho^{n, \varepsilon}_\omega, \nabla \rho^{n, \varepsilon}_\omega\big)
+ \dfrac{\theta}{2}(|\nabla \phi^{n, \varepsilon}_\omega|^2, \Delta \rho^{n, \varepsilon}_\omega).
	\]
Recalling \eqref{eq:convexityapprox} and using H\"{o}lder's as well as Young's inequalities, we get
\begin{align*}
\beta\|\Delta \rho^{n, \varepsilon}_\omega\|^2
 &\leq \|\nabla \psi^{n, \varepsilon}_\omega \| \|\nabla \rho^{n, \varepsilon}_\omega\| + \gamma_2 \|\nabla \rho^{n, \varepsilon}_\omega\|^2
 + C\|\nabla \phi^{n, \varepsilon}_\omega\|^2_{\LL^4(\Omega)}\|\Delta \rho^{n, \varepsilon}_\omega\|\\
 &\leq C \big(\|\nabla \psi^{n, \varepsilon}_\omega \|+1\big)
      + C \|\nabla \phi^{n, \varepsilon}_\omega\|^\frac32_{\mathbf{H}^1(\Omega)}\|\nabla \phi^{n, \varepsilon}_\omega\|^\frac12_{\mathbf{L}^2(\Omega)} \|\Delta \rho^{n, \varepsilon}_\omega\|\\
      &\leq C \big(\|\nabla \psi^{n, \varepsilon}_\omega \|+1\big) + C \big(\textcolor{black}{\|\Delta \phi^{n, \varepsilon}_\omega\|^\frac32}+1 \big)\|\Delta \rho^{n, \varepsilon}_\omega\|\\
      &\leq \frac{\beta}{2} \|\Delta \rho^{n, \varepsilon}_\omega\|^2 +C (\|\nabla \psi^{n, \varepsilon}_\omega \|+1),
\end{align*}
which implies
\begin{align}
\beta\|\Delta \rho^{n, \varepsilon}_\omega\|^2 \leq C(\|\nabla \psi^{n, \varepsilon}_\omega \|+1).
\label{rhoL4h2}
\end{align}
From the above estimate, the elliptic regularity theory and Lemma \ref{prop:ub2}, we get $\rho^{n, \varepsilon}_\omega\in L^4(0,T;H^2(\Omega))$.

Concerning $\phi^{n, \varepsilon}_\omega$, we multiply the equation for $\mu^{n, \varepsilon}_\omega$ by $\Delta^2\phi^{n, \varepsilon}_\omega$, integrating over $\Omega$. This yields
\begin{align*}
(\mu^{n, \varepsilon}_\omega, \Delta^2 \phi^{n, \varepsilon}_\omega)
&= \alpha\| \Delta^2 \phi^{n, \varepsilon}_\omega\|^2 + \| \nabla \Delta \phi^{n, \varepsilon}_\omega \|^2
- (S''_\phi(\phi^{n, \varepsilon}_\omega)\nabla\phi^{n, \varepsilon}_\omega, \nabla \Delta \phi^{n, \varepsilon}_\omega)
+ \theta(\rho^{n, \varepsilon}_\omega\Delta\phi^{n, \varepsilon}_\omega, \Delta^2\phi^{n, \varepsilon}_\omega) \\
&\quad + \theta (\nabla \rho^{n, \varepsilon}_\omega \cdot \nabla \phi^{n, \varepsilon}_\omega, \Delta^2\phi^{n, \varepsilon}_\omega)
- \textcolor{black}{\omega(\nabla \cdot\left( |\nabla \phi^{n, \varepsilon}_\omega|^2\nabla \phi^{n, \varepsilon}_\omega \right), \Delta^2\phi^{n, \varepsilon}_\omega).}	
\end{align*}
Exploiting the fact that $\phi^{n, \varepsilon}_\omega \in L^\infty(0,T;L^\infty(\Omega))$ and H\"{o}lder's inequality,
we infer that
\begin{align*}
& \alpha\| \Delta^2 \phi^{n, \varepsilon}_\omega\|^2
+\| \nabla \Delta \phi^{n, \varepsilon}_\omega \|^2\\
&\quad  \leq |(\nabla \mu^{n, \varepsilon}_\omega, \nabla \Delta \phi^{n, \varepsilon}_\omega)|
 + \|S''_\phi(\phi^{n, \varepsilon}_\omega)\|_{L^\infty(\Omega)}\|\nabla\phi^{n, \varepsilon}_\omega\|\| \nabla \Delta \phi^{n, \varepsilon}_\omega\| \\
 &\qquad + C \|\rho^{n, \varepsilon}_\omega\|_{L^\infty(\Omega)} \|\Delta \phi^{n, \varepsilon}_\omega\|\|\Delta^2\phi^{n, \varepsilon}_\omega\|
 + C\|\nabla \rho^{n, \varepsilon}_\omega\|_{\LL^3(\Omega)}\|\nabla \phi^{n, \varepsilon}_\omega\|_{\LL^6(\Omega)}\|\Delta^2 \phi^{n, \varepsilon}_\omega\|
 \\
 &\qquad + C \|\nabla \cdot\left( |\nabla \phi^{n, \varepsilon}_\omega|^2\nabla \phi^{n, \varepsilon}_\omega \right) \| \|\Delta^2 \phi^{n, \varepsilon}_\omega\|.
\end{align*}
Exploiting the Cauchy--Schwarz and Young inequalities, as well as the embeddings $H^1(\Omega) \hookrightarrow L^6(\Omega)$ and $H^2(\Omega) \hookrightarrow L^\infty(\Omega)$ ($d=2,3$), we have
\begin{align*}
& |(\nabla \mu^{n, \varepsilon}_\omega, \nabla \Delta \phi^{n, \varepsilon}_\omega)|
+ \|S''_\phi(\phi^{n, \varepsilon}_\omega)\|_{L^\infty(\Omega)}\|\nabla\phi^{n, \varepsilon}_\omega\|\| \nabla \Delta \phi^{n, \varepsilon}_\omega\| \\
 &\qquad + C \|\rho^{n, \varepsilon}_\omega\|_{L^\infty(\Omega)} \|\Delta \phi^{n, \varepsilon}_\omega\|\|\Delta^2\phi^{n, \varepsilon}_\omega\|
 + C\|\nabla \rho^{n, \varepsilon}_\omega\|_{\LL^3(\Omega)}\|\nabla \phi^{n, \varepsilon}_\omega\|_{\LL^6(\Omega)}\|\Delta^2 \phi^{n, \varepsilon}_\omega\|
 \\
& \leq \dfrac{\alpha}{4}\|\Delta^2\phi^{n, \varepsilon}_\omega\|^2 +\frac12 \|\nabla \Delta\phi^{n, \varepsilon}_\omega\|^2
+ C\big( 1+ \| \nabla \mu^{n, \varepsilon}_\omega\|^2+  \|\rho^{n, \varepsilon}_\omega\|_{L^\infty(\Omega)}^2 + \|\nabla \rho^{n, \varepsilon}_\omega\|_{\LL^3(\Omega)}^2\big) \\
& \leq \dfrac{\alpha}{4}\|\Delta^2\phi^{n, \varepsilon}_\omega\|^2 +\frac12 \|\nabla \Delta\phi^{n, \varepsilon}_\omega\|^2 + C\big(1 +\|\nabla \mu^{n, \varepsilon}_\omega\|^2+ \|\rho^{n, \varepsilon}_\omega\|_{H^2(\Omega)}^2 \big).		
\end{align*}
Next, it follows from H\"{o}lder's, Agmon's and Young's inequalities that
\begin{align*}
\| \nabla \cdot\left( |\nabla \phi^{n, \varepsilon}_\omega|^2\nabla \phi^{n, \varepsilon}_\omega \right) \| \|\Delta^2 \phi^{n, \varepsilon}_\omega\|
& \leq C\left( \||\nabla \phi^{n, \varepsilon}_\omega|^2\Delta\phi^{n, \varepsilon}_\omega\|
+ \|\nabla (|\nabla\phi^{n, \varepsilon}_\omega|^2) \cdot \nabla \phi^{n, \varepsilon}_\omega\|\right)\|\Delta^2 \phi^{n, \varepsilon}_\omega\| \\
& \leq C\|\nabla \phi^{n, \varepsilon}_\omega\|_{\LL^\infty(\Omega)}^2
\left( \|\Delta \phi^{n, \varepsilon}_\omega\| + \|\phi^{n, \varepsilon}_\omega\|_{H^2(\Omega)}\right) \|\Delta^2 \phi^{n, \varepsilon}_\omega\|\\
& \leq C\|\nabla \phi^{n, \varepsilon}_\omega\|_{\LL^\infty(\Omega)}^2 \|\Delta^2 \phi^{n, \varepsilon}_\omega\|\\
& \leq C\|\nabla \phi^{n, \varepsilon}_\omega\|_{\mathbf{H}^1(\Omega)}  \|\nabla \phi^{n, \varepsilon}_\omega\|_{\mathbf{H}^2(\Omega)} \|\Delta^2 \phi^{n, \varepsilon}_\omega\|\\
& \leq C\|\phi^{n, \varepsilon}_\omega\|_{H^3(\Omega)} \|\Delta^2 \phi^{n, \varepsilon}_\omega\|\\
& \leq C\big(\|\Delta \phi^{n, \varepsilon}_\omega\|_{H^1(\Omega)}+\|\phi^{n, \varepsilon}_\omega\|\big) \|\Delta^2 \phi^{n, \varepsilon}_\omega\|\\
& \leq C\big(\|\Delta^2 \phi^{n, \varepsilon}_\omega\|^\frac12\|\Delta \phi^{n, \varepsilon}_\omega\|^\frac12
+\|\phi^{n, \varepsilon}_\omega\|_{H^2(\Omega)}\big)\|\Delta^2 \phi^{n, \varepsilon}_\omega\| \\
& \leq \dfrac{\alpha}{4}\|\Delta^2\phi^{n, \varepsilon}_\omega\| + C.
\end{align*}
As a consequence, it holds
\begin{align*}
\alpha\| \Delta^2 \phi^{n, \varepsilon}_\omega\|^2
+ \| \nabla \Delta \phi^{n, \varepsilon}_\omega \|^2
& \leq  \dfrac{\alpha}{2}\|\Delta^2\phi^{n, \varepsilon}_\omega\|^2 +\frac12 \|\nabla \Delta\phi^{n, \varepsilon}_\omega\|^2 + C\big(1 +\|\nabla \mu^{n, \varepsilon}_\omega\|^2+ \|\rho^{n, \varepsilon}_\omega\|_{H^2(\Omega)}^2 \big),
\end{align*}
which implies
\begin{align*}
\alpha\| \Delta^2 \phi^{n, \varepsilon}_\omega\|^2
+ \| \nabla \Delta \phi^{n, \varepsilon}_\omega \|^2
& \leq C\big(1 +\|\nabla \mu^{n, \varepsilon}_\omega\|^2+ \|\rho^{n, \varepsilon}_\omega\|_{H^2(\Omega)}^2 \big).
\end{align*}
From \eqref{rhoL4h2}, the elliptic regularity theory and Lemma \ref{prop:ub2}, we can conclude that $\phi^{n, \varepsilon}_\omega\in L^2(0,T; H^4(\Omega))$.

The proof is complete.
\end{proof}

\subsection{Existence of weak solutions for the penalized problem}
We can now prove the existence of a global weak solution to the penalized problem  \eqref{eq:strongNSCHpert}--\eqref{eq:bcspert} on $[0,T]$.

Let us first pass to the limit as $n\to+\infty$.
In view of Lemmas \ref{prop:ub1}--\ref{prop:ub4}, for fixed $\omega \in (0,1]$, $\varepsilon \in (0, \min(\epsilon_1, \epsilon_2))$, we have proved that
\begin{align*}
        \uu^{n, \varepsilon}_\omega & \text{ is uniformly bounded in } L^\infty(0,T;\mathbf{H}_\sigma) \cap L^2(0,T;\mathbf{V}_\sigma) \cap W^{1, \frac{4}{d}}(0,T;\mathbf{V}_\sigma^*), \\
		\phi^{n, \varepsilon}_\omega & \text{ is uniformly bounded in } L^\infty(0,T;H^2(\Omega)) \cap L^2(0,T; H^4(\Omega)) \cap H^1(0,T;(H^1(\Omega))^*), \\
		\rho^{n, \varepsilon}_\omega & \text{ is uniformly bounded in } L^\infty(0,T;H^1(\Omega)) \cap L^4(0,T; H^2(\Omega)) \cap H^1(0,T;(H^1(\Omega))^*), \\
		\mu^{n, \varepsilon}_\omega & \text{ is uniformly bounded in } L^2(0,T;H^1(\Omega)),  \\
		\psi^{n, \varepsilon}_\omega & \text{ is uniformly bounded in } L^2(0,T;H^1(\Omega)).		
\end{align*}
Besides the obvious weak and weak star convergence (up to a subsequence), the above uniform bounds and the Aubin--Lions lemma allow us to find $(\uu_\omega^{\varepsilon},\phi_\omega^{\varepsilon}, \rho_\omega^{\varepsilon}, \mu_\omega^{\varepsilon}, \psi_\omega^{\varepsilon})$ such that, up to subsequences (not relabelled hereafter)
\begin{align*}
&\uu_\omega^{n,\varepsilon} \to \uu_\omega^{\varepsilon}, \quad \text{strongly in}\ \ L^2(0,T;\mathbf{H}^{1-r}(\Omega)),\\
&\phi^{n, \varepsilon}_\omega \to \phi_\omega^{\varepsilon}, \quad \text{strongly in}\ \ C([0,T];H^{2-r}(\Omega))\cap L^2(0,T; H^{4-r}(\Omega)),\\
& \rho_\omega^{n,\varepsilon} \to \rho_\omega^{\varepsilon},\quad \text{strongly in}\ \ C([0,T];H^{1-r}(\Omega))\cap L^4(0,T; H^{2-r}(\Omega)),
\end{align*}
for $r\in (0,1/2)$, which further imply the a.e. convergence of $(\uu_\omega^{n,\varepsilon},\phi_\omega^{n,\varepsilon}, \rho_\omega^{n,\varepsilon})$ in $\Omega\times (0,T)$. Then we can deduce the further convergences
\begin{align*}
&\big(\nu(\phi_\omega^{n,\varepsilon}, \rho_\omega^{n,\varepsilon})\big)^s D\uu_\omega^{n,\varepsilon} \to \big(\nu(\phi_\omega^{\varepsilon}, \rho_\omega^{\varepsilon})\big)^s D \uu_\omega^{\varepsilon}\quad \text{weakly in}\ \ L^2(0,T;\mathbf{L}^2(\Omega)),\quad s=\frac12, 1,\\
& \mu_\omega^{n,\varepsilon}\nabla \phi_\omega^{n,\varepsilon} \to \mu_\omega^{\varepsilon}\nabla \phi_\omega^{\varepsilon},\quad \text{weakly in}\ \ L^2(0,T;\mathbf{L}^\frac{3}{2}(\Omega)),\\
& \psi_\omega^{n,\varepsilon}\nabla \rho_\omega^{n,\varepsilon} \to \psi_\omega^{\varepsilon}\nabla \rho_\omega^{\varepsilon},\quad \text{weakly in}\ \ L^2(0,T;\mathbf{L}^\frac{3}{2}(\Omega)),\\
&\nabla \cdot(\rho_\omega^{n,\varepsilon}\nabla \phi_\omega^{n,\varepsilon} ) \to \nabla \cdot(\rho_\omega^{\varepsilon}\nabla \phi_\omega^{\varepsilon} ),\quad \text{weakly in}\ \ L^2(0,T;L^2(\Omega)).
\end{align*}
Thus, using well-known arguments (see e.g., \cite{Ab09,Bo99}), we can show that $(\uu_\omega^{\varepsilon},\phi_\omega^{\varepsilon}, \rho_\omega^{\varepsilon}, \mu_\omega^{\varepsilon}, \psi_\omega^{\varepsilon})$ is a global weak solution to \textcolor{black}{the penalized system \eqref{eq:strongNSCHpertap} with the regularized potential $S_{\rho,\varepsilon}$ } subject to \eqref{eq:bcspert}. Moreover, using the weak and weak-$^*$ lower semicontinuity of the norms, from \eqref{eq:energyn} we also deduce
\begin{align}
&\ene(\mathbf{u}^{\varepsilon}_\omega(t), \phi^{\varepsilon}_\omega(t),\rho^{n, \varepsilon}_\omega(t)) + \ii{0}{t}{\|\sqrt{\nu(\phi^{\varepsilon}_\omega(\tau),\rho^{\varepsilon}_\omega(\tau))}D \mathbf{u}^{\varepsilon}_\omega(\tau)\|^2
+\| \nabla \mu^{\varepsilon}_\omega(\tau) \|^2 + \| \nabla \psi^{\varepsilon}_\omega(\tau)\|^2 }{\tau} \notag\\
 &\quad\leq \ene(\uu_0, \phi_0, \rho_0),\quad \forall\,t\in (0,T].
\label{eq:energye}
\end{align}

Next, we pass to the limit as $\varepsilon \to 0^+$. We can still use the a priori bounds obtained in the Galerkin scheme except \eqref{eq:epsilonestimate}. Concerning this bound, on account of \textbf{(H3)}, arguing as in \cite{FG12} (see also \cite[Lemma 3.1]{GGW18}) we find
\begin{align}
\widehat{S}_{\rho, \varepsilon}(s) \leq \widehat{S}_\rho(s) \leq C,\quad \forall\, s \in [0,1].
\label{Srho1}
\end{align}
Thus the updated estimate \eqref{eq:epsilonestimate} for the weak solution yields a bound that is independent of $\varepsilon$.
More precisely, we have
\begin{lemma} \label{prop:enebound2}
For any fixed $\omega \in (0,1]$ and $\varepsilon \in (0, \min(\epsilon_1, \epsilon_2))$ (recall that $\epsilon_2$ depends on $\omega$), let $(\uu_\omega^{\varepsilon},\phi_\omega^{\varepsilon}$, $\rho_\omega^{\varepsilon}$, $\mu_\omega^{\varepsilon}, \psi_\omega^{\varepsilon})$ be a global weak solution solving system \eqref{eq:strongNSCHpertap} with the regularized potential $S_{\rho,\varepsilon}$ subject to \eqref{eq:bcspert} on $[0,T]$. Then, for every $t \in (0,T]$, we have
\begin{align*}
&\ene(\mathbf{u}^{\varepsilon}_\omega(t), \phi^{\varepsilon}_\omega(t),\rho^{ \varepsilon}_\omega(t)) + \ii{0}{t}{\|\sqrt{\nu(\phi^{ \varepsilon}_\omega(\tau),\rho^{\varepsilon}_\omega(\tau))}D \mathbf{u}^{ \varepsilon}_\omega(t)\|^2 + \| \nabla \mu^{\varepsilon}_\omega(\tau) \|^2 + \| \nabla \psi^{\varepsilon}_\omega(\tau)\|^2 }{\tau} \leq C_4,
\end{align*}
and
\begin{align*}
\ene(\mathbf{u}^{\varepsilon}_\omega(t),\phi^{\varepsilon}_\omega(t),\rho^{ \varepsilon}_\omega(t))
&\geq \dfrac{1}{2}\|\uu_{\omega}^{\varepsilon}(t)\|^2
+ \dfrac{\alpha}{4} \|\Delta \phi^{\varepsilon}_\omega(t)\|^2
+ \dfrac{1}{2} \|\nabla \phi^{\varepsilon}_\omega(t)\|^2
+ \dfrac{\beta}{2}\|\nabla \rho^{\varepsilon}_\omega(t)\|^2
\\
&\quad + \frac{c_3}{2} \|\phi^{\varepsilon}_\omega(t)\|^4_{L^4(\Omega)}
+ \dfrac{\omega}{8} \|\nabla\phi^{\varepsilon}_\omega(t)\|^4_{\mathbf{L}^4(\Omega)} -C_5,
\end{align*}
where $C_4,C_5>0$ are independent of $\omega$, $\varepsilon$, and $T$.
\end{lemma}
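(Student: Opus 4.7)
The plan is to deduce both bounds by combining what is already available for the $\varepsilon$-regularized penalized system: the energy inequality \eqref{eq:energye} (obtained by passing $n\to+\infty$ via weak/weak-$*$ lower semicontinuity) and the pointwise coercivity estimate established for Galerkin solutions in Lemma \ref{prop:enebound1}.

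For the upper bound, since \eqref{eq:energye} gives
$\ene(\uu_\omega^\varepsilon(t),\phi_\omega^\varepsilon(t),\rho_\omega^\varepsilon(t)) + \int_0^t(\cdots)\,\mathrm{d}\tau \leq \ene(\uu_0,\phi_0,\rho_0)$,
it suffices to bound the initial energy $\ene(\uu_0,\phi_0,\rho_0)$ uniformly in $\omega\in(0,1]$ and $\varepsilon\in(0,\min(\epsilon_1,\epsilon_2))$. Term by term: the contributions $\frac{1}{2}\|\uu_0\|^2$, $\frac{\alpha}{2}\|\Delta\phi_0\|^2$, $\frac{1}{2}\|\nabla\phi_0\|^2$ and $\frac{\beta}{2}\|\nabla\rho_0\|^2$ are fixed by the data; $\int_\Omega S_\phi(\phi_0)\,\mathrm{d}x$ is controlled by \textbf{(H2)} combined with $H^2\hookrightarrow L^\infty$; the cross term $-\frac{\theta}{2}\rho_0|\nabla\phi_0|^2$ and the penalty $\frac{\omega}{4}|\nabla\phi_0|^4 \leq \frac{1}{4}|\nabla\phi_0|^4$ (using $\omega\leq 1$) are estimated via H\"older together with $H^2\hookrightarrow W^{1,4}$; the quadratic bound on $R_\rho$ (inherited from $|R''_\rho|\leq L_1$) handles $R_\rho(\rho_0)$. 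The crucial contribution is $\int_\Omega \widehat{S}_{\rho,\varepsilon}(\rho_0)\,\mathrm{d}x$: invoking \eqref{Srho1} gives $\widehat{S}_{\rho,\varepsilon}(\rho_0)\leq \widehat{S}_\rho(\rho_0)\leq C$ uniformly in $\varepsilon$, provided $\rho_0\in[0,1]$ a.e.\ in $\Omega$, which is ensured either by the explicit hypothesis of Remark \ref{rem:rho1} or, under the assumption $S_\rho(\rho_0)\in L^1(\Omega)$ of Definition \ref{def:solution}, by the convention in \textbf{(H3)} that $\widehat{S}_\rho\equiv+\infty$ outside $[0,1]$. Summing these contributions produces $C_4$ independent of $\omega$, $\varepsilon$, and (since the initial energy is $T$-independent) of $T$.

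For the lower bound, the strategy is to propagate the estimate of Lemma \ref{prop:enebound1} to the weak limit. All algebraic manipulations used there --- the coercivity $\int S_\phi(\phi)\,\mathrm{d}x \geq c_3\int|\phi|^4\,\mathrm{d}x - c_4|\Omega|$ from \textbf{(H2)}, the splitting $\Omega = \Omega_1\cup\Omega_2$ with $\Omega_2 = \{\rho\geq 4\}$, the Young-type absorption of $-\frac{\theta}{2}\rho|\nabla\phi|^2$ by $\frac{\alpha}{4}\|\Delta\phi\|^2$, $\frac{c_3}{2}\int|\phi|^4\,\mathrm{d}x$ and $\frac{\omega}{8}\int|\nabla\phi|^4\,\mathrm{d}x$, and the quadratic tail coercivity of $\widehat{S}_{\rho,\varepsilon}$ on $\Omega_2$ (the latter alone fixing the constraint $\varepsilon<\epsilon_2(\omega)$) --- are pointwise-in-time inequalities whose resulting constant $C_2$ was already independent of $n$, $\varepsilon$, $\omega$. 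Each contribution survives the passage $n\to+\infty$: weak/weak-$*$ lower semicontinuity for the squared Sobolev seminorms, a.e.\ convergence of $\rho_\omega^{n,\varepsilon}$ combined with Fatou's lemma for $\int \widehat{S}_{\rho,\varepsilon}(\rho_\omega^{n,\varepsilon})\,\mathrm{d}x$ (note $\widehat{S}_{\rho,\varepsilon}\geq -\gamma_1$ by \eqref{eq:convexityapprox}), and dominated convergence for the bounded terms. This delivers the stated coercivity with $C_5:=C_2$.

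The principal obstacle is the uniform control of $\widehat{S}_{\rho,\varepsilon}(\rho_0)$ as $\varepsilon\to 0^+$: outside $[\varepsilon,1-\varepsilon]$ the Taylor-expansion tails carry coefficients $\widehat{S}_\rho''(\varepsilon),\widehat{S}_\rho''(1-\varepsilon)\to+\infty$, so the initial energy could diverge unless $\rho_0$ is confined to $[0,1]$. Under the admissible initial data this is precisely what happens, and \eqref{Srho1} then produces the clean uniform bound. A secondary point worth flagging is that, although $\epsilon_2$ depends on $\omega$ (through the absorption $k_1(\varepsilon)\geq C_R+\theta^2/(2\omega)$), this only restricts the \emph{range} of admissible $\varepsilon$; the final constants $C_4$ and $C_5$ themselves remain $\omega$-free, which is what allows the subsequent passage $\omega\to 0^+$ to proceed.
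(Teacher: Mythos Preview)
Your proposal is correct and follows essentially the same approach as the paper: the upper bound comes from the energy inequality \eqref{eq:energye} together with a uniform control of $\ene(\uu_0,\phi_0,\rho_0)$ via \eqref{Srho1}, and the lower bound reuses the coercivity computation of Lemma~\ref{prop:enebound1}. One minor remark: your framing of the lower bound as ``propagating to the weak limit'' via lower semicontinuity and Fatou is more elaborate than necessary --- since the coercivity estimate in Lemma~\ref{prop:enebound1} is a purely algebraic, pointwise-in-time inequality valid for \emph{any} triple $(\uu,\phi,\rho)$ with the requisite regularity, the paper simply reapplies it directly to $(\uu^\varepsilon_\omega,\phi^\varepsilon_\omega,\rho^\varepsilon_\omega)$ without invoking any limit-passage machinery.
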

\begin{proof}
Recalling the energy inequality \eqref{eq:energye} for the approximate solution $(\uu_\omega^{\varepsilon},\phi_\omega^{\varepsilon}, \rho_\omega^{\varepsilon}, \mu_\omega^{\varepsilon}, \psi_\omega^{\varepsilon})$, we can obtain the lower bound with a constant $C_5>0$ that is independent of $\varepsilon$ following the same argument as in Lemma \ref{prop:enebound1}. In order to find $C_4$, we just need to observe that (see also \eqref{Srho1})
		\[
		\begin{split}
			\ene(\phi_0, \rho_0, \uu_0) \leq C\Big(\| \uu_0 \|, \|\phi_0\|_{H^2(\Omega)}, \|\rho_0\|_{H^1(\Omega)}, \int_\Omega S_\rho(\rho_0)\,\mathrm{d} x, \Omega, \alpha, \beta, \theta \Big),
		\end{split}
		\]
thanks to the Sobolev embeddings $H^2(\Omega)\hookrightarrow L^\infty(\Omega)$, $H^2(\Omega)\hookrightarrow W^{1,4}(\Omega)$ ($d=2,3$), \eqref{Srho1} and the fact that $0 \leq \rho_0 \leq 1$ almost everywhere in $\Omega$ (cf. Remark \ref{rem:rho1}).

The proof is complete.
\end{proof}

On account of Lemma \ref{prop:enebound2}, we can now argue as in the proofs of Lemmas \ref{prop:ub1}--\ref{prop:ub4} to deduce a series of uniform estimates with respect to $\varepsilon$ for the approximate solution $(\uu_\omega^{\varepsilon},\phi_\omega^{\varepsilon}, \rho_\omega^{\varepsilon}, \mu_\omega^{\varepsilon}, \psi_\omega^{\varepsilon})$.
The main novelty with respect to the Galerkin scheme is the following estimate which can be deduced from \eqref{eq:strongNSCHpert}$_6$ (see, for instance, \cite[Section 3]{MT16})
\begin{equation}
\label{singbound}
\Vert \widehat{S}'_{\rho, \varepsilon}(\rho^{ \varepsilon}_\omega) \Vert_{L^2(0,T;L^2(\Omega))} \leq C
\end{equation}
for some $C>0$ independent of $\varepsilon$.
These bounds combined with compactness arguments (see e.g., \cite{MT16}) allow us to find $(\uu_\omega,\phi_\omega, \rho_\omega, \mu_\omega, \psi_\omega)$ which is a global weak (or finite energy) solution to the penalized problem \eqref{eq:strongNSCHpert}--\eqref{eq:bcspert}. More precisely, we have

\begin{proposition}
\label{th:weaksolutionomega}
Let \textbf{(H1)}--\textbf{H4} hold and $\omega\in (0,1]$ be given. Then, for any $\uu_0 \in \mathbf{H}_\sigma$, $\phi_0 \in H^2_N(\Omega)$, $\rho_0 \in H^1(\Omega)$ be such that $S_\rho(\rho_0) \in L^1(\Omega)$ and $\overline{\rho_0} \in (0,1)$, problem \eqref{eq:strongNSCHpert}--\eqref{eq:bcspert} admits at least one global weak solution $(\uu_\omega$, $\phi_\omega$, $\rho_\omega$, $\mu_\omega$, $\psi_\omega)$ satisfying
\begin{align*}
& \uu_\omega \in L^{\infty}(0,T; \mathbf{H}_\sigma) \cap L^2(0,T;\mathbf{V}_\sigma)\cap W^{1,\frac{4}{d}}(0,T; \mathbf{V}^*_\sigma),\\
& \phi_\omega \in L^{\infty}(0,T; H^2_N(\Omega)) \cap L^{2}(0,T; H^4_N(\Omega))\cap H^1(0,T; (H^1(\Omega))^*),\\
& \rho_\omega \in L^{\infty}(0,T; H^1(\Omega)) \cap L^{4}(0,T;H^2_N(\Omega)) \cap H^1(0,T; (H^1(\Omega))^*),\\
& \mu_\omega, \psi_\omega \in L^2(0,T; H^1(\Omega)),\\
& \rho_\omega\in L^\infty(\Omega\times(0,T)),\ \ \text{and}\ \ 0 < \rho_\omega (x, t) < 1,\ \ \text{for a.a. }(x,t) \in \Omega \times (0,T),
\end{align*}
and the initial conditions. Moreover, the following energy inequality holds
\begin{align}
& \eneg(\phi_\omega(t),\rho_\omega(t),\mathbf{u}_\omega(t))
+ \ii{0}{t}{ \|\sqrt{\nu(\phi_\omega(\tau),\rho_\omega(\tau))}D \mathbf{u}_\omega(\tau)\|^2
+ \| \nabla \mu_\omega(\tau)\|^2
+ \| \nabla \psi_\omega(\tau) \|^2 }{\tau} \notag \\
&\quad \leq \eneg(\phi_0,\rho_0,\mathbf{u}_0),
\label{weak-IENo}
\end{align}
for every $t \in (0,T]$.
\end{proposition}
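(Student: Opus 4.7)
The plan is to take the sequence of approximate solutions $(\uu_\omega^{\varepsilon},\phi_\omega^{\varepsilon}, \rho_\omega^{\varepsilon}, \mu_\omega^{\varepsilon}, \psi_\omega^{\varepsilon})$ already constructed in the previous subsection (via the Galerkin scheme with the Taylor-regularized potential $S_{\rho,\varepsilon}$) and pass to the limit $\varepsilon \to 0^+$ with $\omega\in(0,1]$ fixed. Lemma \ref{prop:enebound2} provides an energy bound that is \emph{independent of $\varepsilon$}, which is the key input. From this, I would first mimic the arguments in Lemmas \ref{prop:ub1}--\ref{prop:ub4} at the $\varepsilon$-level (not at the Galerkin level) to derive uniform-in-$\varepsilon$ bounds of the expected types: $\uu_\omega^\varepsilon$ in $L^\infty(0,T;\mathbf{H}_\sigma)\cap L^2(0,T;\mathbf{V}_\sigma)\cap W^{1,4/d}(0,T;\mathbf{V}_\sigma^*)$; $\phi_\omega^\varepsilon$ in $L^\infty(0,T;H^2_N(\Omega))\cap L^2(0,T;H^4_N(\Omega))\cap H^1(0,T;(H^1)^*)$; $\rho_\omega^\varepsilon$ in $L^\infty(0,T;H^1(\Omega))\cap L^4(0,T;H^2_N(\Omega))\cap H^1(0,T;(H^1)^*)$; and $\mu_\omega^\varepsilon,\psi_\omega^\varepsilon$ in $L^2(0,T;H^1(\Omega))$. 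The only subtle point here is to re-derive the bound on $\overline{\psi_\omega^\varepsilon}$: since $\widehat{S}_{\rho,\varepsilon}$ is globally defined but approximates a singular potential, I would invoke the standard inequality \eqref{eq:uniformhatS}, which holds with a constant independent of $\varepsilon$ thanks to \textbf{(H3)}, cf.\ \cite{KNP95,MZ04}.

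The central new estimate, which has no analogue at the Galerkin level, is the uniform control \eqref{singbound} of $\widehat{S}'_{\rho,\varepsilon}(\rho_\omega^\varepsilon)$ in $L^2(0,T;L^2(\Omega))$. This is obtained by testing the equation $\psi_\omega^\varepsilon = -\beta\Delta\rho_\omega^\varepsilon + S'_{\rho,\varepsilon}(\rho_\omega^\varepsilon) - \tfrac{\theta}{2}|\nabla\phi_\omega^\varepsilon|^2$ against $\widehat{S}'_{\rho,\varepsilon}(\rho_\omega^\varepsilon)$, integrating by parts to exploit the monotonicity $\widehat{S}''_{\rho,\varepsilon}\geq 0$ outside $[\epsilon_1,1-\epsilon_1]$ (with only a bounded contribution from the nonmonotone part, controlled by $\gamma_2$ via \eqref{eq:convexityapprox}), and using the already-established $L^4(0,T;L^\infty(\Omega))$-type bound on $|\nabla\phi_\omega^\varepsilon|^2$ together with the $L^2$-bound on $\psi_\omega^\varepsilon$. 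This is the standard Miranville--Zelik/Gilardi--Miranville--Schimperna strategy and is the main technical step I expect to write carefully.

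Once these uniform bounds are in hand, the Banach--Alaoglu theorem yields weak/weak-$*$ limits $(\uu_\omega,\phi_\omega,\rho_\omega,\mu_\omega,\psi_\omega)$, and the Aubin--Lions--Simon compactness lemma gives, along a subsequence, the strong convergences
$$
\uu_\omega^\varepsilon \to \uu_\omega \ \text{in } L^2(0,T;\mathbf{H}^{1-r}),\quad \phi_\omega^\varepsilon\to\phi_\omega \ \text{in } C([0,T];H^{2-r})\cap L^2(0,T;H^{4-r}),\quad \rho_\omega^\varepsilon\to\rho_\omega\ \text{in } C([0,T];H^{1-r}),
$$
for $r\in(0,1/2)$, and a.e.\ convergence in $\Omega\times(0,T)$. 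These are sufficient to pass to the limit in all the nonlinear terms of the weak formulation \eqref{eq:weakrega} (convective terms, $\nu(\phi,\rho)D\uu$, the capillary forces, $S'_\phi(\phi)$, and the couplings $\theta\nabla\cdot(\rho\nabla\phi)$, $\tfrac{\theta}{2}|\nabla\phi|^2$, and $\omega\nabla\cdot(|\nabla\phi|^2\nabla\phi)$) in the standard way, cf.\ \cite{Ab09,Bo99,MT16}.

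The main remaining issue is to handle the singular term $S'_{\rho,\varepsilon}(\rho_\omega^\varepsilon)$ and the confinement $0<\rho_\omega<1$. Here the a.e.\ convergence of $\rho_\omega^\varepsilon\to\rho_\omega$, the uniform bound \eqref{singbound}, Fatou's lemma applied to $\widehat{S}_{\rho,\varepsilon}(\rho_\omega^\varepsilon)$ (which forces $\widehat{S}_\rho(\rho_\omega)<+\infty$ a.e., hence $0\le\rho_\omega\le 1$ a.e.), and the boundedness of $\widehat{S}'_\rho(\rho_\omega)$ in $L^2$ (obtained by lower semicontinuity and Fatou, exploiting that $\widehat{S}'_{\rho,\varepsilon}(\rho_\omega^\varepsilon)\to\widehat{S}'_\rho(\rho_\omega)$ a.e.\ where $\rho_\omega\in(0,1)$ and that $|\widehat{S}'_\rho(s)|\to+\infty$ as $s\to 0^+$ or $s\to 1^-$) together imply both the identification of the limit $\psi_\omega = -\beta\Delta\rho_\omega+S'_\rho(\rho_\omega)-\tfrac{\theta}{2}|\nabla\phi_\omega|^2$ and the strict separation $0<\rho_\omega<1$ a.e. Finally, \eqref{weak-IENo} follows from \eqref{eq:energye} by weak/weak-$*$ lower semicontinuity of norms, together with Fatou's lemma to pass to the liminf in $\int_\Omega S_{\rho,\varepsilon}(\rho_\omega^\varepsilon)\,\mathrm{d}x$ using $\widehat{S}_{\rho,\varepsilon}(s)\le \widehat{S}_\rho(s)$ for $s\in[0,1]$. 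I expect the careful treatment of the singular potential, especially showing strict separation at the limit, to be the delicate step.
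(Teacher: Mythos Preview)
Your proposal is correct and follows essentially the same approach as the paper: use Lemma~\ref{prop:enebound2} to obtain $\varepsilon$-independent energy bounds, replay Lemmas~\ref{prop:ub1}--\ref{prop:ub4} at the $\varepsilon$-level, establish the key new $L^2(0,T;L^2(\Omega))$ bound on $\widehat{S}'_{\rho,\varepsilon}(\rho_\omega^\varepsilon)$ (the paper cites \cite{MT16} for this; your testing argument against $\widehat{S}'_{\rho,\varepsilon}(\rho_\omega^\varepsilon)$ using the lower bound in \eqref{eq:convexityapprox} is exactly what is needed), then pass to the limit by compactness and identify the singular term and the confinement $0<\rho_\omega<1$ via the standard argument based on \eqref{singbound} (cf.~\cite{FG12,MZ04}). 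The paper's own argument is terser and defers details to the cited references, but your outline matches it step for step.
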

\begin{remark}
Observe that  $0 < \rho_\omega(x, t) < 1$ almost everywhere in $\Omega \times (0,T)$ can be achieved by means of an argument based on \eqref{singbound} (see, for instance, \cite{FG12,MZ04}).
Recalling Remark \ref{rem:reg3}, we also have $\| \rho_\omega \|_{L^\infty(0,T;L^\infty(\Omega))}\leq 1$. On the other hand, the same property cannot be deduced for $\phi_\omega$. However, the regularity $\phi_\omega \in L^\infty(0,T; H^2(\Omega))$ ensures that $\phi_\omega \in L^\infty(0,T;L^\infty(\Omega))$ thanks to the embedding $H^2(\Omega)\hookrightarrow L^\infty(\Omega)$.
\end{remark}\medskip

\subsection{Existence of weak solutions to the original problem}

The final step is based on uniform estimates that are independent of $\omega\in (0,1]$. First, we have energy bounds
\begin{lemma} \label{prop:enebound3}
For every $\omega \in (0,1]$, let $(\uu_\omega,\phi_\omega, \rho_\omega, \mu_\omega, \psi_\omega)$ be a global weak solution solving the penalized problem \eqref{eq:strongNSCHpert}--\eqref{eq:bcspert}. Then, for every $t \in (0,T]$, we have
\begin{align*}
&\mathcal{E}_\omega(\mathbf{u}_\omega(t), \phi_\omega(t),\rho_\omega(t)) + \ii{0}{t}{\|\sqrt{\nu(\phi_\omega(\tau),\rho_\omega(\tau))}D \mathbf{u}_\omega(t)\|^2 + \| \nabla \mu_\omega(\tau) \|^2 + \| \nabla \psi_\omega(\tau)\|^2 }{\tau} \leq C_6,
\end{align*}
and
\begin{align*}
\mathcal{E}_\omega(\mathbf{u}_\omega(t),\phi_\omega(t),\rho_\omega(t))
&\geq \dfrac{1}{2}\|\uu_{\omega}(t)\|^2
+ \dfrac{\alpha}{4} \|\Delta \phi_\omega(t)\|^2
+ \dfrac{1}{2} \|\nabla \phi_\omega(t)\|^2
+ \dfrac{\beta}{2}\|\nabla \rho_\omega(t)\|^2
\\
&\quad + \frac{c_3}{2} \|\phi_\omega(t)\|^4_{L^4(\Omega)}
 -C_7,
\end{align*}
where $C_6,C_7>0$ do not depend on $\omega$ and $T$.
\end{lemma}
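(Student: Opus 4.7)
The argument mirrors the structure of Lemma \ref{prop:enebound1}, but takes advantage of the crucial new fact, provided by Proposition \ref{th:weaksolutionomega}, that any weak solution of the penalized problem satisfies $0<\rho_\omega(x,t)<1$ for a.a.\ $(x,t)\in\Omega\times(0,T)$. This pointwise bound (absent for the $\varepsilon$-regularized potential $S_{\rho,\varepsilon}$) is precisely what makes the constants $C_6, C_7$ independent of $\omega$.

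For the upper bound, the energy inequality \eqref{weak-IENo} provides
\[
\eneg(\uu_\omega(t),\phi_\omega(t),\rho_\omega(t))+\ii{0}{t}{\|\sqrt{\nu(\phi_\omega,\rho_\omega)}D\uu_\omega\|^2+\|\nabla\mu_\omega\|^2+\|\nabla\psi_\omega\|^2}{\tau}\leq \eneg(\uu_0,\phi_0,\rho_0),
\]
so it suffices to bound $\eneg(\uu_0,\phi_0,\rho_0)$ uniformly in $\omega\in(0,1]$. Using the Sobolev embeddings $H^2(\Omega)\hookrightarrow L^\infty(\Omega)\cap W^{1,4}(\Omega)$ ($d=2,3$), the hypothesis $0\leq\rho_0\leq 1$ a.e.\ (or equivalently $S_\rho(\rho_0)\in L^1(\Omega)$) together with \textbf{(H2)}, \textbf{(H3)} and \eqref{Srho1}, every contribution is controlled by $\|\uu_0\|$, $\|\phi_0\|_{H^2(\Omega)}$ and $\|\rho_0\|_{H^1(\Omega)}$. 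For the penalization, $\frac{\omega}{4}\|\nabla\phi_0\|_{\LL^4(\Omega)}^4\leq \frac{C}{4}\|\phi_0\|_{H^2(\Omega)}^4$ uniformly in $\omega\in(0,1]$.

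For the lower bound, following Remark \ref{Rm-ill}, the pointwise bound $0\leq\rho_\omega\leq 1$ yields
\[
-\frac{\theta}{2}\ii{\Omega}{}{\rho_\omega|\nabla\phi_\omega|^2}{x}\geq -\frac{\theta}{2}\|\nabla\phi_\omega\|^2=\frac{\theta}{2}(\phi_\omega,\Delta\phi_\omega)\geq -\frac{\alpha}{4}\|\Delta\phi_\omega\|^2-\frac{\theta^2}{4\alpha}\|\phi_\omega\|^2,
\]
where the identity uses $\partial_\mathbf{n}\phi_\omega=0$ and the last step is Young's inequality. The coercivity in \textbf{(H2)} gives $\int_\Omega S_\phi(\phi_\omega)\,\mathrm{d}x\geq c_3\|\phi_\omega\|_{L^4(\Omega)}^4-c_4|\Omega|$, and a further Young inequality $\|\phi_\omega\|^2\leq \eta\|\phi_\omega\|_{L^4(\Omega)}^4+C(\eta)|\Omega|$ with $\eta=2\alpha c_3/\theta^2$ absorbs $\frac{\theta^2}{4\alpha}\|\phi_\omega\|^2$ into $\frac{c_3}{2}\|\phi_\omega\|_{L^4(\Omega)}^4$ at the cost of a constant depending only on $c_3,c_4,\theta,\alpha,|\Omega|$. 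Furthermore, $S_\rho(\rho_\omega)=\widehat{S}_\rho(\rho_\omega)+R_\rho(\rho_\omega)\geq -C$ since $\widehat{S}_\rho\in\mathcal{C}^0([0,1])$ by \textbf{(H3)} and $R_\rho\in\mathcal{C}^2(\mathbb{R})$; the nonnegative penalization $\frac{\omega}{4}|\nabla\phi_\omega|^4$ is simply dropped. Collecting these estimates yields the desired lower bound with $C_7>0$ depending only on the coefficients and $|\Omega|$, hence on neither $\omega$ nor $T$.

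I do not foresee any genuine new obstacle, the reasoning being essentially a cleaner replay of the lower bound in Lemma \ref{prop:enebound1}. The main simplification — and the reason for the $\omega$-independence of $C_6, C_7$ — is that the pointwise bound $\rho_\omega\in[0,1]$ eliminates the earlier domain decomposition $\Omega=\Omega_1\cup\Omega_2$ and the cancellation of $\frac{\theta^2}{2\omega}\int_{\Omega_2}|\rho|^2$ against the $\varepsilon$-dependent lower bound of $\widehat{S}_{\rho,\varepsilon}$ that were required in the regularized setting. Uniformity in $T$ is automatic since the right-hand side of \eqref{weak-IENo} only involves the initial data.
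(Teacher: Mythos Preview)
Your proposal is correct and follows essentially the same approach as the paper: the upper bound comes from \eqref{weak-IENo} together with an $\omega$-independent bound on $\eneg(\uu_0,\phi_0,\rho_0)$, while the lower bound exploits $0\leq\rho_\omega\leq1$ to replay the computation of Remark~\ref{Rm-ill} and then drops the nonnegative penalization term. Your explicit remark that the pointwise bound on $\rho_\omega$ is precisely what removes the $\omega$-dependence (by rendering the $\Omega_1/\Omega_2$ splitting of Lemma~\ref{prop:enebound1} unnecessary) is accurate and matches the paper's terse reference to Remark~\ref{Rm-ill}.
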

\begin{proof}
The upper bound is straightforward (cf. the proof of Lemma \ref{prop:enebound2}).
Concerning the lower bound, we shall essentially make use of the estimate $\|\rho_\omega\|_{L^\infty(0,T;L^\infty(\Omega))}\leq 1$. Recalling \textbf{(H2)}, \textbf{(H3)} and arguing as in Remark \ref{Rm-ill}, we can achieve the conclusion by noting that the perturbation involving $\omega$ is nonnegative.

The proof is complete.
\end{proof}

Thanks to Lemma \ref{prop:enebound3} and reasoning as above, we can derive a number of uniform estimates that are independent of $\omega$. In particular we find again
(see \eqref{singbound})
\begin{equation}
\label{singbound2}
\Vert \widehat{S}'_{\rho}(\rho_\omega) \Vert_{L^2(0,T;L^2(\Omega))} \leq C
\end{equation}
for some $C>0$ independent of $\omega$.  This is enough to find, through compactness arguments by taking $\omega\to 0^+$ (up to a convergent subsequence), a quintuplet $(\uu, \phi, \rho, \mu, \psi)$ which is a global weak solution to problem \eqref{eq:strongNSCH}--\eqref{eq:bcs} in the sense of Definition \ref{def:solution}, provided we establish some additional spatial regularity. First, we show that $\rho \in L^2(0,T; W^{2,p}(\Omega))$ for every finite $p > 2$ if $d = 2$ and for every $2 < p \leq 6$ if $d = 3$. To this end, we observe that $\rho$ solves the semilinear problem with a singular nonlinearity
\begin{equation}
\label{eq:nnlp1}
\begin{cases}
-\beta\Delta \rho + \widehat{S}_\rho'(\rho) = \psi + \dfrac{\theta}{2}|\nabla \phi|^2 - R'_\rho(\rho) & \quad \text{a.e. in } \Omega,\\
\partial_\mathbf{n}\rho = 0 & \quad \text{a.e. on } \partial\Omega.
\end{cases}
\end{equation}
We know that $\psi\in L^2(0,T;H^1(\Omega))$. Moreover, we have
\[
\begin{split}
\| |\nabla \phi|^2\|_{H^1(\Omega)} \leq C\|\phi\|_{W^{1,4}(\Omega)}\|\phi\|_{W^{2,4}(\Omega)} & \leq C\|\phi\|_{H^{2}(\Omega)}\|\phi\|_{H^{3}(\Omega)}
\end{split}
\]
and
$$
\|R'_\rho(\rho)\|_{H^1(\Omega)}\leq C(1+\|\rho\|_{H^1(\Omega)}).
$$
Thus the right-hand side of \eqref{eq:nnlp1} belongs to $L^2(0,T;H^1(\Omega))$. Therefore, recalling \cite[Theorem 6]{Ab09} (see also \cite[Theorem A.2]{GMT18}) , we conclude that $\rho \in L^2(0,T; W^{2,p}(\Omega))$. Consider now the elliptic problem
\begin{equation*}
\label{eq:nnp2}
\begin{cases}
 \alpha \Delta^2 \phi = \mu + \Delta \phi -S_\phi'(\phi) - \theta\nabla \cdot (\rho \nabla \phi) & \quad \text{a.e. in } \Omega,\\
\partial_\mathbf{n}\phi = 0 & \quad \text{a.e. on } \partial\Omega.
\end{cases}
\end{equation*}
A well-known elliptic estimate yields
\begin{align*}
\|\phi\|_{H^5(\Omega)}&\leq C( \|\mu + \Delta \phi -S_\phi'(\phi) - \theta\nabla \cdot (\rho \nabla \phi)\|_{H^1(\Omega)}+\|\phi\|)\\
&\leq C\big(\|\mu\|_{H^1(\Omega)}+\|\phi\|_{H^3(\Omega)}+ \|\nabla \cdot (\rho \nabla \phi)\|_{H^1(\Omega)}\big).
\end{align*}
On the other hand, we have
\begin{align*}
\|\nabla \cdot (\rho \nabla \phi)\|_{H^1(\Omega)}
&\leq C\big(\|\rho\|_{W^{2,4}(\Omega)}\|\phi\|_{W^{1,4}(\Omega)}+ \|\rho\|_{H^1(\Omega)}\|\phi\|_{W^{2,\infty}(\Omega)}+\|\rho\|_{L^\infty(\Omega)}\|\phi\|_{H^3(\Omega)}\big)\\
&\leq C\big(\|\rho\|_{W^{2,4}(\Omega)}\|\phi\|_{H^2(\Omega)}+ \|\rho\|_{H^1(\Omega)} \|\phi\|_{H^4(\Omega)}+\|\phi\|_{H^3(\Omega)}\big),
\end{align*}
and recalling that $\phi\in L^{\infty}(0,T; H^2(\Omega)) \cap L^{2}(0,T; H^4(\Omega))$ and $\rho \in L^{\infty}(0,T; H^1(\Omega)) \cap L^{2}(0,T;W^{2,4}(\Omega))$, we infer that $\phi\in L^2(0,T; H^5(\Omega))$.

Finally, through a semicontinuity argument applied to \eqref{weak-IENo}, we can also recover the energy inequality \eqref{weak-IEN}. If $d=2$, the regularity of weak solutions allow us to derive an energy equality by arguing as in the proof of Proposition \ref{MEconv} (see also \cite{Ab09}).

The existence part of Theorem \ref{th:weaksolution} is now proved.
\hfill $\square$

\begin{remark}
\label{FHphi}
As we mentioned in the Introduction, it would be physically reasonable to take a Flory--Huggins potential for $\phi$ as well. From the mathematical point of view, this case is highly non-trivial since $\phi$ satisfies a sixth order Cahn--Hilliard type equation with a singular potential (cf. \cite{AM15}). In the approximation scheme, the essential bound \eqref{singbound2} (see also \eqref{singbound}) cannot be recovered anymore because of the fourth-order term in the chemical potential. Thus it is not clear how to establish the existence of a weak solution in the usual sense. On one hand, maybe one could show the existence of a weaker solution like the one obtained for a single Cahn--Hilliard equation in \cite{AM15}. See also \cite{SP13b,SW20} for alternative approaches to handle singular potentials. On the other hand, one may want to consider a standard fourth-order Cahn--Hilliard equation for $\phi$ (i.e., taking $\alpha=0$ in \eqref{total}). In this case, the existence of a weak solution might be provable provided that $\theta\in (0,1)$ (see \eqref{theta}). However, other results (e.g., uniqueness and regularity in two dimensions) could be rather challenging because of the couplings between the two Cahn--Hilliard equations.
\end{remark}

\subsection{Uniqueness of weak solutions when $d=2$}
 \label{sec:proof2}

Suppose that $(\uu_{0},\phi_{0}, \rho_{0})\in \mathbf{H}_\sigma\times H^2_N(\Omega)\times H^1(\Omega)$ is a set of initial data satisfying the assumptions of Theorem \ref{th:weaksolution}. Denote by $(\uu_1,\phi_1, \rho_1)$ and $(\uu_2,\phi_2, \rho_2)$ two global weak solutions to problem \eqref{eq:strongNSCH}--\eqref{eq:bcs} departing from $(\uu_{0},\phi_{0}, \rho_{0})$, with corresponding chemical potentials $\mu_i$ and $\psi_i$, for $i = 1,2$. Set (see Remark \ref{rem:reg1} for pressures)
\begin{equation}
\label{initial}
	\begin{cases}
      	\uu = \uu_1 - \uu_2, & \quad \pi = \pi_1 - \pi_2,\\
		\phi = \phi_1 - \phi_2, & \quad \rho = \rho_1 - \rho_2, \\
		\mu = \mu_1 - \mu_2, & \quad \psi= \psi_1 - \psi_2.
	\end{cases}
\end{equation}
Writing down (formally) the system for $(\uu,\phi,\rho)$, we get
\begin{equation} \label{eq:stab0}
\begin{cases}
			\partial_t\mathbf{u} + (\mathbf{u}_1 \cdot \nabla)\mathbf{u}_1 - (\mathbf{u}_2 \cdot \nabla)\mathbf{u}_2 - \nabla \cdot \left( \nu(\phi_1,\rho_1) D\uu_1 - \nu(\phi_2, \rho_2)D\uu_2 \right)  + \nabla \pi\\
\qquad  =\mu_1 \nabla \phi_1 - \mu_2 \nabla \phi_2 +  \psi_1 \nabla \rho_1 - \psi_2\nabla \rho_2 ,\\
			\nabla \cdot \mathbf{u} = 0,\\
\partial_t\phi + \uu_1 \cdot \nabla \phi +\uu \cdot \nabla \phi_2 = \Delta \mu,\\
			\mu = \alpha \Delta^2 \phi -\Delta \phi + S'_\phi(\phi_1)-S'_\phi(\phi_2) + \theta\nabla \cdot (\rho_1 \nabla \phi + \rho \nabla \phi_2),\\
			\partial_t\rho +  \uu_1 \cdot \nabla \rho + \uu \cdot \nabla \rho_2 = \Delta \psi, \\
			\psi = -\beta \Delta \rho + S'_\rho(\rho_1)-S'_\rho(\rho_2) - \dfrac{\theta}{2}|\nabla \phi_1|^2 + \dfrac{\theta}{2} |\nabla \phi_2|^2,
\end{cases}
\end{equation}
in $\Omega\times (0,T)$, with
\begin{equation} \label{eq:bcstab}
\begin{cases}
			\mathbf{u} = \mathbf{0} & \quad \text{on } \partial \Omega \times (0,T), \\
			\partial_\mathbf{n} \phi = \partial_\mathbf{n}\Delta \phi = \partial_\mathbf{n}\mu =0 & \quad \text{on } \partial \Omega \times (0,T), \\
			\partial_\mathbf{n} \rho = \partial_\mathbf{n} \psi = 0 & \quad \text{on } \partial \Omega \times (0,T), \\
			\mathbf{u}|_{t=0}=\mathbf{0},\quad \phi|_{t=0} = 0,\quad \rho|_{t=0} = 0,	 & \quad \text{in } \Omega.
\end{cases}
\end{equation}

In the subsequent analysis, on account of \cite{GMT18}, we derive a differential inequality for problem \eqref{eq:stab0}--\eqref{eq:bcstab} involving weaker norms with respect to the energy ones (cf. \eqref{total}). The proof consists of several steps. We indicate by $C$ a generic positive constant depending on known quantities.
\medskip

		\textbf{Step 1.} Testing the evolution equation for $\phi$ in \eqref{eq:stab0} by $\phi$, using integration by parts, we get
		\[
		\left\langle \partial_t\phi, \phi \right\rangle_{H^1(\Omega))^*,H^1(\Omega)}
       - \left( \phi\uu_1, \nabla \phi \right) -  \left( \phi_2\uu, \nabla \phi \right) = - (\nabla \mu, \nabla \phi).
		\]
		Using the equation for $\mu$ and  a further integration by parts, we obtain
		\begin{equation} \label{eq:stab1}
			\dfrac{1}{2}\td{}{t}\|\phi\|^2+ \alpha\| \nabla \Delta \phi\|^2 + \|\Delta \phi\|^2    = \mathcal{I}_1 + \mathcal{I}_2 + \mathcal{I}_3,
		\end{equation}
		where
		\begin{align*}
			\mathcal{I}_1 & := \left( \phi\uu_1, \nabla \phi \right) + \left( \phi_2\uu , \nabla \phi \right),\\
			\mathcal{I}_2 & := \left( S'_\phi(\phi_1) - S'_\phi(\phi_2), \Delta \phi \right), \\
			\mathcal{I}_3 & := \theta\left( \nabla \cdot(\rho_1 \nabla \phi), \Delta \phi\right) + \theta\left( \nabla \cdot(\rho \nabla \phi_2), \Delta \phi\right).
		\end{align*}
\textcolor{black}{Concerning  $\mathcal{I}_1$, observe that $\left( \phi\uu_1, \nabla \phi \right)=0$ since $\nabla \cdot \uu_1=0$ and $\|\nabla \phi\|^2=-(\Delta\phi,\phi)$. Thus we have}
\begin{align}
\mathcal{I}_1 & = \left( \phi_2\uu , \nabla \phi \right) \leq  \|\phi_2\|_{L^\infty(\Omega)}\|\uu\|\|\nabla \phi\| \notag\\
& \leq C\|\uu\|\|\phi\|^\frac{1}{2}\|\Delta \phi\|^\frac{1}{2} \notag\\
& \leq \dfrac{1}{18}\|\Delta \phi\|^2 + \dfrac{\nu_*}{20}\|\uu\|^2 + C\|\phi\|^2.		
\label{eq:stab11}
\end{align}
 Next, for $\mathcal{I}_2$ it holds
\begin{align}
\mathcal{I}_2 & = \left(S'_\phi(\phi_1) - S'_\phi(\phi_2), \Delta \phi \right) \notag\\
& \leq \left|\left(\int_0^1S''_\phi(\tau\phi_1+(1-\tau)\phi_2)\phi\,\mathrm{d}\tau,\, \Delta\phi\right)\right|  \notag \\
& \leq C\left\|\int_0^1S''_\phi(\tau\phi_1+(1-\tau)\phi_2)\,\mathrm{d}\tau \right\|_{L^\infty(\Omega)} \|\phi\|\| \Delta \phi\| \notag \\
& \leq \dfrac{1}{18}\|\Delta \phi\|^2 + C\|\phi\|^2.
\label{eq:stab12}
\end{align}
Finally, owing to standard Sobolev embeddings, the Poincar\'{e}--Wirtinger inequality and the elliptic estimate, we have
\begin{align}
\mathcal{I}_3
& = \theta(\rho_1 \Delta\phi, \Delta \phi) + \theta(\rho \Delta\phi_2, \Delta \phi) + \theta(\nabla \rho_1 \cdot \nabla \phi, \Delta \phi) + \theta(\nabla \rho \cdot \nabla \phi_2, \Delta \phi) \notag  \\
& \leq C\|\Delta \phi\|_{L^3(\Omega)}\big( \|\rho_1\|_{L^6(\Omega)}\|\Delta\phi\|
+ \|\rho\|_{L^6(\Omega)}\|\Delta\phi_2\|
+ \|\nabla \rho_1\| \|\nabla \phi\|_{\LL^6(\Omega)}
+ \|\nabla \rho\|\|\nabla \phi_2\|_{\mathbf{L}^6(\Omega)} \big) \notag \\
& \leq C\|\phi\|^\frac{2}{9}\|\nabla \Delta \phi\|^\frac{7}{9} \left( \|\Delta\phi\| + \|\nabla \rho\| \right)\notag \\
& \leq \dfrac{1}{18}\|\Delta \phi\|^2 + \dfrac{\alpha}{6}\|\nabla \Delta \phi\|^2 + \dfrac{\beta}{12}\| \nabla \rho\|^2  + C\| \phi\|^2.
\label{eq:stab13}
\end{align}
Collecting the estimates \eqref{eq:stab11}--\eqref{eq:stab13}, we deduce from  \eqref{eq:stab1} that
\begin{equation} \label{eq:stab1final}
\dfrac{1}{2}\td{}{t}\| \phi\|^2 + \dfrac{5\alpha}{6}\| \nabla \Delta \phi\|^2+ \frac{5}{6}\|\Delta \phi\|^2
\leq \dfrac{\nu_*}{20}\|\uu\|^2 + \dfrac{\beta}{12}\| \nabla \rho\|^2  + C\|\phi\|^2.		
\end{equation}

\textbf{Step 2.} We now apply a similar argument for $\rho$, but the presence of the singular potential forces us to take $\mathcal{N}\rho \in V_0$ as test function in \eqref{eq:stab0}. This yields
	\[
		\left\langle \partial_t\rho, \mathcal{N}\rho \right\rangle_{H^1(\Omega))^*,H^1(\Omega)}  - \left(\rho\mathbf{u}_1, \nabla \mathcal{N}\rho\right) - \left(\rho_2\mathbf{u}, \nabla \mathcal{N}\rho\right) =- (\psi, \rho).
	\]
Like in Step 1, we compute the last scalar product using the equation for $\psi$ and obtain
\begin{equation} \label{eq:stab2}
\dfrac{1}{2}\td{}{t}\| \rho \|^2_{V_0^*} + \beta\|\nabla\rho\|^2 + \mathcal{I}_4 = \mathcal{I}_5 + \mathcal{I}_6,
\end{equation}
where
\begin{align*}
    \mathcal{I}_4 & := \big(S'_\rho(\rho_1)-S'_\rho(\rho_2), \rho\big), \\
	\mathcal{I}_5 & := \left(\rho\mathbf{u}_1, \nabla \mathcal{N}\rho\right) + \left(\rho_2\mathbf{u}, \nabla \mathcal{N}\rho\right), \\
	\mathcal{I}_6 & := \dfrac{\theta}{2}\big(\nabla \phi\cdot(\nabla\phi_1+\nabla \phi_2), \rho\big).
\end{align*}
Recalling  \textbf{(H3)}, we easily obtain
\begin{equation}
\label{eq:stab22}
\mathcal{I}_4  \geq -L_1 \| \rho \|^2 \geq -\frac{\beta}{12}\Vert \nabla \rho\|^2 - C\|\rho\|^2_{V_0^*}.
\end{equation}
Arguing as for $\mathcal{I}_1$, we get
\begin{align}
\mathcal{I}_5
& \leq \|\rho\|_{L^6(\Omega)}\|\uu_1\|_{\LL^3(\Omega)}\|\rho\|_{V_0^*}
+ \|\rho_2\|_{L^\infty(\Omega)}\|\uu\|\|\rho\|_{V_0^*} \notag \\
& \leq \dfrac{\beta}{12}\| \nabla \rho \|^2  + \dfrac{\nu_*}{20}\|\uu\|^2
+ C\big( 1 + \|\uu_1\|_{\LL^3(\Omega)}^2 \big) \|\rho\|_{V_0^*}^2.
\label{eq:stab21}
\end{align}
Then, Sobolev embeddings and the Poincar\'{e}--Wirtinger inequality give
\begin{align}
	\mathcal{I}_6 & \leq \big(\|\nabla \phi_1\|_{\mathbf{L}^4(\Omega)}
    + \|\nabla \phi_2\|_{\mathbf{L}^4(\Omega)}\big) \|\nabla \phi\| \|\rho\|_{L^4(\Omega)} \notag \\
	& \leq C\|\nabla \phi\|\|\rho\|_{L^4(\Omega)} \notag \\
	& \leq C\|\phi\|^\frac{1}{2}\|\Delta \phi\|^\frac{1}{2}\|\nabla \rho\| \notag \\
    & \leq  \dfrac{\beta}{12}\|\nabla \rho\|^2+ \dfrac{1}{18}\|\Delta \phi\|^2 + C\|\phi\|^2.
    \label{eq:stab23}
\end{align}
Thus, we can conclude from estimates \eqref{eq:stab21}--\eqref{eq:stab23} and  \eqref{eq:stab2} that
\begin{equation} \label{eq:stab2final}
			\dfrac{1}{2}\td{}{t}\| \rho \|^2_{V_0^*}
+ \frac{3\beta}{4}\|\nabla \rho\|^2 \leq \dfrac{\nu_*}{20}\|\uu\|^2 + \dfrac{1}{18}\| \nabla \phi\|^2
+  C\big( 1 + \|\uu_1\|_{\LL^3(\Omega)}^2 \big) \|\rho\|_{V_0^*}^2 + C\|\phi\|^2.
\end{equation}

\textbf{Step 3.} Now we consider the Navier--Stokes system. For the sake of convenience, we make use of the vectorial identity
		\begin{equation*} \label{eq:identity1}
			(\uu_i \cdot \nabla)\uu_i = \nabla \cdot (\uu_i \otimes \uu_i), \qquad i = 1, 2,
		\end{equation*}
which holds thanks to $\nabla\cdot \uu_i=0$. Besides, we recast the Korteweg forces by using the equations for $\mu_i$, $\psi_i$, $i = 1, 2$,
and we write
\begin{align*}
    \mu_i\nabla \phi_i + \psi_i \nabla \rho_i
& = \nabla \left( \dfrac{1}{2}(1-\theta\rho_i)|\nabla \phi_i|^2 + \dfrac{\beta}{2}|\nabla \rho_i|^2 + S_\phi(\phi_i) + S_\rho(\rho_i)\right) \\
&\quad - \nabla \cdot \big( (1-\theta\rho_i) \nabla \phi_i \otimes \nabla \phi_i
+ \beta \nabla \rho_i \otimes \nabla \rho_i - \alpha \nabla\Delta\phi_i \otimes \nabla \phi_i \big) \\
&\quad - \alpha(\nabla\Delta\phi_i \cdot \nabla)\nabla\phi_i.
\end{align*}
In this way, we get rid of the chemical potentials by considering extra pressure terms.
After introducing these modifications, we test the equation for $\uu$ by $\mathbf{A}^{-1}\uu \in \mathbf{W}_\sigma$, which yields
\begin{equation} \label{eq:stab3pre}
			\dfrac{1}{2}\td{}{t}\|\uu\|_{\mathbf{V}_\sigma^*}^2 + (\nu(\phi_1, \rho_1)D\uu, \nabla \mathbf{A}^{-1}\uu) = \sum_{j = 7}^{13} \mathcal{I}_j,
\end{equation}
where we set (using integrations by parts and adding/subtracting suitable quantities)
		\begin{align*}
			\mathcal{I}_7 & := (\uu_1 \otimes \uu, \nabla \mathbf{A}^{-1}\uu) + (\uu \otimes \uu_2, \nabla \mathbf{A}^{-1}\uu)  \\
			\mathcal{I}_8 & := (\nabla \phi_1 \otimes \nabla \phi, \nabla \mathbf{A}^{-1}\uu) + (\nabla \phi \otimes \nabla \phi_2, \nabla \mathbf{A}^{-1}\uu), \\
			\mathcal{I}_9 & := \beta(\nabla \rho_1 \otimes \nabla \rho, \nabla \mathbf{A}^{-1}\uu) + \beta(\nabla \rho \otimes \nabla \rho_2, \nabla \mathbf{A}^{-1}\uu), \\
			\mathcal{I}_{10} & := -\theta(\rho_1\nabla \phi_1 \otimes \nabla \phi, \nabla \mathbf{A}^{-1}\uu) - \theta(\rho_1\nabla \phi \otimes \nabla \phi_2, \nabla \mathbf{A}^{-1}\uu) - \theta(\rho\nabla \phi_2 \otimes \nabla \phi_2, \nabla \mathbf{A}^{-1}\uu), \\
			\mathcal{I}_{11} & := -\alpha(\nabla\Delta\phi_1 \otimes \nabla \phi, \nabla \mathbf{A}^{-1}\uu) - \alpha(\nabla \Delta \phi \otimes \nabla \phi_2, \nabla \mathbf{A}^{-1}\uu),\\
			\mathcal{I}_{12} & := -\alpha\big((\nabla\Delta\phi_1 \cdot \nabla)\nabla\phi,  \mathbf{A}^{-1}\uu\big) - \alpha\big((\nabla\Delta\phi \cdot \nabla)\nabla\phi_2,  \mathbf{A}^{-1}\uu\big),\\
			\mathcal{I}_{13} & := - \big((\nu(\phi_1, \rho_1) - \nu(\phi_2, \rho_2))D\uu_2, \nabla \mathbf{A}^{-1}\uu\big).
		\end{align*}
We analyze the remainder terms on the left-hand side of \eqref{eq:stab3pre} by using the argument in \cite{GMT18}. Since $\nabla \cdot \nabla \uu^\mathrm{T} = \nabla \nabla \cdot \uu =  0$, we deduce that
\begin{align} \label{eq:stab31}
\big(\nu(\phi_1, \rho_1)D\uu, \nabla \mathbf{A}^{-1}\uu\big)
    & = \big(\nu(\phi_1, \rho_1) D\uu, D\mathbf{A}^{-1}\uu\big) \notag\\
    & = \big(\nabla \uu, \nu(\phi_1, \rho_1) D\mathbf{A}^{-1}\uu\big) \notag\\
	& = - \big(\uu, \nabla \cdot [\nu(\phi_1, \rho_1)D\mathbf{A}^{-1}\uu] \big) \notag \\
	& = -  \big(\uu, D\mathbf{A}^{-1}\uu\nabla \nu(\phi_1, \rho_1)\big) - \dfrac{1}{2}\big(\uu, \nu(\phi_1, \rho_1)\Delta\mathbf{A}^{-1}\uu\big).
\end{align}
From the definition of the Stokes operator, we find that there exists $q \in L^2(0,T;H^1(\Omega))$ satisfying $-\Delta\mathbf{A}^{-1}\uu + \nabla q = \uu$ almost everywhere in $\Omega \times (0,T)$ (cf. Lemma \ref{stokes}). Moreover, it holds
\begin{equation} \label{eq:qest}
			\|q\| \leq C\|\nabla \mathbf{A}^{-1}\uu\|^\frac{1}{2}\|\uu\|^\frac{1}{2}, \qquad \|\nabla q\| \leq C \|\uu\|.
\end{equation}
Therefore, the second term on the right-hand side of \eqref{eq:stab31} can be estimated as follows:
\begin{align*}
				- \dfrac{1}{2}\big(\uu, \nu(\phi_1,\rho_1)\Delta\mathbf{A}^{-1}\uu\big)
 & = \dfrac{1}{2}\big(\uu, \nu(\phi_1,\rho_1)\uu\big)
 - \dfrac{1}{2}\big(\uu, \nu(\phi_1,\rho_1)\nabla q\big) \\
 & \geq \dfrac{\nu_*}{2}\|\uu\|^2 - \dfrac{1}{2}\big(\uu, \nu(\phi_1,\rho_1)\nabla q\big).
\end{align*}	
Setting
\begin{align*}
\mathcal{I}_{14} := \big(\uu, D\mathbf{A}^{-1}\uu\nabla \nu(\phi_1, \rho_1) \big),\qquad \mathcal{I}_{15} := \dfrac{1}{2}\big(\uu, \nu(\phi_1,\rho_1)\nabla q\big),
\end{align*}
we then recast \eqref{eq:stab3pre} as
\begin{equation} \label{eq:stab3}
			\dfrac{1}{2}\td{}{t}\|\uu\|_{\mathbf{V}_\sigma^*}^2 + \dfrac{\nu_*}{2}\|\uu\|^2 \leq \sum_{j = 7}^{15} \mathcal{I}_j.
\end{equation}
Next, we estimate all the $\mathcal{I}_j$ terms defined above.
Using the Ladyzhenskaya inequality \eqref{eq:gn1} and Young's inequality, we can deduce that  (see \cite{GMT18})
\begin{align}
\mathcal{I}_7
& \leq \left( \|\uu_1\|_{\LL^4(\Omega)} + \|\uu_2\|_{\LL^4(\Omega)} \right)\|\uu\|\|\nabla \mathbf{A}^{-1}\uu\| \notag \\
& \leq C\left( \|\uu_1\|^\frac12\|\uu_1\|_{\mathbf{V}_\sigma}^\frac12
               + \|\uu_2\|^\frac12\|\uu_2\|_{\mathbf{V}_\sigma}^\frac12 \right)\|\uu\|_{\mathbf{V}_\sigma^*}^\frac{1}{2}\|\uu\|^\frac{3}{2} \notag \\
& \leq \dfrac{\nu_*}{20}\|\uu\|^2 + C\left( \|\uu_1\|_{\mathbf{V}_\sigma}^2 + \|\uu_2\|_{\mathbf{V}_\sigma}^2 \right) \|\uu\|_{\mathbf{V}_\sigma^*}^2,
	\label{eq:stab32}
\end{align}
\begin{align}
\mathcal{I}_8
& \leq \left( \| \nabla \phi_1 \|_{\LL^\infty(\Omega)} + \| \nabla \phi_2 \|_{\LL^\infty(\Omega)} \right) \| \nabla \phi \| \|\nabla \mathbf{A}^{-1} \uu\| \notag \\
& \leq \dfrac{1}{18}\|\Delta \phi\|^2 + C\|\phi\|^2 + C\left(  \| \nabla \phi_1 \|_{\LL^\infty(\Omega)}^2 + \| \nabla \phi_2 \|_{\LL^\infty(\Omega)}^2 \right)\|\uu\|^2_{\mathbf{V}_\sigma^*},
\label{eq:stab33}
\end{align}
\begin{align}
\mathcal{I}_9 & \leq \left( \| \nabla \rho_1 \|_{\LL^\infty(\Omega)} + \| \nabla \rho_2 \|_{\LL^\infty(\Omega)} \right) \| \nabla \rho \| \|\nabla \mathbf{A}^{-1} \uu\| \notag \\
& \leq \dfrac{\beta}{12}\| \nabla \rho\|^2 + C\big(  \| \nabla \rho_1 \|_{\LL^\infty(\Omega)}^2 + \| \nabla \rho_2 \|_{\LL^\infty(\Omega)}^2 \big)\|\uu\|^2_{\mathbf{V}_\sigma^*}.
\label{eq:stab34}
\end{align}
The estimate for $\mathcal{I}_{10}$ is slightly more involved. Indeed we have
\begin{align}
\mathcal{I}_{10}
& \leq C\|\rho_1\|_{L^\infty(\Omega)}\left( \| \nabla \phi_1 \|_{\LL^\infty(\Omega)} + \| \nabla \phi_2 \|_{\LL^\infty(\Omega)} \right) \| \nabla \phi \| \|\nabla \mathbf{A}^{-1} \uu\| \notag\\
&\quad +  C \| \rho\|_{L^4(\Omega)} \| \nabla \phi_2 \|_{\LL^4(\Omega)}\| \nabla \phi_2 \|_{\LL^\infty(\Omega)} \| \nabla \mathbf{A}^{-1}  \uu\| \notag \\
& \leq C\left( \| \nabla \phi_1 \|_{\LL^\infty(\Omega)} + \| \nabla \phi_2 \|_{\LL^\infty(\Omega)} \right) \| \Delta \phi \|^\frac12\|\phi\|^\frac12 \|\nabla \mathbf{A}^{-1} \uu\| \notag\\
&\quad +  C \| \nabla \rho\|^\frac34\|\rho\|_{V_0^*}^\frac14 \| \nabla \phi_2 \|_{\LL^\infty(\Omega)} \| \nabla \mathbf{A}^{-1}  \uu\| \notag \\
& \leq \dfrac{1}{18}\|\Delta \phi\|^2 + \dfrac{\beta}{12}\| \nabla \rho\|^2
+ C\big(  \| \nabla \phi_1 \|_{\LL^\infty(\Omega)}^2 + \| \nabla \phi_2 \|_{\LL^\infty(\Omega)}^2 \big)\|\uu\|^2_{\mathbf{V}_\sigma^*} + C\big( \|\phi\|^2 + \|\rho\|^2_{V_0^*}\big).
\label{eq:stab35}
\end{align}
Using now Sobolev embeddings, we deduce that
\begin{align}
\mathcal{I}_{11}
& \leq C\left( \|\nabla \Delta \phi_1\|_{\LL^4(\Omega)}\|\nabla \phi\|_{\LL^4(\Omega)}\|\nabla \mathbf{A}^{-1}\uu\| + \|\nabla\Delta \phi\|\|\nabla \phi_2\|_{\LL^\infty(\Omega)}\|\nabla \mathbf{A}^{-1}\uu\|\right) \notag \\
& \leq \dfrac{\alpha}{6}\|\nabla \Delta \phi\|^2 + \dfrac{1}{18}\|\Delta \phi\|^2 +  C\big(\|\nabla \Delta \phi_1\|_{\LL^4(\Omega)}^2 + \|\nabla \phi_2\|_{\LL^\infty(\Omega)}^2 \big)\|\uu\|_{\mathbf{V}_\sigma^*}^2.
\label{eq:stab36}
\end{align}
Recalling that $\mathbf{V}_\sigma \hookrightarrow \LL^r(\Omega)$ for every $r > 0$, we find
\begin{align}
\mathcal{I}_{12}
    & \leq C\left( \|\nabla \Delta \phi_1\|_{\LL^4(\Omega)}\|\phi\|_{H^2(\Omega)}\| \mathbf{A}^{-1}\uu\|_{\LL^4(\Omega)}
        + \|\nabla\Delta \phi\|\|\phi_2\|_{W^{2,4}(\Omega)}\| \mathbf{A}^{-1}\uu\|_{\LL^4(\Omega)}\right)\notag \\
	& \leq \dfrac{\alpha}{6}\|\nabla \Delta \phi\|^2 + \dfrac{1}{18}\|\Delta \phi\|^2
        +  C\big(\|\nabla \Delta \phi_1\|_{\LL^4(\Omega)}^2 +\|\phi_2\|_{W^{2,4}(\Omega)}^2 \big)\|\uu\|_{\mathbf{V}_\sigma^*}^2.
 \label{eq:stab37}
\end{align}
Let us now handle the terms involving viscosity. Consider $\mathcal{I}_{13}$.
Making use of Agmon's inequality \eqref{eq:agmon1}, the Poincar\'{e}--Wirtinger inequality and \cite[Proposition C.1]{GMT18}, we obtain
\begin{align}
\mathcal{I}_{13} & = - ((\nu(\phi_1, \rho_1) - \nu(\phi_1, \rho_2))D\uu_2, \nabla \mathbf{A}^{-1}\uu) - ((\nu(\phi_1, \rho_2) - \nu(\phi_2, \rho_2))D\uu_2, \nabla \mathbf{A}^{-1}\uu) \notag \\
				& = -\left( \ii{0}{1}{\partial_\rho\nu(\phi_1, s\rho_1 + (1-s)\rho_2) \rho }{s}D\uu_2, \nabla \mathbf{A}^{-1}\uu \right)\notag\\
&\quad - \left( \ii{0}{1}{\partial_\phi\nu(s\phi_1 + (1-s)\phi_2, \rho_2)\phi}{s} D\uu_2, \nabla \mathbf{A}^{-1}\uu \right) \notag \\
				& \leq C\|D\uu_2\|\|\rho \nabla \mathbf{A}^{-1}\uu\|
                + C\|D\uu_2\| \|\phi\|_{L^\infty(\Omega)} \|\nabla \mathbf{A}^{-1}\uu\| \notag \\
				& \leq C\|D \uu_2\| \|\nabla \rho\| \|\uu\|_{\mathbf{V}_\sigma^*} \left[ \ln \left(\dfrac{e\|\nabla \mathbf{A}^{-1}\uu\|_{\mathbf{H}^1(\Omega)}}{\|\uu\|_{\mathbf{V}_\sigma^*}} \right) \right]^\frac{1}{2}
+ \dfrac{1}{18}\|\Delta \phi\|^2 + C\|\phi\|^2 + \|D\uu_2\|^2\|\uu\|_{\mathbf{V}_\sigma^*}^2.
\label{eq:stab38}
\end{align}		
Next, we see that
\begin{align}
\mathcal{I}_{14}
& \leq C\left( \|\nabla \phi_1\|_{\LL^\infty(\Omega)} + \|\nabla \rho_1\|_{\LL^\infty(\Omega)}\right)\|\uu\|\|D\mathbf{A}^{-1}\uu\| \notag \\
& \leq \dfrac{\nu_*}{20}\|\uu\|^2 +  C\big(\|\nabla \phi_1\|_{\LL^\infty(\Omega)}^2 + \|\nabla \rho_1\|_{\LL^\infty(\Omega)}^2 \big)\|\uu\|_{\mathbf{V}_\sigma^*}^2,
\label{eq:stab39}
\end{align}
and exploiting \eqref{eq:gn1} and \eqref{eq:agmon1}, jointly with \eqref{eq:qest}, we get
\begin{align}
\mathcal{I}_{15}
& = -\dfrac{1}{2}(\uu \cdot \nabla \nu(\phi_1, \rho_1), q) \notag \\
				& \leq C\left( \|\nabla \phi_1\|_{\LL^4(\Omega)} + \|\nabla \rho_1\|_{\LL^4(\Omega)}\right)\|\uu\|\|q\|_{L^4(\Omega)} \notag \\
				& \leq C\big( \|\phi_1\|_{H^2(\Omega)} + \|\rho_1\|^\frac{1}{2}_{L^\infty(\Omega)}\|\rho_1\|_{H^2(\Omega)}^\frac{1}{2}\big)\|\uu\|\|q\|^\frac{1}{2}\|q\|_{H^1(\Omega)}^\frac{1}{2} \notag \\
				& \leq C\big( 1 + \|\rho_1\|_{H^2(\Omega)}^\frac{1}{2}\big)\|\uu\|^\frac{7}{4}\|\nabla \mathbf{A}^{-1}\uu\|^\frac{1}{4} \notag \\
				& \leq \dfrac{\nu_*}{20}\|\uu\|^2 + C\left( 1 + \|\rho_1\|_{H^2(\Omega)}^4\right)\|\uu\|_{\mathbf{V}_\sigma^*}^2.
	\label{eq:stab310}
\end{align}
Collecting the estimates \eqref{eq:stab31}--\eqref{eq:stab310}, we infer from  \eqref{eq:stab3} that
\begin{align}
 \dfrac{1}{2}\td{}{t}\|\uu\|^2_{\mathbf{V}_\sigma^*} + \dfrac{7\nu_*}{20} \|\uu\|^2
& \leq  \dfrac{\alpha}{3}\|\nabla \Delta \phi\|^2+ \dfrac{5}{18}\|\Delta \phi\|^2 + \dfrac{\beta}{6}\|\nabla \rho\|^2
+  C\big( \|\phi\|^2 + \|\rho\|_{V_0^*}^2\big) \notag \\
& \quad  + \mathcal{H}\|\uu\|^2_{\mathbf{V}_\sigma^*} + C\|D \uu_2\| \|\nabla \rho\| \|\uu\|_{\mathbf{V}_\sigma^*} \left[ \ln \left(\dfrac{e\|\nabla \mathbf{A}^{-1}\uu\|_{\mathbf{H}^1(\Omega)}}{\|\uu\|_{\mathbf{V}_\sigma^*}} \right) \right]^\frac{1}{2},
\label{eq:stab3final}
\end{align}
where we set
\begin{align}
\mathcal{H}(t) &:= C\left( 1 + \|\uu_1(t)\|_{\mathbf{V}_\sigma}^2 + \|\uu_2(t)\|_{\mathbf{V}_\sigma}^2 + \| \nabla \phi_1(t) \|_{\LL^\infty(\Omega)}^2 + \| \nabla \phi_2(t) \|_{\LL^\infty(\Omega)}^2 + \|\nabla \Delta \phi_1\|_{\LL^4(\Omega)}^2 \right.\notag\\
&\qquad  \left.  +\|\phi_2\|_{W^{2,4}(\Omega)}^2 + \| \nabla \rho_1(t) \|_{\LL^\infty(\Omega)}^2 + \| \nabla \rho_2(t) \|_{\LL^\infty(\Omega)}^2 + \|\rho_1(t)\|_{H^2(\Omega)}^4 \right). \label{FF1}
\end{align}

\textbf{Step 4.} Collecting \eqref{eq:stab1final},  \eqref{eq:stab2final} and \eqref{eq:stab3final}, we arrive at the differential inequality
\begin{align*}
&\frac{\mathrm{d}\mathcal{Y} }{\mathrm{d}t} + \dfrac{\nu_*}{2} \|\uu\|^2 + \alpha\| \nabla \Delta \phi\|^2+ \|\Delta \phi\|^2
                + \beta \|\nabla \rho\|^2 \notag\\
&\quad \leq \mathcal{H} \mathcal{Y} + C\|D \uu_2\| \|\nabla \rho\| \|\uu\|_{\mathbf{V}_\sigma^*} \left[ \ln \left(\dfrac{e\|\nabla \mathbf{A}^{-1}\uu\|_{\mathbf{H}^1(\Omega)}}{\|\uu\|_{\mathbf{V}_\sigma^*}} \right) \right]^\frac{1}{2},
\end{align*}
where
\begin{align}
			\mathcal{Y}(t) & := \|\uu(t)\|^2_{\mathbf{V}_\sigma^*} + \|\phi(t)\|^2 + \|\rho(t)\|^2_{V_0^*}, \notag
\end{align}
and the function $\mathcal{H}$ is given by \eqref{FF1} with a suitably enlarged $C>0$.

We now analyze the logarithmic term on the right-hand side by using the fact that on any interval $(0,M]$ the function $s \ln \left(\frac{C}{s}\right)$ is increasing provided that $C>eM$.
Recalling that $\|\uu\|_{L^\infty(0,T;\mathbf{H}_\sigma)}$, $\|\phi\|_{L^\infty(0,T;H^2(\Omega))}$ and $\|\rho\|_{L^\infty(0,T;H^1(\Omega))}$ are bounded, we have
\begin{equation}
\|\mathcal{Y}\|_{L^{\infty}\left(0, T\right)}\le K_1,\notag
\end{equation}
where the constant $K_1>0$ depends on norms of the initial data, $\Omega$, $T$, and coefficients of the system. Let $K_2$ be a sufficiently large constant that may depend on $K_1$. Then we deduce that
\begin{align*}
\|D \uu_2\| \|\nabla \rho\| \|\uu\|_{\mathbf{V}_\sigma^*} \left[ \ln \left(\dfrac{e\|\nabla \mathbf{A}^{-1}\uu\|_{\mathbf{H}^1(\Omega)}}{\|\uu\|_{\mathbf{V}_\sigma^*}} \right) \right]^\frac{1}{2}
& \leq  \|D \uu_2\| \|\nabla \rho\| \mathcal{Y}(t)^\frac12 \left[ \ln \left(\dfrac{K_2^\frac12}{\mathcal{Y}(t)^\frac12} \right) \right]^\frac{1}{2}\\
& \leq \frac{\beta}{2} \|\nabla \rho\|^2+ C \|D \uu_2\|^2  \mathcal{Y}(t) \ln \left(\dfrac{K_2}{\mathcal{Y}(t)} \right).
\end{align*}
As a consequence, we obtain
\begin{align}
&\frac{\mathrm{d}\mathcal{Y}}{\mathrm{d}t}  + \dfrac{\nu_*}{2} \|\uu\|^2 + \alpha\| \nabla \Delta \phi\|^2+ \|\Delta \phi\|^2
                + \frac{\beta}{2} \|\nabla \rho\|^2
                \leq \mathcal{H} \mathcal{Y} \ln \left(\dfrac{K_2}{\mathcal{Y}} \right),
                \label{uniA}
\end{align}
where, again, we possibly enlarge $C$ in $\mathcal{H}$. Integrating \eqref{uniA} on $[0,t]\subset [0,T]$, we get
\begin{align}
\mathcal{Y}(t)\leq \mathcal{Y}(0)+ \int_0^t \mathcal{H}(\tau)\mathcal{Y}(\tau)\ln \left(\frac{K_2}{\mathcal{Y}(\tau)}\right)\mathrm{d}\tau,\quad \text{for a.a.}\, t\in [0,T].
\label{uniA1}
\end{align}
Since $\mathcal{H}\in L^1(0,T)$ and $\mathcal{Y}(0)=0$, from \eqref{uniA1} and using the Osgood lemma (see, e.g., \cite[Lemma 3.4]{BCD}), we can conclude that  $\mathcal{Y}(t)=0$ for all $t\in [0,T]$. Hence, the global weak solution to problem \eqref{eq:strongNSCH}--\eqref{eq:bcs} is  unique.

This completes the proof of Theorem \ref{th:weaksolution}.
\hfill $\square$

\begin{remark}
If $\mathcal{Y}(0)>0$, the inequality \eqref{uniA1} yields a continuous dependence estimate with respect to the initial data (cf. \cite{GMT18,He21}). Indeed, choosing a
sufficiently large $K_2$ in order to have
$$
\ln\left[\ln\left(\frac{K_2}{\mathcal{Y}(0)}\right)\right]\geq \int_0^T \mathcal{H}(t)\,\mathrm{d}t,
$$
we infer from \eqref{uniA1} and the Osgood lemma \cite[Lemma 3.4]{BCD} that
$$
\ln\left[\ln\left(\frac{K_2}{\mathcal{Y}(0)}\right)\right]-
\int_0^t \mathcal{H}(\tau)\,\mathrm{d}\tau \leq \ln\left[\ln\left(\frac{K_2}{\mathcal{Y}(t)}\right)\right],\quad \ \forall\, t\in[0,T].
$$
Thus, after taking the double exponential, we find
$$
\mathcal{Y}(t)\le K_2\left(\frac{\mathcal{Y}(0)}{K_2}\right)^{\exp\left(-\int_{0}^{t}\mathcal{H}(\tau)\, \mathrm{d}\tau\right)},\quad \ \forall\, t\in[0,T].
$$
Nevertheless, in the above argument, we should assume that either the initial data for $\rho$ have the same mean value, or take $\mathcal{N}(\rho - \overline{\rho})$ as a test function in Step 2.
\end{remark}

\section{Proof of  Theorem \ref{th:wellposedlocal}} \label{sec:proof3}
In this section, we prove the existence of strong solutions to problem \eqref{eq:strongNSCH}--\eqref{eq:bcs}. Following the approach devised in \cite{GMT18}, we first construct a proper approximation of the initial datum $\rho_0$ (\textcolor{black}{which is indeed not necessary for the logarithmic potential \eqref{eq:mixentropy} when $d=2$, as pointed out in \cite{HeWu}}). Then, using the same approximating scheme as in Section \ref{sec:proof1}, we derive higher-order uniform bounds
which allow us to pass to the limit with respect to the approximation parameters.

\subsection{The approximating scheme} \label{subs:initialdata}

\textbf{Approximating $\rho_0$}. Recalling \cite[Section 4]{GMT18} (see also \cite{GGM}), we consider the family of cutoff functions $h_k:\mathbb{R} \to \mathbb{R}$, $k \in \mathbb{N}$, defined by
\begin{equation*} \label{eq:cutoff}
		h_k(s) := \begin{cases}
			-k, & \quad s < -k, \\
			s, & \quad |s| \leq k, \\
			k, & \quad s > k.
		\end{cases}
\end{equation*}
Observe that $h_k$ is globally Lipschitz continuous. For $\widehat{\psi}_0 := -\Delta \rho_0 + \widehat{S}'_\rho(\rho_0) \in H^1(\Omega)$, we have
$\widehat{\psi}_{0,k} := h_k \circ\widehat{\psi}_0 \in H^1(\Omega)$ for any $k \in \mathbb{N}$. Moreover, the weak chain rule implies
$\nabla \widehat{\psi}_{0,k} = \nabla {\psi}_0 \cdot \chi_{[-k,k]}(\widehat{\psi}_0)$, and thus
\begin{equation} \label{eq:approxinitial}
		\|\widehat{\psi}_{0,k}\|_{H^1(\Omega)} \leq \|\widehat{\psi}_{0}\|_{H^1(\Omega)}.
\end{equation}
	We now approximate the initial condition $\rho_0 \in H^2(\Omega)$. For any $k \in \mathbb{N}$, consider the following elliptic problem
\begin{equation*} \label{eq:neumanninitial}
\begin{cases}
	-\beta \Delta \rho_{0,k} + \widehat{S}'_\rho(\rho_{0,k}) = \widehat{\psi}_{0,k} & \quad \text{ a.e. in } \Omega,\\
	\partial_\mathbf{n}\rho_{0,k} = 0, & \quad \text{ a.e. on } \partial\Omega,
\end{cases}
\end{equation*}
which admits a unique solution $\rho_{0,k} \in H^2_N(\Omega)$ satisfying
\begin{equation} \label{eq:estimateinitdata0}
		\|\rho_{0,k}\|_{H^2(\Omega)} + \|\widehat{S}_\rho'(\rho_{0,k})\| \leq C\big( 1 + \|\widehat{\psi}_{0,k}\|\big)
\leq  C\big( 1 + \|\widehat{\psi}_{0}\|\big).
\end{equation}
Besides, owing to the strong convergence $\widehat{\psi}_{0,k} \to \widehat{\psi}_0$ in $L^2(\Omega)$, it holds $\rho_{0,k} \to \rho_0$ in $H^1(\Omega)$, see \cite[Lemma A.1]{GMT18}. Then there exists some $m_1 \in (0,1/2)$ independent of $k$ and some $k^* \in \mathbb{N}$ such that for every $k > k^*$
\begin{equation} \label{eq:estimateinitdata2}
		\|\rho_{0,k}\|_{H^1(\Omega)} \leq 1 + \|\rho_0\|_{H^1(\Omega)}, \qquad m_1\leq \overline{\rho_{0,k}} \leq 1-m_1.
\end{equation}
Moreover, a regularity estimate (see \cite[Theorem A.2]{GMT18}) implies that
	\[
	\|\widehat{S}'_\rho(\rho_{0,k})\|_{L^\infty(\Omega)} \leq \|\widehat{\psi}_{0,k}\|_{L^\infty(\Omega)} \leq k,
	\]
and thus the approximated initial data sequence is strictly separated from the pure states $0,1$, namely, there exists $\textcolor{black}{\widetilde{\eta} = \widetilde{\eta}(k)\ \in (0,1/2)}$ such that
	\[
	\textcolor{black}{\widetilde{\eta} \leq \|\rho_{0,k}\|_{L^\infty(\Omega)} \leq 1-\widetilde{\eta}.}
	\]
Hence, it holds $\widehat{S}'_\rho(\rho_{0,k}) \in H^1(\Omega)$ and furthermore $\rho_{0,k} \in H^3(\Omega)$. \medskip

\textbf{The Galerkin scheme}. For the approximating system, we carry over the notation of Subsection \ref{sub:galerkin} with the additional modification of the initial datum $\rho_{0,k}$. However, for ease of notation, instead of $f^{n,\varepsilon}_{\omega, k}$ we only write $f^{n,\varepsilon}_\omega$. We look for functions
\begin{align*}
\phi^{n, \varepsilon}_\omega(t) = \sum_{i = 1}^n a_{i}(t)w_i, & \quad \rho^{n, \varepsilon}_\omega(t) = \sum_{i = 1}^n b_{i}(t)w_i,\\ \mu^{n, \varepsilon}_\omega(t) = \sum_{i = 1}^n c_{i}(t)w_i, & \quad \psi^{n, \varepsilon}_\omega(t) = \sum_{i = 1}^n d_{i}(t)w_i, \\ \uu^{n, \varepsilon}_\omega(t) =  \sum_{i = 1}^n e_{i}(t)\mathbf{w}_i,&
\end{align*}
that solve the following problem
\begin{equation} \label{eq:weakreg2}
\begin{cases}
\left\langle \partial_t\mathbf{u}^{n, \varepsilon}_\omega, \mathbf{v} \right\rangle_{\mathbf{V}^*_\sigma, \mathbf{V}_\sigma} + ((\mathbf{u}^{n, \varepsilon}_\omega \cdot \nabla)\mathbf{u}^{n, \varepsilon}_\omega,\mathbf{v}) + (\nu(\phi^{n, \varepsilon}_\omega,\rho^{n, \varepsilon}_\omega) D\mathbf{u}^{n, \varepsilon}_\omega, D\mathbf{v}) &\\
\qquad \qquad = (\mu^{n, \varepsilon}_\omega\nabla \phi^{n, \varepsilon}_\omega, \mathbf{v}) + (\psi^{n, \varepsilon}_\omega\nabla \rho^{n, \varepsilon}_\omega, \mathbf{v}) & \quad \forall \: \mathbf{v} \in \mathbf{W}_n, \aevv,\\
\left\langle \partial_t\phi^{n, \varepsilon}_\omega, v \right\rangle_{V^*,V} + (\uu^{n, \varepsilon}_\omega \cdot \nabla \phi^{n, \varepsilon}_\omega, v) + (\nabla \mu^{n, \varepsilon}_\omega, \nabla v) = 0 & \quad \forall \: v \in W_n, \aevv,\\
\mu^{n, \varepsilon}_\omega = \Pi_n \Big(\alpha \Delta^2 \phi^{n, \varepsilon}_\omega  -\Delta \phi^{n, \varepsilon}_\omega +  S'_{\phi}(\phi^{n, \varepsilon}_\omega) + \theta\nabla \cdot (\rho^{n, \varepsilon}_\omega \nabla \phi^{n, \varepsilon}_\omega)\Big) &\\
\qquad \quad - \Pi_n \Big(\omega \nabla \cdot\left( |\nabla \phi^{n, \varepsilon}_\omega|^2\nabla \phi^{n, \varepsilon}_\omega \right) \Big)& \quad \text{a.e. in } \Omega \times (0,T),\\
\left\langle \partial_t\rho^{n, \varepsilon}_\omega, v \right\rangle_{V^*,V} + (\uu^{n, \varepsilon}_\omega \cdot \nabla \rho^{n, \varepsilon}_\omega, v) + (\nabla \psi^{n, \varepsilon}_\omega, \nabla v) = 0& \quad \forall \: v \in W_n, \aevv,\\
\psi^{n, \varepsilon}_\omega = \Pi_n \Big( -\beta \Delta \rho^{n, \varepsilon}_\omega + S'_{\rho,\varepsilon}(\rho^{n, \varepsilon}_\omega) - \dfrac{\theta}{2}|\nabla \phi^{n, \varepsilon}_\omega|^2 \Big) & \quad \text{a.e. in } \Omega \times (0,T), \\
\uu^{n, \varepsilon}_\omega(\cdot,0) = P_n(\uu_0) =: \uu_{0}^n & \quad \text{in } \Omega,\\
\phi^{n, \varepsilon}_\omega(\cdot, 0) = \Pi_n(\phi_{0}) =: \phi_{0}^n, \quad
\rho^{n, \varepsilon}_\omega(\cdot, 0) = \Pi_n(\rho_{0,k}) =: \rho_{0,k}^n & \quad \text{in } \Omega.
\end{cases}
\end{equation}
Since the singular and regular potentials coincide on compact subsets of the interval $(0,1)$, we notice that the following bound holds
\begin{equation} \label{eq:estimateinitdata}
		\|-\beta \Delta \rho_{0,k} + \widehat{S}'_{\rho,\varepsilon}(\rho_{0,k}) \|_{H^1(\Omega)}=\|-\beta \Delta \rho_{0,k} + \widehat{S}'_{\rho}(\rho_{0,k}) \|_{H^1(\Omega)} \leq \|	\widehat{\psi}_0 \|_{H^1(\Omega)},
\end{equation}
for sufficiently small $\varepsilon\in (0,\epsilon_3]$, where $\epsilon_3=\min\{\frac12 \widetilde{\eta}(k), \epsilon_1, \epsilon_2\}$. For the definition of $\epsilon_2$, we recall the proof of Lemma \ref{prop:enebound1}.

 Let us clarify how the parameters work. We fix $\omega\in (0, 1]$, then for any $k > k^*$ we take $\varepsilon \in (0,\epsilon_3(k))$ so that \eqref{eq:convexityapprox}, \eqref{eq:estimateinitdata2} and \eqref{eq:estimateinitdata} hold. Since $\rho^n_{0,k} \to \rho_{0,k}$ in $H^3(\Omega)$ as $n \to +\infty$ and thus in $L^\infty(\Omega)$, there exist $m_2 \in (0,m_1)$ independent of $k$ and some $n^* = n^*(k) \in \mathbb{N}$ such that
	\[
	m_2 < \overline{\rho_{0,k}^n}<1-m_2,\quad
\textcolor{black}{\frac{\widetilde{\eta}}{2} \leq \|\rho_{0,k}^n\|_{L^\infty(\Omega)} \leq 1-\frac{\widetilde{\eta}}{2},}
\qquad \forall\, n > n^*.
	\]

We can now apply the Cauchy--Lipschitz theorem to the Cauchy problem for the above system of $5n$ ordinary differential equations in the unknowns $a_i(t)$, $b_i(t)$, $c_i(t)$, $d_i(t)$ and $e_i(t)$. This gives
\begin{proposition}\label{localODEstr}
Let $\omega$, $k, \varepsilon$ be fixed as specified above.
For any positive integer $n > n^*(k)$, there exists $T_n > 0$ such that problem \eqref{eq:weakrega} admits a unique local solution $(\uu_\omega^{n,\varepsilon},\phi_\omega^{n,\varepsilon}, \rho_\omega^{n,\varepsilon}, \mu_\omega^{n,\varepsilon}, \psi_\omega^{n,\varepsilon})$ in $[0,T_n]$, which is given by the functions $a_i, b_i, c_i, d_i, e_i\in \mathcal{C}^1([0,T_n])$, $i=1, \dots, n$.
\end{proposition}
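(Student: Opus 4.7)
\textbf{Proof proposal for Proposition \ref{localODEstr}.} The strategy is to reduce the mixed weak/pointwise formulation \eqref{eq:weakreg2} to a closed system of first-order ordinary differential equations in the coefficient vector $Y(t) := (a_1,\dots,a_n,b_1,\dots,b_n,e_1,\dots,e_n)^{\mathrm{T}} \in \mathbb{R}^{3n}$ and then apply the Cauchy--Lipschitz theorem. The functions $\mu^{n,\varepsilon}_\omega$ and $\psi^{n,\varepsilon}_\omega$ are not genuine unknowns of the evolution: the third and sixth lines of \eqref{eq:weakreg2} determine $c_i = c_i(a_1,\dots,a_n,b_1,\dots,b_n)$ and $d_i = d_i(a_1,\dots,a_n,b_1,\dots,b_n)$ explicitly, via
\[
c_i = \big(\alpha\Delta^2\phi^{n,\varepsilon}_\omega - \Delta\phi^{n,\varepsilon}_\omega + S'_\phi(\phi^{n,\varepsilon}_\omega) + \theta \nabla\cdot(\rho^{n,\varepsilon}_\omega\nabla\phi^{n,\varepsilon}_\omega) - \omega\nabla\cdot(|\nabla\phi^{n,\varepsilon}_\omega|^2\nabla\phi^{n,\varepsilon}_\omega),\, w_i\big),
\]
and an analogous formula for $d_i$ using $S'_{\rho,\varepsilon}$. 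Hence we may eliminate $c_i,d_i$ from the remaining equations and obtain a closed system of the form $\dot Y = F(Y)$ on $\mathbb{R}^{3n}$.

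Next, I would check that the vector field $F$ is locally Lipschitz on $\mathbb{R}^{3n}$. The cubic convective terms $((\mathbf{u}^{n,\varepsilon}_\omega\cdot\nabla)\mathbf{u}^{n,\varepsilon}_\omega,\mathbf{w}_i)$, $(\mathbf{u}^{n,\varepsilon}_\omega\cdot\nabla\phi^{n,\varepsilon}_\omega, w_i)$, $(\mathbf{u}^{n,\varepsilon}_\omega\cdot\nabla\rho^{n,\varepsilon}_\omega, w_i)$, the polynomial coupling terms in $c_i,d_i$ (including the cubic penalization $\omega\,\nabla\cdot(|\nabla\phi^{n,\varepsilon}_\omega|^2\nabla\phi^{n,\varepsilon}_\omega)$ and the quadratic $\tfrac{\theta}{2}|\nabla\phi^{n,\varepsilon}_\omega|^2$), and the Korteweg forces $(\mu^{n,\varepsilon}_\omega\nabla\phi^{n,\varepsilon}_\omega,\mathbf{w}_i)$, $(\psi^{n,\varepsilon}_\omega\nabla\rho^{n,\varepsilon}_\omega,\mathbf{w}_i)$ are all finite sums of products of the coefficients with inner products of the basis functions (and their derivatives). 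These inner products are fixed numerical constants. The nonlinear dependences on $Y$ enter only through $\nu(\phi^{n,\varepsilon}_\omega,\rho^{n,\varepsilon}_\omega)$, $S'_\phi(\phi^{n,\varepsilon}_\omega)$ and $S'_{\rho,\varepsilon}(\rho^{n,\varepsilon}_\omega)$. Under \textbf{(H1)}, $\nu\in \mathcal{C}^2(\mathbb{R}^2)$; under \textbf{(H2)}, $S'_\phi\in \mathcal{C}^1(\mathbb{R})$; and by construction \eqref{eq:taylor}--\eqref{eq:approxpot} together with \textbf{(H3)}, the regularized potential $S'_{\rho,\varepsilon}\in \mathcal{C}^1(\mathbb{R})$ with $|S''_{\rho,\varepsilon}| \leq \gamma_3(\varepsilon) + L_1$ globally. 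Composition with the smooth maps $Y \mapsto \phi^{n,\varepsilon}_\omega,\rho^{n,\varepsilon}_\omega$ (evaluated pointwise against the fixed basis $\{w_i\}_{i=1}^n$) and subsequent $L^2$-projection against $\{w_i\}$ preserves local Lipschitz continuity on the finite-dimensional Euclidean space.

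Invoking the Cauchy--Lipschitz theorem for ordinary differential equations, for every admissible initial datum $Y(0)\in\mathbb{R}^{3n}$ (which is well defined because $\mathbf{u}^n_0 \in \mathbf{W}_n$, $\phi^n_0, \rho^n_{0,k}\in W_n$) there exist $T_n \in (0,T]$ and a unique local solution $Y \in \mathcal{C}^1([0,T_n];\mathbb{R}^{3n})$. Correspondingly, $a_i,b_i,e_i\in \mathcal{C}^1([0,T_n])$ for $i=1,\dots,n$, and then $c_i,d_i\in \mathcal{C}^1([0,T_n])$ follow from the explicit algebraic formulas recalled above, combined with the $\mathcal{C}^1$-regularity of $S'_\phi$ and $S'_{\rho,\varepsilon}$.

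The main (and really only) delicate point in the plan is the bookkeeping required to write the coupled system in the standard form $\dot Y = F(Y)$; in particular, verifying that the elimination of $\mu^{n,\varepsilon}_\omega$ and $\psi^{n,\varepsilon}_\omega$ does not introduce implicit constraints. This is automatic here because $\Pi_n$ acts on the whole defining expressions, so $\mu^{n,\varepsilon}_\omega, \psi^{n,\varepsilon}_\omega \in W_n$ are expressed pointwise in terms of $(a_i,b_i)$ without differentiation in time. All remaining ingredients (Lipschitz estimates, existence of a positive existence time $T_n$) are routine once the system is cast in standard form. Global-in-time existence on $[0,T]$ does not follow at this stage and will rely on the uniform a priori bounds derived subsequently in Lemmas \ref{prop:ub1}--\ref{prop:ub4}.
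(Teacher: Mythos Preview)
Your proposal is correct and follows the same approach as the paper, which simply states (just before the proposition) that one applies the Cauchy--Lipschitz theorem to the resulting system of ordinary differential equations in the coefficients $a_i, b_i, c_i, d_i, e_i$. You have supplied the details the paper omits---in particular the elimination of the algebraic unknowns $c_i, d_i$ to obtain a genuine $3n$-dimensional ODE system and the verification of local Lipschitz continuity from \textbf{(H1)}--\textbf{(H3)}---but the underlying argument is identical.
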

\subsection{Uniform estimates} \label{sub:unifestimates}
We now show uniform estimates with respect to the approximating parameters $\omega$, $k$, $\varepsilon$ and $n$. \medskip

\textbf{Lower order estimates}.
The presence of the additional parameter $k$ does not introduce any technical difficulty. Mimicking the proof of Lemma \ref{prop:enebound1} and using \eqref{Srho1} we deduce that
\begin{lemma} \label{prop:enebound4}
For every $t \in (0,T_n]$, it holds
\begin{align*}
&\ene(\mathbf{u}^{n, \varepsilon}_\omega(t), \phi^{n, \varepsilon}_\omega(t),\rho^{n, \varepsilon}_\omega(t)) + \ii{0}{t}{\|\sqrt{\nu(\phi^{n, \varepsilon}_\omega(\tau),\rho^{n, \varepsilon}_\omega(\tau))}D \mathbf{u}^{n, \varepsilon}_\omega(t)\|^2 + \| \nabla \mu^{n, \varepsilon}_\omega(\tau) \|^2 + \| \nabla \psi^{n, \varepsilon}_\omega(\tau)\|^2 }{\tau} \leq C_8,
\end{align*}
and
\begin{align*}
\ene(\mathbf{u}^{n, \varepsilon}_\omega(t),\phi^{n, \varepsilon}_\omega(t),\rho^{n, \varepsilon}_\omega(t))
&\geq \dfrac{1}{2}\|\uu_{\omega}^{n,\varepsilon}(t)\|^2
+ \dfrac{\alpha}{4} \|\Delta \phi^{n, \varepsilon}_\omega(t)\|^2
+ \dfrac{1}{2} \|\nabla \phi^{n, \varepsilon}_\omega(t)\|^2
+ \dfrac{\beta}{2}\|\nabla \rho^{n, \varepsilon}_\omega(t)\|^2
\\
&\quad + \frac{c_3}{2} \|\phi^{n, \varepsilon}_\omega(t)\|^4_{L^4(\Omega)}
+ \dfrac{\omega}{8} \|\nabla\phi^{n, \varepsilon}_\omega(t)\|^4_{\mathbf{L}^4(\Omega)} -C_9,
\end{align*}
where the constant $C_8, C_9>0$ are independent of $n$, $\omega$, $k$ and $\varepsilon$.
\end{lemma}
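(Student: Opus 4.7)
The proof will follow the blueprint of Lemma \ref{prop:enebound1} essentially verbatim, with one substantive modification: the upper bound on the initial energy must now be made uniform with respect to \emph{both} $\varepsilon$ and $k$ (in addition to $\omega$ and $n$). The plan is to obtain the energy identity first by choosing in \eqref{eq:weakreg2} the test functions $\mathbf{v}=\uu^{n,\varepsilon}_\omega$, $v = \mu^{n,\varepsilon}_\omega$ in the $\phi$-equation and $v=\psi^{n,\varepsilon}_\omega$ in the $\rho$-equation, while pairing the equations for the chemical potentials with $\partial_t\phi^{n,\varepsilon}_\omega$ and $\partial_t\rho^{n,\varepsilon}_\omega$, respectively, summing and using the incompressibility condition together with the homogeneous boundary conditions. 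This yields
\[
\frac{\mathrm{d}}{\mathrm{d}t}\ene(\uu^{n,\varepsilon}_\omega,\phi^{n,\varepsilon}_\omega,\rho^{n,\varepsilon}_\omega) + \|\sqrt{\nu(\phi^{n,\varepsilon}_\omega,\rho^{n,\varepsilon}_\omega)}D\uu^{n,\varepsilon}_\omega\|^2 + \|\nabla \mu^{n,\varepsilon}_\omega\|^2 + \|\nabla \psi^{n,\varepsilon}_\omega\|^2 = 0,
\]
so that integrating on $[0,t]$ the problem reduces to controlling the initial energy uniformly.

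For the \textbf{lower bound} giving $C_9$, I would copy the argument of Lemma \ref{prop:enebound1} without change: the splitting $\Omega=\Omega_1\cup\Omega_2$ with $\Omega_1=\{0<\rho^{n,\varepsilon}_\omega<4\}$, the coercivity $S_\phi(s)\geq c_3 s^4-c_4$ from \textbf{(H2)}, Young's inequality to absorb $-\tfrac{\theta}{2}\rho^{n,\varepsilon}_\omega|\nabla\phi^{n,\varepsilon}_\omega|^2$ into $\tfrac{\alpha}{4}\|\Delta\phi^{n,\varepsilon}_\omega\|^2+\tfrac{c_3}{2}\|\phi^{n,\varepsilon}_\omega\|_{L^4}^4+\tfrac{\omega}{8}\|\nabla\phi^{n,\varepsilon}_\omega\|_{\mathbf{L}^4}^4$, and the quadratic coercivity of $\widehat{S}_{\rho,\varepsilon}$ outside $[0,1]$ (for $\varepsilon\in(0,\epsilon_2)$ with $\epsilon_2=\epsilon_2(\omega)$ chosen so that $k_1(\varepsilon)\geq C_R+\theta^2/(2\omega)$) to dominate $-C_R\|\rho^{n,\varepsilon}_\omega\|_{L^2(\Omega_2)}^2$. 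The presence of $k$ plays no role here, since the estimate is pointwise in the approximate solution, so $C_9$ is independent of $n,\omega,k,\varepsilon$.

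For the \textbf{upper bound} giving $C_8$, I would bound the quantities depending only on $\uu_0$ and $\phi_0$ exactly as in Lemma \ref{prop:enebound1}: $\|\uu_0^n\|\leq\|\uu_0\|$, $\|\phi_0^n\|_{H^2}\leq C\|\phi_0\|_{H^2}$ for $n$ large, and the Sobolev embeddings $H^2\hookrightarrow L^\infty\cap W^{1,4}$ handle $S_\phi(\phi_0^n)$, $\tfrac{\omega}{4}\|\nabla\phi_0^n\|_{\mathbf{L}^4}^4$ and the cross term. For the $\rho$-contributions, I would invoke \eqref{eq:estimateinitdata2} to get $\|\rho_{0,k}^n\|_{H^1}\leq\|\rho_{0,k}\|_{H^1}\leq 1+\|\rho_0\|_{H^1}$ and the quadratic growth of $R_\rho$ from \textbf{(H3)}. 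The \emph{essential} new step is to handle $\int_\Omega \widehat{S}_{\rho,\varepsilon}(\rho_{0,k}^n)\,\mathrm{d}x$ uniformly in all parameters, which cannot be done via the $\varepsilon$-dependent quadratic bound used in \eqref{eq:epsilonestimate}. Instead, I would use the strict separation of $\rho_{0,k}$ established in Subsection \ref{subs:initialdata}: by the choice $n>n^*(k)$ one has $\tfrac{\widetilde\eta}{2}\leq \rho_{0,k}^n\leq 1-\tfrac{\widetilde\eta}{2}$, and the choice $\varepsilon\leq\epsilon_3(k)\leq\tfrac{\widetilde\eta(k)}{2}$ forces $\widehat{S}_{\rho,\varepsilon}$ to coincide with $\widehat{S}_\rho$ on the range of $\rho_{0,k}^n$. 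Hence by \eqref{Srho1}
\[
\int_\Omega \widehat{S}_{\rho,\varepsilon}(\rho_{0,k}^n)\,\mathrm{d}x = \int_\Omega \widehat{S}_\rho(\rho_{0,k}^n)\,\mathrm{d}x \leq C|\Omega|,
\]
with $C$ independent of $n,\omega,k,\varepsilon$.

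The main obstacle is precisely this joint $(\varepsilon,k)$-uniformity of the initial energy: the singular contribution $\widehat{S}_{\rho,\varepsilon}(\rho_{0,k}^n)$ is the only place where the three layers of approximation interact nontrivially, and the argument depends on a careful tuning of the parameter hierarchy $n>n^*(k)$ and $\varepsilon<\epsilon_3(k)=\min(\widetilde\eta(k)/2,\epsilon_1,\epsilon_2(\omega))$ fixed in Subsection \ref{subs:initialdata}. Once this term is controlled by the universal bound \eqref{Srho1} rather than by an $\varepsilon$-dependent quadratic estimate, every other piece of the initial energy is controlled by $\|\uu_0\|$, $\|\phi_0\|_{H^2}$ and $\|\rho_0\|_{H^1}$ only, and the constant $C_8$ is immediately independent of $n,\omega,k,\varepsilon$, completing the proof.
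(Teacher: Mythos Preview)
Your proposal is correct and follows essentially the same approach as the paper: mimic the proof of Lemma~\ref{prop:enebound1} for the lower bound and for most of the upper bound, and replace the $\varepsilon$-dependent estimate \eqref{eq:epsilonestimate} on $\widehat{S}_{\rho,\varepsilon}(\rho_{0,k}^n)$ by an application of \eqref{Srho1}, which is licensed precisely because the parameter hierarchy $n>n^*(k)$, $\varepsilon\le\epsilon_3(k)$ keeps $\rho_{0,k}^n$ inside $[0,1]$. Your coincidence argument is slightly more than what is strictly needed---the first inequality in \eqref{Srho1} alone already gives $\widehat{S}_{\rho,\varepsilon}(\rho_{0,k}^n)\le \widehat{S}_\rho(\rho_{0,k}^n)\le C$ once $\rho_{0,k}^n\in[0,1]$---but this is a harmless over-justification, not a different route.
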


Then, we can follow line by line all the proofs of Lemmas \ref{prop:ub1}--\ref{prop:ub4} to derive uniform estimates for the approximate solutions with respect to $\omega$, $n$, $k$ and $\varepsilon$. In particular, we have $T_n=T$ and
	\begin{align*}
        \uu^{n, \varepsilon}_\omega & \text{ is uniformly bounded in } L^\infty(0,T; \mathbf{H}_\sigma) \cap L^2(0,T; \mathbf{V}_\sigma) \cap W^{1, \frac{4}{d}}([0,T];\mathbf{V}_\sigma^*), \\
		\phi^{n, \varepsilon}_\omega & \text{ is uniformly bounded in } L^\infty(0,T; H^2(\Omega)) \cap L^2(0,T; H^4(\Omega)) \cap H^1(0,T;(H^1(\Omega))^*), \\
		\rho^{n, \varepsilon}_\omega & \text{ is uniformly bounded in } L^\infty(0,T; H^1(\Omega)) \cap L^4(0,T; H^2(\Omega)) \cap H^1(0,T;(H^1(\Omega))^*), \\
		\mu^{n, \varepsilon}_\omega & \text{ is uniformly bounded in } L^2(0,T; H^1(\Omega)),  \\
		\psi^{n, \varepsilon}_\omega & \text{ is uniformly bounded in } L^2(0,T; H^1(\Omega)).	
	\end{align*}
The existence of strong solutions depends on higher-order estimates. The situation is different according to the spatial dimension.
\medskip

\textbf{Higher-order estimates in two dimensions}. We have
\begin{lemma} \label{prop:ub5}
Let $d=2$. The sequences $\{\mu^{n, \varepsilon}_\omega\}$ and $\{\psi^{n, \varepsilon}_\omega\}$ are uniformly bounded in $L^\infty(0,T;H^1(\Omega))$. The sequence $\{\uu^n_\omega\}$ is uniformly bounded in $L^\infty(0,T;\mathbf{V}_\sigma) \cap L^2(0,T; \mathbf{W}_\sigma)$.
\end{lemma}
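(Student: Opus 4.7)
My plan is to upgrade the lower-order bounds on the Galerkin approximations $(\uu^{n,\varepsilon}_\omega, \phi^{n,\varepsilon}_\omega, \rho^{n,\varepsilon}_\omega, \mu^{n,\varepsilon}_\omega, \psi^{n,\varepsilon}_\omega)$ to strong-solution bounds in three steps, uniformly in all approximation parameters $n, \varepsilon, \omega, k$, exploiting the enhanced initial data ($\uu_0 \in \mathbf{V}_\sigma$, $\phi_0 \in H^5\cap H^4_N$, $\widehat{\psi}_0 \in H^1(\Omega)$) and the two-dimensional Sobolev embeddings $H^1(\Omega) \hookrightarrow L^q(\Omega)$ for every finite $q$ and $H^2(\Omega) \hookrightarrow L^\infty(\Omega)$.

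For the velocity bound I would test the Galerkin momentum equation in \eqref{eq:weakreg2} with $\mathbf{A}\uu^{n,\varepsilon}_\omega \in \mathbf{W}_n$. In two dimensions the inertial term is absorbed via Ladyzhenskaya's inequality \eqref{eq:gn1} as $|((\uu\cdot\nabla)\uu, \mathbf{A}\uu)| \leq \tfrac{\nu_*}{8}\|\mathbf{A}\uu\|^2 + C\|\uu\|^2\|\uu\|_{\mathbf{V}_\sigma}^4$, while the variable-viscosity residual $-\nabla\nu(\phi,\rho)\cdot D\uu$ is controlled using the already-available $L^\infty H^2$ bound on $\phi$ and $L^4 H^2$ bound on $\rho$. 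For the Korteweg forces I would integrate by parts, using $\nabla\cdot\mathbf{A}\uu = 0$, to obtain
\begin{equation*}
(\mu\nabla\phi + \psi\nabla\rho, \mathbf{A}\uu) = -(\phi\nabla\mu + \rho\nabla\psi, \mathbf{A}\uu) \leq \tfrac{\nu_*}{8}\|\mathbf{A}\uu\|^2 + C\big(\|\phi\|_{L^\infty}^2\|\nabla\mu\|^2 + \|\rho\|_{L^\infty}^2\|\nabla\psi\|^2\big),
\end{equation*}
whose right-hand side lies in $L^1(0,T)$ since $\phi \in L^\infty L^\infty$, $\rho \in L^4 L^\infty$, and $\nabla\mu, \nabla\psi \in L^2 L^2$. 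A Gronwall argument then delivers $\uu^{n,\varepsilon}_\omega \in L^\infty \mathbf{V}_\sigma \cap L^2 \mathbf{W}_\sigma$ uniformly.

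Next, I would bound $\partial_t\phi^{n,\varepsilon}_\omega$ and $\partial_t\rho^{n,\varepsilon}_\omega$ in $L^\infty(0,T;L^2(\Omega))$. Since the Galerkin coefficients are $\mathcal{C}^1$ in time, I may differentiate the $\phi$ and $\rho$ equations in time and test the results with $\mathcal{N}(\partial_t\phi^{n,\varepsilon}_\omega)$ and $\mathcal{N}(\partial_t\rho^{n,\varepsilon}_\omega)$, both admissible because $\overline{\partial_t\phi^{n,\varepsilon}_\omega} = \overline{\partial_t\rho^{n,\varepsilon}_\omega} = 0$ and $\mathcal{N}$ preserves $W_n \cap L^2_0(\Omega)$. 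The resulting pairings $(\partial_t\mu, \partial_t\phi)$ and $(\partial_t\psi, \partial_t\rho)$ produce the dissipation $\alpha\|\Delta\partial_t\phi\|^2 + \|\nabla\partial_t\phi\|^2 + \beta\|\nabla\partial_t\rho\|^2$ together with a nonnegative $\omega$-contribution of the form $\omega\int(2(\nabla\phi\cdot\nabla\partial_t\phi)^2 + |\nabla\phi|^2|\nabla\partial_t\phi|^2)\,\mathrm{d}x$; the convex-potential contributions admit uniform lower bounds via \textbf{(H2)} and \eqref{eq:convexityapprox}, and the cross-coupling terms $\theta\nabla\cdot(\partial_t\rho\nabla\phi+\rho\nabla\partial_t\phi)$ and $-\theta\nabla\phi\cdot\nabla\partial_t\phi$ are absorbed by Young's inequality after summing both estimates. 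The initial values $\partial_t\phi(0)$ and $\partial_t\rho(0)$, read off from the equations at $t=0$, are controlled in $V_0^*$ uniformly in the parameters precisely thanks to the enhanced assumptions on $\uu_0, \phi_0$, and $\widehat{\psi}_0$. Gronwall yields $\partial_t\phi \in L^\infty V_0^* \cap L^2 H^2$ and $\partial_t\rho \in L^\infty V_0^* \cap L^2 H^1$, and a bootstrap by a further test with $\partial_t\phi, \partial_t\rho$ themselves promotes this to $L^\infty L^2$.

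Finally, with $\partial_t\phi, \partial_t\rho \in L^\infty L^2$ in hand, I would read off the spatial regularity of the chemical potentials by inverting the Cahn--Hilliard equations, which reduce to the Neumann problems
\begin{equation*}
-\Delta\big(\mu^{n,\varepsilon}_\omega - \overline{\mu^{n,\varepsilon}_\omega}\big) = -\partial_t\phi^{n,\varepsilon}_\omega - \uu^{n,\varepsilon}_\omega\cdot\nabla\phi^{n,\varepsilon}_\omega, \quad -\Delta\big(\psi^{n,\varepsilon}_\omega - \overline{\psi^{n,\varepsilon}_\omega}\big) = -\partial_t\rho^{n,\varepsilon}_\omega - \uu^{n,\varepsilon}_\omega\cdot\nabla\rho^{n,\varepsilon}_\omega,
\end{equation*}
with zero-mean right-hand sides bounded in $L^\infty L^2$ by Step 2 combined with $\uu \in L^\infty \mathbf{V}_\sigma$ from Step 1 and the 2D embeddings. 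Together with the uniform control of $\overline{\mu^{n,\varepsilon}_\omega}, \overline{\psi^{n,\varepsilon}_\omega}$ established in the proof of Lemma \ref{prop:ub2}, elliptic regularity delivers $\mu^{n,\varepsilon}_\omega, \psi^{n,\varepsilon}_\omega \in L^\infty(0,T;H^2(\Omega)) \subset L^\infty(0,T;H^1(\Omega))$. The main obstacle I expect is Step 2: the simultaneous treatment of the two time-differentiated and strongly coupled Cahn--Hilliard equations demands careful bookkeeping of all cross-coupling signs so that dissipation dominates, and one must verify that the resulting bound remains uniform as $\varepsilon \to 0^+$, since $S''_{\rho,\varepsilon}$ is only controlled from below by $-\gamma_2$ and from above by the $\varepsilon$-dependent constant $\gamma_3$.
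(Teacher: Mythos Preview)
Your decoupled strategy has a genuine gap in Step~1. You claim that the Korteweg source term
\[
C\big(\|\phi\|_{L^\infty}^2\|\nabla\mu\|^2 + \|\rho\|_{L^\infty}^2\|\nabla\psi\|^2\big)
\]
lies in $L^1(0,T)$ because $\rho \in L^4(0,T;L^\infty(\Omega))$ and $\nabla\psi \in L^2(0,T;L^2(\Omega))$. But this only gives $\|\rho\|_{L^\infty}^2 \in L^2(0,T)$ and $\|\nabla\psi\|^2 \in L^1(0,T)$, and the product of an $L^2$ function with an $L^1$ function is \emph{not} in $L^1$ in general. No rearrangement of the H\"older split saves this: the available pointwise relation $\|\rho^{n,\varepsilon}_\omega\|_{H^2}^2 \leq C(\|\nabla\psi^{n,\varepsilon}_\omega\|+1)$ from \eqref{rhoL4h2} only converts the source into $C\|\nabla\psi^{n,\varepsilon}_\omega\|^3$, which is again not integrable from $\nabla\psi^{n,\varepsilon}_\omega \in L^2_t L^2_x$ alone. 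Consequently you cannot close the Gronwall loop for $\|\nabla\uu^{n,\varepsilon}_\omega\|^2$ by itself, and Step~1 collapses. Since Steps~2 and~3 rely on the output of Step~1 (you need $\uu \in L^\infty\mathbf{V}_\sigma$ to control the convective terms $\partial_t\uu\cdot\nabla\phi$ and $\uu\cdot\nabla\partial_t\phi$ in Step~2, and to bound $\uu\cdot\nabla\rho$ in $L^\infty L^2$ in Step~3), the whole scheme unravels.

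The paper's proof is built precisely to avoid this obstruction: rather than decoupling, it tests the $\rho$-, $\phi$-, and $\uu$-equations with $\partial_t\psi^{n,\varepsilon}_\omega$, $\partial_t\mu^{n,\varepsilon}_\omega$, $\partial_t\uu^{n,\varepsilon}_\omega$, and $\mathbf{A}\uu^{n,\varepsilon}_\omega$ \emph{simultaneously}, and assembles the single quantity
\[
\Lambda(t) = \tfrac{1}{2}\|\nabla\uu^{n,\varepsilon}_\omega\|^2 + \tfrac{1}{2}\|\nabla\mu^{n,\varepsilon}_\omega\|^2 + \tfrac{1}{2}\|\nabla\psi^{n,\varepsilon}_\omega\|^2 + (\uu^{n,\varepsilon}_\omega\cdot\nabla\phi^{n,\varepsilon}_\omega,\mu^{n,\varepsilon}_\omega) + (\uu^{n,\varepsilon}_\omega\cdot\nabla\rho^{n,\varepsilon}_\omega,\psi^{n,\varepsilon}_\omega) + C\|\uu^{n,\varepsilon}_\omega\|^2.
\]
Because $\|\nabla\psi^{n,\varepsilon}_\omega\|^2$ is now \emph{inside} $\Lambda$ rather than a source, the dangerous Korteweg contribution $(1+\|\rho^{n,\varepsilon}_\omega\|_{H^2}^2)\|\nabla\psi^{n,\varepsilon}_\omega\|^2$ is absorbed via \eqref{rhoL4h2} as $\leq C(1+\Lambda^{1/2})\Lambda \leq C(1+\Lambda)\Lambda$, yielding the closable inequality $\tfrac{\mathrm{d}\Lambda}{\mathrm{d}t} + \mathcal{G} \leq C(1+\Lambda)\Lambda$ with $\int_0^T\Lambda\,\mathrm{d}t \leq C$ already known. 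The cross terms $(\uu\cdot\nabla\rho,\psi)$ in $\Lambda$ are exactly what makes the test with $\partial_t\psi$ (rather than your $\mathcal{N}\partial_t\rho$) tractable: the convective contribution $(\uu\cdot\nabla\rho,\partial_t\psi)$ is rewritten as a total time derivative plus terms involving $\partial_t\uu$, which are then paid for by the $\|\partial_t\uu\|^2$-dissipation arising from the $\partial_t\uu$-test. This coupling is not optional; it is the mechanism that closes the estimate.
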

\begin{proof}
The proof consists of several steps.

		\textbf{Step 1.} As in \cite{GMT18}, taking $\partial_t\psi^{n, \varepsilon}_\omega$ as a test function in \eqref{eq:weakreg2}$_2$  yields
		\begin{equation} \label{eq:step11}
			\dfrac{1}{2}\td{}{t}\|\nabla \psi^{n, \varepsilon}_\omega\|^2
+ (\partial_t \psi^{n, \varepsilon}_\omega, \partial_t\rho^{n, \varepsilon}_\omega)
+ (\uu^{n, \varepsilon}_\omega \cdot \nabla \rho^{n, \varepsilon}_\omega, \partial_t \psi^{n, \varepsilon}_\omega) = 0.
		\end{equation}
Then, recalling \eqref{eq:convexityapprox} and the fact that $\overline{\partial_t\rho^{n, \varepsilon}_\omega}(t) = 0$, we arrive at
\begin{align}
(\partial_t \psi^{n, \varepsilon}_\omega, \partial_t\rho^{n, \varepsilon}_\omega)
& = \beta(-\Delta \partial_t\rho^{n, \varepsilon}_\omega, \partial_t\rho^{n, \varepsilon}_\omega) + (S''_{\rho,\varepsilon}(\rho^{n, \varepsilon}_\omega)\partial_t\rho^{n, \varepsilon}_\omega,\partial_t\rho^{n, \varepsilon}_\omega) - \dfrac{\theta}{2}(\partial_t|\nabla \phi^{n, \varepsilon}_\omega|^2, \partial_t\rho^{n, \varepsilon}_\omega) \notag \\
				& \geq \beta\| \nabla \partial_t\rho^{n, \varepsilon}_\omega\|^2 -C\|\partial_t\rho^{n, \varepsilon}_\omega\|^2 - \theta|(\nabla\partial_t\phi^{n, \varepsilon}_\omega \cdot \nabla\phi^{n, \varepsilon}_\omega, \partial_t\rho^{n, \varepsilon}_\omega)| \notag \\
				& \geq \beta\| \nabla \partial_t\rho^{n, \varepsilon}_\omega\|^2 -C\|\partial_t\rho^{n, \varepsilon}_\omega\|^2 - C\|\nabla \phi^{n, \varepsilon}_\omega\|_{\LL^4(\Omega)}\|\nabla \partial_t\phi^{n, \varepsilon}_\omega\|\|\partial_t\rho^{n, \varepsilon}_\omega\|_{L^4(\Omega)} \notag \\
				& \geq \beta\| \nabla \partial_t\rho^{n, \varepsilon}_\omega\|^2 -C\|\partial_t\rho^{n, \varepsilon}_\omega\|^2 - \dfrac{1}{16}\|\nabla \partial_t\phi^{n, \varepsilon}_\omega\|^2 - C\|\partial_t\rho^{n, \varepsilon}_\omega\|_{L^4(\Omega)}^2 \notag \\
				& \geq \left( \beta - \dfrac{\beta}{12} \right)\| \nabla \partial_t\rho^{n, \varepsilon}_\omega\|^2 - \dfrac{1}{16}\|\nabla \partial_t\phi^{n, \varepsilon}_\omega\|^2 - C\|\partial_t\rho^{n, \varepsilon}_\omega\|^2 \notag \\
				& \geq \left( \beta - \dfrac{\beta}{6} \right)\| \nabla \partial_t\rho^{n, \varepsilon}_\omega\|^2 - \dfrac{1}{16}\|\nabla \partial_t\phi^{n, \varepsilon}_\omega\|^2 - C\|\partial_t\rho^{n, \varepsilon}_\omega\|_{V_0^*}^2,
\label{eq:step12}
\end{align}
whereas
\begin{align}
(\uu^{n, \varepsilon}_\omega \cdot \nabla \rho^{n, \varepsilon}_\omega, \partial_t \psi^{n, \varepsilon}_\omega) = \td{}{t}	(\uu^{n, \varepsilon}_\omega \cdot \nabla \rho^{n, \varepsilon}_\omega, \psi^{n, \varepsilon}_\omega)
- 	(\partial_t\uu^{n, \varepsilon}_\omega \cdot \nabla \rho^{n, \varepsilon}_\omega, \psi^{n, \varepsilon}_\omega)
- 	(\uu^{n, \varepsilon}_\omega \cdot \nabla \partial_t\rho^{n, \varepsilon}_\omega,  \psi^{n, \varepsilon}_\omega).
\label{eq:step13}
\end{align}
Moreover, recalling \eqref{eq:potentialest}, we have
\begin{align}
(\uu^{n, \varepsilon}_\omega \cdot \nabla\partial_t \rho^{n, \varepsilon}_\omega, \psi^{n, \varepsilon}_\omega)
& = \textcolor{black}{(\uu^{n, \varepsilon}_\omega \cdot \nabla\partial_t \rho^{n, \varepsilon}_\omega, \psi^{n, \varepsilon}_\omega-\overline{\psi^{n, \varepsilon}_\omega} )}\notag \\
& \leq \|\uu^{n, \varepsilon}_\omega\|_{\LL^3(\Omega)}\|\nabla \partial_t\rho^{n, \varepsilon}_\omega\|\|\psi^{n, \varepsilon}_\omega -\overline{\psi^{n, \varepsilon}_\omega}\|_{L^6(\Omega)}\notag  \\
& \leq \dfrac{\beta}{12}\| \nabla \partial_t\rho^{n, \varepsilon}_\omega\|^2 + C\|\uu^{n, \varepsilon}_\omega\|_{\LL^3(\Omega)}^2 \|\nabla \psi^{n, \varepsilon}_\omega\|^2.
\label{eq:step14}
\end{align}
Thus, from \eqref{eq:step11} and the estimates \eqref{eq:step12}--\eqref{eq:step14}, we infer that
\begin{align}
&\td{}{t}\left( \dfrac{1}{2}\|\nabla \psi^{n, \varepsilon}_\omega\|^2 +  (\uu^{n, \varepsilon}_\omega \cdot \nabla \rho^{n, \varepsilon}_\omega, \psi^{n, \varepsilon}_\omega) \right)  +  \dfrac{3\beta}{4} \| \nabla \partial_t\rho^{n, \varepsilon}_\omega\|^2
- \dfrac{1}{16}\| \nabla \partial_t\phi^{n, \varepsilon}_\omega\|^2 \notag \\
&\quad \leq (\partial_t\uu^{n, \varepsilon}_\omega \cdot \nabla \rho^{n, \varepsilon}_\omega, \psi^{n, \varepsilon}_\omega)
+\ \textcolor{black}{C\big( \|\partial_t \rho^{n, \varepsilon}_\omega\|_{V_0^*}^2 + \|\uu^{n, \varepsilon}_\omega\|_{\LL^3(\Omega)}^2
 \|\nabla \psi^{n, \varepsilon}_\omega\|^2 \big). }
\label{eq:step1final}
\end{align}

\textbf{Step 2.} Analogously, we take $\partial_t\mu^{n, \varepsilon}_\omega$ as a test function for \eqref{eq:weakreg2}$_2$ and obtain
\begin{equation} \label{eq:step21}
			\dfrac{1}{2}\td{}{t}\|\nabla  \mu^{n, \varepsilon}_\omega\|^2
+ (\partial_t \mu^{n, \varepsilon}_\omega, \partial_t\phi^{n, \varepsilon}_\omega)
+ (\uu^{n, \varepsilon}_\omega \cdot \nabla \phi^{n, \varepsilon}_\omega, \partial_t \mu^{n, \varepsilon}_\omega) = 0.
\end{equation}
Recalling that $S''_\phi(s) \geq -c_0$ for every $s \in \mathbb{R}$ and using
\begin{align*}
-\omega(\partial_t \nabla \cdot(|\nabla \phi^{n, \varepsilon}_\omega|^2\nabla \phi^{n, \varepsilon}_\omega), \partial_t \phi^{n, \varepsilon}_\omega)
& = \omega\left(\partial_t \left(|\nabla \phi^{n, \varepsilon}_\omega|^2\nabla \phi^{n, \varepsilon}_\omega\right), \nabla \partial_t \phi^{n, \varepsilon}_\omega \right) \notag\\
&= \omega \int_\Omega |\nabla \phi^{n, \varepsilon}_\omega|^2|\nabla \partial_t \phi^{n, \varepsilon}_\omega|^2\,\mathrm{d}x
+ 2\omega \| \nabla \phi^{n, \varepsilon}_\omega \cdot \nabla \partial_t \phi^{n, \varepsilon}_\omega \|^2 \geq 0,
\end{align*}
we get
\begin{align}
(\partial_t \mu^{n, \varepsilon}_\omega, \partial_t\phi^{n, \varepsilon}_\omega)
& \geq  \alpha \|\Delta \partial_t \phi^{n, \varepsilon}_\omega\|^2 + \|\nabla \partial_t \phi^{n, \varepsilon}_\omega\|^2  - c_0 \|\partial_t \phi^{n, \varepsilon}_\omega\|^2
+ \theta\big(\partial_t(\rho^{n, \varepsilon}_\omega\Delta\phi^{n, \varepsilon}_\omega), \partial_t \phi^{n, \varepsilon}_\omega\big) \notag\\
&\quad +  \theta\big(\partial_t(\nabla \rho^{n, \varepsilon}_\omega \cdot \nabla \phi^{n, \varepsilon}_\omega), \partial_t \phi^{n, \varepsilon}_\omega\big).
\label{eq:step22}
\end{align}
The last two terms on the right-hand side of \eqref{eq:step22} can be controlled as follows
\begin{align}
& \big|\theta\big(\partial_t(\rho^{n, \varepsilon}_\omega\Delta\phi^{n, \varepsilon}_\omega), \partial_t \phi^{n, \varepsilon}_\omega\big)\big| \notag \\
& \quad \leq \theta|(\partial_t\rho^{n, \varepsilon}_\omega\Delta\phi^{n, \varepsilon}_\omega,\partial_t \phi^{n, \varepsilon}_\omega)| + \theta|(\rho^{n, \varepsilon}_\omega\Delta\partial_t\phi^{n, \varepsilon}_\omega,\partial_t \phi^{n, \varepsilon}_\omega)| \notag \\
				& \quad \leq C\|\partial_t\rho^{n, \varepsilon}_\omega\|_{L^4(\Omega)}\|\Delta \phi^{n, \varepsilon}_\omega\|\|\partial_t\phi^{n, \varepsilon}_\omega\|_{L^4(\Omega)} + C\|\rho^{n, \varepsilon}_\omega\|_{L^4(\Omega)}\|\Delta \partial_t\phi^{n, \varepsilon}_\omega\|\|\partial_t\phi^{n, \varepsilon}_\omega\|_{L^4(\Omega)} \notag \\
				& \quad \leq \dfrac{\beta}{12}\|\nabla \partial_t\rho^{n, \varepsilon}_\omega\|^2
+ \dfrac{1}{8}\|\nabla \partial_t\phi^{n, \varepsilon}_\omega\|^2
+ C\|\partial_t \rho^{n, \varepsilon}_\omega\|^2
+ C\|\partial_t \phi^{n, \varepsilon}_\omega\|^2
+ \dfrac{\alpha}{4}\|\Delta \partial_t \phi^{n, \varepsilon}_\omega\|^2 \notag  \\
				& \quad \leq \dfrac{\beta}{6}\|\nabla \partial_t\rho^{n, \varepsilon}_\omega\|^2
+ \dfrac{3}{16}\|\nabla \partial_t\phi^{n, \varepsilon}_\omega\|^2
+ \dfrac{\alpha}{4}\|\Delta \partial_t \phi^{n, \varepsilon}_\omega\|^2
+ C\|\partial_t \rho^{n, \varepsilon}_\omega\|^2_{V_0^*} + C\|\partial_t \phi^{n, \varepsilon}_\omega\|^2_{V_0^*},
\label{eq:step23}
\end{align}
and
\begin{align}
	&			\big|\theta\big(\partial_t(\nabla \rho^{n, \varepsilon}_\omega \cdot \nabla \phi^{n, \varepsilon}_\omega), \partial_t \phi^{n, \varepsilon}_\omega\big)\big| \notag \\
                & \quad \leq \theta|(\nabla\partial_t\rho^{n, \varepsilon}_\omega \cdot \nabla \phi^{n, \varepsilon}_\omega,\partial_t \phi^{n, \varepsilon}_\omega)| + \theta|(\nabla\rho^{n, \varepsilon}_\omega \cdot \nabla\partial_t \phi^{n, \varepsilon}_\omega,\partial_t \phi^{n, \varepsilon}_\omega)| \notag \\
				& \quad \leq C\|\nabla \partial_t\rho^{n, \varepsilon}_\omega\|\|\nabla \phi^{n, \varepsilon}_\omega\|_{\LL^4(\Omega)}\|\partial_t\phi^{n, \varepsilon}_\omega\|_{L^4(\Omega)} + C\|\nabla \rho^{n, \varepsilon}_\omega\|\|\nabla \partial_t\phi^{n, \varepsilon}_\omega\|_{\LL^4(\Omega)}\|\partial_t\phi^{n, \varepsilon}_\omega\|_{L^4(\Omega)} \notag \\
				& \quad \leq \dfrac{\beta}{12}\|\nabla \partial_t\rho^{n, \varepsilon}_\omega\|^2
                + \dfrac{1}{8}\|\nabla \partial_t\phi^{n, \varepsilon}_\omega\|^2 + C\|\partial_t \phi^{n, \varepsilon}_\omega\|^2
                + C\|\nabla \partial_t\phi^{n, \varepsilon}_\omega\|_{\LL^4(\Omega)}^2 \notag \\
				& \quad \leq \dfrac{\beta}{12}\|\nabla \partial_t\rho^{n, \varepsilon}_\omega\|^2
                + \dfrac{1}{8}\|\nabla \partial_t\phi^{n, \varepsilon}_\omega\|^2 + C\|\partial_t \phi^{n, \varepsilon}_\omega\|^2
                + C\|\partial_t \phi^{n, \varepsilon}_\omega\|^\frac{1}{2}\|\Delta \partial_t \phi^{n, \varepsilon}_\omega\|^\frac{3}{2} \notag \\
				& \quad \leq \dfrac{\beta}{12}\|\nabla \partial_t\rho^{n, \varepsilon}_\omega\|^2
                + \dfrac{\alpha}{4}\|\Delta \partial_t \phi^{n, \varepsilon}_\omega\|^2
                + \dfrac{3}{16}\|\nabla \partial_t\phi^{n, \varepsilon}_\omega\|^2 + C\|\partial_t \phi^{n, \varepsilon}_\omega\|^2_{V_0^*}.
			\label{eq:step24}
\end{align}
Furthermore, the remaining term can be treated as in Step 1, namely,
\begin{align}
(\uu^{n, \varepsilon}_\omega \cdot \nabla \phi^{n, \varepsilon}_\omega, \partial_t \mu^{n, \varepsilon}_\omega)
= \td{}{t}	(\uu^{n, \varepsilon}_\omega \cdot \nabla \phi^{n, \varepsilon}_\omega, \mu^{n, \varepsilon}_\omega)
- 	(\partial_t\uu^{n, \varepsilon}_\omega \cdot \nabla \phi^{n, \varepsilon}_\omega, \mu^n_\omega)
- 	(\uu^{n, \varepsilon}_\omega \cdot \partial_t\nabla \phi^{n, \varepsilon}_\omega, \mu^{n, \varepsilon}_\omega).
\label{eq:step25}
\end{align}
Then, recalling again \eqref{eq:potentialest}, we get
\begin{align}
(\uu^{n, \varepsilon}_\omega \cdot \nabla\partial_t \phi^{n, \varepsilon}_\omega, \mu^{n, \varepsilon}_\omega)
&= \textcolor{black}{(\uu^{n, \varepsilon}_\omega \cdot \nabla\partial_t \phi^{n, \varepsilon}_\omega, \mu^{n, \varepsilon}_\omega-\overline{\mu^{n, \varepsilon}_\omega}) }\notag\\
& \leq \|\uu^{n, \varepsilon}_\omega\|_{\LL^3(\Omega)}\|\nabla \partial_t\phi^{n, \varepsilon}_\omega\|\|\mu^{n, \varepsilon}_\omega-\overline{\mu^{n, \varepsilon}_\omega}\|_{L^6(\Omega)} \notag \\
& \leq \dfrac{1}{16}\| \nabla \partial_t\phi^{n, \varepsilon}_\omega\|^2
+ C\|\uu^{n, \varepsilon}_\omega\|_{\LL^3(\Omega)}^2 \|\nabla \mu^{n, \varepsilon}_\omega\|^2.	
\label{eq:step26}	
\end{align}
Collecting the estimates \eqref{eq:step22}--\eqref{eq:step26}, we infer from \eqref{eq:step21} that
\begin{align}
& \td{}{t}\left( \dfrac{1}{2}\|\nabla  \mu^{n, \varepsilon}_\omega\|^2
+  (\uu^{n, \varepsilon}_\omega \cdot \nabla \phi^{n, \varepsilon}_\omega, \mu^{n, \varepsilon}_\omega) \right)
+ \dfrac{\alpha}{2} \|\Delta \partial_t \phi^{n, \varepsilon}_\omega\|^2
+ \dfrac{9}{16} \| \nabla \partial_t\phi^{n, \varepsilon}_\omega\|^2
- \dfrac{\beta}{4}\| \nabla \partial_t\rho^{n, \varepsilon}_\omega\|^2
\notag  \\
&\quad \leq (\partial_t\uu^{n, \varepsilon}_\omega \cdot \nabla \phi^{n, \varepsilon}_\omega, \mu^{n, \varepsilon}_\omega)
+ \textcolor{black}{	C\big( \|\partial_t \phi^{n, \varepsilon}_\omega\|_{V_0^*}^2 + \|\uu^{n, \varepsilon}_\omega\|_{\LL^3(\Omega)}^2 \|\nabla \mu^{n, \varepsilon}_\omega\|^2 \big). }
\label{eq:step2final}
\end{align}

\textbf{Step 3.} Testing  \eqref{eq:weakreg2}$_1$  by \textcolor{black}{ $\partial_t\uu^{n,\varepsilon}_\omega$,} we get
\begin{align}
&	\|\partial_t \uu^{n, \varepsilon}_\omega\|^2 + \big((\uu^{n, \varepsilon}_\omega \cdot \nabla)\uu^{n, \varepsilon}_\omega, \partial_t \uu^{n, \varepsilon}_\omega\big) + \big(\nabla \cdot(\nu(\phi^{n, \varepsilon}_\omega, \rho^{n, \varepsilon}_\omega) D\uu^{n, \varepsilon}_\omega), \partial_t \uu^{n, \varepsilon}_\omega\big) \notag\\
&\quad  =  (\mu^{n, \varepsilon}_\omega\nabla\phi^{n, \varepsilon}_\omega, \partial_t \uu^{n, \varepsilon}_\omega)
+(\psi^{n, \varepsilon}_\omega\nabla\rho^{n, \varepsilon}_\omega, \partial_t \uu^{n, \varepsilon}_\omega) .
\label{eq:step31}
\end{align}
The following estimates can be derived arguing as in \cite[Section 4]{GMT18}:
\begin{align}
	\big|\big((\uu^{n, \varepsilon}_\omega \cdot \nabla)\uu^{n, \varepsilon}_\omega, \partial_t \uu^{n, \varepsilon}_\omega\big)\big|
& \leq \|\uu^{n, \varepsilon}_\omega\|_{\LL^4(\Omega)}\|\nabla \uu^{n, \varepsilon}_\omega\|_{\LL^4(\Omega)}\|\partial_t \uu^{n, \varepsilon}_\omega\| \notag \\
	& \leq C\|\nabla \uu^{n, \varepsilon}_\omega\|\|\mathbf{A}\uu^{n, \varepsilon}_\omega\|^\frac{1}{2}\|\partial_t \uu^{n, \varepsilon}_\omega\| \notag \\
	& \leq \dfrac{1}{10}\|\partial_t \uu^{n, \varepsilon}_\omega\|^2 + C\left( \|\mathbf{A}\uu^{n, \varepsilon}_\omega\|^2 + \|\nabla \uu^{n, \varepsilon}_\omega\|^4 \right),
\label{eq:step32}
\end{align}
\begin{align}
			&	\big|\big(\nabla \cdot(\nu(\phi^{n, \varepsilon}_\omega, \rho^{n, \varepsilon}_\omega) D\uu^{n, \varepsilon}_\omega), \partial_t\uu^{n, \varepsilon}_\omega\big)\big| \notag\\
            &\quad  = \left|\dfrac{1}{2}\big(\nu(\phi^{n, \varepsilon}_\omega, \rho^{n, \varepsilon}_\omega)\Delta\uu^{n, \varepsilon}_\omega, \partial_t\uu^{n, \varepsilon}_\omega\big)
            + \big(D\uu^{n, \varepsilon}_\omega\nabla\nu(\phi^{n, \varepsilon}_\omega, \rho^{n, \varepsilon}_\omega), \partial_t \uu^{n, \varepsilon}_\omega \big) \right| \notag \\
				&\quad  \leq C\|\mathbf{A} \uu^{n, \varepsilon}_\omega\|\|\partial_t \uu^{n, \varepsilon}_\omega\| + C\left(\|\nabla \phi^{n, \varepsilon}_\omega\|_{\LL^4(\Omega)} + \|\nabla \rho^{n, \varepsilon}_\omega\|_{\LL^4(\Omega)}\right)\|D\uu^{n, \varepsilon}_\omega\|_{\LL^4(\Omega)}\|\partial_t \uu^{n, \varepsilon}_\omega\| \notag \\
				&\quad  \leq \dfrac{1}{5}\|\partial_t \uu^{n, \varepsilon}_\omega\|^2 + C \|\mathbf{A}\uu^{n, \varepsilon}_\omega\|^2 + C\big(1 + \| \rho^{n, \varepsilon}_\omega\|_{H^2(\Omega)}\big)\|\nabla\uu^{n, \varepsilon}_\omega\|\|\mathbf{A}\uu^{n, \varepsilon}_\omega\| \notag \\
				&\quad  \leq \dfrac{1}{5}\|\partial_t \uu^{n, \varepsilon}_\omega\|^2 + C\|\mathbf{A}\uu^{n, \varepsilon}_\omega\|^2
 + C\big(1+ \| \rho^{n, \varepsilon}_\omega\|_{H^2(\Omega)}^2\big)\|\nabla\uu^{n, \varepsilon}_\omega\|^2.
\label{eq:step322}
\end{align}
Concerning the Korteweg forces, we have
\begin{align}
	& (\mu^{n, \varepsilon}_\omega\nabla\phi^{n, \varepsilon}_\omega, \partial_t \uu^{n, \varepsilon}_\omega)
   + (\psi^{n, \varepsilon}_\omega\nabla\phi^{n, \varepsilon}_\omega, \partial_t \uu^{n, \varepsilon}_\omega) \notag \\
   &\quad \textcolor{black}{ = \big((\mu^{n, \varepsilon}_\omega - \overline{\mu^{n, \varepsilon}_\omega}) \nabla\phi^{n, \varepsilon}_\omega, \partial_t \uu^{n, \varepsilon}_\omega\big)
   + \big((\psi^{n, \varepsilon}_\omega - \overline{\psi^{n, \varepsilon}_\omega}) \nabla\phi^{n, \varepsilon}_\omega, \partial_t \uu^{n, \varepsilon}_\omega\big) } \notag \\
   &\quad  \leq   \|\mu^{n, \varepsilon}_\omega - \overline{\mu^{n, \varepsilon}_\omega}\|_{L^6(\Omega)}\|\nabla \phi^{n, \varepsilon}_\omega\|_{\LL^3(\Omega)}\|\partial_t \uu^{n, \varepsilon}_\omega\| +\|\psi^{n, \varepsilon}_\omega- \overline{\psi^{n, \varepsilon}_\omega}\|_{L^6(\Omega)}\|\nabla \rho^{n, \varepsilon}_\omega\|_{\LL^3(\Omega)}\|\partial_t \uu^{n, \varepsilon}_\omega\| \notag \\
	& \quad \leq \dfrac{1}{5}\|\partial_t \uu^{n, \varepsilon}_\omega\|^2 \textcolor{black}{\ + C\big(1 +\|\rho^{n, \varepsilon}_\omega\|_{H^2(\Omega)}^2\big) \big(\|\nabla \mu^{n, \varepsilon}_\omega\|^2+\|\nabla \psi^{n, \varepsilon}_\omega\|^2\big). }
 \label{eq:step33}
\end{align}
On account of \eqref{eq:step32}--\eqref{eq:step33}, from \eqref{eq:step31} we deduce
		\begin{equation} \label{eq:step3final}
		\|\partial_t \uu^{n, \varepsilon}_\omega\|^2
\leq C_{10}\|\mathbf{A}\uu^{n, \varepsilon}_\omega\|^2
\textcolor{black}{\ + C\|\nabla \uu^{n, \varepsilon}_\omega\|^4 + C\big(1 +\|\rho^{n, \varepsilon}_\omega\|_{H^2(\Omega)}^2\big) \big(\|\nabla \uu^{n, \varepsilon}_\omega\|^2+\|\nabla \mu^{n, \varepsilon}_\omega\|^2+\|\nabla \psi^{n, \varepsilon}_\omega\|^2\big). }
		\end{equation}

		\textbf{Step 4.} We now take $\mathbf{A}\uu^{n, \varepsilon}_\omega \in L^2(0,T;\mathbf{H}_\sigma)$ as test function in \eqref{eq:weakreg2}$_1$. This gives
		\begin{align}
	&		\dfrac{1}{2}\td{}{t}\|\nabla\uu^{n, \varepsilon}_\omega\|^2 + \big((\uu^{n, \varepsilon}_\omega \cdot \nabla)\uu^{n, \varepsilon}_\omega, \mathbf
			A\uu^{n, \varepsilon}_\omega\big) - \big(\nabla \cdot(\nu(\phi^{n, \varepsilon}_\omega, \rho^{n, \varepsilon}_\omega)D\uu^{n, \varepsilon}_\omega), \mathbf{A}\uu^{n, \varepsilon}_\omega\big)\notag\\
&\quad  =  (\mu^{n, \varepsilon}_\omega\nabla\phi^{n, \varepsilon}_\omega, \mathbf{A}\uu^{n, \varepsilon}_\omega) + (\psi^{n, \varepsilon}_\omega\nabla\rho^{n, \varepsilon}_\omega, \mathbf{A}\uu^{n, \varepsilon}_\omega).
\label{eq:step41}
		\end{align}
The trilinear form can be controlled as follows
\begin{align}
		\big|\big((\uu^{n, \varepsilon}_\omega \cdot \nabla)\uu^{n, \varepsilon}_\omega, \mathbf{A}\uu^{n, \varepsilon}_\omega \big)\big|
& \leq \|\uu^{n, \varepsilon}_\omega\|_{\LL^4(\Omega)}\|\nabla \uu^{n, \varepsilon}_\omega\|_{\LL^4(\Omega)}\|\mathbf{A}\uu^{n, \varepsilon}_\omega\| \notag \\
				& \leq C\|\nabla \uu^{n, \varepsilon}_\omega\|\|\mathbf{A}\uu^{n, \varepsilon}_\omega\|^\frac{3}{2} \notag \\
				& \leq \dfrac{\nu_*}{16}\|\mathbf{A}\uu^{n, \varepsilon}_\omega\|^2 + C\|\nabla \uu^{n, \varepsilon}_\omega\|^4.
	\label{eq:step43}
\end{align}
Following \cite{GMT18} once more, we find that there exists a $q^{n, \varepsilon}_\omega \in L^2(0,T;H^1(\Omega))$ such that $-\Delta \uu^{n, \varepsilon}_\omega + \nabla q^{n, \varepsilon}_\omega = \mathbf{A}\uu^{n, \varepsilon}_\omega$ almost everywhere in $\Omega \times (0,T)$. Thus we obtain
		\begin{align}
			&	- \big(\nabla \cdot(\nu(\phi^{n, \varepsilon}_\omega, \rho^{n, \varepsilon}_\omega)D\uu^{n, \varepsilon}_\omega), \mathbf{A}\uu^{n, \varepsilon}_\omega\big) \notag\\
& \quad = -\dfrac{1}{2}\big(\nu(\phi^{n, \varepsilon}_\omega, \rho^{n, \varepsilon}_\omega)\Delta\uu^{n, \varepsilon}_\omega, \mathbf{A}\uu^{n, \varepsilon}_\omega\big)
- \big(D\uu^{n, \varepsilon}_\omega\nabla\nu(\phi^{n, \varepsilon}_\omega, \rho^{n, \varepsilon}_\omega), \mathbf{A}\uu^n_\omega \big)\notag \\
				& \quad = \dfrac{1}{2}\big(\nu(\phi^{n, \varepsilon}_\omega, \rho^{n, \varepsilon}_\omega)\mathbf{A}\uu^{n, \varepsilon}_\omega, \mathbf{A}\uu^{n, \varepsilon}_\omega\big)
-  \dfrac{1}{2}\big(\nu(\phi^{n, \varepsilon}_\omega, \rho^{n, \varepsilon}_\omega)\nabla q^{n, \varepsilon}_\omega, \mathbf{A}\uu^{n, \varepsilon}_\omega\big)\notag \\
				& \qquad  - \big(D\uu^{n, \varepsilon}_\omega\nabla\nu(\phi^{n, \varepsilon}_\omega, \rho^{n, \varepsilon}_\omega), \mathbf{A}\uu^n_\omega \big) \notag \\
				& \quad \geq \dfrac{\nu_*}{2}\|\mathbf{A}\uu^{n, \varepsilon}_\omega\|^2 + \frac12 \big(q^{n, \varepsilon}_\omega  \nabla\nu(\phi^{n, \varepsilon}_\omega, \rho^{n, \varepsilon}_\omega), \mathbf{A}\uu^n_\omega \big)
- \big(D\uu^{n, \varepsilon}_\omega\nabla\nu(\phi^{n, \varepsilon}_\omega, \rho^{n, \varepsilon}_\omega), \mathbf{A}\uu^n_\omega \big),
\label{eq:step42}
		\end{align}
where the two terms on the right-hand side can be estimated in the following way
\begin{align}
 		& \left|\frac12 \big(q^{n, \varepsilon}_\omega  \nabla\nu(\phi^{n, \varepsilon}_\omega, \rho^{n, \varepsilon}_\omega), \mathbf{A}\uu^n_\omega \big)
  -\big(D\uu^{n, \varepsilon}_\omega\nabla\nu(\phi^{n, \varepsilon}_\omega, \rho^{n, \varepsilon}_\omega), \mathbf{A}\uu^n_\omega \big)\right|\notag  \\
				& \quad  \leq C\big( \|\nabla \phi^{n, \varepsilon}_\omega\|_{\LL^4(\Omega)} + \|\nabla \rho^{n, \varepsilon}_\omega\|_{\LL^4(\Omega)} \big)\big( \|q^{n, \varepsilon}_\omega\|_{L^4(\Omega)} + \|D\uu^{n, \varepsilon}_\omega\|_{\LL^4(\Omega)} \big) \|\mathbf{A}\uu^{n, \varepsilon}_\omega\|\notag \\
				& \quad  \leq C\big( 1 + \|\rho^{n, \varepsilon}_\omega\|^\frac{1}{2}_{H^2(\Omega)}\big) \big(\|q^{n, \varepsilon}_\omega\|^\frac{1}{2}\|q^{n, \varepsilon}_\omega\|_{H^1(\Omega)}^\frac{1}{2} + \|\nabla \uu^{n, \varepsilon}_\omega\|^\frac{1}{2}\|\mathbf{A}\uu^{n, \varepsilon}_\omega\|^\frac{1}{2} \big)\|\mathbf{A}\uu^{n, \varepsilon}_\omega\| \notag \\	
				& \quad  \leq C\big( 1 + \|\rho^{n, \varepsilon}_\omega\|^\frac{1}{2}_{H^2(\Omega)}\big) \big( \|\nabla \uu^{n, \varepsilon}_\omega\|^\frac{1}{4}\|\mathbf{A}\uu^{n, \varepsilon}_\omega\|^\frac{3}{4} + \|\nabla \uu^{n, \varepsilon}_\omega\|^\frac{1}{2}\|\mathbf{A}\uu^{n, \varepsilon}_\omega\|^\frac{1}{2} \big)\|\mathbf{A}\uu^{n, \varepsilon}_\omega\| \notag \\
				& \quad  \leq \frac{\nu_*}{16}\|\mathbf{A}\uu^{n, \varepsilon}_\omega\|^2  + C\big( 1 + \|\rho^{n, \varepsilon}_\omega\|^4_{H^2(\Omega)}\big)\|\nabla \uu^{n, \varepsilon}_\omega\|^2.
\label{eq:step422}
\end{align}
Besides, we have
\begin{align}
& (\mu^{n, \varepsilon}_\omega\nabla\phi^{n, \varepsilon}_\omega, \mathbf{A}\uu^{n, \varepsilon}_\omega)
+ (\psi^{n, \varepsilon}_\omega\nabla\rho^{n, \varepsilon}_\omega, \mathbf{A}\uu^{n, \varepsilon}_\omega) \notag\\
&\quad \textcolor{black}{ =
\big((\mu^{n, \varepsilon}_\omega - \overline{\mu^{n, \varepsilon}_\omega}) \nabla\phi^{n, \varepsilon}_\omega, \mathbf{A}\uu^{n, \varepsilon}_\omega)
+ \big((\psi^{n, \varepsilon}_\omega - \overline{\psi^{n, \varepsilon}_\omega}) \nabla\rho^{n, \varepsilon}_\omega, \mathbf{A}\uu^{n, \varepsilon}_\omega\big)} \notag\\
& \quad \leq \|\mu^{n, \varepsilon}_\omega- \overline{\mu^{n, \varepsilon}_\omega}\|_{L^6(\Omega)}\|\nabla \phi^{n, \varepsilon}_\omega\|_{\LL^3(\Omega)}\|\mathbf{A}\uu^{n, \varepsilon}_\omega\|
+ \|\psi^{n, \varepsilon}_\omega- \overline{\psi^{n, \varepsilon}_\omega}\|_{L^6(\Omega)}\|\nabla \rho^{n, \varepsilon}_\omega\|_{\LL^3(\Omega)}\|\mathbf{A}\uu^{n, \varepsilon}_\omega\|
\notag \\
& \quad \leq \dfrac{\nu_*}{16}\|\mathbf{A}\uu^{n, \varepsilon}_\omega\|^2
\textcolor{black}{\ + C\big(1  +\|\rho^{n, \varepsilon}_\omega\|_{H^2(\Omega)}^2\big)\big(\|\nabla \mu^{n, \varepsilon}_\omega\|^2+\|\nabla \psi^{n, \varepsilon}_\omega\|^2\big). }
\label{eq:step44}
\end{align}
Combining \eqref{eq:step42}--\eqref{eq:step44}, from  \eqref{eq:step41} we deduce that
\begin{align}
&	\dfrac{1}{2}\td{}{t}\|\nabla\uu^{n, \varepsilon}_\omega\|^2
+ \frac{5\nu_*}{16} \|\mathbf{A}\uu^{n, \varepsilon}_\omega\|^2 \notag \\
&\quad \leq\ \textcolor{black}{ C\|\nabla \uu^{n, \varepsilon}_\omega\|^4 + C \big( 1 + \|\rho^{n, \varepsilon}_\omega\|^4_{H^2(\Omega)}\big)\big(\|\nabla \uu^{n, \varepsilon}_\omega\|^2+  \|\nabla \mu^{n, \varepsilon}_\omega\|^2 +\|\nabla \psi^{n, \varepsilon}_\omega\|^2\big). }
\label{eq:step4final}
\end{align}

\textbf{Step 5.} We can now estimate the two scalar products on the right-hand side of \eqref{eq:step1final} and \eqref{eq:step2final} as follows
\begin{align}
& (\partial_t\uu^{n, \varepsilon}_\omega \cdot \nabla \rho^{n, \varepsilon}_\omega, \psi^{n, \varepsilon}_\omega) +(\partial_t\uu^{n, \varepsilon}_\omega \cdot \nabla \phi^{n, \varepsilon}_\omega, \mu^{n, \varepsilon}_\omega) \notag\\
&\quad \textcolor{black}{
=  (\partial_t\uu^{n, \varepsilon}_\omega \cdot \nabla \rho^{n, \varepsilon}_\omega, \psi^{n, \varepsilon}_\omega - \overline{\psi^{n, \varepsilon}_\omega} ) +(\partial_t\uu^{n, \varepsilon}_\omega \cdot \nabla \phi^{n, \varepsilon}_\omega, \mu^{n, \varepsilon}_\omega - \overline{\mu^{n, \varepsilon}_\omega} ) }
\notag\\
& \quad \leq \|\partial_t \uu^{n, \varepsilon}_\omega\|\|\nabla \rho^{n, \varepsilon}_\omega\|_{\LL^3(\Omega)}\|\psi^{n, \varepsilon}_\omega
- \overline{\psi^{n, \varepsilon}_\omega}\|_{L^6(\Omega)}
+ \|\partial_t \uu^{n, \varepsilon}_\omega\|\|\nabla \phi^{n, \varepsilon}_\omega\|_{\LL^3(\Omega)}\|\mu^{n, \varepsilon}_\omega- \overline{\mu^{n, \varepsilon}_\omega}\|_{L^6(\Omega)} \notag \\
& \quad \leq \dfrac{\nu_*}{64C_{10}}\|\partial_t \uu^{n, \varepsilon}_\omega\|^2 + C\big(1   +\|\rho^{n, \varepsilon}_\omega\|_{H^2(\Omega)}^2\big)\big(\|\nabla \mu^{n, \varepsilon}_\omega\|^2+\|\nabla \psi^{n, \varepsilon}_\omega\|^2\big).
\label{eq:step51}
\end{align}
Besides, we observe that
\begin{align}
&	|(\uu^{n, \varepsilon}_\omega \cdot \nabla \phi^{n, \varepsilon}_\omega, \mu^{n, \varepsilon}_\omega) + (\uu^{n, \varepsilon}_\omega \cdot \nabla \rho^{n, \varepsilon}_\omega, \psi^{n, \varepsilon}_\omega)| \notag\\
&\quad \leq \textcolor{black}{ |(\uu^{n, \varepsilon}_\omega \cdot \nabla \phi^{n, \varepsilon}_\omega, \mu^{n, \varepsilon}_\omega-\overline{\mu^{n, \varepsilon}_\omega}) |
            + |(\uu^{n, \varepsilon}_\omega \cdot \nabla \rho^{n, \varepsilon}_\omega, \psi^{n, \varepsilon}_\omega-\overline{\psi^{n, \varepsilon}_\omega})| }\notag\\
&\quad  \leq  \|\uu^{n, \varepsilon}_\omega\|_{\LL^4(\Omega)}\|\nabla \phi^{n, \varepsilon}_\omega\| \|\mu^{n, \varepsilon}_\omega-\overline{\mu^{n, \varepsilon}_\omega}\|_{L^4(\Omega)}
+ \|\uu^{n, \varepsilon}_\omega\|_{\LL^4(\Omega)}\|\nabla \rho^{n, \varepsilon}_\omega\| \|\psi^{n, \varepsilon}_\omega -\overline{\psi^{n, \varepsilon}_\omega}\|_{L^4(\Omega)}\notag\\
			&\quad  \leq  C\|\uu^{n, \varepsilon}_\omega\|^\frac12\|\uu^{n, \varepsilon}_\omega\|_{\mathbf{V}_\sigma}^\frac{1}{2} \|\nabla \mu^{n, \varepsilon}_\omega\|
+ C\|\uu^{n, \varepsilon}_\omega\|^\frac12\|\uu^{n, \varepsilon}_\omega\|_{\mathbf{V}_\sigma}^\frac{1}{2}\|\nabla \psi^{n, \varepsilon}_\omega\| \notag\\
			&\quad  \leq \dfrac{1}{4}\| \nabla \uu^{n, \varepsilon}_\omega\|^2 + \dfrac{1}{4}\| \nabla \mu^{n, \varepsilon}_\omega\|^2 + \dfrac{1}{4}\| \nabla \psi^{n, \varepsilon}_\omega\|^2 + C_{11}\|\uu^{n, \varepsilon}_\omega\|^2,
\label{equiL}
\end{align}
where $C_{11}>0$ only depends on $\Omega$. Set now
    \[
		\begin{split}
			\Lambda(t) &:= \dfrac{1}{2}\|\nabla\uu^{n, \varepsilon}_\omega(t)\|^2
+ \dfrac{1}{2}\|\nabla \mu^{n, \varepsilon}_\omega(t)\|^2 +\dfrac{1}{2}\|\nabla \psi^{n, \varepsilon}_\omega(t)\|^2 +(\uu^{n, \varepsilon}_\omega(t) \cdot \nabla \phi^{n, \varepsilon}_\omega(t), \mu^{n, \varepsilon}_\omega(t)) \\
&\qquad +  (\uu^{n, \varepsilon}_\omega(t) \cdot \nabla \rho^{n, \varepsilon}_\omega(t), \psi^{n, \varepsilon}_\omega(t))\ \textcolor{black}{+ C_{11}\|\uu^{n, \varepsilon}_\omega\|^2}, \\
			\mathcal{G}(t) & := \dfrac{\nu_*}{32C_{10}}\|\partial_t\uu^{n, \varepsilon}_\omega(t)\|^2 + \dfrac{\nu_*}{4}\|\mathbf{A}\uu^{n, \varepsilon}_\omega(t)\|^2 + \dfrac{\alpha}{2}\|\Delta \partial_t \phi^{n, \varepsilon}_\omega(t)\|^2 + \dfrac{1}{2}\|\nabla \partial_t \phi^{n, \varepsilon}_\omega(t)\|^2 + \dfrac{\beta}{2}\|\nabla \partial_t \rho^{n, \varepsilon}_\omega(t)\|^2,
		\end{split}
	\]
for all $t\in [0,T]$. Therefore, we have
		\begin{align*}
		&\Lambda(t)\leq \dfrac{3}{4}\|\nabla \uu^{n, \varepsilon}_\omega\|^2
        + \dfrac{3}{4}\|\nabla \mu^{n, \varepsilon}_\omega\|^2
        + \dfrac{3}{4}\|\nabla \psi^{n, \varepsilon}_\omega\|^2  + \textcolor{black}{2C_{11}\|\uu^{n, \varepsilon}_\omega\|^2},\\
        &\Lambda(t) \geq \dfrac{1}{4}\|\nabla \uu^{n, \varepsilon}_\omega\|^2
        + \dfrac{1}{4}\|\nabla \mu^{n, \varepsilon}_\omega\|^2
        + \dfrac{1}{4}\|\nabla \psi^{n, \varepsilon}_\omega\|^2\geq 0.
		\end{align*}
By Young's inequality, we get
\begin{align}
C_{11}\frac{\mathrm{d}}{\mathrm{d}t}\|\uu^{n, \varepsilon}_\omega\|^2 \leq  2C_{11}\|\partial_t \uu^{n, \varepsilon}_\omega\|\|\uu^{n, \varepsilon}_\omega\|
\leq \dfrac{\nu_*}{64C_{10}}\|\partial_t \uu^{n, \varepsilon}_\omega\|^2 + \frac{64}{\nu^*}C_{10}C_{11}^2\|\uu^{n, \varepsilon}_\omega\|^2.
\label{eq:lowdiffu}
\end{align}

Collecting \eqref{eq:step1final}, \eqref{eq:step2final}, \eqref{eq:step4final}, \eqref{eq:lowdiffu} and \eqref{eq:step3final} multiplied by $\dfrac{\nu_*}{16C_{10}}$, and taking  \eqref{eq:timederivatives}, \eqref{rhoL4h2} and \eqref{eq:step51} into account, we arrive at the differential inequality
		\begin{equation*}
			\td{\Lambda}{t} + \mathcal{G} \leq C_{12}(1 + \Lambda)\Lambda.
		\end{equation*}
Since
		\[
		\int_0^T \Lambda(t)\,  \mathrm{d}t\leq C_{13},
		\]
		where the constant $C_{13}>0$ does not depend on $n$, $k$, $\varepsilon$ and $\Omega$, an application of Gronwall's lemma yields
		\[
		\Lambda(t) \leq \Lambda(0)e^{C_{12}C_{13}} + C_{12}e^{C_{12}C_{13}}T, \qquad \forall \: t \in [0,T].
		\]
As a consequence, we also have
\begin{align*}
\int_0^t \mathcal{G}(\tau)\,\mathrm{d}\tau &\leq  \Lambda(0)+ C_{12}\int_0^t (1+\Lambda(\tau))\Lambda(\tau)\,\mathrm{d}\tau \\
&\leq  \Lambda(0)+2C_{12}T+ 2C_{12}T\big(\Lambda(0)e^{C_{12}C_{13}} + C_{12}e^{C_{12}C_{13}}T\big)^2 ,\quad \forall\, t\in [0,T].
\end{align*}
 We are left to control the initial quantity $\Lambda(0)$. To this end, we observe that, owing to \eqref{eq:estimateinitdata2},
\begin{align}
	\Lambda(0) & =
  \dfrac{1}{2}\|\nabla\uu^{n}_0\|^2
+ \dfrac{1}{2}\|\nabla \mu^{n,\varepsilon}_\omega(0)\|^2
+ \dfrac{1}{2}\|\nabla \psi^{n,\varepsilon}_\omega(0)\|^2
+ (\uu^n_0 \cdot \nabla \phi^n_{0}, \mu^{n,\varepsilon}_\omega(0))
+ (\uu^n_0 \cdot \nabla \rho^n_{0,k}, \psi^{n,\varepsilon}_\omega(0))\notag \\
&\quad + \textcolor{black}{C_{11}\|\uu^{n}_0\|^2 }
\notag\\
		& \leq \dfrac{1}{2}\|\nabla\uu^{n}_0\|^2
+ \dfrac{1}{2}\|\nabla \mu^{n,\varepsilon}_\omega(0)\|^2
+ \dfrac{1}{2}\|\nabla \psi^{n,\varepsilon}_\omega(0)\|^2  \notag\\
&\quad + \|\uu^n_0\|_{\LL^3(\Omega)}\left( \|\phi^n_{0}\|_{L^6(\Omega)} \|\mu^{n,\varepsilon}_\omega(0)\| +\|\rho^n_{0,k}\|_{L^6(\Omega)}\|\psi^{n,\varepsilon}_\omega(0)\| \right) + \textcolor{black}{C_{11}\|\uu^{n}_0\|^2}
\notag \\
		& \leq C\|\uu_0\|_{\mathbf{V}_\sigma}^2 + C\left( 1 + \|\phi_0\|^2_{H^1(\Omega)}  + \|\rho_0\|^2_{H^1(\Omega)} \right)\left(\|\mu^{n,\varepsilon}_\omega(0)\|_{H^1(\Omega)}^2 + \|\psi^{n,\varepsilon}_\omega(0)\|_{H^1(\Omega)}^2 \right).
\label{eq:step512}
	\end{align}
Moreover, we have
\begin{align}
		\|\mu^{n,\varepsilon}_\omega(0)\|_{H^1(\Omega)}
&  = \| \Pi_n\left( \alpha \Delta^2 \phi^n_{0} -\Delta \phi^n_{0} +  S'_\phi(\phi^n_{0}) + \nabla \cdot(\rho_{0,k}^n\nabla\phi^n_{0})
- \textcolor{black}{\omega \nabla \cdot\left( |\nabla \phi^n_{0}|^2\nabla \phi^n_{0} \right)} \right) \|_{H^1(\Omega)} \notag \\
		&  \leq \| \alpha \Delta^2 \phi^n_{0}-\Delta \phi^n_{0}  + S'_\phi(\phi^n_{0}) + \nabla \cdot(\rho_{0,k}^n\nabla\phi^n_{0}) - \textcolor{black}{\omega \nabla \cdot\left( |\nabla \phi^n_{0}|^2\nabla \phi^n_{0} \right)} \|_{H^1(\Omega)} \notag\\
		&  \leq  C\|\phi^n_{0}\|_{H^5(\Omega)} + \| S'_\phi(\phi^n_{0})\|_{H^1(\Omega)} + \|\nabla \cdot(\rho_{0,k}^n\nabla\phi^n_{0})\|_{H^1(\Omega)}+ \textcolor{black}{\omega \||\nabla \phi^n_{0}|^2\nabla \phi^n_{0} \|_{\mathbf{H}^2(\Omega)}.}
\label{eq:step52}
		\end{align}
Observe that
\begin{equation*}
\begin{split}
\|\nabla \cdot(\rho_{0,k}^n\nabla\phi^n_{0})\|_{H^1(\Omega)}
&  \leq \| \rho_{0,k}^n\Delta \phi_{0}^n\|_{H^1(\Omega)} + \| \nabla \rho_{0,k}^n \cdot \nabla \phi_{0}^n\|_{H^1(\Omega)},
\end{split}
\end{equation*}
where
\begin{equation*}
\begin{split}
				\| \rho_{0,k}^n\Delta \phi_{0}^n\|_{H^1(\Omega)} &  \leq \| \rho_{0,k}^n\Delta \phi_{0}^n\| + \| \nabla \rho_{0,k}^n \Delta \phi_{0}^n\| + \| \rho_{0,k}^n \nabla \Delta \phi_{0}^n\| \\
				& \leq\|\rho_{0,k}^n\|_{L^\infty(\Omega)}\left(\|\Delta \phi_{0}^n \| + \|\nabla \Delta \phi_{0}^n \|\right) + \|\nabla \rho_{0,k}^n\|_{\LL^4(\Omega)}\|\Delta \phi_{0}^n \|_{L^4(\Omega)} \\
				& \leq C\left(\|\rho_{0,k}^n\|_{H^2(\Omega)}^2 + \|\phi_{0}^n\|_{H^3(\Omega)}^2  \right),
\end{split}
\end{equation*}
and
\begin{equation*}
\begin{split}
	\| \nabla \rho_{0,k}^n \cdot \nabla \phi_{0}^n\|_{H^1(\Omega)}
    & \leq \|\nabla \rho_{0,k}^n\|_{\LL^4(\Omega)}\|\nabla \phi_{0}^n \|_{\LL^4(\Omega)} + \|\rho_{0,k}^n \|_{H^2(\Omega)}\|\nabla \phi_{0}^n\|_{\LL^\infty(\Omega)} + \|\nabla \rho_{0,k}^n\|_{\LL^4(\Omega)}\|\phi_{0}^n \|_{W^{2,4}(\Omega)} \\
	& \leq C\left(\|\rho_{0,k}^n\|_{H^2(\Omega)}^2 + \|\phi_{0}^n\|_{H^3(\Omega)}^2  \right).
\end{split}
\end{equation*}
Therefore, exploiting \eqref{eq:estimateinitdata0}, in light of the above controls, from \eqref{eq:step52} and Young's inequality we infer that
\begin{align} \label{eq:step53}
\|\mu^{n,\varepsilon}_\omega(0)\|_{H^1(\Omega)}
& \leq \textcolor{black}{C\|\phi^n_{0}-\phi_{0}\|_{H^5(\Omega)}^3} + \| S'_\phi(\phi^n_{0})-S'_\phi(\phi_{0})\|_{H^1(\Omega)}  + \| S'_\phi(\phi_{0})\|_{H^1(\Omega)}\notag\\
&\quad +C\|\rho_{0,k}^n-\rho_{0,k}\|_{H^2(\Omega)}^2 + C  (1+\textcolor{black}{\|\phi_{0}\|_{H^5(\Omega)}^3} + \|\widehat{\psi}_0 \|^2).
\end{align}
Recalling now \eqref{eq:estimateinitdata}, we find
\begin{align}
	\|\psi^{n,\varepsilon}_\omega(0)\|_{H^1(\Omega)}
    &  = \left\| \Pi_n\left( -\beta\Delta \rho^n_{0,k} + S'_{\rho,\varepsilon}(\rho^n_{0,k}) -\dfrac{\theta}{2} |\nabla\phi^n_{0}|^2 \right) \right\|_{H^1(\Omega)}  \notag\\
	& \leq \left\| -\beta\Delta \rho^n_{0,k} + \widehat{S}'_{\rho,\varepsilon}(\rho^n_{0,k}) +R'_\rho(\rho^n_{0,k}) - \dfrac{\theta}{2} |\nabla\phi^n_{0}|^2 \right\|_{H^1(\Omega)} \notag \\
	& \leq \beta\|\rho^n_{0,k}- \rho_{0,k}\|_{H^3(\Omega)} + \|\widehat{S}'_{\rho,\varepsilon}(\rho^n_{0,k})-\widehat{S}'_{\rho,\varepsilon}(\rho_{0,k}) \|_{H^1(\Omega)}  + \textcolor{black}{\|\widehat{\psi}_0\|_{H^1(\Omega)}}\notag\\
&\quad + \|R'_\rho(\rho^n_{0,k})\|_{H^1(\Omega)}+\dfrac{\theta}{2} \left\| |\nabla\phi^n_{0}|^2\right\|_{H^1(\Omega)}.
\label{eq:step54}
\end{align}
Assumption \textbf{(H3)} and H\"{o}lder's inequality entail that
$$
\|R'_\rho(\rho^n_{0,k})\|_{H^1(\Omega)}\leq C\left(1 + \|\rho^n_{0,k}\|_{H^1(\Omega)}  \right),
$$
	\[
	\dfrac{\theta}{2} \left\| |\nabla\phi^n_{0}|^2\right\|_{H^1(\Omega)} \leq C\left( \|\nabla \phi^n_{0}\|_{\LL^4(\Omega)}^2 + \|\phi^n_{0}\|_{W^{2,4}(\Omega)}\|\nabla\phi^n_{0}\|_{\LL^4(\Omega)}\right) \leq C\|\phi^n_{0}\|_{H^3(\Omega)}^2.
	\]
Therefore, on account of \eqref{eq:estimateinitdata2}, from \eqref{eq:step54} we deduce that
\begin{align}
\|\psi^{n,\varepsilon}_\omega(0)\|_{H^1(\Omega)}
&\leq C\|\rho^n_{0,k}- \rho_{0,k}\|_{H^3(\Omega)} + \|\widehat{S}'_{\rho,\varepsilon}(\rho^n_{0,k})-\widehat{S}'_{\rho,\varepsilon}(\rho_{0,k}) \|_{H^1(\Omega)}  + \textcolor{black}{\|\widehat{\psi}_0\|_{H^1(\Omega)}} \notag \\
&\quad +C\|\phi^n_{0}-\phi_{0}\|_{H^3(\Omega)}^2+ C\big(1 +\|\phi_{0}\|_{H^3(\Omega)}^2 +\|\rho_{0}\|_{H^1(\Omega)}\big).
\label{eq:step55}
		\end{align}
Exploiting the strong convergences  $\phi_{0}^n \to \phi_{0}$ and $\rho_{0,k}^n \to \rho_{0,k}$ in $H^5(\Omega)$ and $H^3(\Omega)$, respectively, we can find $n^{**}(k)>n^*(k)$ such that, for all $n>n^{**}(k)$ and $k > k^*$, it holds
\begin{align}
\|\phi_{0}^n-\phi_{0}\|_{H^5(\Omega)}\leq 1,\quad \|\rho^n_{0,k}-\rho_{0,k}\|_{H^3(\Omega)}\leq 1.
\label{nerr}
\end{align}
Thanks to \textbf{(H3)$'$}, arguing line by line exactly as in \cite[(4.39)]{GMT18}, we have
\begin{align}
\|\widehat{S}'_{\rho,\varepsilon}(\rho^n_{0,k}) - \widehat{S}'_{\rho,\varepsilon}(\rho_{0,k})\|_{H^1(\Omega)} \leq C\|\rho_{0,k}^n - \rho_{0,k}\|_{H^1(\Omega)},
\label{kkk}
\end{align}
		where $C>0$ may depend on $k$. In a similar manner, thanks to \textbf{(H2)$'$} and \eqref{nerr}, we get
\begin{align*}
&\|S'_\phi(\phi^n_{0})-S'_\phi(\phi_{0} )\|_{H^1(\Omega)}\notag\\
&\quad \leq \|S'_\phi(\phi^n_{0})-S'_\phi(\phi_{0} )\| + \|S''_\phi(\phi^n_{0})\nabla (\phi^n_{0}-\phi_{0} )\| +\| (S''_\phi(\phi^n_{0})-S''_\phi(\phi_{0}))\nabla \phi_0\|\notag\\
&\quad \leq C\| \phi^n_{0}-\phi_{0}\|_{H^1(\Omega)}.
\end{align*}
From the above estimates, we infer from \eqref{eq:step53} and \eqref{eq:step55} that for
\textcolor{black}{every fixed $k > k^*$, for any} $\omega\in (0, 1]$, $\varepsilon \in (0,\epsilon_3(k))$, $n>n^{**}(k)$, it holds
		\[
		\|\mu^{n,\varepsilon}_\omega(0)\|_{H^1(\Omega)}  + \|\psi^{n,\varepsilon}_\omega(0)\|_{H^1(\Omega)}
       \leq C\left( 1 + \|\phi_0\|_{H^5(\Omega)}^2 + \|{\rho}_0\|_{H^1(\Omega)}^2+\|\widehat{\psi}_{0}\|_{H^1(\Omega)}^2
       \right),
		\]
		where $C>0$ may depend on $\|\phi_0\|_{H^2(\Omega)}$ and the right-hand side is independent of the parameters $n$, $\omega$ as well as $\varepsilon$, \textcolor{black}{but may depend on $k$}. We can thus conclude that
		\begin{align*}
		\Lambda(t) + \int_0^t \mathcal{G}(\tau)\,\mathrm{d}\tau \leq C, \quad \forall\, t\in (0,T],
		\end{align*}
		where $C$ depends on $T$ and the initial conditions, but not on $n$, $\omega$ and $\varepsilon$, \textcolor{black}{however it may depend on $k$}.
From the definitions of $\Lambda$ and $\mathcal{G}$, we arrive at the expected conclusion.
\end{proof}

\textbf{Higher-order estimates in three dimensions.}
The difference with respect to the previous argument is mainly in the usage of Sobolev embeddings and interpolation inequalities.
\begin{lemma} \label{prop:ub53d}
Let $d = 3$. There exists a time $T^*\in (0,T]$ independent of $\omega$, $n$, $k$, $\varepsilon$ such that the sequences $\{\mu^{n,\varepsilon}_\omega\}$ and $\{\psi^{n,\varepsilon}_\omega\}$  are uniformly bounded in $L^\infty(0,T^*;H^1(\Omega))$ and the sequence $\{\uu^{n,\varepsilon}_\omega\}$ is uniformly bounded in $L^\infty(0,T^*;\mathbf{V}_\sigma) \cap L^2(0,T^*; \mathbf{W}_\sigma)$.
\end{lemma}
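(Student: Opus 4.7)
The plan is to repeat verbatim the four-step scheme devised in the proof of Lemma \ref{prop:ub5}: test the $\rho$-equation by $\partial_t\psi^{n,\varepsilon}_\omega$, the $\phi$-equation by $\partial_t\mu^{n,\varepsilon}_\omega$, the Navier--Stokes equation by $\partial_t\uu^{n,\varepsilon}_\omega$ and by $\mathbf{A}\uu^{n,\varepsilon}_\omega$, sum the resulting identities, and deduce a differential inequality for a quantity of the form
\[
\Lambda(t):=\tfrac{1}{2}\|\nabla\uu^{n,\varepsilon}_\omega\|^2+\tfrac{1}{2}\|\nabla\mu^{n,\varepsilon}_\omega\|^2+\tfrac{1}{2}\|\nabla\psi^{n,\varepsilon}_\omega\|^2+(\uu^{n,\varepsilon}_\omega\cdot\nabla\phi^{n,\varepsilon}_\omega,\mu^{n,\varepsilon}_\omega)+(\uu^{n,\varepsilon}_\omega\cdot\nabla\rho^{n,\varepsilon}_\omega,\psi^{n,\varepsilon}_\omega)+C_{11}\|\uu^{n,\varepsilon}_\omega\|^2,
\]
controlled by the dissipation $\mathcal{G}(t)$ (with the same structure as in 2D). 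The initial value $\Lambda(0)$ will be bounded exactly as in estimates \eqref{eq:step512}--\eqref{kkk}, since the initial data assumptions in Theorem \ref{th:wellposedlocal} are the same in both dimensions and the convergences $\phi_0^n\to\phi_0$ in $H^5$, $\rho_{0,k}^n\to\rho_{0,k}$ in $H^3$ are dimension-independent.

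The essential difference lies in the Sobolev calculus: whenever Lemma \ref{prop:ub5} invokes the 2D Ladyzhenskaya estimate $\|f\|_{L^4}\leq C\|f\|^{1/2}\|f\|_{H^1}^{1/2}$ or Agmon's inequality $\|f\|_{L^\infty}\leq C\|f\|^{1/2}\|f\|_{H^2}^{1/2}$, I would replace them by their 3D counterparts
\[
\|f\|_{L^4(\Omega)}\le C\|f\|^{1/4}\|f\|_{H^1(\Omega)}^{3/4},\qquad \|f\|_{L^\infty(\Omega)}\le C\|f\|_V^{1/2}\|f\|_{H^2(\Omega)}^{1/2},
\]
together with $H^1(\Omega)\hookrightarrow L^6(\Omega)$. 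The trilinear term $((\uu^{n,\varepsilon}_\omega\cdot\nabla)\uu^{n,\varepsilon}_\omega,\mathbf{A}\uu^{n,\varepsilon}_\omega)$ is then bounded by $C\|\nabla\uu^{n,\varepsilon}_\omega\|^{3/2}\|\mathbf{A}\uu^{n,\varepsilon}_\omega\|^{3/2}$, producing after Young a term $C\|\nabla\uu^{n,\varepsilon}_\omega\|^6$; similar analyses of the Korteweg forces and the coupling $\theta\nabla\cdot(\rho^{n,\varepsilon}_\omega\nabla\phi^{n,\varepsilon}_\omega)$ yield cubic (and higher) contributions in the $H^1$ norms of $(\uu^{n,\varepsilon}_\omega,\mu^{n,\varepsilon}_\omega,\psi^{n,\varepsilon}_\omega)$, each controllable by a power of $\Lambda$. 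Once all terms are collected and absorbed into the dissipation, I expect a differential inequality of the form
\[
\Lambda'(t)+\mathcal{G}(t)\le C_{14}(1+\Lambda(t))^{p},
\]
for some $p>1$ (depending on which estimates dominate, but in any case $p$ independent of the approximation parameters).

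Unlike the 2D case, this Bernoulli-type inequality cannot be globalized, but by comparison with the ODE $y'=C_{14}(1+y)^p$ one obtains a blow-up time
\[
T^*:=\frac{(1+\Lambda(0))^{1-p}}{(p-1)C_{14}}>0,
\]
depending only on $\Lambda(0)$ and $C_{14}$, hence only on the norms of the initial data, on the coefficients of the system, and on $\Omega$, but \emph{not} on $n,\omega,\varepsilon,k$. On $[0,T^*]$ one then concludes the uniform bounds
\[
\sup_{t\in[0,T^*]}\bigl(\|\uu^{n,\varepsilon}_\omega(t)\|_{\mathbf{V}_\sigma}^2+\|\mu^{n,\varepsilon}_\omega(t)\|_{H^1(\Omega)}^2+\|\psi^{n,\varepsilon}_\omega(t)\|_{H^1(\Omega)}^2\bigr)+\int_0^{T^*}\|\mathbf{A}\uu^{n,\varepsilon}_\omega\|^2\,\mathrm{d}t\le C,
\]
which is exactly the stated conclusion.

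The main obstacle I expect is the accurate bookkeeping of the coupling terms generated by $\theta\nabla\cdot(\rho^{n,\varepsilon}_\omega\nabla\phi^{n,\varepsilon}_\omega)$ and by the viscosity $\nu(\phi^{n,\varepsilon}_\omega,\rho^{n,\varepsilon}_\omega)$: in 3D one cannot hide $\|\nabla\phi^{n,\varepsilon}_\omega\|_{\mathbf{L}^\infty}$ behind the low-order energy, and the Agmon-type estimates introduce half-powers of $\|\phi^{n,\varepsilon}_\omega\|_{H^3}$ and $\|\rho^{n,\varepsilon}_\omega\|_{H^2}$ that must be absorbed by the fourth-order dissipation $\|\Delta\partial_t\phi^{n,\varepsilon}_\omega\|^2$ (through comparison via \eqref{rhoL4h2} and the elliptic estimate for $\phi^{n,\varepsilon}_\omega$). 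Care is required to ensure that every such term is either dominated by $\mathcal{G}(t)$ or bounded by a polynomial in $\Lambda(t)$ with coefficient independent of $\omega,\varepsilon,k$; this rules out, e.g., embedding-constant dependence on the separation parameter $\widetilde\eta(k)$.
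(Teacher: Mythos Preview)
Your proposal is correct and follows essentially the same route as the paper. The paper carries out Steps~1--5 exactly as you outline, noting that Steps~1 and~2 survive verbatim from the 2D case while Steps~3--4 are reworked with the 3D Ladyzhenskaya/Agmon exponents you indicate; the resulting differential inequality is specifically $\Lambda'+\mathcal{G}\le C(1+\Lambda^2)\Lambda$, i.e.\ your $p=3$, and the local time $T^*$ is extracted from this cubic ODE. Two minor remarks on your anticipated obstacles: first, $\|\nabla\phi^{n,\varepsilon}_\omega\|_{\mathbf{L}^\infty}$ is never actually needed---the paper gets by with $\|\nabla\phi^{n,\varepsilon}_\omega\|_{\mathbf{L}^3}$ or $\mathbf{L}^4$, both controlled by the low-order energy bound $\phi^{n,\varepsilon}_\omega\in L^\infty_t H^2_x$; second, the $\widetilde\eta(k)$-dependence you worry about enters only through the constant in \eqref{kkk}, which multiplies $\|\rho_{0,k}^n-\rho_{0,k}\|_{H^1}$ and is therefore rendered harmless (bounded by $1$, say) once $n>n^{**}(k)$ via \eqref{nerr}, so the bound on $\Lambda(0)$ is indeed uniform in $k$.
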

\begin{proof}
For the sake of brevity, we only report the main differences with respect to the proof of Lemma \ref{prop:ub5}. First, using interpolation inequalities in three dimensions, we can still recover the differential inequalities for $ \psi^{n, \varepsilon}_\omega$ and $ \mu^{n, \varepsilon}_\omega$, namely,
\begin{align}
&\td{}{t}\left( \dfrac{1}{2}\|\nabla \psi^{n, \varepsilon}_\omega\|^2 +  (\uu^{n, \varepsilon}_\omega \cdot \nabla \rho^{n, \varepsilon}_\omega, \psi^{n, \varepsilon}_\omega) \right)  +  \dfrac{3\beta}{4} \| \nabla \partial_t\rho^{n, \varepsilon}_\omega\|^2
- \dfrac{1}{16}\| \nabla \partial_t\phi^{n, \varepsilon}_\omega\|^2 \notag \\
&\quad \leq (\partial_t\uu^{n, \varepsilon}_\omega \cdot \nabla \rho^{n, \varepsilon}_\omega, \psi^{n, \varepsilon}_\omega)
\ \textcolor{black}{+ C\big( \|\partial_t \rho^{n, \varepsilon}_\omega\|_{V_0^*}^2 + \|\uu^{n, \varepsilon}_\omega\|_{\LL^3(\Omega)}^2
\|\nabla \psi^{n, \varepsilon}_\omega\|^2 \big),}
\label{eq:step1final3d}
\end{align}
and
\begin{align}
& \td{}{t}\left( \dfrac{1}{2}\|\nabla  \mu^{n, \varepsilon}_\omega\|^2
+  (\uu^{n, \varepsilon}_\omega \cdot \nabla \phi^{n, \varepsilon}_\omega, \mu^{n, \varepsilon}_\omega) \right)
+ \dfrac{\alpha}{2} \|\Delta \partial_t \phi^{n, \varepsilon}_\omega\|^2
+ \dfrac{9}{16} \| \nabla \partial_t\phi^{n, \varepsilon}_\omega\|^2
- \dfrac{\beta}{4}\| \nabla \partial_t\rho^{n, \varepsilon}_\omega\|^2
\notag  \\
&\quad \leq (\partial_t\uu^{n, \varepsilon}_\omega \cdot \nabla \phi^{n, \varepsilon}_\omega, \mu^{n, \varepsilon}_\omega)
+ \ \textcolor{black}{ C\big( \|\partial_t \phi^{n, \varepsilon}_\omega\|_{V_0^*}^2 + \|\uu^{n, \varepsilon}_\omega\|_{\LL^3(\Omega)}^2 \|\nabla \mu^{n, \varepsilon}_\omega\|^2 \big). }
\label{eq:step2final3d}
\end{align}
Concerning the estimates for $\uu^{n,\varepsilon}_\omega$, a number of modifications are needed. Some computations can be borrowed from \cite{GMT18}. For instance,
\begin{align}
	\big((\uu^n_\omega \cdot \nabla)\uu^{n, \varepsilon}_\omega, \partial_t \uu^{n, \varepsilon}_\omega\big)
                & \leq \|\uu^{n, \varepsilon}_\omega\|_{\LL^6(\Omega)}\|\nabla \uu^{n, \varepsilon}_\omega\|_{\LL^3(\Omega)}\|\partial_t \uu^{n, \varepsilon}_\omega\|\notag \\
				& \leq C\|\nabla \uu^{n, \varepsilon}_\omega\|^\frac{3}{2}\|\mathbf{A}\uu^{n, \varepsilon}_\omega\|^\frac{1}{2}\|\partial_t \uu^{n, \varepsilon}_\omega\| \notag \\
				& \leq \dfrac{1}{10}\|\partial_t \uu^{n, \varepsilon}_\omega\|^2 + C\left( \|\mathbf{A}\uu^{n, \varepsilon}_\omega\|^2 + \|\nabla \uu^{n, \varepsilon}_\omega\|^6 \right),
 \label{eq:step323d}
\end{align}
\begin{align}
&\big(\nabla \cdot(\nu(\phi^{n, \varepsilon}_\omega, \rho^{n, \varepsilon}_\omega) D\uu^{n, \varepsilon}_\omega), \partial_t\uu^{n, \varepsilon}_\omega\big)\notag \\
&\quad = \dfrac{1}{2}\big(\nu(\phi^{n, \varepsilon}_\omega, \rho^{n, \varepsilon}_\omega)\Delta\uu^{n, \varepsilon}_\omega, \partial_t\uu^{n, \varepsilon}_\omega\big)
+ \big(D\uu^{n, \varepsilon}_\omega\nabla\nu(\phi^{n, \varepsilon}_\omega, \rho^{n, \varepsilon}_\omega), \partial_t \uu^{n, \varepsilon}_\omega \big) \notag  \\
				&\quad \leq C\|\mathbf{A} \uu^{n, \varepsilon}_\omega\|\|\partial_t \uu^{n, \varepsilon}_\omega\| + C\big(\|\nabla \phi^{n, \varepsilon}_\omega\|_{\LL^6(\Omega)} + \|\nabla \rho^{n, \varepsilon}_\omega\|_{\LL^6(\Omega)}\big)\|D\uu^{n, \varepsilon}_\omega\|_{\LL^3(\Omega)}\|\partial_t \uu^{n, \varepsilon}_\omega\|
\notag \\
				&\quad  \leq \dfrac{1}{5}\|\partial_t \uu^{n, \varepsilon}_\omega\|^2 + C \|\mathbf{A}\uu^{n, \varepsilon}_\omega\|^2 + C\big(1 + \| \rho^{n, \varepsilon}_\omega\|_{H^2(\Omega)}^2\big)\|\nabla\uu^{n, \varepsilon}_\omega\|\|\mathbf{A}\uu^{n, \varepsilon}_\omega\| \notag \\
				& \quad \leq \dfrac{1}{5}\|\partial_t \uu^{n, \varepsilon}_\omega\|^2 + C\|\mathbf{A}\uu^{n, \varepsilon}_\omega\|^2 + C\big(1+\| \rho^{n, \varepsilon}_\omega\|_{H^2(\Omega)}^4\big)\|\nabla\uu^{n, \varepsilon}_\omega\|^2,
\label{eq:step3223d}
\end{align}
and
\begin{align}
& (\mu^{n,\varepsilon}_\omega\nabla\phi^{n, \varepsilon}_\omega, \partial_t \uu^{n, \varepsilon}_\omega) + (\psi^{n, \varepsilon}_\omega\nabla\rho^{n, \varepsilon}_\omega, \partial_t \uu^{n, \varepsilon}_\omega) \notag\\
&\quad \textcolor{black}{ = \big((\mu^{n,\varepsilon}_\omega-\overline{\mu^{n,\varepsilon}_\omega})\nabla\phi^{n, \varepsilon}_\omega, \partial_t \uu^{n, \varepsilon}_\omega\big)
+\big((\psi^{n, \varepsilon}_\omega - \overline{\psi^{n, \varepsilon}_\omega}) \nabla\rho^{n, \varepsilon}_\omega, \partial_t \uu^{n, \varepsilon}_\omega\big) }\notag\\
&\quad  \leq   \|\mu^{n, \varepsilon}_\omega-\overline{\mu^{n,\varepsilon}_\omega}\|_{L^6(\Omega)}\|\nabla \phi^{n, \varepsilon}_\omega\|_{\LL^3(\Omega)}\|\partial_t \uu^{n, \varepsilon}_\omega\|
+\|\psi^{n, \varepsilon}_\omega- \overline{\psi^{n, \varepsilon}_\omega}\|_{L^6(\Omega)}\|\nabla \rho^{n, \varepsilon}_\omega\|_{\LL^3(\Omega)}\|\partial_t \uu^{n, \varepsilon}_\omega\| \notag\\
&\quad  \leq \dfrac{1}{5}\|\partial_t \uu^{n, \varepsilon}_\omega\|^2
\textcolor{black}{ + C\big(1 +\|\nabla \rho^{n, \varepsilon}_\omega\|_{\LL^3(\Omega)}^2)\big(\|\nabla \mu^{n, \varepsilon}_\omega\|^2+\|\nabla \psi^{n, \varepsilon}_\omega\|^2\big). }
\label{eq:step333d}
\end{align}
On account of estimates \eqref{eq:step323d}--\eqref{eq:step333d}, from \eqref{eq:step31} we infer that
\begin{align} \label{eq:step3final3d}
\|\partial_t \uu^{n, \varepsilon}_\omega\|^2
&\leq C_{14} \|\mathbf{A}\uu^{n, \varepsilon}_\omega\|^2
\textcolor{black}{ + C\|\nabla \uu^{n, \varepsilon}_\omega\|^6 + C \big(1+ \|\rho^{n, \varepsilon}_\omega\|^4_{H^2(\Omega)}\big) \big(\|\nabla \uu^{n, \varepsilon}_\omega\|^2 +\|\nabla \mu^{n, \varepsilon}_\omega\|^2 +\|\nabla \psi^{n, \varepsilon}_\omega\|^2\big). }
\end{align}
Consider now the terms in \eqref{eq:step41}. We have
\begin{align}
\big|\big((\uu^{n, \varepsilon}_\omega \cdot \nabla)\uu^{n, \varepsilon}_\omega, \mathbf{A}\uu^{n, \varepsilon}_\omega\big)\big|
& \leq \|\uu^{n, \varepsilon}_\omega\|_{\LL^6(\Omega)}\|\nabla \uu^{n, \varepsilon}_\omega\|_{\LL^3(\Omega)}\|\mathbf{A}\uu^{n, \varepsilon}_\omega\| \notag \\
				& \leq C\|\nabla \uu^{n, \varepsilon}_\omega\|^\frac{3}{2}\|\mathbf{A}\uu^{n, \varepsilon}_\omega\|^\frac{3}{2} \notag \\
				& \leq \dfrac{\nu_*}{16}\|\mathbf{A}\uu^{n, \varepsilon}_\omega\|^2 + C\|\nabla \uu^{n, \varepsilon}_\omega\|^6,
	\label{eq:step433d}		
\end{align}
\begin{align}
		& \left|\frac12 \big(q^{n, \varepsilon}_\omega  \nabla\nu(\phi^{n, \varepsilon}_\omega, \rho^{n, \varepsilon}_\omega), \mathbf{A}\uu^n_\omega \big)
- \big(D\uu^{n, \varepsilon}_\omega\nabla\nu(\phi^{n, \varepsilon}_\omega, \rho^{n, \varepsilon}_\omega), \mathbf{A}\uu^n_\omega \big)\right| \notag \\
				& \qquad \leq C\big( \|\nabla \phi^{n, \varepsilon}_\omega\|_{\LL^6(\Omega)} + \|\nabla \rho^{n, \varepsilon}_\omega\|_{\LL^6(\Omega)} \big)\big( \|q^n_\omega\|_{L^3(\Omega)} + \|D\uu^{n, \varepsilon}_\omega\|_{\LL^3(\Omega)} \big) \|\mathbf{A}\uu^{n, \varepsilon}_\omega\| \notag \\
				& \qquad  \leq C\big( 1 + \|\rho^{n, \varepsilon}_\omega\|_{H^2(\Omega)}\big)\big(\|q^{n, \varepsilon}_\omega\|^\frac{1}{2}\|q^{n, \varepsilon}_\omega\|_{H^1(\Omega)}^\frac{1}{2} + \|\nabla \uu^{n, \varepsilon}_\omega\|^\frac{1}{2}\|\mathbf{A}\uu^{n, \varepsilon}_\omega\|^\frac{1}{2} \big)\|\mathbf{A}\uu^{n, \varepsilon}_\omega\| \notag \\	
				& \qquad  \leq C\big( 1 + \|\rho^{n, \varepsilon}_\omega\|_{H^2(\Omega)}\big)\big( \|\nabla \uu^{n, \varepsilon}_\omega\|^\frac{1}{4}\|\mathbf{A}\uu^{n, \varepsilon}_\omega\|^\frac{3}{4} + \|\nabla \uu^{n, \varepsilon}_\omega\|^\frac{1}{2}\|\mathbf{A}\uu^{n, \varepsilon}_\omega\|^\frac{1}{2} \big)\|\mathbf{A}\uu^{n, \varepsilon}_\omega\| \notag \\
				& \qquad  \leq \frac{\nu_*}{16}\|\mathbf{A}\uu^{n, \varepsilon}_\omega\|^2  + C\big( 1 + \|\rho^{n, \varepsilon}_\omega\|^8_{H^2(\Omega)}\big)\|\nabla \uu^{n, \varepsilon}_\omega\|^2,
\label{eq:step4223d}
\end{align}
and
\begin{align}
& (\mu^{n, \varepsilon}_\omega\nabla\phi^{n, \varepsilon}_\omega, \mathbf{A}\uu^{n, \varepsilon}_\omega) +(\psi^{n, \varepsilon}_\omega\nabla\rho^{n, \varepsilon}_\omega, \mathbf{A}\uu^{n, \varepsilon}_\omega)  \notag\\
&\quad \textcolor{black}{=  \big((\mu^{n, \varepsilon}_\omega-\overline{\mu^{n, \varepsilon}_\omega}) \nabla\phi^{n, \varepsilon}_\omega, \mathbf{A}\uu^{n, \varepsilon}_\omega\big)
+\big((\psi^{n, \varepsilon}_\omega-\overline{\psi^{n, \varepsilon}_\omega})\nabla\rho^{n, \varepsilon}_\omega, \mathbf{A}\uu^{n, \varepsilon}_\omega\big) } \notag\\
&\quad  \leq   \|\mu^{n, \varepsilon}_\omega-\overline{\mu^{n, \varepsilon}_\omega}\|_{L^6(\Omega)}\|\nabla \phi^{n, \varepsilon}_\omega\|_{\LL^3(\Omega)}\|\mathbf{A}\uu^{n, \varepsilon}_\omega\|
+ \|\psi^{n, \varepsilon}_\omega-\overline{\psi^{n, \varepsilon}_\omega}\|_{L^6(\Omega)}\|\nabla \rho^{n, \varepsilon}_\omega\|_{\LL^3(\Omega)}\|\mathbf{A}\uu^{n, \varepsilon}_\omega\| \notag \\
&\quad  \leq \dfrac{\nu_*}{16}\|\mathbf{A}\uu^{n, \varepsilon}_\omega\|^2
\ \textcolor{black}{+ C\big(1 +\|\nabla \rho^{n, \varepsilon}_\omega\|_{\LL^3(\Omega)}^2\big)\big( \|\nabla \mu^{n, \varepsilon}_\omega\|^2 +\|\nabla \psi^{n, \varepsilon}_\omega\|^2\big). }
 \label{eq:step443d}
\end{align}
Thus from \eqref{eq:step41} we obtain
\begin{align}
&\dfrac{1}{2}\td{}{t}\|\nabla\uu^{n, \varepsilon}_\omega\|^2 +  \dfrac{5\nu_*}{16}\|\mathbf{A}\uu^{n, \varepsilon}_\omega\|^2 \notag\\
&\quad \leq
\textcolor{black}{ C\|\nabla \uu^n_\omega\|^6 + C\big(1+ \|\rho^{n, \varepsilon}_\omega\|^8_{H^2(\Omega)}\big)(\|\nabla \uu^{n, \varepsilon}_\omega\|^2
+ \|\nabla \mu^n_\omega\|^2 + \|\nabla \psi^{n, \varepsilon}_\omega\|^2\big). }
\label{eq:step4final3d}
\end{align}
The controls on the leftover terms in \eqref{eq:step1final3d} and \eqref{eq:step2final3d} are similar to their two-dimensional counterpart, namely,
\begin{align}
	& (\partial_t\uu^{n, \varepsilon}_\omega \cdot \nabla \rho^{n, \varepsilon}_\omega, \psi^{n, \varepsilon}_\omega) +(\partial_t\uu^{n, \varepsilon}_\omega \cdot \nabla \phi^{n, \varepsilon}_\omega, \mu^{n, \varepsilon}_\omega) \notag\\
&\quad  \leq \|\partial_t \uu^{n, \varepsilon}_\omega\|\|\nabla \rho^{n, \varepsilon}_\omega\|_{\LL^3(\Omega)}\|\psi^{n, \varepsilon}_\omega-\overline{\psi^{n, \varepsilon}_\omega}\|_{L^6(\Omega)}
+ \|\partial_t \uu^{n, \varepsilon}_\omega\|\|\nabla \phi^{n, \varepsilon}_\omega\|_{\LL^3(\Omega)}\|\mu^{n, \varepsilon}_\omega-\overline{\mu^{n, \varepsilon}_\omega}\|_{L^6(\Omega)} \notag \\
	& \quad \leq \dfrac{\nu_*}{32C_{14}}\|\partial_t \uu^{n, \varepsilon}_\omega\|^2
+\ \textcolor{black}{ C\big(1 +\|\nabla \rho^{n, \varepsilon}_\omega\|_{\LL^3(\Omega)}^2\big)\big(  \|\nabla \mu^{n, \varepsilon}_\omega\|^2 +\|\nabla \psi^{n, \varepsilon}_\omega\|^2\big).}
\label{eq:step513d}
\end{align}

Defining the quantities $\Lambda$ and $\mathcal{G}$ as in Lemma \ref{prop:ub5}, we still have that $\Lambda$ is bounded from above and from below. Indeed, we have
		\[
		\begin{split}
		&	|(\uu^{n, \varepsilon}_\omega \cdot \nabla \rho^{n, \varepsilon}_\omega, \psi^{n, \varepsilon}_\omega) +(\uu^{n, \varepsilon}_\omega \cdot \nabla \phi^{n, \varepsilon}_\omega, \mu^{n, \varepsilon}_\omega)|\\
       & \quad \leq \|\uu^{n, \varepsilon}_\omega\|_{\LL^3(\Omega)}\|\nabla \rho^{n, \varepsilon}_\omega\| \|\psi^{n, \varepsilon}_\omega-\overline{\psi^{n, \varepsilon}_\omega}\|_{L^6(\Omega)} + \|\uu^{n, \varepsilon}_\omega\|_{\LL^3(\Omega)}\|\nabla \phi^{n, \varepsilon}_\omega\| \|\mu^{n, \varepsilon}_\omega-\overline{\mu^{n, \varepsilon}_\omega}\|_{L^6(\Omega)}\\
	   &\quad  \leq C\|\uu^{n, \varepsilon}_\omega\|^\frac12\|\uu^{n, \varepsilon}_\omega\|_{\mathbf{V}_\sigma}^\frac{1}{2}\|\psi^{n, \varepsilon}_\omega\|_{H^1(\Omega)} + C\|\uu^{n, \varepsilon}_\omega\|^\frac12\|\uu^{n, \varepsilon}_\omega\|_{\mathbf{V}_\sigma}^\frac{1}{2} \|\mu^{n, \varepsilon}_\omega\|_{H^1(\Omega)}\\
	   &\quad  \leq \dfrac{1}{4}\| \nabla \uu^{n, \varepsilon}_\omega\|^2 + \dfrac{1}{4}\| \nabla \mu^{n, \varepsilon}_\omega\|^2 + \dfrac{1}{4}\| \nabla \psi^{n, \varepsilon}_\omega\|^2 + C_{11}'\|\uu^{n, \varepsilon}_\omega\|^2,
		\end{split}
		\]
		where $C_{11}'>0$ only depend on $\Omega$.
		Then, recalling the estimates \eqref{eq:timederivatives}, \eqref{rhoL4h2}, we can derive the following differential inequality
		\begin{equation*}
			\td{\Lambda}{t} + \mathcal{G} \leq C(1 + \Lambda^2)\Lambda.
		\end{equation*}
		The quantity $\Lambda(0)$ can be controlled in a similar manner for the two dimensional case. Therefore, we conclude that there exists some $T^*\in(0,T]$ depending on the initial conditions (but not on $n, k, \varepsilon$) such that
\begin{equation*}
 \Lambda(t) + \ii{0}{t}{\mathcal{G}(\tau)}{\tau}\leq C, \quad \forall\,t\in (0,T^*],
\end{equation*}
where $C$ depends on $T^*$ and the initial data only. This easily yields the conclusion of this lemma.
\end{proof}

\subsection{Existence of strong solutions}\label{sec:exestr}
We are now in a position to prove the existence of strong solutions. Let us consider the two dimensional case first. From Lemma \ref{prop:ub5} and mimicking the proof of Lemma \ref{prop:ub4}, we can deduce that
\begin{lemma} \label{prop:ub6}
Let $d=2$.
The sequence $\{\phi^{n,\varepsilon}_\omega\}$ is uniformly bounded in $L^\infty(0,T;H^4(\Omega))$ and the sequence $\{\rho^{n,\varepsilon}_\omega\}$ is uniformly bounded in $L^\infty(0,T;H^2(\Omega))$.
	\end{lemma}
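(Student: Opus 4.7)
The plan is to re-run the elliptic estimates derived in the proof of Lemma \ref{prop:ub4} but now feed in the strengthened $L^\infty$-in-time control of the chemical potentials furnished by Lemma \ref{prop:ub5}. No new computation is needed beyond what appears there; only the time integrability of the right-hand sides is upgraded.

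First I would treat $\rho^{n,\varepsilon}_\omega$. The proof of Lemma \ref{prop:ub4} yields, after testing the equation for $\psi^{n,\varepsilon}_\omega$ against $-\Delta \rho^{n,\varepsilon}_\omega$ and invoking the one-sided bound on $S''_{\rho,\varepsilon}$ from \eqref{eq:convexityapprox} together with the $L^\infty_tH^2_x$ control of $\phi^{n,\varepsilon}_\omega$, the pointwise-in-time inequality
\[
\beta\|\Delta \rho^{n,\varepsilon}_\omega(t)\|^2 \leq C\big(\|\nabla \psi^{n,\varepsilon}_\omega(t)\|+1\big), \quad \text{a.e.}\ t\in(0,T).
\]
In Lemma \ref{prop:ub4} this was combined with $\psi^{n,\varepsilon}_\omega \in L^2(0,T;H^1)$ to give only $\rho^{n,\varepsilon}_\omega \in L^4(0,T;H^2)$. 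Now Lemma \ref{prop:ub5} supplies $\psi^{n,\varepsilon}_\omega \in L^\infty(0,T;H^1(\Omega))$ uniformly in the approximation parameters, so the right-hand side is uniformly bounded in $L^\infty(0,T)$. Together with the uniform $L^\infty(0,T;H^1)$ bound from Lemma \ref{prop:ub1} and the elliptic estimate of Lemma \ref{th:ellreg}, this yields the desired $L^\infty(0,T;H^2(\Omega))$ bound on $\rho^{n,\varepsilon}_\omega$.

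Next, I would treat $\phi^{n,\varepsilon}_\omega$ by recalling the inequality derived in Lemma \ref{prop:ub4} after testing the equation for $\mu^{n,\varepsilon}_\omega$ against $\Delta^2 \phi^{n,\varepsilon}_\omega$, namely
\[
\alpha\|\Delta^2 \phi^{n,\varepsilon}_\omega(t)\|^2 + \|\nabla\Delta\phi^{n,\varepsilon}_\omega(t)\|^2 \leq C\big(1 + \|\nabla \mu^{n,\varepsilon}_\omega(t)\|^2 + \|\rho^{n,\varepsilon}_\omega(t)\|_{H^2(\Omega)}^2\big),\quad \text{a.e.}\ t\in(0,T).
\]
In $d=2$ the nonlinearities involving $\nabla\phi^{n,\varepsilon}_\omega$ and the cubic perturbation $\omega\nabla\cdot(|\nabla\phi^{n,\varepsilon}_\omega|^2\nabla\phi^{n,\varepsilon}_\omega)$ are absorbed into the constant $C$ through the embeddings $H^2(\Omega)\hookrightarrow L^\infty(\Omega)$ and $H^3(\Omega)\hookrightarrow W^{1,\infty}(\Omega)$ together with the already-known $L^\infty_tH^2_x$ bound on $\phi^{n,\varepsilon}_\omega$, exactly as in the previous lemma. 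By Lemma \ref{prop:ub5} the term $\|\nabla\mu^{n,\varepsilon}_\omega\|$ is uniformly bounded in $L^\infty(0,T)$, and by the first step the same holds for $\|\rho^{n,\varepsilon}_\omega\|_{H^2(\Omega)}$. Applying once more the elliptic regularity of Lemma \ref{th:ellreg} for the biharmonic operator with boundary conditions $\partial_\mathbf{n}\phi^{n,\varepsilon}_\omega=\partial_\mathbf{n}\Delta\phi^{n,\varepsilon}_\omega=0$ then delivers the uniform $L^\infty(0,T;H^4(\Omega))$ bound for $\phi^{n,\varepsilon}_\omega$.

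Because the argument only stitches together inequalities already derived in Section \ref{sub:unifestimates}, I do not foresee any substantial obstacle. The only point that deserves verification is that the constants appearing in the two displayed inequalities depend neither on the Galerkin index $n$ nor on the regularization parameters $\omega,\varepsilon,k$; this is transparent from their derivation, since every nonlinear term was controlled via Young's inequality using the uniform bounds of Lemma \ref{prop:enebound4} and Lemma \ref{prop:ub5}.
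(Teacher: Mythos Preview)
Your proposal is correct and follows exactly the approach the paper takes: the paper's proof is literally the single clause ``From Lemma \ref{prop:ub5} and mimicking the proof of Lemma \ref{prop:ub4},'' and you have spelled out precisely that mimicry by feeding the $L^\infty_t$ control of $\nabla\mu^{n,\varepsilon}_\omega$ and $\nabla\psi^{n,\varepsilon}_\omega$ into the pointwise inequalities \eqref{rhoL4h2} and the $\Delta^2\phi^{n,\varepsilon}_\omega$ estimate. One small caveat: the bounds supplied by Lemma \ref{prop:ub5} may depend on $k$ (as the paper notes at the end of that proof), so the constants here inherit this $k$-dependence; this is harmless for the subsequent limit passage but your final paragraph slightly overstates uniformity.
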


Summing up, Lemmas \ref{prop:ub5} and \ref{prop:ub6} say that \textcolor{black}{for every fixed $k > k^*$,}
\begin{align*}
		\uu^{n,\varepsilon}_\omega & \text{ is uniformly bounded in } L^\infty(0,T;\mathbf{V}_\sigma) \cap L^2(0,T;\mathbf{W}_\sigma) \cap H^1(0,T;\mathbf{H}_\sigma), \\
		\phi^{n,\varepsilon}_\omega & \text{ is uniformly bounded in } L^\infty(0,T;H^4(\Omega)) \cap H^1(0,T;H^2(\Omega)), \\
		\rho^{n,\varepsilon}_\omega & \text{ is uniformly bounded in } L^\infty(0,T;H^2(\Omega)) \cap H^1(0,T;H^1(\Omega)), \\
		\mu^{n,\varepsilon}_\omega & \text{ is uniformly bounded in } L^\infty(0,T;H^1(\Omega)),  \\
		\psi^{n,\varepsilon}_\omega & \text{ is uniformly bounded in } L^\infty(0,T;H^1(\Omega)),		
	\end{align*}
	with respect to the parameters $\omega, n, \varepsilon$, provided that $ \omega \in (0, 1]$,  $\varepsilon \in (0, \epsilon_3(k))$, and $n > n^{**}(k)$.

A standard compactness argument allows us to pass to the limit first as $n \to +\infty$,  then we let $\varepsilon \to 0^+$ and, finally, $k\to+\infty$ (see \cite{GMT18}). \textcolor{black}{We note that in the last limit procedure associated with $k$, \eqref{kkk} is no longer necessary so that the resulting estimates are indeed independent of $k$.} In this way, for any $\omega \in (0,1]$, we find a quintuplet $(\uu_{\omega}$, $\phi_{\omega}$, $\rho_{\omega}$, $\mu_{\omega}$, $\psi_{\omega})$ which preserves the regularity properties of the approximating sequence and solves the penalized problem \eqref{eq:strongNSCHpert}--\eqref{eq:bcspert}.

In order to take the limit $\omega \to 0^+$, we once again rely on the fact that $\rho_\omega \in L^\infty(\Omega \times (0,T))$, and, in particular, $0 \leq \rho_\omega \leq 1$ almost everywhere in $\Omega \times (0,T)$, just like in the proof for weak solutions. Then, we obtain the same energy estimates as in Lemma \ref{prop:enebound3} that are independent of $\omega$. In this way, repeating the previous argument, we are able to get further estimates that are independent of $\omega$. Hence, the usual compactness argument lets us get a quintuplet $(\uu, \phi, \rho, \mu, \psi)$ as $\omega \to 0^+$, which is a weak solution to the original problem  \eqref{eq:strongNSCH}--\eqref{eq:bcs} according to Definition \ref{def:solution} and has the following additional properties
\begin{align*}
        \uu & \in L^\infty(0,T;\mathbf{V}_\sigma) \cap L^2(0,T;\mathbf{W}_\sigma) \cap H^1(0,T;\mathbf{H}_\sigma), \\
		\phi & \in L^\infty(0,T;H^4_N(\Omega)) \cap H^1(0,T;H^2(\Omega)), \\
		\rho & \in L^\infty(0,T;H^2_N(\Omega)) \cap H^1(0,T;H^1(\Omega)), \\
		\mu & \in L^\infty(0,T;H^1(\Omega)),  \\
		\psi & \in L^\infty(0,T;H^1(\Omega)).		
\end{align*}
Then it is straightforward to check that
$\textcolor{black}{ \uu \cdot \nabla \phi \in L^2(0,T;H^2(\Omega))}$, $ \uu \cdot \nabla \rho \in L^2(0,T;H^1(\Omega))$. Also, by standard elliptic regularity estimates, we  deduce that $\textcolor{black}{\mu \in L^2(0,T;H^4(\Omega)\cap H^2_N(\Omega))}$ and $\psi\in L^2(0,T;H^3(\Omega)\cap H^2_N(\Omega))$. Therefore, we see that the solution $(\uu, \phi, \rho, \mu,\psi)$ is indeed a strong one, which satisfies the equations almost everywhere. In addition, using the same argument as in the proof of Theorem \ref{th:weaksolution}, we can deduce that  $\rho \in L^\infty(0,T; W^{2,p}(\Omega))$ for every $p \geq 2$ if $d = 2$ and $\phi \in L^\infty(0,T;H^5(\Omega))$. The pressure $\pi \in L^\infty(0,T;H^1(\Omega))$ can be recovered, up to a constant, through the De Rham theorem (see, for instance, \cite{BFNS, RTNS}).

Existence of strong solutions in the three dimensional case can be proved by arguing as in the two dimensional case. The only difference is that the higher-order estimates in Lemma \ref{prop:ub53d} are only local in time so that the strong solution is local in time as well, as expected. Besides, we can show $\rho \in L^\infty(0,T^*; W^{2,p}(\Omega))$ only for $p \in [2,6]$.

The existence part of Theorem \ref{th:wellposedlocal} is now proved.
\hfill$\square$

\subsection{Uniqueness of strong solutions}
On account of Theorem \ref{th:weaksolution}, we only need to consider the three dimensional case. Below we present some easy modifications of the argument in Subsection \ref{sec:proof2}, taking full advantage of the higher-order regularity properties of the strong solution. To this end, suppose that $(\uu_{0},\phi_{0}, \rho_{0})$ is an admissible set of initial data in the statement of Theorem \ref{th:wellposedlocal}. Then let $(\uu_1,\phi_1, \rho_1)$ and $(\uu_2,\phi_2, \rho_2)$ be two local strong solutions to problem \eqref{eq:strongNSCH}--\eqref{eq:bcs} defined in some time interval $[0,T^*]$ and both originating from $(\uu_{0},\phi_{0}, \rho_{0})$. Denoting by
$\mu_i$, $\psi_i$, $\pi_i$, $i = 1,2$, the corresponding chemical potentials and pressures, we set the differences by
\begin{align*}
      	&\uu = \uu_1 - \uu_2,  \quad \pi = \pi_1 - \pi_2, \quad \phi = \phi_1 - \phi_2,\\
		&\rho = \rho_1 - \rho_2,  \quad \mu = \mu_1 - \mu_2, \quad \psi= \psi_1 - \psi_2.
	\end{align*}
Using the Gagliardo--Nirenberg inequality in three dimensions (see \eqref{eq:gagnir}), we modify the estimate for $\mathcal{I}_3$ (cf. \eqref{eq:stab13}) as follows
\begin{align}
	\mathcal{I}_3
				& \leq C\|\Delta \phi\|_{L^3(\Omega)}\big( \|\Delta\phi\| +  \|\rho\|_{L^6(\Omega)} + \|\nabla \phi\|_{\LL^6(\Omega)}  + \|\nabla \rho\| \big) \notag \\
				& \leq C\|\phi\|^\frac{1}{6}\|\nabla \Delta \phi\|^\frac{5}{6} \left( \|\Delta\phi\| + \|\nabla \rho\| \right) \notag \\
				& \leq \dfrac{1}{18}\|\Delta \phi\|^2 + \dfrac{\alpha}{6}\|\nabla \Delta \phi\|^2 + \dfrac{\beta}{12}\| \nabla \rho\|^2  + C\| \phi\|^2.
\label{eq:stab133d}
\end{align}	
		The final result still reads as \eqref{eq:stab1final}, that is,
\begin{equation} \label{eq:stab1final-3d}
\dfrac{1}{2}\td{}{t}\| \phi\|^2 + \dfrac{5\alpha}{6}\| \nabla \Delta \phi\|^2+ \frac{5}{6}\|\Delta \phi\|^2
\leq \dfrac{\nu_*}{20}\|\uu\|^2 + \dfrac{\beta}{12}\| \nabla \rho\|^2
+ C\|\phi\|^2.		
\end{equation}
In a similar manner, we can derive (cf. \eqref{eq:stab2final})
\begin{equation} \label{eq:stab2final-3d}
			\dfrac{1}{2}\td{}{t}\| \rho \|^2_{V_0^*}
+ \frac{3\beta}{4}\|\nabla \rho\|^2 \leq \dfrac{\nu_*}{20}\|\uu\|^2 + \dfrac{1}{18}\| \nabla \phi\|^2
+   C\big(\|\phi\|^2+ \|\rho\|_{V_0^*}^2\big) .
\end{equation}
The estimate for $\mathcal{I}_7$ is revised as follows (cf. \eqref{eq:stab32})
		\begin{equation*}
			\begin{split}
				\mathcal{I}_7 & \leq \big( \|\uu_1\|_{\LL^6(\Omega)} + \|\uu_2\|_{\LL^6(\Omega)} \big) \|\uu\|\|\nabla \mathbf{A}^{-1}\uu\|_{\LL^3(\Omega)} \\
				& \leq C\|\uu\|_{\mathbf{V}_\sigma^*}^\frac{1}{2}\|\uu\|^\frac{3}{2} \\
				& \leq \dfrac{\nu_*}{20}\|\uu\|^2 + C\|\uu\|_{\mathbf{V}_\sigma^*}^2,
			\end{split}
		\end{equation*}
while for $\mathcal{I}_{13}$, we write (cf. \eqref{eq:stab38})
		\begin{equation*}
			\begin{split}
				\mathcal{I}_{13} & \leq C\|D\uu_2\|_{\LL^3(\Omega)}\|\nabla \mathbf{A}^{-1}\uu\|\big(\|\rho\|_{L^6(\Omega)} + \|\phi\|_{L^6(\Omega)} \big) \\
				& \leq \dfrac{\beta}{6}\|\nabla \rho\|^2 + \dfrac{1}{18}\|\Delta \phi\|^2 + C\|D\uu_2\|^2_{\LL^3(\Omega)}\|\uu\|^2_{\mathbf{V}_\sigma^*}.
			\end{split}
		\end{equation*}
Finally, for $\mathcal{I}_{15}$, we have (cf. \eqref{eq:stab310})
		\begin{equation*}
			\begin{split}
				\mathcal{I}_{15}
				& \leq C\big( \|\nabla \phi_1\|_{\LL^6(\Omega)} + \|\nabla \rho_1\|_{\LL^6(\Omega)}\big)\|\uu\|\|q\|_{L^3(\Omega)} \\
				& \leq C\|\uu\|\|q\|^\frac{1}{2}\|q\|_{H^1(\Omega)}^\frac{1}{2}\\
				& \leq C\|\uu\|^\frac{7}{4}\|\nabla \mathbf{A}^{-1}\uu\|^\frac{1}{4}\\
				& \leq \dfrac{\nu_*}{20}\|\uu\|^2 + C\|\uu\|_{\mathbf{V}_\sigma^*}^2.
			\end{split}
		\end{equation*}
Using the estimates for strong solution in higher norms on $[0,T^*]$, we arrive at (cf. \eqref{eq:stab3final})
		\begin{align}
		& \dfrac{1}{2}\td{}{t}\|\uu\|^2_{\mathbf{V}_\sigma^*} + \dfrac{7\nu_*}{20} \|\uu\|^2
- \dfrac{\alpha}{3}\|\nabla \Delta \phi\|^2
- \dfrac{5}{18}\|\Delta \phi\|^2
-\dfrac{\beta}{3}\|\nabla \rho\|^2
\notag \\
&\quad \leq \big( 1 +\|D\uu_2\|_{\LL^3(\Omega)}^2\big) \|\uu\|^2_{\mathbf{V}_\sigma^*} + C\big( \|\phi\|^2 + \|\rho\|_{V_0^*}^2\big).
\label{eq:stab3final-3d}
		\end{align}
Collecting \eqref{eq:stab1final-3d}, \eqref{eq:stab2final-3d}, and \eqref{eq:stab3final-3d}, we get the differential inequality
		\[
		\td{\widetilde{\mathcal{Y}}}{t} \leq \widetilde{\mathcal{H}} \widetilde{\mathcal{Y}}, \quad \text{for a.a.}\ t\in (0,T^*),
		\]
where
\begin{align*}
        \widetilde{\mathcal{Y}}(t) & := \|\uu(t)\|^2_{\mathbf{V}_\sigma^*} + \|\phi(t)\|^2 + \|\rho(t)\|^2_{V_0^*},\\
		\widetilde{\mathcal{H}}(t) &:= C\big( 1 +\|D\uu_2(t)\|_{\LL^3(\Omega)}^2 \big).
\end{align*}
Since $\widetilde{\mathcal{Y}}(0)=0$ and $\widetilde{\mathcal{H}} \in L^1(0,T^*)$, an application of Gronwall's lemma easily implies that $\widetilde{\mathcal{Y}}(t)\equiv 0$ for all $t\in [0,T^*]$. That is, the local strong solution to problem \eqref{eq:strongNSCH}--\eqref{eq:bcs} in three dimensions is unique.

The proof of Theorem \ref{th:wellposedlocal} is complete.
\hfill $\square$

\section{Further Results in Two Dimensions}
\label{sec:proof4}

Throughout this section we assume $d=2$. \textcolor{black}{An important consequence of the additional assumption \textbf{(H5)} is the  strict separation property for the strong solution component $\rho$ which is given by}
\begin{proposition}\label{prop:sepstr}
\textcolor{black}{Let $d=2$. In addition to the assumptions in Theorem \ref{th:wellposedlocal}, assume that \textbf{(H5)} is satisfied. Then for the global strong solution $\rho$, there exists $\eta \in (0,1/2]$ such that}
		\begin{equation}
		\eta \leq \rho(x,t) \leq 1-\eta,\quad \text{for all } x\in\Omega, \ t\geq 0.
        \label{regg2s}
		\end{equation}
		
\end{proposition}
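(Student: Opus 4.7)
The plan is to combine the uniform separation on $[\delta,+\infty)$ provided by Theorem \ref{th:regularity3} (applied to the strong solution viewed as a global weak solution) with a direct elliptic argument showing that the initial datum $\rho_0$ is already strictly separated from $0$ and $1$, and then to glue these via the time-continuity of $\rho$ up to $t=0$. More precisely, every global strong solution is in particular a global weak solution satisfying the hypotheses of Theorem \ref{th:regularity3}, so for any fixed $\delta>0$ there exists $\eta_\delta\in(0,1/2]$ with $\eta_\delta\le\rho(x,t)\le 1-\eta_\delta$ for all $x\in\Omega$ and $t\ge\delta$. It therefore suffices to control $\rho$ uniformly on $[0,\delta]$.

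The heart of the argument is a Moser--Trudinger/De Giorgi bootstrap applied time-slice by time-slice to the semilinear elliptic relation
\[
-\beta\Delta\rho(t)+\widehat{S}_\rho'(\rho(t))=g(t),\qquad g(t):=\psi(t)+\tfrac{\theta}{2}|\nabla\phi(t)|^2-R_\rho'(\rho(t)),
\]
endowed with homogeneous Neumann boundary condition. From Theorem \ref{th:wellposedlocal} we have $\psi\in L^\infty(0,T;H^1(\Omega))$ and $\phi\in L^\infty(0,T;H^5(\Omega))$; since $d=2$, Trudinger's inequality yields $\|\psi(t)\|_{L^p}\le C\sqrt{p}\,\|\psi(t)\|_{H^1}$, while $|\nabla\phi|^2\in L^\infty(\Omega\times(0,T))$ and $R_\rho'$ is bounded on $[0,1]$. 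Hence $\|g(t)\|_{L^p}\le K\sqrt{p}$ with $K$ uniform in $t\in[0,T]$. Testing the elliptic equation with $|\widehat{S}_\rho'(\rho(t))|^{2k-2}\widehat{S}_\rho'(\rho(t))$ (justified by the Lipschitz truncations $h_n\circ\widehat{S}_\rho'$ used in Subsection \ref{subs:initialdata}) and exploiting that, by \textbf{(H3)}, $\widehat{S}_\rho''\ge0$ on the monotone endpoint regions while the nonconvex middle part is absorbed into $R_\rho$, integration by parts yields
\[
\|\widehat{S}_\rho'(\rho(t))\|_{L^{2k}}^{2k}\le \int_\Omega |g(t)|\,|\widehat{S}_\rho'(\rho(t))|^{2k-1}\,\mathrm{d}x\le \|g(t)\|_{L^{2k}}\|\widehat{S}_\rho'(\rho(t))\|_{L^{2k}}^{2k-1},
\]
so that $\|\widehat{S}_\rho'(\rho(t))\|_{L^p}\le K\sqrt{p}$ uniformly in $t\in[0,T]$. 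A standard series expansion converts this polynomial growth into the exponential bound $\int_\Omega e^{\kappa|\widehat{S}_\rho'(\rho(t))|^2}\,\mathrm{d}x\le M$ for suitable $\kappa,M>0$ independent of $t$. Assumption \textbf{(H5)} then implies
\[
\widehat{S}_\rho''(\rho(t))\le C\,e^{C|\widehat{S}_\rho'(\rho(t))|}\le C\,e^{C^2/(2\kappa)}\,e^{(\kappa/2)|\widehat{S}_\rho'(\rho(t))|^2},
\]
so $\widehat{S}_\rho''(\rho(t))$ is bounded in every $L^q(\Omega)$ uniformly in $t$.

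To upgrade to an $L^\infty$ bound we use the identity $\nabla\widehat{S}_\rho'(\rho)=\widehat{S}_\rho''(\rho)\nabla\rho$: since the strong solution enjoys $\rho\in L^\infty(0,T;W^{2,p}(\Omega))$ for every finite $p$, Sobolev embedding in dimension two gives $\nabla\rho\in L^\infty(\Omega\times(0,T))$, and combined with $\widehat{S}_\rho''(\rho)\in L^q$ for large $q$ we obtain $\widehat{S}_\rho'(\rho)\in L^\infty(0,T;W^{1,q}(\Omega))\hookrightarrow L^\infty(\Omega\times(0,T))$. Since $\widehat{S}_\rho'(s)\to\pm\infty$ as $s\to0^+$ or $s\to1^-$, this uniform bound forces the existence of some $\eta_0\in(0,1/2]$ such that $\eta_0\le\rho(x,t)\le 1-\eta_0$ for all $x\in\Omega$ and $t\in[0,\delta]$. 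Taking $\eta:=\min\{\eta_0,\eta_\delta\}$ yields \eqref{regg2s}. The main technical obstacle will be rigorously justifying the test with the singular power $|\widehat{S}_\rho'(\rho)|^{2k-2}\widehat{S}_\rho'(\rho)$ and handling the nonconvex contribution coming from the middle part of $\widehat{S}_\rho''$; this is overcome by the truncation procedure of Subsection \ref{subs:initialdata} together with the fact that outside a neighborhood of $\{0,1\}$ the nonconvex terms are uniformly bounded and can be absorbed into $\|g(t)\|_{L^{2k}}$.
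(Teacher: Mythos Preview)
Your elliptic argument is essentially the content of \cite[Lemma 3.2]{HeWu}, which the paper simply cites as a black box; so at the level of the actual analysis you are reproducing the paper's proof in more detail. The Trudinger/Moser bootstrap giving $\|\widehat{S}_\rho'(\rho(t))\|_{L^p}\le K\sqrt{p}$, the use of \textbf{(H5)} to control $\widehat{S}_\rho''(\rho)$ in $L^q$, and the chain rule step $\nabla\widehat{S}_\rho'(\rho)=\widehat{S}_\rho''(\rho)\nabla\rho$ leading to $\widehat{S}_\rho'(\rho)\in L^\infty(0,T;W^{1,q})\hookrightarrow L^\infty$ are all correct.

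There is, however, a structural circularity in your plan. You invoke Theorem \ref{th:regularity3} to obtain separation on $[\delta,+\infty)$, but the paper's proof of Theorem \ref{th:regularity3} itself appeals to Proposition \ref{prop:sepstr} (see the sentence ``Moreover, thanks to Proposition \ref{prop:sepstr}, we have \eqref{regg2sw}''). So you cannot use Theorem \ref{th:regularity3} here. The fix is immediate: the paper notes in the proof of Proposition \ref{prop:sepstr} that for $d=2$ the strong-solution bounds $\phi\in L^\infty(0,+\infty;H^5(\Omega))$ and $\psi\in L^\infty(0,+\infty;H^1(\Omega))$ hold \emph{uniformly in time}, so the right-hand side $g(t)$ is in $L^\infty(0,+\infty;H^1(\Omega))$ with a time-independent bound. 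Your entire elliptic argument then works directly on $[0,+\infty)$ with constants independent of $t$, and the splitting into $[0,\delta]$ and $[\delta,+\infty)$ is unnecessary. Drop the reference to Theorem \ref{th:regularity3} and run your bootstrap once on $[0,+\infty)$; then the continuity $\rho\in \mathcal{C}([0,+\infty);\mathcal{C}(\overline{\Omega}))$ (which the paper also records) upgrades the a.e.\ separation to pointwise separation.
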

\begin{proof}
In Theorem \ref{th:wellposedlocal}, we have shown that in two dimensions, $\phi\in L^\infty(0,+\infty; H^5(\Omega))$, $\psi\in L^\infty(0,+\infty; H^1(\Omega))$, whose corresponding norms are uniformly bounded in time. Besides, we have $\rho\in \mathcal{C}([0,+\infty); H^{1-r}(\Omega))$, $r\in (0,1/2)$, which implies $\rho\in \mathcal{C}([0,+\infty); \mathcal{C}(\overline{\Omega}))$ thanks to the Sobolev embedding theorem in two dimensions. Observe that
		\[
		\|\Delta|\nabla \phi|^2\|  \leq C \|\phi\|_{W^{3,4}(\Omega)}^2,
		\]
where the constant $C$ only depends on $\Omega$. Thus, $\Delta|\nabla \phi|^2 \in L^\infty(0,+\infty;L^2(\Omega))$ is uniformly bounded. Thus, it follows that the source term on the right-hand side in \eqref{eq:nnlp1} belongs to $L^\infty(0, +\infty; H^1(\Omega))$. This fact and the assumption \textbf{(H5)} enable us to apply \cite[Lemma 3.2]{HeWu} and conclude that
$$
\|\widehat{S}^\prime_\rho(\rho)\|_{L^\infty(0,+\infty; W^{1,p}(\Omega))}\leq C,\quad \forall\, p\in[2,+\infty),
$$
where $C>0$ is independent of $t$ (cf. \cite{GGM,GMT18}, where the argument therein requires the source term on the right-hand side in \eqref{eq:nnlp1} to be in $L^\infty(0, +\infty; H^2(\Omega))$). Then the above estimate implies $\eta\leq \|\rho\|_{L^\infty(0,+\infty; L^\infty(\Omega))}\leq 1-\eta$ for some $\eta\in (0,1/2]$, which combined with the continuity of $\rho$ yield the strict separation property \eqref{regg2s}. We note that under the additional assumption \textbf{(H5)}, the initial datum $\rho_0$ is already strictly separated from the pure states $0$ and $1$.
\end{proof}

\subsection{Continuous dependence estimate for strong solutions}
\label{subs:contdep}
	\textbf{Proof of Theorem \ref{th:contdep}}.
Let us consider two sets of admissible initial data $(\phi_{01}, \rho_{01}, \uu_{01})$, $(\phi_{02}, \rho_{02},\uu_{02})$ in $H^5(\Omega) \times H^2(\Omega) \times \mathbf{V}_\sigma$ (namely, complying with the hypotheses of Theorem \ref{th:wellposedlocal}). Let $(\phi_1, \rho_1, \uu_1)$ and $(\phi_2, \rho_2, \uu_2)$ be the corresponding strong solutions to problem \eqref{eq:strongNSCH}--\eqref{eq:bcs}, with chemical potentials $\mu_i$ and $\psi_i$ for $i = 1,2$, accordingly. Recalling \eqref{initial}, we consider again the system \eqref{eq:stab0} equipped with
\begin{equation*}
		\begin{cases}
\mathbf{u} = \mathbf{0} & \quad \text{a.e. on } \partial \Omega \times (0,T), \\
			\partial_\mathbf{n}\phi = \partial_\mathbf{n}\Delta \phi =\partial_\mathbf{n}\mu=0 & \quad \text{a.e. on } \partial \Omega \times (0,T), \\
				\partial_\mathbf{n}\rho = \partial_\mathbf{n}\psi = 0 & \quad \text{a.e. on } \partial \Omega \times (0,T), \\
		\mathbf{u}|_{t=0} = \mathbf{u}_0,\ \	\phi|_{t=0} = \phi_0,\ \ \rho|_{t=0} = \rho_0 & \quad \text{a.e. in } \Omega,
		\end{cases}
	\end{equation*}
where
$$
\quad \uu_0 = \uu_{01}-\uu_{02},\quad \phi_0 = \phi_{01} - \phi_{02}, \quad \rho_0 = \rho_{01} - \rho_{02}.
$$	

Taking advantage of higher regularity of strong solutions (recall Theorem \ref{th:wellposedlocal}), we proceed to estimate their difference in stronger norms (cf. the argument in Subsection \ref{sec:proof2}). Besides, in the present case we have to take care of the fact that the initial data are no longer null.

		\textbf{Step 1}. Testing the evolution equation for $\phi$ in \eqref{eq:stab0} by \textcolor{black}{$\Delta^2 \phi$,} we have
		\[
		\textcolor{black}{
        ( \partial_t\phi,  \Delta^2 \phi) + ( \uu_1 \cdot \nabla \phi, \Delta^2 \phi)
        +  ( \uu \cdot \nabla \phi_2, \Delta^2 \phi) = (\Delta \mu, \Delta^2 \phi).
        }
		\]
		Using the expression of $\mu$ and integration by parts, we get
\begin{equation} \label{eq:hstab1}
			\dfrac{1}{2}\td{}{t}\| \Delta \phi\|^2 + \alpha\|\nabla \Delta^2 \phi\|^2 + \|\Delta^2 \phi\|^2 = \mathcal{J}_1 + \mathcal{J}_2 + \mathcal{J}_3 + \mathcal{J}_4,
\end{equation}
where we have set
\begin{align*}
			\mathcal{J}_1 & := -( \uu_1 \cdot \nabla \phi, \Delta^2 \phi) -  ( \uu \cdot \nabla \phi_2, \Delta^2 \phi) \\
			\mathcal{J}_2 & := -\big( \nabla (S'_\phi(\phi_1) - S'_\phi(\phi_2)), \nabla \Delta^2\phi\big) \\
			\mathcal{J}_3 & := - \theta\big(\nabla (\nabla \cdot(\rho_1 \nabla \phi)), \nabla \Delta^2 \phi\big) \\
            \mathcal{J}_4 & := - \theta\big(\nabla (\nabla \cdot(\rho \nabla \phi_2)), \nabla \Delta^2\phi\big).
\end{align*}
By the Gagliardo--Nirenberg inequality \eqref{eq:gagnir} and elliptic estimates, we deduce that
\begin{align}
\mathcal{J}_1 & \leq \textcolor{black}{	\|\uu_1\|_{\LL^6(\Omega)}\|\nabla \phi\|_{\mathbf{L}^3(\Omega)}\|\Delta^2\phi\| + \|\nabla\phi_2\|_{\LL^\infty(\Omega)}\|\uu\|\|\Delta^2 \phi\| } \notag \\
				& \leq C\|\nabla \phi\|_{\mathbf{H}^1(\Omega)}\|\Delta\phi\|^\frac{1}{3}\|\nabla \Delta^2 \phi\|^\frac{2}{3} + C\|\uu\|\|\Delta\phi\|^\frac{1}{3}\|\nabla \Delta^2 \phi\|^\frac{2}{3} \notag \\
				& \leq \dfrac{\alpha}{6}\|\nabla \Delta^2\phi\|^2 + C\big( \|\uu\|^2 + \| \Delta \phi\|^2 \big).
\label{eq:hstab11}
\end{align}
In the above estimates, we have used the mass conservation property such that $$\overline{\phi}(t)=\overline{\phi_0},\quad \forall\,t\geq 0,$$
and the elliptic estimate given by Lemma \ref{th:ellreg}. Next, thanks to the regularity of $S_\phi$, we get
\begin{align}
\mathcal{J}_2 & \leq \textcolor{black}{ \left|\left(\nabla (S'_\phi(\phi_1) - S'_\phi(\phi_2)), \nabla \Delta^2 \phi \right)\right| } \notag  \\
				& \leq \left|\left(\int_0^1S''_\phi(\tau\phi_1+(1-\tau)\phi_2)\nabla \phi\,\mathrm{d}\tau,\, \nabla \Delta^2\phi\right)\right|\notag \\
&\quad +\left|\left(\int_0^1 S'''_\phi(\tau\phi_1+(1-\tau)\phi_2)\nabla (\tau\phi_1+(1-\tau)\phi_2) \phi\,\mathrm{d}\tau,\, \nabla \Delta^2\phi\right)\right|  \notag \\
				& \leq C \|\nabla \phi\|\| \nabla \Delta^2 \phi\|
                +C \|\phi\|\|\nabla \Delta^2\phi\|\notag \\
				& \leq \dfrac{\alpha}{6}\|\nabla \Delta^2 \phi\|^2 + C\|\phi\|_{H^1(\Omega)}^2 \notag \\
&\leq \dfrac{\alpha}{6}\|\nabla \Delta^2 \phi\|^2 + C(\|\Delta \phi\|^2+|\phi|^2),
\label{eq:hstab12}
\end{align}
whereas for $\mathcal{J}_3$, applications of H\"{o}lder's inequality and Young's inequality entail
\begin{align}
   \mathcal{J}_3
   &\leq \textcolor{black}{ C\|\rho_1\nabla \phi\|_{H^2(\Omega)}\|\nabla\Delta^2\phi\| } \notag\\
   & \leq C\|\rho_1\|_{L^\infty(\Omega)} \|\nabla \phi\|_{\mathbf{H}^2(\Omega)}\|\nabla \Delta^2\phi\| +  C\|\rho_1\|_{H^2(\Omega)}\|\nabla \phi\|_{\mathbf{L}^\infty(\Omega)}\|\nabla \Delta^2\phi\|  \notag \\
   & \leq C\|\nabla \phi\|_{\mathbf{H}^2(\Omega)} \|\nabla \Delta^2\phi\| \notag \\
   & \leq C \|\Delta \phi\|^\frac{2}{3} \|\nabla \Delta^2\phi\|^\frac{4}{3}   \notag \\
   & \leq \dfrac{\alpha}{12}\|\nabla \Delta^2 \phi\|^2  + C \|\Delta\phi\|^2.
\label{eq:hstab13}
\end{align}
 In a similar manner, using that
 mass conservation property such that $$\overline{\rho}(t)=\overline{\rho_0},\quad \forall\,t\geq 0,$$
 we have
\begin{align}
   \mathcal{J}_4
   & \leq \textcolor{black}{C\|\rho\nabla \phi_2\|\|\nabla \Delta^2\phi\| } \notag\\
   & \leq C\|\rho\|_{L^\infty(\Omega)} \|\nabla \phi_2\|_{H^2(\Omega)}\|\nabla \Delta^2\phi\|
      +  C \|\rho\|_{H^2(\Omega)} \|\nabla \phi_2\|_{\mathbf{L}^\infty(\Omega)}\|\nabla \Delta^2\phi\| \notag \\
  & \leq C (\|\Delta \rho\|+|\overline{\rho}|) \|\nabla \Delta^2\phi\|  \notag \\
				& \leq \dfrac{\alpha}{12}\|\nabla \Delta^2 \phi\|^2 + \dfrac{\beta}{12}\| \nabla \Delta \rho\|^2 + C\big( \|\nabla \rho \|^2 +|\overline{\rho}|^2 \big).
\label{eq:hstab13b}
\end{align}
		On account of estimates \eqref{eq:hstab11}--\eqref{eq:hstab13}, from \eqref{eq:hstab1} and the Poincar\'{e}--Wirtinger inequality we deduce that
\begin{equation} \label{eq:hstab1final}
			\dfrac{1}{2}\td{}{t}\| \Delta \phi\|^2 + \dfrac{\alpha}{2}\|\nabla \Delta^2 \phi\|^2 + \|\Delta^2 \phi\|^2 -  \dfrac{\beta}{12}\| \nabla \Delta \rho\|^2
\leq C\big( \|\uu\|^2+ \|\Delta  \phi\|^2 + \|\nabla \rho \|^2+ |\overline{\phi}|^2+ |\overline{\rho}|^2   \big).
\end{equation}

		\textbf{Step 2}. Taking $\textcolor{black}{-\Delta \rho}$ as a test function in \eqref{eq:stab0} yields
		\[
		\textcolor{black}{
        ( \partial_t\rho, -\Delta \rho ) - (\mathbf{u}_1\cdot \nabla \rho, \Delta \rho) - (\mathbf{u}\cdot \nabla \rho_2, \Delta \rho)= -(\Delta \psi, \Delta \rho).}
		\]
		Using the expression of $\psi$ and integration by parts, we get
		\begin{equation} \label{eq:hstab2}
			\dfrac{1}{2}\td{}{t}\| \nabla \rho \|^2 + \beta\| \nabla \Delta\rho\|^2 = \mathcal{J}_5 + \mathcal{J}_6 + \mathcal{J}_7,
		\end{equation}
		where
		\begin{align*}
			\mathcal{J}_5 & := (\mathbf{u}_1\cdot \nabla \rho, \Delta \rho) +  (\mathbf{u}\cdot \nabla \rho_2, \Delta \rho),\\
			\mathcal{J}_6 & := \big(\nabla (S'_\rho(\rho_1)-S'_\rho(\rho_2)), \nabla \Delta \rho\big), \\
			\mathcal{J}_7 & := -\dfrac{\theta}{2}(\nabla (\nabla (\phi_1+ \phi_2)\cdot\nabla \phi), \nabla \Delta \rho).
		\end{align*}
For $\mathcal{J}_5$, we have
\begin{align}
		\mathcal{J}_5 & \textcolor{black}{ \leq \|\mathbf{u}_1\|_{\mathbf{L}^4(\Omega)}\|\nabla \rho\|_{\mathbf{L}^4(\Omega)}\|\Delta \rho\|+ \|\nabla \rho_2\|_{\mathbf{L}^4(\Omega)}\|\uu\|\|\Delta \rho\|_{L^4(\Omega)} } \notag \\
        &\leq C\|\nabla \rho\|^\frac54\|\nabla \Delta\rho\|^\frac34+ C\|\mathbf{u}\|\|\nabla \rho\|^\frac14\|\nabla \Delta\rho\|^\frac34\notag \\
				& \leq \frac{\beta}{12}\|\nabla \Delta\rho\|^2+ C\big( \|\uu\|^2 + \|\nabla \rho\|^2 \big),
        \label{eq:hstab21}
\end{align}
Next, \textcolor{black}{using the strict separation property \eqref{regg2s} for $\rho_1$, $\rho_2$,} we can deduce that
 \begin{align}
		\mathcal{J}_6 & = \textcolor{black}{ \left(\nabla  \ii{0}{1}{S''_\rho(s\rho_1 +(1-s)\rho_2)\rho}{s}, \nabla \Delta \rho \right) } \notag  \\
        &\leq \left|\left(  \ii{0}{1}{S''_\rho(s\rho_1 +(1-s)\rho_2)}{s}\nabla\rho, \nabla \Delta \rho \right)\right|\notag\\
        &\quad + \left|\left(  \ii{0}{1}{S'''_\rho(s\rho_1 +(1-s)\rho_2)\nabla (s\rho_1 +(1-s)\rho_2)}{s}\rho, \nabla \Delta \rho \right)\right| \notag\\
		& \leq C\|\rho\|_{H^1(\Omega)}\|\nabla \Delta \rho\| \notag \\
		& \leq \dfrac{\beta}{12}\|\nabla \Delta \rho\|^2 + C(\|\nabla \rho\|^2+|\overline{\rho}|^2).
			\label{eq:hstab22}
\end{align}
Finally, for $\mathcal{J}_7$, it holds
\begin{align}
\mathcal{J}_7
& \leq \textcolor{black}{  C\big(\| \nabla \phi_1\|_{\LL^\infty(\Omega)} + \| \nabla \phi_2\|_{\LL^\infty(\Omega)}\big)\| \nabla \phi \|_{H^1(\Omega)} \|\nabla \Delta \rho\| } \notag\\
&\quad  +
C\big(\| \phi_1\|_{W^{2,\infty}(\Omega)} + \| \phi_2\|_{W^{2,\infty}(\Omega)}\big)\| \nabla \phi \|\|\nabla \Delta \rho\|
 \notag \\
& \leq \dfrac{\beta}{12}\|\nabla \Delta \rho\|^2 + C(\|\Delta \phi \|^2+|\overline{\phi}|^2).
\label{eq:hstab23a}
\end{align}
Thus, estimates \eqref{eq:hstab21}--\eqref{eq:hstab23a} combined with \eqref{eq:hstab2} give
\begin{equation} \label{eq:hstab2final}
\textcolor{black}{ \dfrac{1}{2}\td{}{t}\|\nabla \rho \|^2 + \dfrac{3\beta}{4} \| \nabla \Delta\rho\|^2 \leq C\big( \|\uu\|^2 + \|\Delta \phi\|^2 + \|\nabla \rho\|^2 + |\overline{\phi}|^2+ |\overline{\rho}|^2 \big).  }
\end{equation}

\textbf{Step 3}. We get rid of the chemical potentials exactly as in Step 3 of the uniqueness proof of Theorem \ref{th:weaksolution}. This procedure yields
		\begin{equation} \label{eq:hstab3}
			\dfrac{1}{2}\td{}{t}\|\uu\|^2 + (\nu(\phi_1,\rho_1)D\uu, D\uu)= \sum_{k = 8}^{14} \mathcal{J}_{k},
		\end{equation}
		where we have set
		\begin{align*}
			\mathcal{J}_8 & := -((\uu\cdot \nabla)\uu_2, \uu)  \\
			\mathcal{J}_9 & := (\nabla \phi_1 \otimes \nabla \phi, \nabla \uu) + (\nabla \phi \otimes \nabla \phi_2, \nabla \uu), \\
			\mathcal{J}_{10} & := \beta(\nabla \rho_1 \otimes \nabla \rho, \nabla \uu) + \beta(\nabla \rho \otimes \nabla \rho_2, \nabla \uu), \\
			\mathcal{J}_{11} & := -\theta(\rho_1\nabla \phi_1 \otimes \nabla \phi, \nabla \uu) - \theta(\rho_1\nabla \phi \otimes \nabla \phi_2, \nabla \uu) - \theta(\rho\nabla \phi_2 \otimes \nabla \phi_2, \nabla \uu), \\
			\mathcal{J}_{12} & := \alpha(\nabla\Delta\phi_1 \otimes \nabla \phi, \nabla \uu) + \alpha(\nabla \Delta \phi \otimes \nabla \phi_2, \nabla \uu),\\
			\mathcal{J}_{13} & := -\alpha((\nabla\Delta\phi_1 \cdot \nabla)\nabla\phi,  \uu) - \alpha((\nabla\Delta\phi \cdot \nabla)\nabla\phi_2,  \uu),\\
			\mathcal{J}_{14} & := - ((\nu(\phi_1, \rho_1) - \nu(\phi_2, \rho_2))D\uu_2, \nabla \uu).
		\end{align*}
		We now provide controls for each of the terms defined above.
		Using the H\"{o}lder inequality and the embedding $\mathbf{V}_\sigma  \hookrightarrow \LL^6(\Omega)$
\begin{equation} \label{eq:hstab32}
			\mathcal{J}_8 \leq \|\uu\|\|\nabla \uu_2\|_{\LL^3(\Omega)}\|\uu\|_{\LL^6(\Omega)}
				\leq \dfrac{\nu_*}{24}\|\nabla \uu\|^2 + C\|\nabla \uu_2\|_{\LL^3(\Omega)}^2\|\uu\|^2,
\end{equation}
		whereas,
		\begin{align}
				\mathcal{J}_9 &\leq \left( \| \nabla \phi_1 \|_{\LL^\infty(\Omega)} + \| \nabla \phi_2 \|_{\LL^\infty(\Omega)} \right) \| \nabla \phi \| \|\nabla \uu\| \notag\\
				& \leq \dfrac{\nu_*}{24}\|\nabla \uu\|^2 + C\|\nabla \phi\|^2\notag\\
                & \leq \dfrac{\nu_*}{24}\|\nabla \uu\|^2 + C\big(\|\Delta \phi\|^2+|\overline{\phi}|^2\big).
                \label{eq:hstab33}
		\end{align}
		The term $\mathcal{J}_{10}$ can be treated in a similar way,
		\begin{align}
			\mathcal{J}_{10} & \leq \textcolor{black}{\left( \| \nabla \rho_1 \|_{\LL^4(\Omega)} + \| \nabla \rho_2 \|_{\LL^4(\Omega)} \right) \| \nabla \rho \|_{\LL^4(\Omega)} \|\nabla \uu\| }\notag \\
				& \leq \dfrac{\nu_*}{24}\|\nabla \uu\|^2 + C\|\nabla \rho\|\|\Delta\rho\| \notag \\
				& \leq \dfrac{\nu_*}{24}\|\nabla \uu\|^2 + \dfrac{\beta}{12}\|\nabla \Delta \rho\|^2 + C\|\nabla \rho\|^2.
			\label{eq:hstab34}
		\end{align}
Also for $\mathcal{J}_{11}$, it holds
\begin{align}
		\mathcal{J}_{11} & \leq \dfrac{\nu_*}{24}\|\nabla \uu\|^2 + C\|\rho_1\|_{L^\infty(\Omega)}^2\big( \| \nabla \phi_1 \|_{\LL^\infty(\Omega)}^2 + \| \nabla \phi_2 \|_{\LL^\infty(\Omega)}^2 \big)\|\nabla \phi\|^2 + C\| \rho\| ^2\| \nabla \phi_2 \|_{\LL^\infty(\Omega)}^4 \notag \\
		& \leq \dfrac{\nu_*}{24}\|\nabla \uu\|^2 + C\big(  \|\nabla \phi\|^2 +\|\rho\|^2 \big)\notag\\
&\leq  \dfrac{\nu_*}{24}\|\nabla \uu\|^2 + C\big(  \|\Delta \phi\|^2 +\|\nabla \rho\|^2 + |\overline{\rho}|^2\big).
	\label{eq:hstab35}
\end{align}
As far as $\mathcal{J}_{12}$ and $\mathcal{J}_{13}$ are concerned, we have
\begin{align}
		\mathcal{J}_{12} & \leq  \textcolor{black}{ C\|\nabla \Delta \phi_1\|_{\LL^\infty}\|\nabla \phi\|\| \nabla \uu\| + C\|\nabla \Delta \phi\|\|\nabla \phi_2\|_{\LL^\infty(\Omega)}\|\nabla \uu\| } \notag \\
&\leq C\|\Delta \phi\|\| \nabla \uu\| + C\|\Delta\phi\|^\frac12\|\Delta^2\phi\|^\frac{1}{2}\|\nabla \uu\|\notag\\
				& \leq \dfrac{\nu_*}{24}\|\nabla \uu\|^2 + \dfrac{1}{4}\|\Delta^2 \phi\|^2 + C\|\Delta \phi\|^2,
		\label{eq:hstab36}
\end{align}
\begin{align}
\mathcal{J}_{13}
& \leq \textcolor{black}{ C\|\nabla \Delta \phi_1\|_{\LL^\infty}\|\nabla \phi\|_{\mathbf{H}^1(\Omega)}\|\uu\| + C\|\nabla \Delta \phi\|\|\nabla\phi_2\|_{\mathbf{W}^{1,\infty}(\Omega)}\|\uu\| } \notag  \\
&\leq C\|\Delta \phi\|\|  \uu\| + C\|\Delta\phi\|^\frac12\|\Delta^2\phi\|^\frac{1}{2}\| \uu\|  \notag\\
	& \leq \dfrac{1}{4}\| \Delta^2 \phi\|^2 + C(\|\uu\|^2+\|\Delta \phi\|^2).
		\label{eq:hstab37}
\end{align}
Finally, for $\mathcal{J}_{14}$, we have
\begin{align}
	\mathcal{J}_{14} & \leq \textcolor{black}{ C\left( \|\phi\|_{L^\infty(\Omega)}\|D\uu_2\|\|\nabla \uu\| + \|\rho\|_{L^\infty(\Omega)}\|D\uu_2\|\|\nabla \uu\|\right) } \notag \\
				& \leq \dfrac{\nu_*}{24}\| \nabla \uu\|^2 + C\big( \|\phi\|_{H^2(\Omega)}^2 + \|\rho\|_{H^2(\Omega)}^2 \big) \notag \\
				& \leq \dfrac{\nu_*}{24}\| \nabla \uu\|^2 + C\big( \|\Delta \phi\|^2 + \|\Delta \rho\|^2 + |\overline{\phi}|^2 +|\overline{\rho}|^2 \big) \notag \\	
				& \leq \dfrac{\nu_*}{24}\| \nabla \uu\|^2 + \dfrac{\beta}{12}\|\nabla \Delta \rho\|^2 + 	C\big( \|\Delta \phi\|^2 + \|\nabla \rho\|^2 + |\overline{\phi}|^2 +|\overline{\rho}|^2 \big).		
			\label{eq:hstab38}
\end{align}
Thus, collecting the results \eqref{eq:hstab32}--\eqref{eq:hstab38}, from \eqref{eq:hstab3} we infer that
\begin{align}
& \dfrac{1}{2}\td{}{t}\|\uu\|^2 + \dfrac{\nu_*}{4} \|\nabla \uu\|^2
- \dfrac{1}{2}\|\Delta^2 \phi\|^2
-\dfrac{\beta}{6}\|\nabla \Delta \rho\|^2
\notag \\
&\quad \leq  C\big(1+\|\nabla \uu_2\|_{\LL^3(\Omega)}^2\big)\big(\|\uu\|^2+ \|\Delta \phi\|^2 + \|\nabla \rho\|^2\big) + C\big(|\overline{\phi}|^2 +|\overline{\rho}|^2  \big).
\label{eq:hstab3final}
\end{align}

		\textbf{Step 4.} Set
		\begin{align*}
			\mathcal{Z}(t) & := \|\uu(t)\|^2 + \|\Delta \phi(t)\|^2 +\|\nabla \rho(t)\|^2 ,\\
			\mathcal{W}(t) & := \dfrac{\nu_*}{2}\|\nabla \uu(t)\|^2+ \alpha\|\nabla \Delta^2 \phi(t)\|^2 + \|\Delta^2 \phi\|^2 + \beta\|\nabla \Delta \rho(t)\|^2.
		\end{align*}
		Collecting \textcolor{black}{ \eqref{eq:hstab1final}, \eqref{eq:hstab2final}, and \eqref{eq:hstab3final},} we arrive at
\begin{equation*} \label{eq:stab41}
			\td{\mathcal{Z}}{t} + \mathcal{W} \leq C\big(1+\|\nabla \uu_2\|_{\LL^3(\Omega)}^2\big)\mathcal{Z} + C\big(|\overline{\phi_0}|^2+|\overline{\rho_0}|^2\big),
\end{equation*}
where we have used again the mass conservation property for $\phi$ and $\rho$. Noticing that $\|\nabla \uu_2\|_{\LL^3(\Omega)}^2\in L^1(0,T)$, therefore, an application of Gronwall's lemma implies that
\begin{equation} \label{eq:stabfinal}
			 \|\uu(t)\|^2+ \|\phi(t)\|_{H^2(\Omega)}^2 + \|\rho(t)\|^2_{H^1(\Omega)} +  \ii{0}{t}{\mathcal{W}(\tau)}{\tau}
\leq C_T\left(  \|\uu_0\|^2+\|\Delta \phi_0\|^2 + \|\nabla \rho_0\|^2 + |\overline{\phi_0}|^2+|\overline{\rho_0}|^2\right),
\end{equation}
for any $t\in [0,T]$, where $C_T>0$ is a constant depending on $\|\uu_{0i}\|_{\mathbf{V}_\sigma}$, $\|\phi_{0i}\|_{H^5(\Omega)}$, $\|\rho_{0i}\|_{H^2(\Omega)}$, $\|\psi_{0i}\|_{H^1(\Omega)}$, $i=1,2$, coefficients of the system, $\Omega$ and $T$.

The proof of Theorem \ref{th:contdep} is complete. \hfill $\square$

\subsection{Instantaneous regularization of weak solutions}
\label{sec:regularity}

We now establish the instantaneous regularization property of global weak solutions in dimension two. In particular, we show that every weak solution becomes a strong one as long as $t>0$.

\vspace{0.3\baselineskip}

\textbf{Proof of Theorem \ref{th:regularity3}.}
		Let $\delta > 0$ be an arbitrary but fixed constant. Consider the global weak solution $(\uu, \phi, \rho, \mu, \psi)$ defined on $[0,+\infty)$ (see Remark \ref{globalsol}). Owing to the regularity properties provided in Theorem \ref{th:weaksolution}, as well as the dissipative nature of the system (see \eqref{weak-IEN}), we can infer the existence of some Lebesgue point $\xi \in (\delta/2, \delta)$ such that, taking \textcolor{black}{$(\uu(\xi), \phi(\xi), \rho(\xi))$} as initial data, the hypotheses of Theorem \ref{th:wellposedlocal} are satisfied. Thus, we can infer the existence of a strong solution on the time interval $[\xi, T]$, for any given $T>\xi$.
Then the uniqueness result implies that the strong and weak solutions coincide on $[\xi,T]$ so that the global weak solution constructed above is indeed a strong one on $[\xi,+\infty)$.

Next, we prove some uniform in time estimates for $t\geq \delta$. The proof relies on   computations that are parallel to those performed in the proof of Lemma \ref{prop:ub5}. Hence, we just sketch the main steps.

Following the argument in the proof of Lemma \ref{prop:ub5}, we set
		\[
		\begin{split}
			\widetilde{\Lambda}(t)
&:=  \dfrac{1}{2}\|\nabla\uu(t)\|^2 +   \dfrac{1}{2}\|\nabla \mu(t)\|^2 + \dfrac{1}{2}\|\nabla \psi(t)\|^2 + (\uu(t) \cdot \nabla \rho(t), \psi(t)) +(\uu(t) \cdot \nabla \phi(t), \mu(t)) \textcolor{black}{ +\widetilde{C}_{11}\|\uu(t)\|^2, } \\
			\widetilde{\mathcal{G}}(t) & :=  \frac{\nu_*}{32\widetilde{C}_{10}} \|\partial_t\uu(t)\|^2+ \dfrac{\nu_*}{4}\|\mathbf{A}\uu(t)\|^2+\dfrac{\alpha}{2}\|\Delta \partial_t \phi(t)\|^2  + \dfrac{1}{2}\|\nabla \partial_t \phi(t)\|^2 +\dfrac{\beta}{2}\|\nabla \partial_t \rho(t)\|^2 ,
		\end{split}
		\]
for some $\widetilde{C}_{10}>0$ depending on the initial energy, $\Omega$ and coefficients of the system, $\widetilde{C}_{11}>0$ depending only on $\Omega$.
Then it is straightforward to check that an inequality similar to \eqref{equiL} holds for $\widetilde{\Lambda}$ with $C_{11}$ replaced by $\widetilde{C}_{11}$.  Moreover, we obtain
\begin{equation} \label{eq:diffineq}
			\td{\widetilde{\Lambda}}{t} + \widetilde{\mathcal{G}} \leq \widetilde{C}_{12}\big(1 + \widetilde{\Lambda}\big)\widetilde{\Lambda},
\end{equation}
for a.a. $t\in (\xi,+\infty)$. Since $\delta>0$ can be arbitrary small, \eqref{eq:diffineq} holds for a.a. $t\in (0,+\infty)$. Then, recalling Remark \ref{Rm-ill}, from the energy inequality \eqref{weak-IEN} we deduce that
\[
\ii{t}{t+1}{  \|\sqrt{\nu(\phi(\tau),\rho(\tau))}D \mathbf{u}(\tau)\|^2+ \| \nabla \mu(\tau)\|^2 + \| \nabla \psi(\tau) \|^2}{\tau} \leq C,
\]
for every $t \geq 0$, where the constant $C$ is independent of $t$. This fact together with an analogue of \eqref{equiL} for $\widetilde{\Lambda}$ yields that
\begin{equation*}
\ii{t}{t+1}{\widetilde{\Lambda}(\tau)}{\tau} \leq C,
\end{equation*}
for every $t \geq 0$, where the constant $C$ is again independent of $t$. Hence, we can apply the uniform Gronwall lemma (see e.g., \cite[Chapter III, Lemma 1.1]{RTDDS}), choosing $r = \delta$ therein, to obtain the estimate $\widetilde{\Lambda}(t)\leq C$, for all $t\geq \delta$, where $C>0$ depends on $K$, $m_1$, $m_2$, $\delta$, coefficients of the system $\Omega$, but not on $t$. Hence, we get
\begin{align}
			 \|\uu(t)\|_{\mathbf{V}_\sigma}
			+  \|\mu(t)\|_{H^1(\Omega)}
			+  \|\psi(t)\|_{H^1(\Omega)} \leq C,\quad \forall\,t\geq \delta.
			\label{reg1}
\end{align}
Integrating \eqref{eq:diffineq} over $[t,t+1]$ yields, by definition of $\widetilde{\mathcal{G}}$, the following
\begin{align}
			\|\uu\|_{L^2(t,t+1;\mathbf{W}_\sigma)} + \|\partial_t \uu\|_{L^2(t,t+1;\mathbf{H}_\sigma)} + \|\partial_t \phi\|_{L^2(t,t+1;H^2(\Omega))} + \|\partial_t \rho\|_{L^2(t,t+1;H^1(\Omega))} \leq C,
			\label{reg2}
\end{align}
for all $t\geq \delta$. The estimate \eqref{reg1} further implies that
\begin{align}
			 \|\phi(t)\|_{H^5(\Omega)}
			+  \|\rho(t)\|_{W^{2,p}(\Omega)} \leq C,\quad \forall\,t\geq \delta,
			\label{reg1a}
\end{align}
for any  $p\in [2,+\infty)$. Moreover, thanks to Proposition \ref{prop:sepstr}, we have
\begin{equation}
		\eta \leq \rho(x,t) \leq 1-\eta,\quad \text{for all } x\in\Omega, \ t\geq \delta,
        \label{regg2sw}
		\end{equation}
for some $\eta\in (0,1/2]$, depends on $K$, $m_1$, $m_2$, $\delta$, coefficients of the system $\Omega$, but not on $t$. Hence, the estimates \eqref{regg1} and \eqref{regg2} are proved.

Higher-order estimate can be derived by using the method of difference quotients (cf. \cite{GMT18}). Given any function $f: [a,+\infty) \to X$, with $X$ being a Banach space, we define
	\[
	\partial_t^h f := \dfrac{f(t+h)-f(t)}{h}
	\]
	for any $h > 0$ and $t \in [a,+\infty)$. Arguing as in Subsection \ref{subs:contdep}, we can derive a system for the difference quotients. Performing the same computations as in the proof of Theorem \ref{th:contdep} (that is, for differences of two solutions) and setting
\begin{align*}
			\widehat{\mathcal{Z}}(t) & :=  \|\dq\uu(t)\|^2
+\|\Delta \dq{\phi}(t)\|^2 +\|\nabla \dq\rho(t)\|^2,\\
			\widehat{\mathcal{W}}(t) & :=  \dfrac{\nu_*}{2}\|\nabla \dq\uu(t)\|^2 + \alpha\|\nabla \Delta^2 \dq \phi(t)\|^2 + \|\Delta^2 \dq \phi(t)\|^2 + \beta\|\nabla \Delta \dq\rho(t)\|^2,
\end{align*}
we deduce that
\begin{equation*} \label{eq:diffineq2}
			\td{\widehat{\mathcal{Z}}}{t} + \widehat{\mathcal{W}} \leq C\big( 1 + \|\nabla \uu(t+h)\|^2_{\LL^3(\Omega)}\big)\widehat{\mathcal{Z}},
\end{equation*}
where we recall that by conservation of mass $\overline{\dq{\phi}} =\overline{\dq{\rho}}= 0$ for every $t \geq 0$ and $h > 0$. Owing to \eqref{reg1}, \eqref{reg2} and the standard estimate $\|\dq{f}\|_{L^2(t,t+1;L^2(\Omega))} \leq \|\partial_t f\|_{L^2(t,t+2;L^2(\Omega))}$, it holds that
		\[
		\ii{t}{t+1}{\Big(\widehat{\mathcal{Z}}(\tau) + \textcolor{black}{\|\uu(\tau)\|^2_{\mathbf{W}^{1,3}(\Omega)}} \Big)}{\tau} \leq C,
		\]
		for every $t \geq \delta$. Moreover, the constant $C$ is independent of $h$. A further application of the uniform Gronwall lemma with $r = \delta$ and then letting $h \to 0^+$ yields the following estimates
\begin{equation} \label{reg3}
			\|\partial_t \uu(t)\| + \|\partial_t\phi(t)\|_{H^2(\Omega)} + \|\partial_t\rho(t)\|_{H^1(\Omega)} \leq C,\quad \forall\, t\geq 2\delta,
\end{equation}
and
\[
\|\partial_t \uu\|_{L^2(t,t+1;\mathbf{V}_\sigma)} + \|\partial_t \phi\|_{L^2(t,t+1;H^5(\Omega))} +  \|\partial_t \rho\|_{L^2(t,t+1;H^3(\Omega))} \leq C,\quad \forall\,t \geq 2\delta.
\]
Namely, the estimates \eqref{regg11} and \eqref{regg12} are obtained.

Consider the Navier--Stokes system written as follows
		\begin{equation} \label{eq:dnlp2}
			\begin{cases}
				-\mathrm{div}(\nu(\phi, \rho)D \uu) + \nabla\pi = \mu\nabla \phi + \psi\nabla \rho - \partial_t\uu - (\uu \cdot \nabla)\uu, & \quad \text{ in } \Omega\times(2\delta,+\infty),\\
				\nabla \cdot \uu = 0 & \quad \text{ in } \Omega\times (2\delta,+\infty),\\
				\uu = \mathbf{0} &  \quad \text{ in } \partial \Omega \times (2\delta, +\infty).
			\end{cases}
		\end{equation}
Using the estimates \eqref{reg1}, \eqref{reg1a} and \eqref{reg3} we can apply the regularity theory for the Stokes problem \cite[Theorem 4]{Ab09} with a bootstrap argument like in \cite[Theorem 4.3]{GMT18}, to show that the source term in \eqref{eq:dnlp2} belongs to $L^\infty(2\delta,+\infty; \mathbf{L}^2(\Omega))$ and thus
\begin{equation}
\textcolor{black}{\uu\in L^\infty(2\delta,+\infty;\mathbf{W}_\sigma)}\label{reg55}
\end{equation}
is uniformly bounded. It also holds that $\pi \in L^\infty(2\delta,+\infty;H^1(\Omega))$. As a consequence, from \eqref{reg1a}, \eqref{reg3} and \eqref{reg55}, we further infer that
\begin{equation*}
\partial_t \phi + \uu \cdot \nabla \phi\in L^\infty(2\delta,+\infty;\textcolor{black}{H^2(\Omega)}),\quad
\partial_t \rho + \uu \cdot \nabla \rho \in L^\infty(2\delta, +\infty; \textcolor{black}{H^1(\Omega)}),
\end{equation*}
which yield
$$
\Delta \mu\in L^\infty(2\delta,+\infty;H^2(\Omega)),\quad \Delta \psi \in L^\infty(2\delta, +\infty;H^1(\Omega)).
$$
Using the elliptic estimate like in Subsection \ref{sec:exestr}, we find that
$$
\mu\in L^\infty(2\delta,+\infty;H^4(\Omega)),\quad \psi\in L^\infty(2\delta,+\infty;H^2(\Omega))
$$
are uniformly bounded.

Thanks to \eqref{regg2sw}, $\widehat{S}^\prime_\rho$ can be considered as globally Lipschitz since $\rho$ only takes its values in a compact subinterval of $[0,1]$, everywhere in $\Omega \times [2\delta,+\infty)$. From \textbf{(H3)$'$}, we conclude that actually $\widehat{S}^\prime_\rho(\rho) \in L^\infty(2\delta,+\infty;H^2(\Omega))$ and thus elliptic regularity theory applied to \eqref{eq:nnlp1} implies that $\rho \in L^\infty(2\delta,+\infty;H^4(\Omega))$. Then, in a similar fashion, we observe that $\phi$ is solution to the Poisson problem
		\begin{equation} \label{eq:phiprob1}
			\begin{cases}
				-\Delta \phi = f & \quad \text{ in } \Omega, \\
				\partial_{\mathbf{n}}\phi = 0 & \quad \text{ on } \partial\Omega, \\
			\end{cases}
		\end{equation}
satisfying $\overline{\phi} = \overline{\phi_0}$, where $f$ solves the linear boundary value problem
		\begin{equation} \label{eq:phiprob2}
			\begin{cases}
				-\alpha \Delta f + f = \mu - S'_\phi(\phi) - \nabla \cdot(\rho\nabla\phi) & \quad \text{ in } \Omega, \\
				\partial_{\mathbf{n}}f= 0 & \quad \text{ on } \partial\Omega.
			\end{cases}
		\end{equation}
		Notice that, by the Lax--Milgram theorem, \eqref{eq:phiprob2} has a unique weak solution in $H^1(\Omega)$, which coincides with $-\Delta \phi$ almost everywhere in $\Omega\times [2\delta,+\infty)$. Thanks to \textbf{(H2)$'$} and \eqref{reg1a}, we easily infer that $S'_\phi(\phi)\in $ $L^\infty(2\delta, +\infty;H^2(\Omega))$. For the coupling term, it follows that
\begin{align*}
\|\nabla \cdot(\rho\nabla\phi)\|_{H^2(\Omega)}
&\leq C\big(\|\rho\|_{L^\infty(\Omega)}\|\Delta \phi\|_{H^2(\Omega)}+\|\rho\|_{H^2(\Omega)}\|\Delta \phi\|_{L^\infty(\Omega)}\big)\\
&\quad  + C\big(\|\nabla \rho\|_{\mathbf{L}^\infty(\Omega)}\| \nabla \phi\|_{\mathbf{H}^2(\Omega)}+\|\nabla \rho\|_{\mathbf{H}^2(\Omega)}\| \nabla \phi\|_{\mathbf{L}^\infty(\Omega)}\big)\\
&\leq C\big(\|\rho\|_{H^2(\Omega)}\|\phi\|_{H^4(\Omega)}+ \|\rho\|_{H^3(\Omega)}\|\phi\|_{H^3(\Omega)}\big),
\end{align*}
which implies $\nabla \cdot(\rho\nabla\phi)\in L^\infty(2\delta,+\infty;H^2(\Omega))$. As a consequence, $f\in L^\infty(2\delta,+\infty;H^4(\Omega))$. Applying the elliptic estimate for problem \eqref{eq:phiprob1}, we find that $\phi\in L^\infty(2\delta,+\infty;H^6(\Omega))$ is uniformly bounded. Hence, the estimate \eqref{regg3} follows.

The proof of Theorem \ref{th:regularity3} is complete.  \hfill $\square$

\section*{Acknowledgements} M.~ Grasselli is a member of Gruppo Nazionale per l'Analisi Ma\-te\-ma\-ti\-ca, la Probabilit\`{a} e le loro Applicazioni (GNAMPA), Istituto Nazionale di Alta Matematica (INdAM). H.~Wu was partially supported by NNSFC grant No. 12071084 and the Shanghai Center for Mathematical Sciences at Fudan University.

\printbibliography[title=Bibliography]

@article{Xu21,
	author = {Xu, C. and Chen, C. and Yang, X.},
	title = {Efficient, non-iterative, and decoupled numerical scheme for a new modified binary phase-field surfactant system},
	journal = {Numer. Algor.},
	volume = 86,
	year = 2021,
	pages = {863--885},
}

@article{Flory42,
	author = {Flory, P. J.},
	title = {Thermodynamics of high polymer solutions},
	journal = {J. Chem. Phys.},
	volume = {10},
	number = {},
	pages = {51--61},
	year = {1942},
	doi = {10.1063/1.1723621},
}

@article{Huggins41,
	author = {Huggins, M. L.},
	title = {Solutions of long chain compounds},
	journal = {J. Chem. Phys.},
	volume = {9},
	number = {},
	pages = {440},
	year = {1941},
	doi = {10.1063/1.1750930},
}

@article{SVDG06,
	author = {van der Sman, R. G. M. and van der Graaf, S.},
	title = {Diffuse interface model of surfactant adsorption onto flat and droplet interfaces},
	journal = {Rheol. Acta},
	volume = 46,
	pages = {3--11},
	year = 2006,
}

@article{GGG,
	author = {Gal, C. G. and Giorgini, A. and Grasselli, M.},
	title = {The nonlocal Cahn--Hilliard equation with singular potential: well-posedness, regularity and strict separation property},
	journal = {J. Differential Equations},
	volume = {263},
	number = {},
	pages = {5253--5297},
	year = {2017},
	doi = {10.1016/j.jde.2017.06.015},
}

@article{FG12,
	author = {Frigeri, S. and Grasselli, M.},
	title = {Nonlocal Cahn--Hilliard--Navier--Stokes systems with singular potentials},
	journal = {Dyn. Partial Differ. Equ.},
	volume = {9},
	number = {},
	pages = {273--304},
	year = {2012},
	doi = {10.4310/DPDE.2012.v9.n4.a1},
}

@article{GMT18,
	author = {Giorgini, A. and Miranville, A. and Temam, R.},
	year = {2019},
	pages = {2535--2574},
	title = {Uniqueness and regularity for the Navier--Stokes--Cahn--Hilliard system},
	volume = {51},
	journal = {SIAM J. Math. Anal.},
	doi = {10.1137/18M1223459},
}

@article{MZ04,
	author = {Miranville, A. and Zelik, S.},
	title = {Robust exponential attractors for Cahn--Hilliard type equations with singular potentials},
	journal = {Math. Methods Appl. Sci.},
	volume = {27},
	number = {},
	pages = {545--582},
	year = {2004},
	doi = {10.1002/mma.464},
}

@article{GGM,
	author = {Giorgini, A. and Grasselli, M. and Miranville, A.},
	title = {The Cahn--Hilliard--Oono equation with singular potential},
	journal = {Math. Models Methods Appl. Sci.},
	volume = {27},
	number = {},
	pages = {2485--2510},
	year = {2017},
	doi = {10.1142/S0218202517500506},
}

@book{BFNS,
	author = {Boyer, F. and Fabrie, P.},
	title = {Mathematical Tools for the Study of the Incompressible Navier--Stokes Equations and Related Models},
	series = {Applied Mathematical Sciences},
	volume = {183},
	year = {2013},
	publisher = {Springer-Verlag New York}
}

@book{RTNS,
	author = {Temam, R.},
	title =  {Navier--Stokes Equations: Theory and Numerical Analysis},
	volume = {343},
	year = {1984},
	publisher = {AMS Chelsea Publishing}
}

@book{RTDDS,
	author = {Temam, R.},
	title = {Infinite-dimensional Dynamical Systems in Mechanics and Physics},
	publisher = {Springer-Verlag},
	year = {1997},
	doi = {},
	address = {New York},
	edition   = {2nd Edition},
}

@book{Galdi11,
	author = {Galdi, G. P.},
	title = {An Introduction to the Mathematical Theory of the Navier--Stokes Equations},
	publisher = {Springer-Verlag},
	address = {New York},
	year = {2011},
	edition = {2nd Edition},
}

@article{Lar1,
	author = {Laradji, M. and Guo, H. and Grant, M. and Zuckermann, M. J.},
	title = {The effect of surfactants on the dynamics of phase separation},
	journal = {J. Phys. Condens. Matter},
	volume = {4},
	pages = {6715--6728},
	year = {1992}
}

@article{Lar2,
	author = {Laradji, M. and Mouristen, O. G. and Toxvaerd, S. and Zuckermann, M. J.},
	title = {Molecular dynamics simulations of phase separation in the presence ef surfactants},
	journal = {Phys. Rev. E},
	volume = {50},
	pages = {1722--1727},
	year = {1994}
}

@article{Fon07,
	author = {Fonseca, I. and Morini, M. and Slastikov, V.},
	title = {Surfactants in foam stability a phase-field approach},
	journal = {Arch. Rational Mech. Anal.},
	volume = {183},
	pages = {411--456},
	year = {2007}
}

@article{Kom97,
	author = {Komura, S. and Kodama, H.},
	title = {Two-order-parameter model for an oil-water-surfactant system},
	journal = {Phys. Rev. E},
	volume = {55},
	pages = {1722--1727},
	year = {1997}
}

@article{illposed,
	author = {Engblom, S. and Do-Quang, M. and Amberg, G. and Tornberg, A. K.},
	title = {On diffuse interface modeling and simulation of surfactants in two-phase fluid flow},
	journal = {Phys. Comm. Comput.},
	volume = {14},
	pages = {879--915},
	year = {2013}
}

@article{Yang18,
	author = {Yang, X.},
	title = {Numerical approximations for the Cahn--Hilliard phase field model of the binary fluid-surfactant system},
	journal = {J. Sci. Comput.},
	volume = 74,
	pages = {1533--1553},
	year = 2018
}

@article{Abels19,
	author = {Abels, H. and Garcke, H. and Weber, J.},
	title = {Existence of weak solutions for a diffuse interface model for two-phase flow with surfactants},
	journal = {Commun. Pure Appl. Anal.},
	volume = 18,
	pages = {195--225},
	year = 2019
}

@article{Garcke13,
	author = {Garcke, H. and Lam, K. F. and Stinner, B.},
	title = {Diffuse interface modelling of soluble surfactants in two-phase flow},
	journal = {Commun. Math. Sci.},
	volume = 12,
	pages = {1475--1522},
	year = 2013
}

@article{Zhu20,
	author = {Zhu, G. and Li, A.},
	title = {Interfacial dynamics with soluble surfactants: A phase-field two-phase flow model with variable densities},
	journal = {Adv. Geo-Energy Res.},
	volume = 4,
	pages = {86--98},
	year = 2020
}

@article{Yang17,
	author = {Yang, X. and Ju, L.},
	title = {Linear and unconditionally energy stable schemes for the binary fluid-surfactant phase field model},
	journal = {Comput. Methods Appl. Mech. Engrg.},
	volume = 318,
	pages = {1005--1029},
	year = 2017
}

@BOOK{GS94,
  author =       {Gompper, G. and Schick, M.},
  title =        {Self-Assembling Amphiphilic Systems},
  publisher =    {Academic Press},
  year =         {1994},
  address =      {London},
}

@ARTICLE{CH58,
  author =       {Cahn, J. W. and Hilliard, J. E.},
  title =        {Free energy of a nonuniform system I. Interfacial free energy},
  journal =      {J. Chem. Phys.},
  year =         {1958},
  volume =       {2},
  pages =        {258--267},
}

@ARTICLE{Yang21a,
  author =       {Yang, X.},
  title =        {On a novel fully decoupled, second-order accurate energy stable numerical scheme for a binary fluid-surfactant phase-field model},
  journal =      {SIAM J. Sci. Comput.},
  year =         {2021},
  volume =       {43},
  pages =        {pp. B479--B507},
}

@ARTICLE{PD95,
  author =       {P\"{a}tzold, G. and Dawson, K.},
  title =        {Numerical simulation of phase separation in the presence of surfactants and hydrodynamics},
  journal =      {Phys. Rev. E},
  year =         {1995},
  volume =       {52},
  pages =        {6908--6911},
}

@INBOOK{NC08,
  author =       {Novick-Cohen, A.},
  editor =       {Dafermos, C. M. and Pokorn\'{y}, M.},
  title =        {The Cahn--Hilliard equation},
  chapter =      {4},
  pages =        {201-228},
  publisher =    {Elsevier/North-Holland},
  year =         {2008},
  volume =       {4},
  series =       {Evolutionary Equations. Handb. Differ. Equ.},
  address =      {Amsterdam},
}

@ARTICLE{EG96,
  author =       {Elliott, C. M. and Garcke, H.},
  title =        {On the Cahn--Hilliard equation with degenerate mobility},
  journal =      {SIAM J. Math. Anal.},
  year =         {1996},
  volume =       {27},
  pages =        {404--423},
}

@ARTICLE{MT16,
  author =       {Miranville, A. and Temam, R.},
  title =        {On the Cahn--Hilliard--Oono--Navier--Stokes equations wih singular potentials},
  journal =      {Appl. Anal.},
  year =         {2016},
  volume =       {95},
  pages =        {2609--2624},
}

@ARTICLE{Bo99,
  author =       {Boyer, F.},
  title =        {Mathematical study of multi-phase flow under shear through order parameter formulation},
  journal =      {Asymptot. Anal.},
  year =         {1999},
  volume =       {20},
  pages =        {175--212},
}

@ARTICLE{Ab09,
  author =       {Abels, H.},
  title =        {On a diffuse interface model for two-phase flows of viscous, incompressible fluids with matched densities},
  journal =      {Arch. Ration. Mech. Anal.},
  year =         {2009},
  volume =       {194},
  pages =        {463--506},
}

@ARTICLE{KNP95,
  author =       {Kenmochi, N. and  Niezg\'{o}dka, M. and Paw{\l}ow, I. },
  title =        {Subdifferential operator approach to the Cahn--Hilliard equation with constraint},
  journal =      {J. Differential Equations},
  year =         {1995},
  volume =       {117},
  pages =        {320--356},
}

@ARTICLE{GGW18,
  author =       {Giorgini, A. and Grasselli, M. and Wu, H.},
  title =        {The Cahn--Hilliard--Hele--Shaw system with singular potential},
  journal =      {Ann. Inst. H. Poincar\'{e} Anal. Non Lin\'{e}aire},
  year =         {2018},
  volume =       {35},
  pages =        {1079--1118},
}

@BOOK{BCD,
  author =       {Bahouri, H. and Chemin, J. Y. and Danchin, R.},
  title =        {Fourier Analysis and Nonlinear Partial Differential Equations},
  publisher =    {Springer},
  year =         {2011},
  volume =       {343},
  series =       {Grundlehren Math. Wiss.},
  address =      {Berlin},
}

@ARTICLE{He21,
  author =       {He, J.-N.},
  title =        {Global weak solutions to a Navier--Stokes--Cahn--Hilliard system with chemotaxis and singular potential},
  journal =      {Nonlinearity},
  year =         {2021},
  volume =       {34},
  pages =        {2155--2190},
}

@article{AM15,
	author =       {Miranville, A.},
	title =        {Sixth-order Cahn--Hilliard equations with singular nonlinear terms},
	journal =      {Appl. Anal.},
	year =         {2015},
	volume =       {94},
	pages =        {2133--2146},
}

@article{Z19,
	author =       {Zhu, G. and Kou, J. and Yao, B. and Wu, Y. and Yao, J. and Sun, S.},
	title =        {Thermodynamically consistent modelling of two-phase flows with moving contact line and soluble surfactants},
	journal =      {J. Fluid Mech.},
	year =         {2019},
	volume =       {879},
	pages =        {327--359},
}

@BOOK{Myers,
  author =       {Myers, D.},
  title =        {Surfactant Science and Technology},
  publisher =    {John Wiley \& Sons},
  year =         {2005},
}

@ARTICLE{AMW,
  author =       {Anderson, D.-M. and McFadden, G.-B. and Wheeler, A.-A.},
  title =        {Diffuse-interface methods in fluid mechanics},
  journal =      {Annu. Rev. Fluid Mech.},
  year =         {1997},
  volume =       {30},
  pages =        {139--165},
}

@article{HeWu,
  author =       {He, J.-N. and Wu, H.},
  title =        {Global well-posedness of a Navier--Stokes--Cahn--Hilliard system with chemotaxis and singular potential in 2D},
  journal =      {J. Differential Equations},
  year =         {2021},
  volume =       {297},
  pages =        {47-80},
}

@ARTICLE{TSLV,
  author =       {Teigen, K. E. and Song, P. and Lowengrub, J. and Voigt, A.},
  title =        {A diffuse-interface model for two-phase flows with soluble surfactants},
  journal =      {J. Comput. Phys.},
  year =         {2011},
  volume =       {230},
  pages =        {375--393},
}

@incollection {Ell89,
    AUTHOR = {Elliott, C. M.},
     TITLE = {The {C}ahn--{H}illiard model for the kinetics of phase separation},
 BOOKTITLE = {Mathematical models for phase change problems (\'{O}bidos, 1988)},
    SERIES = {Internat. Ser. Numer. Math.},
    VOLUME = {88},
     PAGES = {35--73},
 PUBLISHER = {Birkh\"{a}user, Basel},
      YEAR = {1989},
}

@ARTICLE{SP13a,
  author =       {Schimperna, G. and Paw{\l}ow, I.},
  title =        {A Cahn--Hilliard equation with singular diffusion},
  journal =      {J. Differential Equations},
  year =         {2013},
  volume =       {254},
  pages =        {779--803},
}

@ARTICLE{SP13b,
  author =       {Schimperna, G. and Paw{\l}ow, I.},
  title =        {On a class of Cahn--Hilliard models with nonlinear diffusion},
  journal =      {SIAM J. Math. Anal.},
  year =         {2013},
  volume =       {45},
  pages =        {31--63},
}

@ARTICLE{SW20,
  author =       {Schimperna, G. and Wu, H.},
  title =        {On a class of sixth-order Cahn--Hilliard type equations with logarithmic potential},
  journal =      {SIAM J. Math. Anal.},
  year =         {2020},
  volume =       {52},
  pages =        {5155--5195},
}
\end{document}